
\documentclass[a4paper]{article}

\usepackage{amsfonts, amsmath, amssymb, amsthm}

\usepackage{graphics}
\usepackage{psfrag}

\theoremstyle{plain}
\newtheorem{letteredthm}{Theorem}
\newtheorem{letteredcor}[letteredthm]{Corollary}
\newtheorem{thm}{Theorem}[section]
\newtheorem{prop}[thm]{Proposition}
\newtheorem{lem}[thm]{Lemma}
\newtheorem{conj}[thm]{Conjecture}
\newtheorem{cor}[thm]{Corollary}
\newtheorem{quest}{Question}
\newtheorem*{clm}{Claim}
\theoremstyle{definition}
\newtheorem{defn}[thm]{Definition}
\newtheorem{rem}[thm]{Remark}

\renewcommand{\Vert}{\operatorname{Vert}}
\DeclareMathOperator{\Edge}{Edge}

\renewcommand{\labelenumi}{(\arabic{enumi})}

\newcommand{\mc}[1]{\mathcal{#1}}

\newcommand{\omc}[1]{\overline{\mathcal{#1}}}
\newcommand{\bast}{\displaystyle \ast}
\newcommand{\fm}[1]{\mc{#1}^{\pm\bast}}
\newcommand{\Z}{\mathbb{Z}}

\newcommand{\N}{\mathbb{N}}
\newcommand{\Q}{\mathbb{Q}}
\newcommand{\ol}{\overline}

\newcommand{\FreeEq}{\stackrel{\text{fr}}{=}}
\newcommand{\hast}{\mbox{\huge $\ast$ \normalsize}\!\!\!}

\DeclareMathOperator{\Dep}{Dep} \DeclareMathOperator{\Area}{Area}
\DeclareMathOperator{\Rad}{Rad}

\DeclareMathOperator{\height}{height}
\DeclareMathOperator{\Fr}{Fr}
\DeclareMathOperator{\dist}{d}
\DeclareMathOperator{\RArea}{RArea}
\DeclareMathOperator{\Back}{Back}
\DeclareMathOperator{\DEdge}{DEdge}
\DeclareMathOperator{\Aut}{Aut}
\DeclareMathOperator{\Ab}{Ab}
\DeclareMathOperator{\Int}{Int}
\DeclareMathOperator{\SDP}{SDP}
\DeclareMathOperator{\Depth}{Depth}

\begin{document}

\begin{center}
  \mbox{}\\
  \vspace{1in}
  \huge\bf{Isoperimetric functions for subdirect products and Bestvina-Brady groups}\\
  \vspace{2in}
  \LARGE\bf{William John Dison}
  \vspace{1in}\\
  Imperial College London\\
  \vspace{1in}
  \normalsize \emph{Thesis presented for the degree of\\
   Doctor of Philosophy, 2008}
\end{center}

\newpage

\begin{abstract}
  In this thesis we investigate the Dehn functions of two different classes of groups: subdirect products, in particular subdirect products of limit groups; and Bestvina-Brady groups.

  Let $D = \Gamma_1 \times \ldots \times \Gamma_n$ be a direct product of $n \geq 3$ finitely presented groups and let $H$ be a subgroup of $D$.  Suppose that each $\Gamma_i$ contains a finite index subgroup $\Gamma_i' \leq \Gamma_i$ such that the commutator subgroup $[D', D']$ of $D' = \Gamma_1' \times \ldots \times \Gamma_n'$ is contained in $H$.  Suppose furthermore that, for each $i$, the subgroup $\Gamma_i H$ has finite index in $D$.  We prove that $H$ is finitely presented and satisfies an isoperimetric inequality given in terms of area-radius pairs for the $\Gamma_i$ and the dimension of $(D'/H) \otimes \Q$.  In the case that each $\Gamma_i$ admits a polynomial-polynomial area-radius pair, it will follow that $H$ satisfies a polynomial isoperimetric inequality.

  As a corollary we obtain that if $K$ is a subgroup of a direct product of $n$ limit groups and if $K$ is of type $\textrm{FP}_{m}(\Q)$, where $m = \max \{ 2, n-1\}$, then $K$ is finitely presented and satisfies a polynomial isoperimetric inequality.  In particular, we obtain that all finitely presented subgroups of a direct product of at most $3$ limit groups satisfy a polynomial isoperimetric inequality.

  We also prove that if $B$ is a finitely presented Bestvina-Brady group, then $B$ admits a quartic isoperimetric function.
\end{abstract}

\newpage

\tableofcontents

\newpage

\begin{center}
  {\large\bf{Acknowledgements}}
\end{center}

\noindent \emph{I owe an immense debt of gratitude to my supervisor, Martin Bridson, without whom this thesis could never have been written.  It has been a privilege to learn mathematics from him and I am hugely grateful for all his invaluable advice.  I would also like to thank Tim Riley, Michael Tweedale and Henry Wilton for all the help and support they have given me, and all my friends and family for being so wonderful.}

\newpage

\section{Introduction}

Since its articulation by Dehn in the early $20^\text{th}$ century, the word problem has been one of the guiding problems in combinatorial and geometric group theory.  Given some finite group presentation, it asks whether there is an algorithm which will effectively determine whether any given word is trivial in the group.  Once it has been determined that a particular group, or class of groups, in which one is interested has a solvable word problem, then it is natural to inquire into the complexity of such an algorithmic solution. In this thesis we study a particular measure of the complexity of the word problem of a group, known as the \emph{Dehn function}.

We give a formal definition in Section~\ref{sec12} below, but, roughly, the Dehn function of a finitely presented group is the least upper bound on the number of defining relations which must be applied to demonstrate that a word in the generators is trivial in the group, with the bound being given in terms of the length of the word.  An \emph{isoperimetric function} for a group is an upper bound on the Dehn function.  In this thesis we will frequently be concerned with whether a group admits a polynomial isoperimetric function.  If one is interested in a class of groups, one might refine this criterion by asking for a single (uniform) polynomial which is an isoperimetric function for all the groups in the class.  Some justification for the choice of this dichotomy is provided by a result of Birget, Rips and Sapir \cite{Birget1}, who proved that the word problem of a finitely generated group $G$ is an $NP$-problem if and only if $G$ embeds in a finitely presented group which admits a polynomial isoperimetric function.

Thus far we have discussed Dehn functions in the language of combinatorial group theory.  The following geometric interpretation provides further justification for their study.  Given a Riemannian manifold $M$, Plateau's problem asks whether every simple null-homotopic loop in $M$ spans a least-area filling disc.  Under mild hypotheses Plateau's problem can be shown to have a positive solution \cite{Rado1}, \cite{Douglas1}, \cite{Morrey1}, and in this case one can define the \emph{filling function} of $M$.  This is the least function which bounds the area of least-area filling discs of rectifiable null-homotopic loops, with the bound being given in terms of the length of the loop.  Gromov's \emph{Filling Theorem} asserts that if $M$ is closed, then its filling function is essentially the same as the Dehn function of $\pi_1 M$.

We now introduce a method for constructing interesting classes of groups that will form the principle objects of study for much of this thesis.  Given a class of groups $\mc{C}$, the collection of \emph{subdirect products} of $\mc{C}$ is defined to be $$\SDP(\mc{C}) = \{ S \leq C_1 \times \ldots \times C_n : C_i \in \mc{C} \text{ and $S$ projects onto each $C_i$} \}.$$  In many cases the requirement that the subgroup projects onto each factor will be immaterial since one can replace the direct product $C_1 \times \ldots \times C_n$ by $p_1(S) \times \ldots \times p_n(S)$, where $p_i : S \rightarrow C_i$ is the projection homomorphism.

Recently, subdirect products have been recognised as worthy objects of study in their own right (see, for example, \cite{Bridson2}).  Typically, one chooses an input class $\mc{C}$ which is already well understood, and asks what can be said about $\SDP(\mc{C})$.  What is surprising, and fascinating, about this construction is that it only involves two absolutely basic group theoretic operations (taking direct products and passing to subgroups), and yet even when the input class is well understood, the same is not necessarily true of the output class.  For example, suppose one takes as input the class $\mc{F}$ of free groups: despite this being perhaps the most basic class of infinite groups, a whole raft of results indicate that the groups in $\SDP(\mc{F})$ are surprisingly diverse.  Stallings \cite{stal63} constructed a subgroup of $F \times F \times F$, where $F$ is a rank-2 free group, as the first example of a finitely presented group whose third integral homology group is not finitely generated.  Bieri \cite{Bieri1} showed that Stallings' group is one element of a sequence of groups $\textrm{SB}_n \leq F^n$, with $\textrm{SB}_n$ being of type $\textrm{F}_{n-1}$ but not of type $\textrm{FP}_n$.  Baumslag and Roseblade \cite{baum84} proved that there exist uncountably many finitely generated non-isomorphic subgroups of $F \times F$, and Miha{\u\i}lova \cite{miha58} and Miller \cite{mill71} exhibited examples with unsolvable conjugacy problems and unsolvable membership problems.  In \cite{Bridson2} Bridson and Miller proved that there exists a recursive sequence of finitely generated subgroups $G_i \leq F \times F$ such that there is no algorithm to determine the rank of $H_1(G_i, \Z)$, nor to decide whether it has any non-trivial torsion elements.

Hopefully, these examples will have convinced the reader of the inherent wildness of $\SDP(\mc{F})$.  From our point of view, it is then natural to ask whether this wildness manifests itself in the Dehn functions of these groups.

\begin{quest} \label{q1}
  Does every finitely presented group in $\SDP(\mc{F})$ admit a polynomial isoperimetric function?  Does there exist a uniform polynomial isoperimetric function for the whole class?
\end{quest}

Various authors have obtained results that bear on this question.  Gersten \cite{gers95} proved that, for $n \geq 3$, the Stallings-Bieri group $\textrm{SB}_n$ admits a polynomial isoperimetric function.  Elder, Riley, Young and the present author have proved \cite{Dison07} that the Dehn function of Stallings' group $\textrm{SB}_3$ is actually quadratic.  It follows from a theorem of Baumslag and Roseblade (see below) that all of the finitely presented subgroups of a direct product of at most $2$ free groups have either linear or quadratic Dehn functions.  By a result of Bridson, Howie, Miller and Short (Theorem~\ref{thm16} below), the same is true of a subgroup of a direct product of $n$ free groups which satisfies the finiteness condition $\textrm{FP}_n$.  We also note that there are various other lines for investigation naturally related to Question~\ref{q1}.  For example, can one find `nice' presentations for particular groups in $\SDP(\mc{F})$?  Do there exist finitely presented groups in $\SDP(\mc{F})$ whose Dehn functions are actually different from that of the ambient direct product?

Recent results suggest that the wildness encountered amongst the arbitrary finitely generated groups in $\SDP(\mc{F})$ is a manifestation of their failure to possess a strong enough degree of finiteness.  Baumslag and Roseblade \cite{baum84} showed that the only finitely presented subgroups of a direct product of $2$ free groups are the `obvious' ones, \textit{i.e.} those which are themselves virtually a direct product of at most $2$ free groups.  The following result of Bridson, Howie, Miller and Short extends the Baumslag-Roseblade theorem to an arbitrary number of factors.

\begin{thm}[\cite{brid02B}] \label{thm16}
  Let $F_1, \ldots, F_n$ be free groups.  A subgroup $G \leq F_1 \times \ldots \times F_n$ is of type $\textrm{FP}_n$ if and only if it has a subgroup of finite index which is itself a direct product of (at most $n$) free groups.
\end{thm}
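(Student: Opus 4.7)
The plan is to split the biconditional. The backward direction is routine: a finitely generated free group is of type $F$, with a finite graph as a classifying space; a direct product of at most $n$ such has a finite classifying space given by the product of these graphs, so is also of type $F$, and in particular of type $\textrm{FP}_n$. Since type $\textrm{FP}_n$ is inherited by commensurable groups, any finite-index overgroup is of type $\textrm{FP}_n$.

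For the forward direction I would proceed by induction on $n$. The base case $n=1$ is Nielsen--Schreier: every finitely generated subgroup of a free group is free. For the inductive step, let $G \leq F_1 \times \cdots \times F_n$ be of type $\textrm{FP}_n$. Since type $\textrm{FP}_n$ entails finite generation, each projection $p_i(G)$ is a finitely generated subgroup of $F_i$, hence free by the base case; replacing $F_i$ by $p_i(G)$, I may assume $G$ is subdirect. Write $N_i = G \cap F_i$, where $F_i$ is identified with its coordinate copy in the product. Each $N_i$ is normal in $F_i$ (by subdirectness) and in $G$. The crux is to show every $N_i$ has finite index in $F_i$; once this is established, $N := N_1 \times \cdots \times N_n$ is a normal finite-index subgroup of $G$ that is itself a direct product of free groups, completing the argument.

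To establish the finite-index assertion I would argue by contradiction: suppose some $N_i$, say $N_n$, has infinite index in $F_n$. If $F_n$ is cyclic then $N_n$ is trivial, so $G$ embeds subdirectly in $F_1 \times \cdots \times F_{n-1}$ and the induction hypothesis (applied with $\textrm{FP}_{n-1}$, which $G$ inherits) finishes this case with at most $n-1$ factors. Otherwise $N_n$ is a free group of infinite rank, so $V := H_1(N_n, \Q)$ is an infinite-dimensional $\Q$-vector space. Setting $H = p_{\hat{n}}(G) \leq F_1 \times \cdots \times F_{n-1}$, I have a short exact sequence $1 \to N_n \to G \to H \to 1$, whose Lyndon--Hochschild--Serre spectral sequence with rational coefficients
$$E^2_{p,q} = H_p(H, H_q(N_n, \Q)) \Rightarrow H_{p+q}(G, \Q)$$
collapses to the rows $q=0,1$ because $N_n$ is free. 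Analysing the $\Q H$-module structure on $V$, which is forced to be large by subdirectness (since $G$ projects onto $F_n$, the conjugation action of $H$ on $N_n$ is nontrivial in a controlled way), one shows that $H_n(G, \Q)$ is infinite-dimensional, contradicting type $\textrm{FP}_n(\Q)$.

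The main obstacle is this last homological step: one needs a careful computation of $H_{n-1}(H, V)$ via the long exact sequence arising from the two-row collapse, together with enough control on $H$ itself (either via a strengthened parallel inductive statement or by first verifying that $H$ is of type $\textrm{FP}_{n-1}$) to extract the infinite-dimensional top class in $H_n(G, \Q)$. This delicate interplay between the subdirect structure and the homological finiteness hypothesis is where the real content of the Bridson--Howie--Miller--Short theorem lies.
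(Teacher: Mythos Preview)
The paper does not prove this theorem: it is quoted from \cite{brid02B} as a background result, with no argument supplied. So there is nothing in the paper to compare your proposal against directly.

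On its own merits, your outline has the right architecture for the easy direction and for the initial reductions of the hard direction (passing to the subdirect case, setting $N_i = G \cap F_i$, and disposing of the case where some $N_i$ is trivial by projecting away a factor). One small slip: the case $N_n = \{1\}$ should be split off regardless of whether $F_n$ is cyclic, not only in the cyclic case.

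The substantive gap is precisely where you locate it yourself. You write down the two-row Lyndon--Hochschild--Serre spectral sequence for $1 \to N_n \to G \to H \to 1$ and assert that ``analysing the $\Q H$-module structure on $V$ \ldots\ one shows that $H_n(G,\Q)$ is infinite-dimensional,'' but you do not carry this out, and you acknowledge that it requires ``enough control on $H$ itself'' that you have not established. There is no general principle giving $H = G/N_n$ type $\mathrm{FP}_{n-1}$ from $G$ being $\mathrm{FP}_n$ when $N_n$ is an infinitely generated free group, and without structural information on $H$ (or on the $\Q H$-module $V$) the computation of $H_{n-1}(H,V)$ and of the edge map into $H_n(G,\Q)$ is not available. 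In the Bridson--Howie--Miller--Short argument this is handled by a strengthened inductive package that simultaneously controls the projections of $G$ to all sub-products, not just to a single $(n-1)$-factor quotient; your proposal gestures at ``a strengthened parallel inductive statement'' but does not formulate one. As written, the proposal is a plausible plan rather than a proof: the acknowledged ``main obstacle'' is the entire content of the theorem.
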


Even if a subdirect product does not enjoy any finiteness properties stronger than being finitely presented, one still has the following structural result of Bridson and Miller.  Recall that the lower cental series $(\gamma_i(G))_{i=1}^\infty$ of a group $G$ is defined recursively by $\gamma_1(G)= G$ and $\gamma_i(G) = [\gamma_{i-1}(G), G]$.

\begin{thm}[\cite{Bridson2}] \label{thm17}
  Let $F_1, \ldots, F_n$ be free groups.  If a subdirect product $G \leq F_1 \times \ldots \times F_n$ is finitely presented  and intersects each factor $F_i$ non-trivially, then each $F_i$ contains a finite index normal subgroup $K_i$ such that $$\gamma_{n-1}(K_i) \leq G \cap F_i \leq K_i.$$
\end{thm}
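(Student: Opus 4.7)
My goal is to show that each quotient $Q_i := F_i/(G \cap F_i)$ is virtually nilpotent of class at most $n-2$; the desired $K_i \leq F_i$ is then the preimage in $F_i$ of any finite-index nilpotent subgroup of $Q_i$ of that class.

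Set $H_i := G \cap F_i$ and $D := F_1 \times \cdots \times F_n$. I would begin with two structural observations. First, $H_i$ is normal in $F_i$: given $x \in H_i$ and $f \in F_i$, the subdirect-product hypothesis supplies $g \in G$ with $p_i(g) = f$, and a coordinate-wise calculation in $D$ gives $gxg^{-1} = (1, \dots, fxf^{-1}, \dots, 1) \in G \cap F_i = H_i$. Second, since the $H_i$ pairwise commute in $D$ and each is normal in $D$, the subgroup $N := H_1 \cdots H_n \leq G$ is normal in $D$ with $D/N \cong Q_1 \times \cdots \times Q_n$; the inclusion $G \hookrightarrow D$ descends to an injection $\bar G := G/N \hookrightarrow \prod Q_i$ realising $\bar G$ as a subdirect product with the \emph{trivial-intersection property} $\bar G \cap Q_i = \{1\}$ for every $i$. (Any lift of $\bar g \in \bar G \cap Q_i$ can be adjusted by elements of $N$ coming from the other coordinates to lie in $G \cap F_i = H_i$, hence is trivial mod $N$.)

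The heart of the proof is to use the finite presentation of $G$, together with the trivial-intersection structure of $\bar G$, to force each $Q_i$ to be virtually nilpotent of class at most $n-2$. I would proceed by induction on $n$. The base case $n = 2$ is Baumslag--Roseblade \cite{baum84}, which forces $H_i$ to have finite index in $F_i$, so one may take $K_i = H_i$ (and $\gamma_1(K_i) = K_i$ is tautologically contained in $H_i$). For the inductive step, the plan is to form $(n-1)$-fold iterated commutators in $G$ of lifts of elements from a carefully chosen finite-index subgroup of $F_i$; componentwise, the $j$-th coordinate of such a commutator lies in $\gamma_{n-1}(F_j)$, and the trivial-intersection property together with an inductive handle on the other $Q_j$ should force the coordinates with $j \neq i$ to vanish on a finite-index subgroup $K_i$, yielding $\gamma_{n-1}(K_i) \subseteq F_i \cap G = H_i$.

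The main obstacle is that the na\"ive inductive step -- project $\bar G$ onto $\prod_{j \neq i} Q_j$ and apply the inductive hypothesis -- fails, because this projection is only known to be finitely generated, not necessarily finitely presented. Mere finite generation of $\bar G$ is genuinely insufficient: the diagonal copy of a free group $F$ inside $F \times F$ has trivial intersection with each factor, yet neither $Q_i$ is virtually nilpotent. Overcoming this defect -- by using a finite presentation of $G$ itself (rather than merely of its quotients) to bound the complexity of commutator relations in $\bar G$, and thus to collapse the non-$i$ coordinates after a controlled number of commutations -- is, I expect, the technical crux of the Bridson--Miller argument.
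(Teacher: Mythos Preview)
The paper does not prove this theorem: it is quoted as background from Bridson--Miller \cite{Bridson2}, with no proof given. So there is no ``paper's own proof'' to compare against.

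As to the content of your sketch: the reductions you make are correct and are indeed the standard opening moves. Normality of $H_i$ in $F_i$, the passage to $\bar G \leq \prod Q_i$ with trivial intersections, and the reformulation ``each $Q_i$ is virtually nilpotent of class $\leq n-2$'' are all sound, and the identification of the $n=2$ case with Baumslag--Roseblade is right.

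However, your proposal is not a proof but a plan that explicitly stops at the hard part. You correctly diagnose that the na\"ive induction fails because projecting $G$ away from one factor need not preserve finite presentability, and you correctly note that finite generation alone is insufficient (your diagonal example is apt). But you then say only that you ``expect'' the Bridson--Miller argument to overcome this by some use of the finite presentation to control commutator relations --- without indicating what that mechanism is. That is precisely the content of the theorem; everything before it is routine. In the actual argument one exploits the finite presentability of $G$ to show that the projection of $G$ to any product of $n-1$ factors has finite-index image (this is where the work lies, and it uses a homological/finiteness argument rather than a direct commutator computation), after which the virtual nilpotence of the $Q_i$ follows. Your commutator-lifting idea in the inductive step is suggestive but, as stated, circular: you need the other $Q_j$ to already be controlled in order to kill the $j\neq i$ coordinates, which is what you are trying to prove.

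So: good framing, honest about the gap, but no proof.
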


In the $3$-factor case this yields the following result.

\begin{thm}[\cite{Bridson2}] \label{thm18}
  Let $F_1, F_2, F_3$ be finitely generated free groups and let $G \leq F_1 \times F_2 \times F_3$ be a subdirect product which intersects each factor $F_i$ non-trivially.  Then $G$ is finitely presented if and only if each $F_i$ contains a finite index normal subgroup $K_i$ such that the subgroup $G' = G \cap (K_1 \times K_2 \times K_3)$ satisfies the following condition:  there is an abelian group $Q$ and epimorphisms $\phi_i : K_i \rightarrow Q$ such that $G'$ is the kernel of the map $\phi_1 + \phi_2 + \phi_3$.
\end{thm}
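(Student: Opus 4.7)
The plan is to use Theorem~\ref{thm17} as the main structural input for the forward implication, and a $1$-$2$-$3$-style fibre-product theorem for the reverse.

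For the forward direction, apply Theorem~\ref{thm17} with $n=3$ to obtain finite index normal subgroups $K_i \trianglelefteq F_i$ with $[K_i,K_i] \le G \cap F_i \le K_i$. Setting $G' := G \cap (K_1 \times K_2 \times K_3)$, the containment $[K_1,K_1] \times [K_2,K_2] \times [K_3,K_3] \subseteq G'$ makes the quotient $Q := (K_1 \times K_2 \times K_3)/G'$ abelian. Define $\phi_i : K_i \to Q$ as the composite of the inclusion of $K_i$ as the $i$th factor with the quotient map, so that by construction $G' = \ker(\phi_1 + \phi_2 + \phi_3)$. The real content of the statement is the surjectivity of each $\phi_i$, which unwinds to the assertion that $(p_j, p_k)(G') = K_j \times K_k$ for every pair of distinct indices. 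I would establish this via a virtual-surjection-on-pairs type argument: the inclusions $[K_i,K_i] \le G$ force $p_{jk}(G)$ to have finite index in $F_j \times F_k$, after which one shrinks the $K_i$ to smaller finite index normal subgroups arranging on-the-nose equality for all three pairs simultaneously. Managing this simultaneous refinement without losing the commutator containment is the main technical obstacle I anticipate.

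For the reverse direction, since $K_1 \times K_2 \times K_3$ has finite index in the ambient direct product, $G'$ has finite index in $G$, and it suffices to show $G'$ is finitely presented. By hypothesis $G'$ fits into a short exact sequence $1 \to G' \to K_1 \times K_2 \times K_3 \to Q \to 0$ with $Q$ finitely generated abelian and each $K_i$ finitely generated free. I would invoke a suitable $1$-$2$-$3$-type result for such fibre products (or a Bestvina--Brady / Bieri style argument) to deduce finite presentability of $G'$. The abelian hypothesis on $Q$ makes it of type $\textrm{F}_\infty$---substantially stronger than the $\textrm{F}_3$ condition of the standard $1$-$2$-$3$ Theorem---so this direction should be comparatively routine once the relevant fibre-product machinery is in place.
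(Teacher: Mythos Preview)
The paper does not prove this theorem; it is quoted from \cite{Bridson2} as background in the introduction, with no proof supplied. So there is nothing to compare against here.

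That said, your proposal has a genuine gap in the forward direction. You assert that the inclusions $[K_i,K_i]\le G$ force $p_{jk}(G)$ to have finite index in $F_j\times F_k$. This does not follow. From $[K_j,K_j]\le G\cap F_j$ and $[K_k,K_k]\le G\cap F_k$ you get $[K_j,K_j]\times[K_k,K_k]\subseteq p_{jk}(G)$, so $p_{jk}(G)$ is a subdirect product in $F_j\times F_k$ containing a subgroup with virtually abelian quotient. But a subdirect product of two finitely generated abelian groups can have infinite index (the diagonal in $\Z\times\Z$ is the obvious example), so you cannot conclude finite index from this alone. The virtual-surjection-to-pairs property is a separate consequence of finite presentability, established in \cite{Bridson2} by a different argument (essentially a homological one, showing that $G\cap F_i$ is finitely normally generated in the projection to the complementary factors); it is not a corollary of Theorem~\ref{thm17}. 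You have correctly identified this as the crux, but your proposed route to it does not work.

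Your reverse direction is fine in outline: once $G'$ is exhibited as the kernel of a map to a finitely generated abelian group with each restriction surjective, the finite presentability follows from $1$-$2$-$3$-type machinery, and indeed the present paper re-proves exactly this implication independently (see Theorem~\ref{thm7}(2) and the surrounding discussion).
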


The previous two results suggest that the first step in an attack on Question~\ref{q1} is to restrict attention to those groups in $\SDP(\mc{F})$ which virtually contain the commutator subgroup of the ambient direct product.  The BNS invariants (see \cite{Bieri2}, \cite{Bieri3} for definitions) of direct products of free groups have been calculated by Meinert \cite{mein94} and so, given its finiteness type, one can readily determine how such a co-abelian subgroup sits inside the direct product, and \textit{vice versa}.

One interpretation of Question~\ref{q1} is as a prototype for a much more profound question regarding the class $\mc{L}$ of \emph{limit groups}.  In \cite{BridsonUP} the authors ask the first part of the following question:

\begin{quest} \label{q2}
  Does every finitely presented group in $\SDP(\mc{L})$ admit a polynomial isoperimetric function?  Does there exist a uniform polynomial isoperimetric function for the whole class?
\end{quest}

Limit groups were introduced by Sela (\cite{Sela01} \textit{et seq.}) and separately by Kharalampovich and Myasnikov (\cite{Kharlampovich98}, \cite{Kharlampovich98B}, \cite{Kharlampovich06}) in their solutions to Tarski's question of which groups have the same elementary theory as finitely generated non-abelian free groups.  The class contains all finitely generated free and free abelian groups and all compact surface groups of Euler characteristic $<-1$.  In some sense, $\mc{L}$ is the class of groups that are `almost free'; indeed, one fascinating aspect of the theory is that several \textit{a priori} unrelated notions of what it means to be `almost free' turn out to define the same class of groups.

The simplest definition of limit groups is that they are the finitely generated fully residually free groups, where a group $G$ is defined to be \emph{fully residually free} if for every finite subset $X \subseteq G$ there exists a homomorphism $G \rightarrow F$ to a non-abelian free group that is injective on $X$.  From a logical perspective, limit groups are precisely the finitely generated groups with the same existential theory as non-abelian free groups; from a geometric perspective, they are the finitely generated groups that have a Cayley graph in which each ball of finite radius is isometric to a ball of the same radius in some Cayley graph of a free group of finite rank.  Limit groups can also be defined in an algebraic context as limits of stable homomorphisms to a free group.

Aside from its own intrinsic interest, several results add further weight to Question~\ref{q2}.  It follows from a result of Baumslag, Myasnikov and Remeslennikov \cite{Baumslag99} and of Sela \cite{Sela01}, that the finitely presented groups in $\SDP(\mc{L})$ are precisely the finitely presented residually free groups.  In a more geometric direction, work of Delzant and Gromov \cite{Delzant1} implies that an answer to Question~\ref{q2} would provide important information about the isoperimetric behaviour of K\"{a}hler groups and compact K\"{a}hler manifolds.

Bridson, Howie, Miller and Short \cite{BridsonUP} have proved that the analogues of Theorems~\ref{thm16}, \ref{thm17} and \ref{thm18} hold with the words `free groups' replaced by `limit groups'.  Building on this and other structural results in \cite{Bridson1}, Kouchloukova \cite{Kochloukova1} proved that if $G$ is a subgroup of a direct product $D = L_1 \times \ldots \times L_n$ of limit groups (with certain additional conditions) and if $G$ is of type $\textrm{FP}_{s}(\Q)$ for some $s \geq 2$, then the projection homomorphism from $G$ to the direct product of any $s$ of the $L_i$ is virtually surjective.  It follows that if $G$ is a subgroup of a direct product of $n \geq 3$ limit groups and if $G$ is of type $\textrm{FP}_{n-1}(\Q)$, then $G$ contains a finite index subgroup $G'$ isomorphic to the kernel of a homomorphism $\phi: L_1 \times \ldots \times L_m \rightarrow A$ where $L_1, \ldots, L_m$ are limit groups, $A$ is abelian, $m \leq n$, and restriction of $\theta$ to each factor $L_i$ is surjective.

One interpretation of a direct product of free groups is as an example (perhaps the canonical example) of a type of group known as a \emph{right-angled Artin group} (RAAG).  Much of the interest in RAAGs amongst geometric group theorists stems from the fact that their definition is flexible enough for them to admit interesting subgroups, and yet they possess enough structure (in particular they have finite $K(\pi_1,1 )$-complexes with the structure of non-positively curved cube complexes) to enable the proof of interesting results.  For example, Bestvina and Brady \cite{Bestvina1} defined a collection of subgroups of RAAGs (known as Bestvina-Brady groups --- see Section~\ref{sec3} for definitions) in their solution to the old problem of whether the finiteness conditions $\mathrm{F}_2$ and $\mathrm{FP}_2$ are equivalent.  They also constructed a Bestvina-Brady group $G$ such that either $G$ is a counterexample to the Eilenberg-Ganea conjecture, or else there exists a counterexample to the Whitehead conjecture.

In general the richness of the subgroup structure of RAAGs suggests that questions about their arbitrary finitely presented subgroups will be hard.  It it thus natural to begin by restricting attention to the Bestvina-Brady subgroups.

\begin{quest} \label{q3}
  Do all finitely presented Bestvina-Brady groups admit a polynomial isoperimetric function?  Does their exist a uniform polynomial isoperimetric inequality?
\end{quest}

In \cite{Brady1}, Brady suggests that the answer to the second part of this question is no: he constructs a sequence $(\Gamma_k)_{k=1}^\infty$ of finitely presented Bestvina-Brady groups and claims that the Dehn function of $\Gamma_k$ is polynomial of degree $k+2$.  However, a result in this thesis shows that in fact $n^4$ is an isoperimetric function for all finitely presented Bestvina-Brady groups, and hence Brady's construction can not be made to work.

Questions~\ref{q1}--\ref{q3} acted as the guides for much of the research in this thesis; we have obtained partial answers to Questions~\ref{q1} and \ref{q2}, and a complete answer to Question~\ref{q3}.  The structure of the thesis is as follows.  After describing our notation in Section~\ref{sec13}, Section~\ref{sec12} gives the required background on Dehn functions and other related filling invariants.  All of this material is standard, although some of the terminology is novel.  Section~\ref{sec9} then gives a brief introduction to distortion functions: just as the Dehn function gives a particular measure of the complexity of the word problem for a finitely presented group, so the distortion function gives a measure of the complexity of the membership problem for a pair of finitely generated groups $H \leq G$.  Again, the material in this section is standard.  Although this thesis is primarily concerned with Dehn functions, when investigating subdirect products our methods will frequently also give analogous results concerning distortion.

From Section~\ref{sec14} onwards all results are original, except where stated.  In Sections~\ref{sec14}--\ref{sec8} we prove various general results of a preliminary nature, that are then applied in Sections~\ref{sec7}--\ref{sec3} in an attack on Questions~\ref{q1}--\ref{q3}.  Each section begins with an introduction explaining its contents.

Guided by Theorems~\ref{thm17} and \ref{thm18} (and their limit group analogues) we focus in Section~\ref{sec7} on a class of subdirect products which virtually contain the commutator subgroup of the ambient direct product.  For definitions of the terms `virtually-full', `virtually-coabelian' and `corank', see Section~\ref{sec16}

\begin{letteredthm} \label{thm20}
  Let $H$ be a virtually-full, virtually-coabelian subgroup of a direct product $D = \Gamma_1 \times \ldots \times \Gamma_n$, with corank $r$. \begin{enumerate}
    \item Suppose each $\Gamma_i$ is finitely generated and $n \geq 2$.  Then $H$ is finitely generated and the distortion function $\Delta$ of $H$ in $D$ satisfies $\Delta(l) \preccurlyeq l^2$.

    \item Suppose each $\Gamma_i$ is finitely presented and $n \geq 3$.  Then $H$ is finitely presented.

    \item Suppose each $\Gamma_i$ is finitely presented and $n \geq 3$.  For each $i$, let $(\alpha_i, \rho_i)$ be an area-radius pair for some finite presentation of $\Gamma_i$.  Define $$\alpha(l) = \max ( \{l^2\} \cup \{ \alpha_i(l) \, : \, 1 \leq i \leq n\})$$ and $$\rho(l) = \max ( \{l\} \cup \{ \rho_i(l) \, : \, 1 \leq i \leq n\}).$$  Then $\rho^{2r} \alpha$ is an isoperimetric function for $H$

    \item Suppose that each $\Gamma_i$ is finitely presented and that $n \geq \max\{3, 2r\}$.  Let $\beta_1$ and $\beta_2$ be the Dehn functions of some finite presentations of $\Gamma_1 \times \ldots \times \Gamma_{n-r}$ and $\Gamma_{n-r+1} \times \ldots \times \Gamma_n$ respectively.  Then the function $\beta$ defined by $$\beta(l) = l\beta_1(l^2) + \beta_2(l)$$ is an isoperimetric function for $H$.
  \end{enumerate}
\end{letteredthm}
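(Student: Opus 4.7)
The plan is to treat the four parts in order. Throughout, after passing to the finite-index subgroup $D' = \Gamma_1' \times \ldots \times \Gamma_n' \leq D$ supplied by the hypotheses, we take $H$-generators consisting of generators for each $\Gamma_i'$ together with lifts of a finite generating set for the (finitely generated abelian) quotient of $D'$ modulo its intersection with $H$. The commutators $[\Gamma_i', \Gamma_j']$ for $i \neq j$ then all lie in $[D',D'] \leq H$, so they serve as auxiliary $H$-generators of bounded $H$-length.

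For part (1), given $h \in H$ expressed by a word $w$ of length $l$ in $D'$-generators, we rearrange $w$ via bubble sort so as to collect the letters belonging to each $\Gamma_i'$. Each adjacent swap across distinct factors introduces a commutator from $[\Gamma_i', \Gamma_j'] \leq H$, and the whole rearrangement uses $O(l^2)$ swaps. The resulting ordered product $u_1 u_2 \cdots u_n$ has total length $\leq l$, and its image in the abelian quotient is represented by $h$ itself modulo $[D',D']$; so it can be rewritten using the lifted abelian $H$-generators in length $O(l)$. Combined with the $O(l^2)$ commutator generators, this expresses $h$ as an $H$-word of length $O(l^2)$, giving $\Delta(l) \preccurlyeq l^2$.

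For parts (2)--(3), we take the natural finite presentation of $H$ whose relators are: (a) the relators of each $\Gamma_i'$; (b) commutator relators $[x,y]=1$ for $x \in \Gamma_i'$, $y \in \Gamma_j'$ with $i \neq j$, interpreted among the $H$-generators; and (c) finitely many relators encoding the image of $D'$ in $D'/H$ as a specific subgroup of the abelianization of $D'$. Given a null-homotopic $H$-word $w$ of length $l$, we reinterpret it as a $D'$-word and fill each factor projection $w_i$ using the area-radius pair $(\alpha_i, \rho_i)$, obtaining a sub-disc of area $\leq \alpha(l)$ and radius $\leq \rho(l)$. We glue these $n$ sub-discs together by commutator strips, each of area at most the product of its length and the radius of the piece it traverses, contributing $O(l\rho(l))$ per interface. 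The coabelian part then introduces an abelian residue in $D'/[D',D']$ which must be rewritten using the lifted abelian $H$-generators; each of the $r$ dimensions of $(D'/H)\otimes\Q$ contributes a factor of $\rho^2$, namely one $\rho$ for the length of the commutator strip needed to correct that direction and one for the radius of the region it must cross. Iterating the construction $r$ times yields the $\rho^{2r}\alpha$ bound.

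For part (4), the stronger hypothesis $n \geq 2r$ allows the factors to be split into a large block of $n-r$ factors and a small block of $r$ factors; we fill the large-block projection of the sorted word (which has length $O(l^2)$) using $\beta_1$, and the small-block projection using $\beta_2$. Because the large block supplies enough independent factors to absorb the $r$ coabelian directions directly, the commutator corrections now cost only a linear factor in $l$ per direction rather than the radius penalty $\rho^{2r}$, giving the improved bound $l\beta_1(l^2)+\beta_2(l)$. The main obstacle throughout is part (3): the $\rho^{2r}$ estimate requires a careful recursive construction that handles one coabelian direction at a time, commuting each already-constructed filling past fresh commutator strips, and correctly accounting for both the length of these strips and the radius of the filling they must traverse without incurring any hidden combinatorial blow-up from the nested product structure.
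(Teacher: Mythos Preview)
Your outline has the right skeleton---reduce to $H=\ker\theta$ with $\theta:D\to\Z^r$ surjective on each factor, then process the $r$ directions one at a time with a $\rho^2$ cost each---and this matches the paper's architecture. But the mechanisms you propose are not the ones that actually carry the argument, and the two central ideas of the paper's proof are absent from your sketch.

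The first missing ingredient is a \emph{close-fillings} principle (Proposition~\ref{prop4}): once one shows that every null-homotopic $D$-word admits a $\mc{P}$-expression whose departure from $H$ is bounded in terms of the departure of the boundary word, finite presentability of $H$ and the area bound follow formally. Your ``commutator strips gluing sub-discs'' produces a $D$-filling, but nothing in your account converts it to an $H$-filling. The second missing ingredient is the pull-down machinery: explicit maps $\Phi_k$ on $\fm{X}$ that collapse the $k$-th height to $\leq 1$ while \emph{not increasing} the other heights (Proposition~\ref{prop5}(3)), extended to $\mc{P}$-expressions so that area is multiplied by $(\height_k+1)^2$ at each step (Proposition~\ref{prop7}). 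The technical core is showing that $\Phi_k$ applied to each relator remains cheaply fillable (Lemmas~\ref{lem18},~\ref{lem19}); for the commutator relator $[x,y]$ with $x\in\Gamma_1$, $y\in\Gamma_2$ one genuinely needs a third factor $\Gamma_3$ to route through, and this is precisely where $n\geq 3$ enters---your proposal gives no indication of it. Similarly, for Part~(1) the $l^2$ bound comes from realising $\ker\theta$ as a fibre product over $\Z^r$ and invoking the quadratic Dehn function of $\Z^r$ (Lemma~\ref{lem9}); your bubble-sort reduces to rewriting $u_1\cdots u_n$ in $H$-generators of length $O(l)$, which you assert but do not justify, and which cannot be done factor-by-factor since $\ker\theta\cap\Gamma_i$ is typically infinitely generated. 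For Part~(4) the paper uses a splitting $\ker\theta\cong(\ker\theta\cap D_1)\rtimes D_2$ together with the estimate $\Area_{\mc{P}}[b,\sigma]\preceq\beta_1(\Delta(|\sigma|))$ for $\sigma$ of zero exponent-sum in the stable letters, not a direct ``absorption'' argument.
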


In Section~\ref{sec15} we focus on subgroups of direct products of limit groups, and use Theorem~\ref{thm20} to prove the following result.

\begin{letteredthm}
  Let $L_1, \ldots, L_n$ be limit groups and let $H$ be a subgroup of the direct product $D = L_1 \times \ldots \times L_n$.  Suppose that $H$ is of type $\mathrm{FP}_{m}(\Q)$, where $m = \max \{2, n-1 \}$.  Then $H$ is finitely presented and satisfies a polynomial isoperimetric inequality, and the distortion function $\Delta$ of $H$ in $D$ satisfies $\Delta(l) \preccurlyeq l^2$.
\end{letteredthm}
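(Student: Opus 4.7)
The plan is to reduce, in the main case $n \geq 3$, to a direct application of Theorem~\ref{thm20}, and to handle the small-$n$ cases separately. When $n = 1$, the subgroup $H \leq L_1$ is of type $\mathrm{FP}_2(\Q)$, hence finitely generated, and by the coherence of limit groups it is itself a finitely presented limit group; the conclusions then follow from the polynomial Dehn function of limit groups together with the fact that finitely presented subgroups of limit groups are undistorted. When $n = 2$, the limit-group analogue of Theorem~\ref{thm16} (proved in \cite{BridsonUP}) forces any $\mathrm{FP}_2(\Q)$ subgroup of $L_1 \times L_2$ to be virtually a direct product of at most two limit groups, from which all three conclusions follow at once.

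For $n \geq 3$ I invoke Kouchloukova's structure theorem cited in the paragraph before Question~\ref{q3}: $H$ contains a finite index subgroup $H'$ isomorphic to the kernel of a surjective homomorphism $\phi : L_1' \times \cdots \times L_k' \twoheadrightarrow A$, where each $L_i'$ is a limit group, $A$ is a finitely generated abelian group, and $\phi$ restricts surjectively to each factor. Since finite presentability, polynomial isoperimetric inequalities, and the $\preccurlyeq l^2$ distortion estimate in $D$ all pass between commensurable subgroups of $D$, it suffices to establish the conclusions for $H' \leq D' := L_1' \times \cdots \times L_k'$. By construction $H'$ is virtually-full and virtually-coabelian in $D'$ with corank $r = \dim_\Q(A \otimes \Q)$. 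Provided $k \geq 3$, Theorem~\ref{thm20}(1) gives the $\Delta(l) \preccurlyeq l^2$ distortion bound, and Theorem~\ref{thm20}(3) gives an isoperimetric function of the form $\rho^{2r}\alpha$ built from area-radius pairs $(\alpha_i, \rho_i)$ for the $L_i'$. The key additional input is that each $L_i'$ admits a \emph{polynomial} area-radius pair; this follows from the result of Alibegovi\'{c} and Dahmani that limit groups are toral relatively hyperbolic, which yields both polynomial Dehn function and polynomial filling radius. The degenerate cases $k \in \{1,2\}$ that may arise after Kouchloukova's reduction are handled exactly as for $n \in \{1,2\}$ above.

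I expect the main obstacle to lie in the structural input underpinning the reduction, rather than in any new geometric argument: all the serious filling estimates have been packaged into Theorem~\ref{thm20}, and the substance of this corollary is to verify that the $\mathrm{FP}_m(\Q)$ hypothesis on $H$, together with the known structure of subgroups of direct products of limit groups, is precisely strong enough to put $H$ into the virtually-coabelian form required by Theorem~\ref{thm20}, and that limit groups themselves provide polynomial area-radius pairs (not merely polynomial Dehn functions) as input.
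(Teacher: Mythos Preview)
Your approach is essentially the paper's: reduce via the Kochloukova/Bridson--Howie--Miller--Short structure theory to a virtually-full, virtually-coabelian subgroup of a product of limit groups, apply Theorem~\ref{thm20}, and feed in a polynomial area-radius pair for limit groups. The small-$n$ cases are handled slightly differently (the paper packages $n\leq 2$ into a single lemma using that $H$ is a virtual retract of $D$, rather than invoking coherence and the two-factor BHMS theorem separately), but your treatment there is fine.

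There is one genuine gap, in the distortion conclusion. You write that the $\preccurlyeq l^2$ bound ``passes between commensurable subgroups of $D$'' and that it therefore suffices to bound the distortion of $H'$ in $D' = L_1'\times\cdots\times L_k'$. But the introductory statement you invoke only furnishes an \emph{abstract} isomorphism $H'\cong\ker\phi$; it does not tell you that $D'$ sits inside $D$, nor how the two embeddings $H'\hookrightarrow D$ and $\ker\phi\hookrightarrow D'$ are related. Distortion is not an intrinsic invariant of $H$, so this step needs an argument. The paper supplies it in Proposition~\ref{prop16}: one projects out factors intersected trivially (using Lemma~\ref{lem25}), replaces each $L_i$ by the projection $p_i(H)$ (undistorted by Wilton's theorem), and splits off the abelian factors, tracking the distortion comparison at each stage. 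Only after this reduction does one have a subdirect product of non-abelian limit groups with nontrivial intersections, which is also what Kochloukova's theorem actually requires as input.

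A minor point: the paper gets the polynomial area-radius pair from the CAT(0) structure on limit groups (Alibegovi\'c--Bestvina), which yields a quadratic-linear pair directly via \cite[III.$\Gamma$.1.6]{brid99}. Extracting a radius bound from toral relative hyperbolicity is plausible but less standard to cite.
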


In particular this result applies to all finitely presented subgroups of a direct product of at most $3$ limit groups:

\begin{letteredcor}
  Let $H$ be a finitely presented subgroup of a direct product $D$ of at most $3$ limit groups.  Then $H$ satisfies a polynomial isoperimetric inequality and the distortion function $\Delta$ of $H$ in $D$ satisfies $\Delta(l) \preccurlyeq l^2$.
\end{letteredcor}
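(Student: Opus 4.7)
The plan is to derive the corollary as an immediate specialization of the preceding theorem. Write $D = L_1 \times \cdots \times L_n$ with $n \leq 3$, so that $H$ is a finitely presented subgroup of a direct product of $n$ limit groups. For each such $n$, a direct computation gives
\[
  m := \max\{2, n-1\} = 2.
\]
Consequently the hypothesis of the preceding theorem reduces to the single requirement that $H$ be of type $\mathrm{FP}_2(\Q)$.

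I would then verify this hypothesis using standard finiteness properties. Since $H$ is finitely presented it is of type $\mathrm{F}_2$, hence of type $\mathrm{FP}_2$ over $\Z$. Tensoring the first two stages of a finite-type partial resolution of $\Z$ over $\Z H$ with $\Q$ yields a finite-type partial resolution of $\Q$ over $\Q H$ of the same length, so $H$ is of type $\mathrm{FP}_2(\Q)$. Applying the preceding theorem now yields both a polynomial isoperimetric function for $H$ and the bound $\Delta(l) \preccurlyeq l^2$ on its distortion in $D$.

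The substantive content of the corollary is really encoded in the preceding theorem, so there is no genuine obstacle here beyond the two observations above. In particular, I would not expect any difficulty with the low-factor edge cases $n=1,2$: these are covered uniformly by the theorem, and for $n=1$ one simply recovers the statement that a finitely presented subgroup of a single limit group admits a polynomial isoperimetric inequality and is at most quadratically distorted. The only step one might worry about is whether $\mathrm{FP}_2$ over $\Z$ really implies $\mathrm{FP}_2(\Q)$, but this is routine homological algebra, since $\Q$ is a flat $\Z$-module and tensoring with $\Q$ preserves finite generation of the modules in a partial free resolution.
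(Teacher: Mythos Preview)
Your argument is correct and matches the paper's own reasoning: the paper states the corollary immediately after Theorem~\ref{thm19} without a separate proof, treating it as the specialization $m=\max\{2,n-1\}=2$ for $n\le 3$ together with the standard implication that finitely presented $\Rightarrow \mathrm{F}_2 \Rightarrow \mathrm{FP}_2(\Q)$. There is nothing to add.
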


These results provide a partial solution to Questions~\ref{q1} and \ref{q2}.

In Section~\ref{sec10} we focus on a class of subdirect products of free groups which have particularly regular structure.  This class includes the Stallings-Bieri groups, and also contains what are perhaps the next most simple groups in $\SDP(\mc{F})$ which are not already well understood.

\begin{letteredthm}
  Let $F_1, F_2, F_3$ be rank $2$ free groups and, for each $i$, let $\theta_i : F_i \rightarrow \Z^2$ be the abelianisation homomorphism.  Define $\theta : F_1 \times F_2 \times F_3 \rightarrow \Z^2$ to be the homomorphism $\theta_1 + \theta_2 + \theta_3$.  Then the kernel of $\theta$ is finitely presented and has Dehn function $\delta$ satisfying $\delta(l) \succeq l^3$.
\end{letteredthm}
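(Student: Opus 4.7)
\emph{Proof proposal.} The statement has two parts: that $K := \ker\theta$ is finitely presented, and that $\delta_K(l) \succeq l^3$. The finite presentation is immediate from Theorem~A(2): since $\theta$ factors through the abelianisation of $D$, we have $[D,D] \subseteq K$ and $D/K \cong \Z^2$, so $K$ is a virtually-full, virtually-coabelian subgroup of corank $2$ in the direct product of $n = 3 \geq 3$ finitely presented groups.

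For the lower bound the plan is to pass to the $2$-step nilpotentisation and extract a cubic Dehn function via a Heisenberg retract.  Since each $F_i$ is free of rank $2$ we have $F_i/\gamma_3(F_i) \cong \mathcal{H}$, the integer Heisenberg group, and hence $D/\gamma_3(D) \cong \mathcal{H}^3$.  The map $\theta$ factors through this nilpotentisation to a homomorphism $\bar\theta : \mathcal{H}^3 \to \Z^2$; let $Q := \ker\bar\theta$ be the image of $K$ in $\mathcal{H}^3$.  Direct computation with the elements $s_i := a_{i+1}a_1^{-1}$ and $t_i := b_{i+1}b_1^{-1}$ (for $i = 1,2$), which project to generators of $Q$, shows that $Q$ is a finitely presented $2$-step nilpotent group with $Q^{\mathrm{ab}} \cong \Z^4$ and $[Q,Q] \cong \Z^3$.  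Moreover, the subgroup $\langle \bar{s_1}, \bar{t_1} \rangle$ is isomorphic to $\mathcal{H}$ and is a retract of $Q$: a suitable homomorphism $r : Q \to \mathcal{H}$ is defined by sending $\bar{s_2}, \bar{t_2}$ to the identity and verifying that the resulting assignment on $[Q,Q]$ is $\Z$-linear.  Hence $\delta_Q(l) \succeq \delta_{\mathcal{H}}(l) \asymp l^3$ by Gromov's theorem on Dehn functions of nilpotent groups.

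The principal difficulty is transferring this cubic lower bound from $Q$ back to $K$.  For each hard word $w_n \in Q$ of length $O(n)$ with $\Area_Q(w_n) = \Omega(n^3)$, read $w_n$ as a word in $K$-generators: it represents some element $k_n \in \ker(K \twoheadrightarrow Q) = K \cap \gamma_3(D) = \gamma_3(D)$.  Since $\gamma_3(D)$ is normally generated in $D$ by the finite set of basic triple commutators in the generators, and since $K$ is normal in $D$, one can control the $K$-length of a word $c_n$ representing $k_n^{-1}$ as a linear function of $n$; the modified word $w_n \cdot c_n$ is then null-homotopic in $K$ of length $O(n)$, and
\[
\Area_K(w_n \cdot c_n) \geq \Area_Q(w_n \cdot c_n) \geq \Area_Q(w_n) - \Area_Q(c_n) = \Omega(n^3),
\]
proving $\delta_K(l) \succeq l^3$.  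An alternative, more hands-on route is to pull a non-trivial class in $H^2(Q;\Z)$ that detects cubic area (via the retract $Q \twoheadrightarrow \mathcal{H}$) back to a $\Z$-valued $2$-cocycle on $K$ and to verify directly that this cocycle attains value $\Omega(n^3)$ on van Kampen diagrams for an explicit family of null-homotopic words such as $[s_1^n, t_2^n] \cdot [s_2^n, t_1^n]^{-1}$, which one computes equals the identity in $D$ because both commutators simplify to $[a_1^{-n}, b_1^{-n}] \in F_1$.
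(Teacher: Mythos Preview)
Your finite-presentability argument is fine; the difficulty is entirely in the cubic lower bound, and there the proposal has a genuine gap in the transfer step from $Q$ back to $K$.

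In your first approach you assert that the correction word $c_n$ can be chosen with $|c_n|=O(n)$; this is not justified (the element $k_n$ lies in $\gamma_3(D)$, but expressing a specific element of $\gamma_3(D)$ as a short product of conjugates of basic triple commutators requires an argument you do not give).  More seriously, even if $|c_n|=O(n)$, your inequality $\Area_Q(w_nc_n)\ge \Area_Q(w_n)-\Area_Q(c_n)$ is useless unless $\Area_Q(c_n)=o(n^3)$.  Since $Q$ is $2$-step nilpotent its Dehn function is itself cubic, so a generic word of length $O(n)$ in $Q$ may well have area $\Theta(n^3)$; nothing you have said prevents $c_n$ from absorbing all of the area of $w_n$.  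In your second approach the explicit word $[s_1^n,t_2^n][s_2^n,t_1^n]^{-1}$ is indeed null-homotopic in $K$, but under the retraction $Q\to\mathcal H$ that kills $\bar s_2,\bar t_2$ it maps to $[s_1^n,1][1,t_1^n]^{-1}=1$, so the pulled-back Heisenberg cocycle vanishes on it and gives no lower bound.

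The paper's proof proceeds along an entirely different line.  It first establishes a splitting $K^3_2(2)\cong L_1\ast_M L_2$ with $L_i\cong K^2_2(1)$ and $M=K^2_2(2)$, and proves a general lower-bound theorem for amalgamated products (using the Bass--Serre tree) which says that $\Area_{\mathcal P}\bigl([w,(uv)^n]\bigr)\ge 2n\,d_{\mathcal B}(1,h)$ whenever $w\in\mathcal A_1^{\pm*}$ represents $h\in M$ commuting with $u\in L_1\smallsetminus M$ and $v\in L_2\smallsetminus M$.  Applying this with $h_l=[x_1^l,y_1^l]$ and suitable $u,v$ reduces the problem to a distortion estimate: $d_{\mathcal B}(1,h_l)\ge l^2$ in $M=K^2_2(2)$.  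That estimate is proved by a short combinatorial trick: any word in the three generators $x_1x_2^{-1},\,y_1y_2^{-1},\,[x_1,y_1]$ representing $h_l$ must use the commutator generator at least $l^2$ times, because killing it produces a null-$\langle x,y\mid [x,y]\rangle$-sequence for $[x_1^l,y_1^l]$, and that word has area exactly $l^2$ in $\Z^2$.  This amalgam/distortion route avoids the transfer problem altogether.
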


This provides the first known example of a group in $\SDP(\mc{F})$ that has Dehn function growing faster than that of the ambient direct product.  We also derive an explicit finite presentation for this group.

Finally, in Section~\ref{sec3}, we prove the following result, which gives a complete solution to Question~\ref{q3}.

\begin{letteredthm}
  Every finitely presented Bestvina-Brady group has $l^4$ as an isoperimetric function.
\end{letteredthm}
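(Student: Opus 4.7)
The plan is to exploit the quadratic Dehn function of the ambient right-angled Artin group and to push each $2$-cell of a filling disc onto the zero level set of the height homomorphism at a quadratic cost, giving a total of $l^2 \cdot l^2 = l^4$.

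Let $L$ be a finite flag complex, $A_L$ its right-angled Artin group, $\phi : A_L \to \Z$ the homomorphism sending every vertex generator to $1$, and $H_L := \ker \phi$ the associated Bestvina--Brady group. By Bestvina and Brady, finite presentability of $H_L$ is equivalent to simple connectivity of $L$, which I would therefore assume. I would fix a finite presentation $\mc{P}$ of $H_L$ with generators of the form $vw^{-1}$ ranging over the edges $\{v,w\}$ of $L$ and with relators derived from the $2$-simplices of $L$; the simple connectivity of $L$ is precisely what ensures that the triangle relators suffice.

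Given a null-homotopic word $\omega$ of length $n$ in $\mc{P}$, I would rewrite it as a length-$2n$ word $\widetilde\omega$ in the vertex generators of $A_L$ that is trivial in $A_L$. Since the Salvetti complex of $A_L$ is non-positively curved cubical, $A_L$ has quadratic Dehn function, so $\widetilde\omega$ bounds a van Kampen diagram $\Delta$ over the Salvetti presentation with at most $Cn^2$ commutator square cells. Choosing $\Delta$ to realise a minimal-area filling disc in the CAT$(0)$ universal cover of the Salvetti complex, one may arrange that every vertex of $\Delta$ has $\phi$-height of absolute value $O(n)$.

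The heart of the argument is a cell-by-cell conversion of $\Delta$ into a diagram over $\mc{P}$. Each square cell corresponds to a commutator $[v,w]=1$ based at some vertex of height $h$ with $|h| = O(n)$. I would establish a local lemma: such a cell can be replaced by a sub-diagram over $\mc{P}$ of area at most $C'|h|^2$. The idea is to conjugate $[v,w]$ by a staircase path of length $|h|$ descending to the zero level set, obtaining a loop in $H_L$ that is then filled by triangle relators via an induction on $|h|$ which peels off one rung at a time, invoking the simple connectivity of $L$ at each step. Summing this local cost over the $O(n^2)$ square cells of $\Delta$ yields a $\mc{P}$-filling of $\omega$ of area $O(n^4)$.

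The main obstacle is the local lemma. The simple connectivity of $L$ is used in an essential way: without it the triangle relators do not fill loops in the zero level set, and one cannot even obtain a polynomial bound. The inductive descent must be organised so that each rung of the staircase contributes only a linear amount of new area (in its current depth), so that the total telescopes to the asserted $O(|h|^2)$ bound per cell rather than to something cubic or worse. Once the lemma is in hand, the global $l^4$ isoperimetric bound is immediate from the accounting above.
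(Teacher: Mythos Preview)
Your approach is correct and shares the paper's core strategy: exploit the quadratic-linear area-radius pair of the CAT(0) group $A_L$, then show that each $2$-cell at height $h$ converts to a sub-filling over the Dicks--Leary presentation at cost $O(h^2)$, giving $O(n^2)\cdot O(n^2)=O(n^4)$ in total. The paper organises this via general machinery rather than directly on Salvetti diagrams: it takes a positive-normal-form presentation $\langle \Edge(\Delta), t \mid \mc{R}_\Delta, \mc{S}_\Delta\rangle$ for $A_\Delta$ whose non-$t$ generators already lie in $H_\Delta$, applies its Theorem~\ref{thm1} on cyclic extensions to turn the area-radius pair into an area-penetration pair for an explicit infinite indexed presentation $\mc{P}_H^\infty$, and then computes (Lemmas~\ref{lem1}--\ref{lem7}) that each indexed relator $\Phi_k(r)$ has $\mc{P}_H$-area $O(k^2)$; the quartic bound then drops out of Proposition~\ref{prop1}. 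Your route through commutator squares must attach height-$|h|$ tails consistently to every vertex of the Salvetti diagram (since vertex generators are not in $H_L$) and verify that adjacent converted cells glue; the paper's choice of presentation sidesteps this bookkeeping at the price of introducing the extra $\mc{S}_\Delta$-relators. One small caution: a \emph{minimal-area} CAT(0) filling need not by itself have linear radius---you need the simultaneous quadratic-linear area-radius pair, which CAT(0) does provide.
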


\section{Notation} \label{sec13}

Given a set $\mc{A}$, write $\mc{A}^{-1}$ for the set $\{ a^{-1} : a \in \mc{A} \}$ of formal inverses to the elements of $\mc{A}$ and write $\mc{A}^{\pm1}$ for the set $\mc{A} \cup \mc{A}^{-1}$.  Write $\fm{A}$ for the free monoid on $\mc{A}^{\pm1}$ and $\Fr(\mc{A})$ for the free group on $\mc{A}$.  We call the elements of $\mc{A}^{\pm1}$ \emph{letters} and the elements of $\fm{A}$ \emph{words}.  Given words $w_1, w_2 \in \fm{A}$, write $w_1 \equiv w_2$ if $w_1$ and $w_2$ are equal as elements of $\fm{A}$ and $w_1 \FreeEq w_2$ if $w_1$ and $w_2$ are equal as elements of $\Fr(\mc{A})$.  Write $\emptyset$ for the empty word.

Given a word $w = a_1 \ldots a_n \in \fm{A}$, write $|w|$ for the length $n$ of $w$ and $\|w\|$ for the length of the free reduction of $w$, \textit{i.e.} the length of the unique freely reduced word $w'$ with $w \FreeEq w'$.  Write $w(i)$ for the $i^{\text{th}}$ letter $a_i$ of $w$ and $w[i]$ for the $i^{\text{th}}$ prefix $a_1 \ldots a_i$ of $w$.  If $i > |w|$ then set $w[i] \equiv w$.  Write $w^{-1}$ for the inverse word $a_n^{-1} \ldots a_1^{-1}$.  Given a set of words $\mc{S} \subseteq \fm{A}$, write $\mc{S}^{-1}$ for the set of inverses $\{s^{-1} : s \in \mc{S} \}$ and $\mc{S}^{\pm1}$ for the set $\mc{S} \cup \mc{S}^{-1}$.  Given words $w_1, \ldots, w_n \in \fm{A}$, write $\prod_{j=1}^n w_i$ for the concatenated word $w_1 \ldots w_n$.  Given letters $a_1, a_2 \in \mc{A}^{\pm1}$, write $[a_1, a_2]$ for the word $a_1 a_2 a_1^{-1} a_2^{-1} \in \fm{A}$, write $a_1^{a_2}$ for the word $a_2 a_1 a_2^{-1} \in \fm{A}$, and write $a_1^{-a_2}$ as shorthand for $\left(a_1^{a_2}\right)^{-1} \equiv a_2 a_1^{-1} a_2^{-1}$.  If $\mc{A}$ is a generating set for a group $G$, then write $d_\mc{A}$ for the word metric on $G$ with respect to $\mc{A}$.

As well as considering words as being elements of the free monoid on an alphabet, we sometimes, abusing notation, take the viewpoint that words are maps: we consider a word as being a function which assigns to an ordered set $\mc{S}$ of fixed, finite cardinality an element of $\fm{S}$.  For example, if $\mc{S} = \{x, y\}$ and $\mc{S}' = \{x', y' \}$, and $w(\mc{S}) = xyx$, then $w(\mc{S}') = x'y'x'$.  More generally, we will also sometimes consider words which take as input an $n$-tuple of finite ordered sets $\mc{S}_1, \ldots, \mc{S}_n$ and output a word in $(\mc{S}_1 \cup \ldots \cup \mc{S}_n)^{\pm\bast}$.  In this context, by, for example, $w(\mc{S}_1, \emptyset)$ we mean the image of $w(\mc{S}_1, \mc{S}_2)$ under the projection map $(\mc{S}_1 \cup \mc{S}_2)^{\pm\bast} \rightarrow \mc{S}_1^{\pm\bast}$.  It will always be clear from context whether we are using the term `word' in the sense of being a map $w$ or in the more usual sense of being an evaluation of $w$ on a specific set.

\section{Filling functions} \label{sec12}

Throughout this section $\mc{P} = \langle \mc{X} \, | \, \mc{R} \rangle$ is a group presentation with $\mc{X}$ finite.   We introduce the notions of $\mc{P}$-expressions, $\mc{P}$-sequences, $\mc{P}$-pictures and $\mc{P}$-van Kampen diagrams which provide means for representing null-homotopies of words in $\fm{X}$.  This allow us to define various \emph{filling invariants}, including Dehn functions, isoperimetric functions and area-radius pairs.  Aside from some of the terminology, all of the definitions given here are standard, except that we do not make the usual assumption that $\mc{R}$ is finite.  For a more thorough introduction to these ideas, see, for example, \cite{brid02}, \cite{rile05}, \cite{Fenn1} or \cite{Pride1}.

\subsection{Representing null-homotopies}

\begin{defn}
  A word $w \in \fm{X}$ is said to be \emph{null-homotopic} over $\mc{P}$ if it represents the identity in the group presented by $\mc{P}$.
\end{defn}

\begin{defn}[$\mc{P}$-expressions]
  A \emph{$\mc{P}$-expression} is a finite sequence $\mc{E} = (x_i, r_i)_{i=1}^m$ of elements of $\fm{X} \times \mc{R}^{\pm1}$.  The \emph{area} of $\mc{E}$, written $\Area(\mc{E})$, is defined to be the integer $m$.  The \emph{radius} of $\mc{E}$, written $\Rad(\mc{E})$, is defined to be $\max \{ |x_i| \, : \, 1 \leq i \leq m \}$.  We allow the empty sequence which is defined to have both zero area and zero radius.  We write $\partial \mc{E}$ for the word $\prod_{i=1}^m x_i r_i x_i^{-1}$.  If $\mc{E}_1$ and $\mc{E}_2$ are $\mc{P}$-expressions then we write $\mc{E}_1 \mc{E}_2$ for the $\mc{P}$-expression given by concatenating the two sequences.  A $\mc{P}$-expression for a word $w \in \fm{X}$ is a $\mc{P}$-expression $\mc{E}$ with $\partial \mc{E}$ freely equal to $w$.
\end{defn}

\begin{defn}[$\mc{P}$-sequences]
  A \emph{$\mc{P}$-sequence} is a sequence $\Sigma = (\sigma_i)_{i=0}^m$ of words in $\fm{X}$ where, for each $i$, the word $\sigma_{i+1}$ is obtained from $\sigma_i$ in one of the following ways: \begin{itemize}
  \item  \textbf{Free contraction:} $\sigma_i \equiv u x x^{-1} v$ and $\sigma_{i+1} \equiv uv$, where $u, v \in \fm{X}$ and $x \in \mc{X}^{\pm1}$.

  \item \textbf{Free expansion:} $\sigma_i \equiv u v$ and $\sigma_{i+1} \equiv u x x^{-1} v$, where $u, v \in \fm{X}$ and $x \in \mc{X}^{\pm1}$.

  \item \textbf{Application-of-a-relator move:} $\sigma_i \equiv u r v$ and $\sigma_{i+1} \equiv u s v$, where $u, v \in \fm{X}$ and $rs^{-1}$ is a cyclic conjugate of a word in $\mc{R}^{\pm1}$.
\end{itemize}  Such a $\mc{P}$-sequence is said to convert the word $\sigma_0$ to the word $\sigma_m$.  A \emph{null $\mc{P}$-sequence} for a word $w \in \fm{X}$ is a $\mc{P}$-sequence converting $w$ to the empty word $\emptyset$.  The \emph{area} of a $\mc{P}$-sequence $\Sigma = (\sigma_i)_{i=0}^m$, written $\Area(\Sigma)$, is defined to be the number of $i$ for which the transition from $\sigma_i$ to $\sigma_{i+1}$ is an application-of-a-relator move.  If $\Sigma_1 = (\sigma_i^{(1)})_{i=0}^{m_1}$ and $\Sigma_2 = (\sigma_i^{(2)})_{i=0}^{m_2}$ are $\mc{P}$-sequences with $\sigma_{m_1}^{(1)} \equiv \sigma_0^{(2)}$ then we write $\Sigma_1 \Sigma_2$ for the $\mc{P}$-sequence $(\sigma_0^{(1)}, \ldots, \sigma_{m_1}^{(1)}, \sigma_1^{(2)}, \ldots, \sigma_{m_2}^{(2)})$.  Note that $\Area(\Sigma_1 \Sigma_2) = \Area(\Sigma_1) + \Area(\Sigma_2)$.
\end{defn}

\begin{defn}[$\mc{P}$-pictures]
  A $\mc{P}$-picture $\mathbb{P}$ consists of a closed $2$-disc $D$ (the \emph{ambient disc}); a collection of closed $2$-discs $D_1, \ldots, D_m$ (the \emph{relator discs}) embedded pairwise disjointly in the interior of $D$; and a collection of compact, connected, normally orientated $1$-manifolds $\alpha_1, \ldots, \alpha_l$ (the \emph{arcs}) embedded pairwise disjointly in $D \smallsetminus \cup_{i=1}^m \Int D_i$.  The ambient disc $D$ is equipped with a basepoint $b \in \partial D$, and each relator disc $D_i$ is equipped with a basepoint $b_i \in \partial D_i$.  We require that each arc is disjoint from all basepoints, and that the interior of each arc is disjoint from $\partial D$ and disjoint from each $D_i$.  Each relator disc is labelled by an element of $\mc{R}^{\pm1}$ and each arc is labelled by an element of $\mc{X}$.

  Reading anticlockwise from its basepoint around the boundary of a relator disc or the ambient disc defines a word in $\fm{X}$, where we understand that if we pass an arc labelled $x$ in the direction of its normal orientation then we read $x$, and if we pass the arc in the opposite direction to its normal orientation we read $x^{-1}$.  We require that the word associated to each relator disc in this way is precisely the element of $\mc{R}^{\pm1}$ labelling the disc.

  The \emph{area} of $\mathbb{P}$, written $\Area \mathbb{P}$, is defined to be the number of relator discs.  Define the \emph{background} of $\mathbb{P}$ to be $$\Back \mathbb{P} := D \smallsetminus \left( (\cup_{i=1}^m D_i) \bigcup (\cup_{i=1}^l \alpha_l) \right).$$ By a \emph{complementary region} of $\mathbb{P}$ we mean a connected component of $\Back \mathbb{P}$.  Given points $p,q \in \Back \mathbb{P}$ a \emph{transverse path} from $p$ to $q$ is a path in $D \smallsetminus \cup_{i=1}^m D_i$ with initial point $p$ and terminal point $q$ which intersects each arc $\alpha_i$ transversely and only finitely many times.  Define the \emph{intersection number} of such a path to be the number of times it intersects $\cup_{i=1}^l \alpha_i$.  Given a complementary region $C$, define $d(b, C)$ to be the minimum intersection number over all transverse paths from $b$ to a point in $C$.  Define the \emph{radius} of $\mathbb{P}$, written $\Rad \mathbb{P}$, to be the maximum value of $d(b, C)$ over all complementary regions $C$.

  The \emph{boundary label} of $\mc{P}$ is defined to be the word in $\fm{X}$ given by reading anticlockwise around $\partial D$ from the basepoint $b$.  A $\mc{P}$-picture $\mathbb{P}$ for a word $w \in \fm{X}$ is a $\mc{P}$-picture with boundary label $w$.
\end{defn}

In order to give our fourth, and final, means of representing null-homotopies, namely van Kampen diagrams, we require the notion of a \emph{combinatorial CW-complex}.

\begin{defn}
  A cellular map between CW-complexes is said to be \emph{combinatorial} if its restriction to each open cell of the   domain complex is a homeomorphism onto some open cell of the   codomain complex.

  The notion of a CW-complex being \emph{combinatorial} is defined by   recursion on dimension.  By definition every $0$-dimensional   CW-complex is combinatorial.  An $n$-dimensional CW-complex $X$ is   combinatorial if $X^{(n-1)}$ is combinatorial and for each $n$-cell   $e_i^n$ the attaching map $\theta_i^n : \mathbb{S}^{n-1} \rightarrow   X^{(n-1)}$ is combinatorial for some combinatorial CW-complex structure   on $\mathbb{S}^{n-1}$. \end{defn}

\begin{defn}[$\mc{P}$-van Kampen diagrams]
  A \emph{singular disc diagram} $\Delta$ is a finite, planar,
  contractible combinatorial CW-complex with a specified base vertex
  $\star$ in its boundary.  The \emph{area} of $\Delta$, written
  $\Area(\Delta)$, is defined to be the number of $2$-cells of which
  $\Delta$ is composed.  The \emph{boundary cycle} of $\Delta$ is
  the edge loop in $\Delta$ which starts at $\star$ and traverses
  $\partial \Delta$ in the anticlockwise direction.  The
  interior of $\Delta$ consists of a number of disjoint open
  $2$-discs, the closures of which are called the \emph{disc components} of $\Delta$.

  Each $1$-cell of $\Delta$ has associated to it two directed
  edges $\epsilon_1$ and $\epsilon_2$, with $\epsilon_1^{-1} =
  \epsilon_2$.  Let $\DEdge(\Delta)$ be the set of directed edges
  of $\Delta$.  A \emph{labelling} of $\Delta$ over a set $\mc{S}$ is
  a map $\lambda : \DEdge(\Delta) \rightarrow \mc{S}^{\pm1}$ such that
  $\lambda(\epsilon^{-1}) = \lambda(\epsilon)^{-1}$.  This induces a
  map from the set of edge paths in $\Delta$ to $\fm{S}$.
  The \emph{boundary label} of $\Delta$ is the word in $\fm{S}$
  associated to the boundary cycle.

  A \emph{$\mc{P}$-van Kampen diagram} for a word $w
  \in \fm{X}$ is a singular disc diagram $\Delta$ labelled over $\mc{X}$ with
  boundary label $w$ and such that for each $2$-cell $c$ of $\Delta$
  the anticlockwise edge loop given by the attaching map of $c$,
  starting at some vertex in $\partial c$, is labelled by a word in
  $\mc{R}^{\pm1}$.
\end{defn}

\begin{defn}[Cayley complexes]
  The \emph{presentation $2$-complex} of $\mc{P}$ is a combinatorial $2$-complex consisting of a single $0$-cell; orientated $1$-cells in bijective correspondence with $\mc{X}$; and $2$-cells in bijective correspondence with $\mc{R}$.  The $2$-cell associated to a relator $r \in \mc{R}$ has $|r|$ edges and is attached by identifying its boundary circuit with the edge path along which the word $r$ is read.

  The \emph{Cayley $2$-complex} $Cay^2(\mc{P})$ of $\mc{P}$ is defined to be the universal cover of the presentation $2$-complex.  The edges of $Cay^2(\mc{P})$ inherit labels and orientations from the presentation $2$-complex.  If $G$ ia the group presented by $P$ then, after choosing a basepoint, the $0$-skeleton of $Cay^2(\mc{P})$ is identified with $G$ and there is a natural left action of $G$ on $Cay^2(\mc{P})$.  The \emph{Cayley graph} $Cay^1(G, \mc{X})$ of $G$ with respect to $\mc{X}$ is defined to be the $1$-skeleton of $G$.
\end{defn}

If $\Delta$ is $\mc{P}$-van Kampen diagram then there is a unique combinatorial basepoint-preserving and label-preserving map $\Delta \rightarrow Cay^2(\mc{P})$.

\subsection{Dehn functions and the areas of words}

\begin{defn}[van Kampen's Lemma]
  The following are equivalent for a word $w \in \fm{X}$: \begin{itemize}
    \item $w$ is null-homotopic;
    \item there exists a $\mc{P}$-expression for $w$;
    \item there exists a null $\mc{P}$-sequence for $w$;
    \item there exists a $\mc{P}$-picture for $w$;
    \item there exists a $\mc{P}$-van Kampen diagram for $w$.
  \end{itemize}  Furthermore, if $w$ is null-homotopic, then the following integers are equal: \begin{itemize}
    \item $\min \{ \Area(\mc{E}) :  \text{$\mc{E}$ a $\mc{P}$-expression for $w$} \}$;
    \item $\min \{ \Area(\Sigma) :  \text{$\Sigma$ a null-$\mc{P}$-sequence for $w$} \}$;
    \item $\min \{ \Area(\mathbb{P}) :  \text{$\mathbb{P}$ a $\mc{P}$-picture for $w$} \}$;
    \item $\min \{ \Area(\Delta) :  \text{$\Delta$ a $\mc{P}$-van Kampen diagram for $w$} \}$;
  \end{itemize}
   and these all serve to define the \emph{area} of $w$, written $\Area(w)$.  If we wish to emphasise which presentation we are working with we talk of the \emph{$\mc{P}$-area} of $w$, written $\Area_\mc{P}(w)$.
\end{defn}

\begin{defn}
  The \emph{Dehn function} of $\mc{P}$ is defined to be the function $\delta_\mc{P} : \N \rightarrow \N$ given by $$\delta_\mc{P}(l) = \max \{ \Area(w) \, : \, w \in \fm{X} \text{ is null-homotopic and } |w| \leq l \}.$$
\end{defn}

Different finite presentations of the same group may have different Dehn functions, but, in a way which we now make precise, the Dehn functions will have the same asymptotic behaviour.

\begin{defn}
  Let $f, g$ be functions $\N \rightarrow \N$.  Write $f \preceq g $ if there exists a constant $C \in \N$ so that $f(l) \leq C g(Cl + C) + Cl + C$.  Write $f \simeq g$ if $f \preceq g$ and $g \preceq f$.
\end{defn}

The following lemma is standard, see for example \cite{brid02}.

\begin{lem} \label{lem27}
  Let $Q$ be a finite presentation presenting the same group as $\mc{P}$.  Then $\delta_\mc{P} \simeq \delta_\mc{Q}$.
\end{lem}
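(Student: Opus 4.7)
The plan is the standard change-of-presentation argument. Write $\mc{P} = \langle \mc{X} \, | \, \mc{R} \rangle$ and $\mc{Q} = \langle \mc{Y} \, | \, \mc{S} \rangle$, both finite. Since the two presentations define the same group $G$, I choose for each $x \in \mc{X}$ a word $\phi(x) \in \fm{Y}$ representing the same element of $G$ as $x$, and symmetrically a word $\psi(y) \in \fm{X}$ for each $y \in \mc{Y}$. These extend by concatenation to length-additive maps $\fm{X} \to \fm{Y}$ and $\fm{Y} \to \fm{X}$. Because $\mc{X}, \mc{Y}, \mc{R}, \mc{S}$ are all finite, there exists a single constant $C$ with $|\phi(x)|, |\psi(y)| \leq C$ for every generator, $\Area_\mc{P}(\psi(s)) \leq C$ for every $s \in \mc{S}$, $\Area_\mc{Q}(\phi(r)) \leq C$ for every $r \in \mc{R}$, and $\Area_\mc{P}(x \cdot \psi(\phi(x))^{-1}) \leq C$ for every $x \in \mc{X}$; each of these areas is finite because the words in question represent the identity of $G$ and are hence null-homotopic over their respective presentations.

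Given a null-homotopic $w \in \fm{X}$ with $|w| = l$, I assemble a $\mc{P}$-expression for $w$ from two ingredients. First, $\phi(w) \in \fm{Y}$ has length at most $Cl$ and is null-homotopic over $\mc{Q}$, so admits a $\mc{Q}$-expression $\mc{E}$ of area at most $\delta_\mc{Q}(Cl)$. Applying $\psi$ letter-wise to each conjugator and each relator occurring in $\mc{E}$, and then replacing each resulting $\psi(s)$ by a fixed $\mc{P}$-expression of area at most $C$, produces a $\mc{P}$-expression for $\psi(\phi(w))$ of $\mc{P}$-area at most $C \cdot \delta_\mc{Q}(Cl)$. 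Second, writing $w \equiv x_1 \ldots x_l$ and setting $u_i \equiv \psi(\phi(x_i))$, the telescoping identity $w \cdot \psi(\phi(w))^{-1} \FreeEq \prod_{i=1}^{l} (u_1 \ldots u_{i-1})(x_i u_i^{-1})(u_1 \ldots u_{i-1})^{-1}$ exhibits $w \cdot \psi(\phi(w))^{-1}$ as a product of $l$ conjugates of words of $\mc{P}$-area at most $C$, yielding a $\mc{P}$-expression of area at most $Cl$. Concatenating the two expressions gives $\Area_\mc{P}(w) \leq C \cdot \delta_\mc{Q}(Cl) + Cl$.

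This is precisely the inequality $\delta_\mc{P} \preceq \delta_\mc{Q}$ after a routine adjustment of the constant to fit the form $C g(Cl+C) + Cl + C$ required by the definition of $\preceq$. The reverse inequality follows by interchanging the roles of $\mc{P}$ and $\mc{Q}$, so $\delta_\mc{P} \simeq \delta_\mc{Q}$. I do not anticipate any genuine obstacle: the finiteness of $\mc{X}, \mc{Y}, \mc{R}, \mc{S}$ is exactly what makes $C$ finite, and the telescoping identity used in the interpolation is purely formal. The only mildly delicate point is verifying that applying $\psi$ to a $\mc{Q}$-expression produces a well-defined $\mc{P}$-expression of the claimed area, but this is immediate from the definition of $\Area$ via $\mc{P}$-expressions and the prefix-concatenation of conjugators.
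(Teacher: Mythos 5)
Your argument is correct and is precisely the standard change-of-presentation argument that the paper itself does not reproduce, deferring instead to the literature (it cites \cite{brid02} and states the lemma without proof). The two ingredients you use --- substituting $\mc{P}$-expressions for the images $\psi(s)$ of the $\mc{Q}$-relators inside a $\mc{Q}$-expression for $\phi(w)$, and the telescoping interpolation between $w$ and $\psi(\phi(w))$ --- are exactly the standard ones, and the finiteness of $\mc{X}$, $\mc{Y}$, $\mc{R}$, $\mc{S}$ is used correctly to obtain the uniform constant $C$.
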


Thus, up to $\simeq$-equivalence, it makes sense to talk about \emph{the} Dehn function of a finitely presented group.  We emphasise that although we will sometimes make use of infinite presentations as calculatory tools, the Dehn function of a finitely presented group always refers to the Dehn function of some finite presentation of the group.

\begin{defn}
  Let $G$ be a finitely presented group.  Then a function $\alpha: \N \rightarrow \N$ is said to be an \emph{isoperimetric function} for $G$ if $\delta_\mc{P} \preceq \alpha$ for some (and hence any) choice of finite presentation $\mc{P}$ for $G$.  We say that $G$ satisfies a \emph{polynomial isoperimetric inequality} if it has a polynomial as an isoperimetric function.
\end{defn}

\subsection{$\mc{P}$-schemes}

In this thesis we will frequently present bounds on the areas of words, and we wish to convey to the reader how these bounds have been derived.  For reasons of space and readability we wish to avoid having to display all of the data required to define a particular null-homotopy.  Instead we make use of the notion of null $\mc{P}$-schemes, which are essentially skeletons of null-homotopies and which provide enough detail to allow the reader to reconstruct a particular null-homotopy and hence a bound on the area of the word in question.

\begin{defn}
  A \emph{$\mc{P}$-scheme} consists of a finite sequence of words $(\sigma_i)_{i=1}^m$ in $\fm{X}$ and a finite sequence of integers $(\alpha_i)_{i=1}^{m-1}$ such that, for each $i$, the word $\sigma_i (\sigma_{i+1})^{-1}$ is null-homotopic over $\mc{P}$ with area at most $\alpha_i$.  Such a $\mc{P}$-scheme is said to convert the word $\sigma_1$ to the word $\sigma_m$.  We frequently display $\mc{P}$-schemes in a table, with the $i^\text{th}$ row containing the word $\sigma_i$ and the number $\alpha_i$.  Since there is a disparity between the number of terms in the sequences $(\sigma_i)$ and $(\alpha_i)$, the last row of such a table will consist of just the word $\sigma_m$.

A \emph{null $\mc{P}$-scheme} for a word $w \in \fm{X}$ is a $\mc{P}$-scheme converting $w$ to the empty word.  When displaying a null $\mc{P}$-scheme in a table we omit the final row, since this does not contain any non-trivial data.  Note that if there exists a null $\mc{P}$-scheme for a word $w$, then $w$ is null-homotopic over $\mc{P}$ with area at most the sum of the integers $\alpha_i$.
\end{defn}

As an example, suppose that $\mc{P}$ is the presentation $\langle x, y \, | \, [x, y] \rangle$ of a rank $2$ free abelian group.  The following null $\mc{P}$-scheme demonstrates that the word $x^2 y x^{-1} y x y x^{-2} y^{-3}$ is null-homotopic over $\mc{P}$ with area at most $5$.

\medskip

  \begin{center}
    \begin{tabular}{ccc}
      \hline
      \\[-10pt]
      $j$ & $\sigma_j$ & Area \\
      \\[-10pt]
      \hline
      \\[-10pt]
      $1$ & $x^2 y x^{-1} y x y x^{-2} y^{-3}$ & $2$ \\
      $2$ & $x^2 y x^{-1} y x^{-1} y^{-2}$ & $1$ \\
      $3$ & $x^2 y x^{-2} y^{-1}$ & $2$ \\
      \\[-10pt]
      \hline
      \\[-10pt]
      & Total & $5$ \\
      \\[-10pt]
      \hline
    \end{tabular}
  \end{center}

\subsection{Area-radius pairs}
As well as bounding the areas of $\mc{P}$-expressions for words, we sometimes wish to simultaneously bound their radii.

\begin{defn}
  A pair of functions $(\alpha, \rho)$, each $\N \rightarrow \N$, is said to be an \emph{area-radius pair} for $\mc{P}$ if, for every null-homotopic word $w \in \fm{X}$ with $|w| \leq l$, there exists a $\mc{P}$-expression $\mc{E}$ for $w$ with $\Area(\mc{E}) \leq \alpha(l)$ and $\Rad(\mc{E}) \leq \rho(l)$.
\end{defn}

The following result shows how area-radius pairs transform under change of presentation.

\begin{prop} \label{prop11}
  Let $\mc{P}$ and $\mc{Q}$ be finite presentations of the same group.  If $(\alpha, \rho)$ is an area-radius pair for $\mc{P}$ then there exists an area-radius pair $(\alpha', \rho')$ for $\mc{Q}$ with $\alpha \simeq \alpha'$ and $\rho \simeq \rho'$.
\end{prop}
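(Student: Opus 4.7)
The plan is to mimic the standard proof that Dehn functions are preserved under change of finite presentation (Lemma~\ref{lem27}), while adding radius bookkeeping at each stage. Write $\mc{P} = \langle \mc{X} \mid \mc{R} \rangle$ and $\mc{Q} = \langle \mc{Y} \mid \mc{S} \rangle$. Since both present the same group, I fix, for each $y \in \mc{Y}^{\pm 1}$, a word $\phi(y) \in \fm{X}$ representing the same group element, and, for each $x \in \mc{X}^{\pm 1}$, a word $\psi(x) \in \fm{Y}$ representing the same group element; these extend to monoid homomorphisms $\phi : \fm{Y} \to \fm{X}$ and $\psi : \fm{X} \to \fm{Y}$. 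Because $\mc{X}$, $\mc{Y}$, and $\mc{R}$ are all finite, there is a single constant $C$ that bounds: the lengths $|\phi(y)|$ and $|\psi(x)|$; the $\mc{Q}$-areas and $\mc{Q}$-radii of the finitely many null-homotopic words $\psi(r)$ for $r \in \mc{R}^{\pm 1}$; and the $\mc{Q}$-areas and $\mc{Q}$-radii of the null-homotopic words $y \cdot \psi(\phi(y))^{-1}$ for $y \in \mc{Y}^{\pm 1}$.

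Given a null-homotopic $w \in \fm{Y}$ with $|w| \le l$, I build a $\mc{Q}$-expression in three steps. First, $\phi(w) \in \fm{X}$ is null-homotopic and has length at most $Cl$, so the hypothesis produces a $\mc{P}$-expression $(x_i, r_i)_{i=1}^{m}$ for $\phi(w)$ with $m \le \alpha(Cl)$ and $|x_i| \le \rho(Cl)$. Second, since $\psi(\phi(w)) \FreeEq \prod_i \psi(x_i)\, \psi(r_i)\, \psi(x_i)^{-1}$ and each $\psi(r_i)$ has a $\mc{Q}$-expression of area and radius at most $C$, splicing these in yields a $\mc{Q}$-expression for $\psi(\phi(w))$ of area at most $C\alpha(Cl)$ and radius at most $C\rho(Cl) + C$. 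Third, the correction from $\psi(\phi(w))$ back to $w$ uses the telescoping identity
$$
w \cdot \psi(\phi(w))^{-1} \FreeEq \prod_{k=l}^{1} u_{k-1} \bigl( y_k \psi(\phi(y_k))^{-1} \bigr) u_{k-1}^{-1},
$$
where $w = y_1 \ldots y_l$ and $u_k = y_1 \ldots y_k$, in which each inner word $y_k \psi(\phi(y_k))^{-1}$ has a $\mc{Q}$-expression of area and radius at most $C$ and each conjugator $u_{k-1}$ has length at most $l-1$. Concatenating, I obtain a $\mc{Q}$-expression for $w$ of area at most $\alpha'(l) := C\alpha(Cl) + Cl$ and radius at most $\rho'(l) := C\rho(Cl) + l + C$.

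It remains to verify $\alpha \simeq \alpha'$ and $\rho \simeq \rho'$. The inequalities $\alpha' \preceq \alpha$ and $\rho' \preceq \rho$ are immediate from the formulas defining $\alpha'$ and $\rho'$, and the reverse inequalities follow from $\alpha(l) \le \alpha(Cl) \le \alpha'(l)$ and $\rho(l) \le \rho(Cl) \le \rho'(l)$, using monotonicity of $\alpha$ and $\rho$ (which may be assumed after replacing each by its monotonic upper envelope). The main obstacle is the bookkeeping in the third step: one must check that the correction from $\psi(\phi(w))$ to $w$ can be made with radius growing only \emph{linearly} in $l$, rather than, say, linearly in $\rho(Cl)$. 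The telescoping decomposition above is precisely what achieves this, since the conjugator $u_{k-1}$ is bounded by $|w|$ rather than by anything depending on $\rho$. Once this is in hand, the linear excess is absorbed into the $\simeq$-equivalence exactly as in the change-of-presentation argument for Dehn functions.
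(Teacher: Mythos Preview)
Your proof is correct and takes a genuinely different route from the paper's. The paper argues via Tietze transformations: since any two finite presentations of the same group are linked by a finite chain of elementary Tietze moves (adding or removing a redundant relator; adding or removing a generator together with its defining word), it suffices to check the statement across a single such move, and the paper does this in four separate cases, computing the new area-radius pair explicitly each time. Your approach is a direct translation argument: push a $\mc{P}$-expression for $\phi(w)$ through $\psi$, then correct $\psi(\phi(w))$ back to $w$ via the telescoping identity. This is essentially the standard quasi-isometry proof of Lemma~\ref{lem27}, upgraded with radius bookkeeping, and it avoids the case analysis entirely; the Tietze approach, by contrast, makes the constants in each step transparent at the cost of four cases. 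One small remark: your closing comment about the ``main obstacle'' slightly overstates the danger --- even if the radius of the step-three correction were $O(\rho(Cl))$ rather than $O(l)$, the total would still be $\simeq \rho$, since step two already contributes $C\rho(Cl)+C$; the linear bound you obtain is pleasant but not essential.
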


\begin{proof}
  Since $\mc{P}$ can be converted to $\mc{Q}$ by a finite sequence of Tietze transformations, it suffices to prove the proposition in the situation that $\mc{P}$ and $\mc{Q}$ are related by a single such transformation.  There are four cases to consider.

  \textbf{Case 1.}  Suppose that $\mc{P} = \langle \mc{A} \, | \, \mc{R} \rangle$ and $\mc{Q} = \langle \mc{A} \, | \, \mc{R}, s \rangle$ where $s \in \fm{A}$ is null-homotopic over $\mc{P}$. A $\mc{P}$-expression for a word $w \in \fm{A}$ is also a $\mc{Q}$-expression for $w$, so $(\alpha, \rho)$ is itself an area-radius pair for $\mc{Q}$.

  \textbf{Case 2.}  Suppose that $\mc{P} = \langle \mc{A} \, | \, \mc{R}, s \rangle$ and $\mc{Q} = \langle \mc{A} \, | \, \mc{R} \rangle$ where $s \in \fm{A}$ is null-homotopic over $\mc{Q}$.  Let $(x_i, r_i)_{i=1}^M$ be a $\mc{Q}$-expression for $s$ with area $M$ and radius $K$.  If $w \in \fm{A}$ is a null-homotopic word of length at most $n$ then there exists a $\mc{P}$-expression $\Sigma = (y_i, z_i)_{i=1}^L$ for $w$ with area $L \leq \alpha(n)$ and radius at most $\rho(n)$.  Substituting $\prod_{i=1}^M x_i r_i x_i^{-1}$ for each occurrence of $s$ in the product $\prod_{i=1}^L y_i z_i y_i^{-1}$ gives a product which is freely equal to $w$ in $F(\mc{A})$.  The corresponding $\mc{Q}$-expression has area at most $ML$ and radius at most $\rho(n) + K$.  Thus $(M \alpha(n), \rho(n) +K)$ is an area-radius pair for $\mc{Q}$.

  \textbf{Case 3.}  Suppose that $\mc{P} = \langle \mc{A} \, | \mc{R} \rangle$ and $\mc{Q} = \langle \mc{A}, b \, | \, \mc{R}, bu_b^{-1} \rangle$ where $u_b \in \fm{A}$ and $b u_b^{-1}$ is null-homotopic over $\mc{P}$.  Define $K = |u_b|$.  Suppose $w \in (\mc{A} \cup \{b\})^{\pm\bast}$ is a null-homotopic word of length at most $n$; say $w \equiv v_0 b^{\epsilon_1} v_1 \ldots b^{\epsilon_L} v_L$ for some $v_i \in \fm{A}$ and $\epsilon_i \in \{\pm1\}$.  Insert cancelling pairs $u_b^{-1} u_b$ into $w$ to obtain the word $w' \equiv v_0 (b u_b^{-1} u_b)^{\epsilon_1} v_1 \ldots (b u_b^{-1} u_b)^{\epsilon_L} v_L$ with $w' \FreeEq w$. Define $v_0', \ldots, v_L'$ to be the words in $\fm{A}$ such that $w' \equiv v_0' (b u_b^{-1})^{\epsilon_1} v_1' \ldots (b u_b^{-1})^{\epsilon_L} v_L'$ and note that $\sum_{i=1}^L |v_i'| \leq K|w| \leq Kn$.  For each $i \in \{0, \ldots, L\}$ define $\tau_i \equiv v_i' v_{i+1}' \ldots v_L'$.  Then $$w' \FreeEq  \tau_0 \prod_{i=1}^L \tau_i^{-1} (b u_b)^{\epsilon_i} \tau_i$$ and $|\tau_i| \leq \sum_{i=1}^L |v_i'| \leq Kn$.  The word $\tau_0$ is null-homotopic over $\mc{Q}$ and hence over $\mc{P}$ and so there exists a $\mc{P}$-expression $(x_i, r_i)_{i=1}^M$ for $\tau_0$ with area at most $\alpha(Kn)$ and radius at most $\rho(Kn)$.  Thus $$w \FreeEq \prod_{i=1}^M x_i r_i x_i^{-1} \prod_{i=1}^L \tau_i^{-1} (b u_b^{-1})^{\epsilon_i} \tau_i$$ and so we obtain a $\mc{Q}$-expression for $w$ with area at most $M + L \leq \alpha(Kn) + n$ and radius at most $\max \{ \max_i |x_i|, \max_i|v_i'| \} \leq \max\{\rho(Kn), Kn\} \leq \rho(Kn) + Kn$.  Thus $(\alpha(Kn) + n, \rho(Kn) + Kn)$ is an area-radius pair for $\mc{Q}$.

  \textbf{Case 4.}  Suppose that $\mc{P} = \langle \mc{A}, b \, | \, \mc{R}, bu_b^{-1} \rangle$ and $\mc{Q} = \langle \mc{A} \, | \mc{R} \rangle$ where $u_b \in \fm{A}$ and $b u_b^{-1}$ is null-homotopic over $\mc{Q}$.  Define $K = |u_b|$.  Consider the retraction $\pi : (\mc{A} \cup \{b\})^{\pm\bast} \rightarrow \fm{A}$ which is the identity on $\mc{A}$ and maps $b^{\pm1} \mapsto u_b^{\pm1}$.  Note that $\pi$ induces a retraction $F(\mc{A} \cup \{b\}) \rightarrow F(\mc{A})$.  Suppose $w \in \fm{A}$ is a null-homotopic word of length at most $n$ and let $(x_i, z_i)_{i=1}^M$ be a $\mc{P}$-expression for $w$ with area at most $\alpha(n)$ and radius at most $\rho(n)$.  Let $S$ be the subset of $\{1, \ldots, m\}$ consisting of those $i$ for which $z_i \in \mc{R}^{\pm1}$.  Then $(\pi(x_i), \pi(z_i))_{i \in S}$ is a $\mc{Q}$-expression for $w$ with area at most $M$ and radius at most $K \rho(n)$.  Thus $(\alpha(n), K\rho(n))$ is an area-radius pair for $\mc{Q}$.
\end{proof}

As with the areas of words, area-radius pairs have interpretations in terms of $\mc{P}$-sequence, $\mc{P}$-pictures and $\mc{P}$-van Kampen diagrams; of these we only consider the pictorial interpretation.

\begin{defn}
  A pair $(\alpha, \rho)$ of functions $\alpha, \rho : \N \rightarrow \N$ is said to be a \emph{pictorial area-radius pair} for the presentation $\mc{P}$ if, for all null-homotopic words $w \in \fm{A}$ with $|w| \leq l$, there exists a $\mc{P}$-picture $\mathbb{P}$ for $w$ with $\Area \mathbb{P} \leq \alpha(l)$ and $\Rad \mathbb{P} \leq \rho(l)$.
\end{defn}

\begin{prop}\label{prop12}
  If $(\alpha, \rho)$ is an area-radius pair for a presentation $\mc{P}$ then there exists a pictorial area-radius pair $(\alpha', \rho')$ for $\mc{P}$ with $\alpha \simeq \alpha'$ and $\rho \simeq \rho'$.  Conversely if $(\alpha, \rho)$ is a pictorial area-radius pair for $\mc{P}$ then there exists an area-radius pair $(\alpha', \rho')$ for $\mc{P}$ with $\alpha \simeq \alpha'$ and $\rho \simeq \rho'$.
\end{prop}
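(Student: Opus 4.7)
The plan is to establish each implication by an explicit geometric construction. In each direction the constructions increase area and radius by at most a constant additive amount (where the constant depends only on the maximum length $K$ of a relator that appears), and this suffices to give the required $\simeq$-equivalences.

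\medskip

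\noindent \textbf{Expression to picture.} Starting from a $\mc{P}$-expression $\mc{E} = (x_i, r_i)_{i=1}^m$ for $w$, I would build the standard ``bouquet of lollipops'' picture $\mathbb{P}_0$: divide the ambient disc $D$ into $m$ radial sectors based at $b$, and inside the $i$-th sector embed a single lollipop consisting of a relator disc $D_i$ labelled $r_i$ together with a stalk of $|x_i|$ parallel arcs labelled successively by the letters of $x_i$, joining a point near $b$ on $\partial D$ to $\partial D_i$. The boundary label of $\mathbb{P}_0$ read anticlockwise from $b$ is exactly the concatenation $\prod_{i=1}^m x_i r_i x_i^{-1}$, and every complementary region of $\mathbb{P}_0$ is reached from $b$ by crossing at most $|x_i| + |r_i| \leq \Rad(\mc{E}) + K$ arcs. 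Since $\prod_{i=1}^m x_i r_i x_i^{-1} \FreeEq w$, one then performs a sequence of boundary folds, identifying pairs of adjacent arcs on $\partial D$ that correspond to free cancellations, to obtain a picture $\mathbb{P}$ with boundary label precisely $w$. Folding does not introduce new relator discs and inflates the radius only by a bounded amount, so $\Area(\mathbb{P}) = m$ and $\Rad(\mathbb{P}) \leq \Rad(\mc{E}) + K$.

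\medskip

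\noindent \textbf{Picture to expression.} Starting from a $\mc{P}$-picture $\mathbb{P}$ for $w$ with area $m$ and radius $R$, I would enumerate the relator discs $D_1, \ldots, D_m$ in the cyclic order in which they are first encountered by an anticlockwise traversal of $\partial D$ beginning at $b$. For each $D_i$ I would then choose a transverse path $\gamma_i$ in $\Back \mathbb{P}$ from $b$ to $b_i$: by definition of $\Rad \mathbb{P}$ some complementary region adjacent to $\partial D_i$ lies at intersection-distance at most $R$ from $b$, and then traversing at most $|r_i| \leq K$ further arcs along $\partial D_i$ reaches $b_i$, so the intersection number of $\gamma_i$ is at most $R + K$. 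Reading the labels of arcs crossed by $\gamma_i$ (with the appropriate orientations) yields a word $x_i \in \fm{X}$ with $|x_i| \leq R + K$. A standard planar argument shows that with the $\gamma_i$ so chosen the sequence $\mc{E} = (x_i, r_i)_{i=1}^m$ is a $\mc{P}$-expression for $w$, with $\Area(\mc{E}) = m$ and $\Rad(\mc{E}) \leq R + K$.

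\medskip

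\noindent \textbf{Main obstacle.} The chief technical point is the choice of paths $\gamma_i$ in the picture-to-expression direction: one must arrange them so that their concatenation $\prod_{i=1}^m x_i r_i x_i^{-1}$ recovers the boundary word $w$ up to free equality rather than some other cyclic conjugate or free variant thereof. This is a classical planar combinatorial argument, ultimately resting on the contractibility of the ambient disc and the compatibility of the paths with a spanning tree in the dual graph of $\mathbb{P}$; the analogous book-keeping in the expression-to-picture direction concerns realising the free reductions from $\prod x_i r_i x_i^{-1}$ to $w$ as boundary folds without allowing the radius to blow up. Once this bookkeeping is in place, the numerical bounds above give $\alpha' \simeq \alpha$ and $\rho' \simeq \rho$ in both directions.
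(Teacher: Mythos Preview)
Your sketch is the standard and correct argument for this equivalence: the lollipop construction in one direction and reading along transverse paths to relator discs in the other, together with the planar bookkeeping you flag, is exactly how one proves this. One minor over-caution: boundary folds in the expression-to-picture direction do not inflate the radius at all. A fold joins two adjacent boundary arcs into a single arc without creating or destroying complementary regions, and any transverse path from $b$ has the same intersection number before and after; so $\Rad(\mathbb{P}) \leq \Rad(\mc{E}) + K$ already holds for the unfolded lollipop picture and is preserved under every fold.

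There is nothing to compare your approach to, because the paper explicitly omits the proof of this proposition. Immediately after the statement, the author notes that the only place Proposition~\ref{prop12} is invoked is in the pictorial proof of Theorem~\ref{thm2}, and since Theorem~\ref{thm2} is superseded by the stronger Theorem~\ref{thm1} (which is given a self-contained algebraic proof not relying on pictures), the proposition is never actually needed and its proof is skipped. The same remark applies to the companion Proposition~\ref{prop2} on area-penetration pairs.
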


The only place in this thesis where we make use of Proposition~\ref{prop12} is in the proof of Theorem~\ref{thm2}.  We thus omit the proof of this proposition since Theorem~\ref{thm2} is implied by the stronger Theorem~\ref{thm1}.

\subsection{Finite index subgroups}

We will frequently simplify arguments by passing to finite index subgroups.  The following lemma shows that Dehn functions and area-radius pairs are unaffected by this transition.

\begin{lem} \label{lem22}
  Let $H \leq G$ be a pair groups with finite presentations $\mc{P}$ and $\mc{Q}$ respectively.  Suppose that $H$ has finite index in $G$. \begin{enumerate}
    \item Let $\delta_\mc{P}$ and $\delta_\mc{Q}$ be the Dehn functions of $\mc{P}$ and $\mc{Q}$ respectively.  Then $\delta_\mc{P} \simeq \delta_\mc{Q}$.

    \item Let $(\alpha, \rho)$ be an area-radius pair for $\mc{Q}$.  Then there exists an area-radius pair $(\alpha', \rho')$ for $\mc{P}$ with $\alpha \simeq \alpha'$ and $\rho \simeq \rho'$.
  \end{enumerate}
\end{lem}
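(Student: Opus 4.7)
The plan is to exploit the fact that, since $[G:H] < \infty$, the subgroup $H$ acts freely and cocompactly on the Cayley $2$-complex $X := \Cay^2(\mc{Q})$, which is simply connected; thus a single simply connected $2$-complex serves both groups, and null-homotopies lift and project between them along a finite cover. By Lemma~\ref{lem27} and Proposition~\ref{prop11} I am free to choose any convenient finite presentation $\mc{P}_0$ of $H$ in place of $\mc{P}$, and I will use the one coming from $Y := X/H$, a finite connected combinatorial $2$-complex with $\pi_1 Y = H$ (after collapsing a spanning tree to obtain a standard presentation complex).

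The \emph{easy direction}, which establishes part~(2) and the inequality $\delta_{\mc{P}_0} \leq \delta_\mc{Q}$ in part~(1), goes as follows. A null-homotopic word $w$ over $\mc{P}_0$ represents a loop in $Y^{(1)}$ that is trivial in $\pi_1 Y$; since $X \to Y$ is a universal covering, $w$ lifts to a loop $\tilde w$ in $X^{(1)}$ of the same length. A minimal $\mc{Q}$-picture for $\tilde w$ post-composed with $X \to Y$ is a $\mc{P}_0$-picture for $w$ of the same area. Transverse paths in $Y$ lift uniquely to transverse paths in $X$ with the same intersection number, so radii are preserved as well. This yields (2) and one half of (1).

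For the reverse inequality $\delta_\mc{Q} \preceq \delta_{\mc{P}_0}$ needed in~(1), fix a right transversal $T = \{t_1, \ldots, t_k\}$ for $H$ in $G$ with $t_1 = 1$ and words $\tau_i \in \fm{Y}$ representing the $t_i$ (so $\tau_1$ is empty). Given a null-homotopic word $u = y_1 \cdots y_n$ over $\mc{Q}$, let $c(i)$ be the unique index with $u[i] \in H t_{c(i)}$; then $c(0) = c(n) = 1$. Insert $\tau_{c(i)}^{-1} \tau_{c(i)}$ between the letters $y_i$ and $y_{i+1}$ to rewrite $u$ freely as a concatenation $s_1 s_2 \cdots s_n$, where each ``syllable'' $s_i := \tau_{c(i-1)} y_i \tau_{c(i)}^{-1}$ represents an element of $H$ and has length at most $2 \max_j |\tau_j| + 1$. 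Expressing each $s_i$ via the Reidemeister--Schreier generators of $\mc{P}_0$ yields a null-homotopic word $\hat u$ over $\mc{P}_0$ of length $O(n)$. A minimal $\mc{P}_0$-van Kampen diagram for $\hat u$ pulls back, via the bounded-area $\mc{Q}$-fillings of each $\mc{P}_0$-relator (which are independent of $n$), to a $\mc{Q}$-diagram for $u$ of area $O(\delta_{\mc{P}_0}(O(n)) + n)$, giving $\delta_\mc{Q} \preceq \delta_{\mc{P}_0}$.

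The main obstacle is the bookkeeping in this reverse direction: one must verify that syllable lengths, Reidemeister--Schreier translation, and the cost of reinterpreting each $\mc{P}_0$-relator as a null-$\mc{Q}$-homotopy all contribute only constant factors, depending solely on the fixed finite data $T$, $\mc{Y}$, $\mc{S}$. These are exactly the constants absorbed by the $\simeq$ relation, so the argument closes without difficulty.
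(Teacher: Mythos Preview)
Your proposal is correct, but it takes a somewhat different route from the paper's. For part~(1), the paper simply observes that $H$ and $G$ are quasi-isometric and cites the quasi-isometry invariance of Dehn functions \cite{Alonso90}, whereas you give an explicit Reidemeister--Schreier rewriting argument; your version is more self-contained but the paper's is a one-line appeal to a standard result. For part~(2), the paper defers the proof to Section~\ref{sec8}, where it is obtained as an application of the general ``close fillings'' machinery of Proposition~\ref{prop4}: since $[G:H]<\infty$, every element of $G$ lies within uniformly bounded $\mc{X}$-distance of $H$, so any $\mc{Q}$-expression has bounded departure from $H$, and Proposition~\ref{prop4} then manufactures a finite presentation of $H$ together with an area-radius pair $\simeq(\alpha,\rho)$. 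Your covering-space argument (lift to $X=\Cay^2(\mc{Q})$, fill, project to $Y=X/H$, collapse a spanning tree) is morally the same idea viewed geometrically, and is more direct for this specific lemma; the paper's approach has the advantage that Proposition~\ref{prop4} is reused elsewhere in the thesis. One small slip: the lift $\tilde w$ does not have \emph{the same} length as $w$, since each $\mc{P}_0$-generator corresponds to a Schreier loop of length up to $2\operatorname{diam}(T)+1$ in $X$; but this constant factor is absorbed by $\simeq$, so the argument goes through.
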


NB: It is also true that if $(\alpha, \rho)$ is an area-radius pair for $\mc{P}$ then there exists an area-radius pair $(\alpha', \rho')$ for $\mc{Q}$ with $\alpha \simeq \alpha'$ and $\rho \simeq \rho'$.  However we will not need this result.

\begin{proof}[Proof of Lemma~\ref{lem22}]
  For (1), observe that since $H$ has finite index in $G$, these two groups are quasi-isometric.  The result then follows since quasi-isometric groups have $\simeq$-equivalent Dehn functions \cite{Alonso90}.

  The assertion (2) is standard.  We give our own proof in Section~\ref{sec8} as a corollary to Proposition~\ref{prop4}.
\end{proof}

\section{Distortion Functions} \label{sec9}

Let $H \leq G$ be a pair of groups with finite generating sets $\mc{X}$ and $\mc{Y}$ respectively.  The \emph{distortion function} of $H$ in $G$ with respect to $\mc{X}$ and $\mc{Y}$ is defined to be the function $\Delta : \N \rightarrow \N$ given by $$\Delta(l) = \max \{ d_\mc{X} (1, h) \, : \, h \in H, d_\mc{Y}(1, h) \leq l \}.$$  Different choices of generating sets will give rise to different distortion functions, but, in a way we now make precise, these will have the same asymptotic behaviour.

\begin{defn}
  Let $f, g$ be functions $\N \rightarrow \N$.  Write $f \preccurlyeq g$ if there exists a constant $C \in \N$ so that $f(l) \leq Cg(Cl)$.  Write $f \approx g$ if $f \preccurlyeq g$ and $g \preccurlyeq f$.
\end{defn}

The following lemma is standard.

\begin{lem} \label{lem23}
  For each $i = 1,2$, let $\Delta_i$ be the distortion function of $H$ in $G$ with respect to some finite generating sets $\mc{X}_i$ and $\mc{Y}_i$ for $H$ and $G$ respectively.  Then $\Delta_1 \approx \Delta_2$.
\end{lem}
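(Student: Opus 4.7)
The plan is to reduce the statement to the elementary fact that any two finite generating sets for a group give bi-Lipschitz equivalent word metrics, and then translate the Lipschitz constants into the definition of $\approx$.

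First I would fix constants witnessing the bi-Lipschitz equivalences. Since $\mc{X}_1$ and $\mc{X}_2$ are both finite generating sets of $H$, each element of $\mc{X}_1$ can be expressed as a word in $\mc{X}_2^{\pm 1}$ of bounded length, and vice versa; let $C_H$ be a common bound, so that
\[
   C_H^{-1}\, d_{\mc{X}_2}(1, h) \leq d_{\mc{X}_1}(1, h) \leq C_H\, d_{\mc{X}_2}(1, h)
\]
for every $h \in H$. Analogously, let $C_G$ be a bi-Lipschitz constant relating $d_{\mc{Y}_1}$ and $d_{\mc{Y}_2}$ on $G$.

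Next I would derive $\Delta_1 \preccurlyeq \Delta_2$. Take any $h \in H$ with $d_{\mc{Y}_1}(1, h) \leq l$. Then $d_{\mc{Y}_2}(1, h) \leq C_G l$, so by definition of $\Delta_2$ we have $d_{\mc{X}_2}(1, h) \leq \Delta_2(C_G l)$, and applying the bi-Lipschitz bound on $H$ yields $d_{\mc{X}_1}(1, h) \leq C_H \Delta_2(C_G l)$. Taking the maximum over such $h$ gives $\Delta_1(l) \leq C_H \Delta_2(C_G l) \leq C\, \Delta_2(C l)$ for $C = \max\{C_H, C_G\}$, which is exactly $\Delta_1 \preccurlyeq \Delta_2$. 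The reverse inequality $\Delta_2 \preccurlyeq \Delta_1$ follows by the symmetric argument, swapping the roles of the two pairs of generating sets. Combining the two inequalities gives $\Delta_1 \approx \Delta_2$.

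There is no real obstacle here; the argument is entirely bookkeeping. The only point to be careful about is to apply the $G$-side constant to convert lengths \emph{before} invoking $\Delta_2$ (since its argument measures $d_{\mc{Y}}$-length) and to apply the $H$-side constant \emph{after} invoking $\Delta_2$ (since its value measures $d_{\mc{X}}$-length); conflating these would give the wrong shape of inequality.
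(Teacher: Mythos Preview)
Your argument is correct and is precisely the standard bi-Lipschitz bookkeeping one expects. The paper does not actually prove this lemma; it simply states that the result is standard and moves on, so your proof fills in exactly what the paper omits.
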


Thus we may talk of \emph{the} distortion function of $H$ in $G$, without making any mention of a choice of generating sets, provided we bear in mind that this is only defined up to $\approx$-equivalence.

We say $H$ has \emph{polynomial distortion} in $G$ if the distortion function with respect to some (and hence any) finite generating sets is bounded above by a polynomial.  We say $H$ is \emph{undistorted} in $G$ if the distortion function with respect to some (and hence any) finite generating sets is linear.  For example, finite index subgroups are undistorted, as are direct factors or, more generally, retracts.

The following lemma gives various transitivity properties of distortion functions.

\begin{lem} \label{lem24}
  Let $G_1 \leq G_2 \leq G_3$ be groups with finite generating sets $\mc{X}_1$, $\mc{X}_2$, $\mc{X}_3$ respectively.  For each $1 \leq i < j \leq 3$, let $\Delta_i^j$ be the distortion function of $G_i$ in $G_j$ with respect to $\mc{X}_i$ and $\mc{X}_j$.  \begin{enumerate}
    \item $\Delta_1^3(l) \leq \Delta_1^2 (\Delta_2^3(l))$.

    \item If $G_1$ has finite index in $G_2$ then $\Delta_2^3 \approx \Delta_1^3$.

    \item If $G_2$ has finite index in $G_3$ then $\Delta_1^2 \approx \Delta_1^3$.
  \end{enumerate}
\end{lem}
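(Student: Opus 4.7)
My plan is to prove all three parts by chasing the definitions, using two elementary facts repeatedly. First, whenever $H \leq G$ are finitely generated with generating sets $\mc{Y} \subseteq H$ and $\mc{X} \subseteq G$, every element of $\mc{Y}^{\pm 1}$ is a word of bounded length in $\mc{X}^{\pm 1}$, so the inclusion is Lipschitz: there is a constant $C_0$ with $d_{\mc{X}}(1, h) \leq C_0 \, d_{\mc{Y}}(1, h)$ for all $h \in H$. Second, as noted immediately before the lemma statement, finite-index subgroups are undistorted, which provides the matching reverse inequality whenever the inclusion is of finite index. Throughout I will tacitly assume that all distortion functions are non-decreasing, which is immediate from their definition as a max over a nested family of sets.

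Part (1) I expect to dispose of in a single line: if $g \in G_1$ with $d_{\mc{X}_3}(1, g) \leq l$, then since $g \in G_2$ the definition of $\Delta_2^3$ gives $d_{\mc{X}_2}(1, g) \leq \Delta_2^3(l)$, and then since $g \in G_1$ the definition of $\Delta_1^2$ gives $d_{\mc{X}_1}(1, g) \leq \Delta_1^2(\Delta_2^3(l))$. Part (3) I will handle similarly: for $\Delta_1^2 \preccurlyeq \Delta_1^3$ I convert $\mc{X}_2$-length to $\mc{X}_3$-length on elements of $G_2$ via Lipschitzness and then apply the definition of $\Delta_1^3$; for $\Delta_1^3 \preccurlyeq \Delta_1^2$ I convert $\mc{X}_3$-length to $\mc{X}_2$-length using the fact that $G_2$ has finite index in $G_3$ and hence is undistorted, then apply the definition of $\Delta_1^2$. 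In part (2) the direction $\Delta_1^3 \preccurlyeq \Delta_2^3$ is immediate from part (1) combined with the linearity of $\Delta_1^2$ (since $G_1$ is finite index in $G_2$).

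The only step with any real content is the remaining direction $\Delta_2^3 \preccurlyeq \Delta_1^3$ in part (2), where I need to pull an arbitrary element of $G_2$ into the subgroup $G_1$. My plan is to fix finitely many coset representatives $t_1, \ldots, t_k$ for $G_1$ in $G_2$ and set $M = \max_i d_{\mc{X}_2}(1, t_i)$ and $N = \max_i d_{\mc{X}_3}(1, t_i)$. Given $g \in G_2$ with $d_{\mc{X}_3}(1, g) \leq l$, I write $g = h t_i$ with $h \in G_1$; then $d_{\mc{X}_3}(1, h) \leq l + N$ by the triangle inequality, so $d_{\mc{X}_1}(1, h) \leq \Delta_1^3(l + N)$ from the definition of $\Delta_1^3$, and finally undistortion of $G_1$ in $G_2$ together with another triangle inequality yields $d_{\mc{X}_2}(1, g) \leq C \, \Delta_1^3(l + N) + M$ for a suitable constant $C$, which is the required bound. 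The hard part is really just this coset-representative bookkeeping; no deeper ideas are required.
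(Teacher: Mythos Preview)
Your argument is correct and follows essentially the same route as the paper's: part~(1) is a one-line definition chase, the ``easy'' directions of (2) and (3) follow from (1) together with the linearity of the appropriate intermediate distortion function, and the ``hard'' direction of (2) is handled by writing $g = h t_i$ with $h \in G_1$ and $t_i$ a coset representative, then applying $\Delta_1^3$ to $h$. The only cosmetic difference is that the paper invokes Lemma~\ref{lem23} to choose convenient generating sets (putting the coset representatives into $\mc{X}_2$ and taking $\mc{X}_2 \subseteq \mc{X}_3$), thereby making several of your Lipschitz constants equal to~$1$, whereas you keep the generating sets general and track the constants explicitly; the underlying idea is identical. One tiny slip: in the final step of (2) you cite ``undistortion of $G_1$ in $G_2$'' to pass from $d_{\mc{X}_1}(1,h)$ to $d_{\mc{X}_2}(1,h)$, but what you actually need there is the trivial Lipschitz direction (which you already stated in your preamble and which holds for any subgroup), not undistortion.
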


\begin{proof}
  Property (1) is immediate.

  For (2),  the direction $\Delta_1^3 \preccurlyeq \Delta_2^3$ follows immediately from property (1).  For the converse, note that by Lemma~\ref{lem23} we are at liberty to choose any finite generating sets convenient to our purposes.  Choose $\mc{X}_2$ to contain a collection $k_1, \ldots, k_n$ of right coset representatives of $G_1$ in $G_2$ and choose $\mc{X}_3$ to contain $\mc{X}_2$.  Let $w \in \mc{X}_3^{\pm \bast}$ represent a non-identity element of $G_2$.  Then there exists $i$ so that $wk_i^{-1}$ represents an element of $G_1$.  Since $wk_i^{-1} \in \mc{X}_3^{\pm \bast}$, there exists $w' \in \mc{X}_1^{\pm \bast}$ representing the same element as $w k_i^{-1}$ with $|w'| \leq \Delta_1^3(|w| + 1) \leq \Delta_1^3(2|w|)$.  Then $w' k_i \in \mc{X}_2^{\pm \bast}$ represents $w$ and has length at most $\Delta_1^3(2|w|) + 1 \leq 2\Delta_1^3(2|w|)$.

  For (3), the direction $\Delta_1^3 \preccurlyeq \Delta_1^2$ follows immediately from property (1).  For the converse, choose the generating set $\mc{X}_3$ to contain $\mc{X}_2$.  Then $\mc{X}_2^{\pm\bast} \subseteq \mc{X}_3^{\pm\bast}$ and so $\Delta_1^2(l) \leq \Delta_1^3(l)$ for all $l$.
\end{proof}

\begin{cor} \label{cor1}
  Let $H$, $H'$ and $G'$ be finitely generated subgroups of the finitely generated group $G$, with $H' \leq H \cap G'$.  Suppose that $H'$ has finite index in $H$ and $G'$ has finite index in $G$.  Let $\Delta$ and $\Delta'$ be the distortion functions of $H$ in $G$ and $H'$ in $G'$ respectively.  Then $\Delta \approx \Delta'$.
\end{cor}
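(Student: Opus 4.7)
The plan is to reduce the statement to two direct applications of Lemma~\ref{lem24} by introducing the distortion function of $H'$ inside $G$ as an intermediate quantity. Let $\Delta''$ denote the distortion function of $H'$ in $G$ (with respect to some fixed choice of finite generating sets); the goal is then to show $\Delta \approx \Delta''$ and $\Delta' \approx \Delta''$ and conclude by transitivity of $\approx$.

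For the first equivalence, I would consider the chain $H' \leq H \leq G$. Since $H'$ has finite index in $H$ by hypothesis, Lemma~\ref{lem24}(2) applied to this chain gives $\Delta \approx \Delta''$. For the second equivalence, I would consider the chain $H' \leq G' \leq G$; this is a valid chain because $H' \leq H \cap G' \leq G'$. Since $G'$ has finite index in $G$ by hypothesis, Lemma~\ref{lem24}(3) applied to this chain gives $\Delta' \approx \Delta''$.

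Combining the two equivalences yields $\Delta \approx \Delta'$, as desired. There is no real obstacle here: the corollary is essentially a formal consequence of the transitivity properties already established in Lemma~\ref{lem24}, and the only care needed is in verifying that the inclusions line up correctly so that parts (2) and (3) can be applied to chains with the indicated subgroup in the indicated position.
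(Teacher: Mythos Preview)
Your proposal is correct and is exactly the argument the paper intends: the corollary is stated without proof immediately after Lemma~\ref{lem24}, and your introduction of the intermediate distortion $\Delta''$ of $H'$ in $G$, followed by applying parts (2) and (3) of that lemma to the chains $H' \leq H \leq G$ and $H' \leq G' \leq G$ respectively, is the natural (and only reasonable) way to deduce it.
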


\begin{lem} \label{lem25}
  Let $H \leq G$ be finitely generated groups and let $p: G \rightarrow G'$ be a surjective homomorphism which is injective on $H$.  Let $\Delta$ and $\Delta'$ be the distortion functions of $H$ in $G$ and $p(H)$ in $G'$ respectively.  Then $\Delta \preccurlyeq \Delta'$.
\end{lem}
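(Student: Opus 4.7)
The plan is to verify the distortion inequality directly by exploiting a well-chosen pair of generating sets and pulling back words letter-by-letter through $p$. Since distortion functions are well-defined up to $\approx$-equivalence (Lemma~\ref{lem23}), I am free to pick any finite generating sets I like; the strategy is to arrange them so that $p$ induces a length-preserving translation between words in $p(H)$ and words in $H$.

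Concretely, I would first choose a finite generating set $\mc{X}$ for $H$ and then extend it to a finite generating set $\mc{Y}$ for $G$ with $\mc{X} \subseteq \mc{Y}$. Because $p$ is injective on $H \supseteq \mc{X}$, it restricts to a bijection $\mc{X} \to p(\mc{X})$, so $p(\mc{X})$ is a finite generating set for $p(H)$; similarly $p(\mc{Y})$ is a finite generating set for $G'$. Let $\Delta, \Delta'$ now denote the distortion functions computed with respect to these specific choices.

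Suppose $h \in H$ satisfies $d_\mc{Y}(1,h) \leq l$ and let $u \in \fm{Y}$ be a word of length at most $l$ representing $h$. Applying $p$ letter-by-letter produces a word $p(u) \in p(\mc{Y})^{\pm\bast}$ of the same length representing $p(h)$, so $d_{p(\mc{Y})}(1, p(h)) \leq l$. By definition of $\Delta'$, there exists $v' \in p(\mc{X})^{\pm\bast}$ with $|v'| \leq \Delta'(l)$ representing $p(h)$. Using the bijection $\mc{X} \to p(\mc{X})$ induced by $p$, I lift $v'$ letter-by-letter to a word $v \in \fm{X}$ of the same length, representing some element $h' \in H$ with $p(h') = p(h)$. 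Since $p$ is injective on $H$, this forces $h' = h$, so $d_\mc{X}(1,h) \leq |v| \leq \Delta'(l)$. Thus $\Delta(l) \leq \Delta'(l)$ and hence $\Delta \preccurlyeq \Delta'$.

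There is no real obstacle; the only subtle point is ensuring that the two generating sets are genuinely compatible, which is why I insist on $\mc{X} \subseteq \mc{Y}$ and derive $p(\mc{X}) \subseteq p(\mc{Y})$ from it. Everything else is formal manipulation of words of equal length, and the bijectivity of $p|_{\mc{X}}$ follows automatically from the hypothesis that $p$ is injective on $H$.
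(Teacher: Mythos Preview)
Your proof is correct and follows essentially the same approach as the paper: both push the generating sets for $H$ and $G$ forward through $p$ to obtain generating sets for $p(H)$ and $G'$, then use that $p$ is length-nonincreasing on $G$ and length-preserving on $H$ to conclude $\Delta(l) \leq \Delta'(l)$. The only cosmetic difference is that the paper does not bother to arrange $\mc{X} \subseteq \mc{Y}$, and phrases the final step as a set inclusion rather than an explicit word-lifting, but the content is identical.
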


\begin{proof}
  Let $\mc{X}$ and $\mc{Y}$ be finite generating sets for $H$ and $G$ respectively.  Define $\mc{X}' = p(\mc{X})$ and $\mc{Y}' = p(\mc{Y})$ and note that these are finite generating sets for $p(H)$ and $G'$ respectively.  By Lemma~\ref{lem23} we may assume that $\Delta$ and $\Delta'$ are defined with respect to these generating sets.  Then, for any $g_1, g_2 \in G$ one then has that $\dist_{\mc{Y}'}(p(g_1), p(g_2)) \leq \dist_\mc{Y}(g_1, g_2)$.  Since the restriction of $p$ to $H$ is an isomorphism onto its image, $\dist_{\mc{X}'}(p(h_1), p(h_2)) = \dist_\mc{X}(h_1, h_2)$ for all $h_1, h_2 \in H$.  Thus for any $l \in \N$, we have the inclusion of sets $$\{\dist_\mc{X}(1, h) : h \in H, \dist_\mc{Y}(1, h) \leq l \} \subseteq \{\dist_{\mc{X}'}(1, h) : h \in p(H), \dist_{\mc{Y}'}(1, h) \leq l\}.$$  It follows that $\Delta(l) \leq \Delta'(l)$.
\end{proof}

\section{The Bounded Noise Lemma} \label{sec14}

Let $\mc{P} = \langle \mc{A} \, | \, \mc{R} \rangle$ be a finite presentation with area-radius pair $(\alpha, \rho)$ and define $L = \max \{ |r| : r \in \mc{R} \}$.  If $w$ is a null-homotopic word over $\mc{P}$ with $|w| \leq n$ then there exists a $\mc{P}$-expression $\mc{E}$ for $w$ with area $\leq \alpha(n)$ and radius $\leq \rho(n)$.  Thus $|\partial \mc{E}| \leq (2 \rho(n) + L)\alpha(n)$.  The Bounded Noise Lemma shows that $\mc{E}$ can be chosen so that the free reduction of the word $\partial \mc{E}$ is bounded only in terms of $\alpha$.  This lemma is not original, but a proof of it does not appear to exist in the literature.  Recall that we write $|w|$ for the length of a word $w$, and $\|w\|$ for the length of the free reduction of $w$.

\begin{lem}[The Bounded Noise Lemma] \label{lem28}
  Let $w$ be a null-homotopic word over the presentation $\mc{P}$ with area $N$.  Then there exists a $\mc{P}$-expression $(u_i, r_i)_{i=1}^N$ for $w$ with $$\|u_1\| + \sum_{i=1}^{N-1} \|u_i^{-1} u_{i+1}\| + \|u_N\| \leq |w| + 2LN.$$
\end{lem}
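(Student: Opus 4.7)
The plan is to proceed by induction on $N$, the area of $w$. The base case $N=0$ is immediate: the empty $\mathcal{P}$-expression satisfies the stated bound vacuously (the sum is $0$ and the right-hand side is $|w| \geq 0$).

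For the inductive step I would first reduce to the case that $w$ is freely reduced: since the free reduction $w_0$ has the same area as $w$ and satisfies $|w_0| \leq |w|$, and since any $\mathcal{P}$-expression for $w_0$ is also a $\mathcal{P}$-expression for $w$, the target bound $|w| + 2LN$ is only easier in that regime.  Let $\Delta$ be a $\mathcal{P}$-van Kampen diagram for $w$ of area exactly $N$.  A standard observation forces $\Delta$ to have no tail (spur) edges once $w$ is freely reduced: the boundary walk $\partial\Delta$ traverses any such tail edge twice in immediate succession, contributing a cancelling pair $ee^{-1}$ to $w$ and contradicting free reducedness.

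The main step is to peel off an outermost $2$-cell at the basepoint.  Because $\Delta$ has no tails, the first edge of $\partial\Delta$ after $\star$ lies on the boundary of some $2$-cell $c_1$.  Let $\alpha$ be the maximal initial sub-arc of $\partial\Delta$ lying along $\partial c_1$; then $\partial c_1 \equiv \alpha(\alpha')^{-1}$ (read from $\star$) for some word $\alpha'$, so $r_1 := \alpha(\alpha')^{-1}$ is a cyclic conjugate of an element of $\mathcal{R}^{\pm 1}$.  Writing $w \equiv \alpha\gamma$ and setting $w' := \alpha'\gamma$, a direct calculation gives $r_1 \cdot w' \equiv \alpha(\alpha')^{-1}\alpha'\gamma \FreeEq \alpha\gamma \equiv w$.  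The diagram $\Delta \smallsetminus c_1$ is a $\mathcal{P}$-van Kampen diagram for $w'$ of area $N-1$ (equality holds by minimality of $\Delta$, since otherwise one could build a diagram for $w$ of area below $N$), and $|w'| = |w| + |\alpha'| - |\alpha| \leq |w| + L$.  Applying the inductive hypothesis to $w'$ produces a $\mathcal{P}$-expression $(u_i, r_i)_{i=2}^N$ with $\|u_2\| + \sum_{i=2}^{N-1} \|u_i^{-1} u_{i+1}\| + \|u_N\| \leq |w'| + 2L(N-1) \leq |w| + L(2N-1)$.  Taking $u_1 := \emptyset$ extends this to a $\mathcal{P}$-expression $(u_i, r_i)_{i=1}^N$ for $w$, since $\prod_{i=1}^N u_i r_i u_i^{-1} = r_1 \cdot \prod_{i=2}^N u_i r_i u_i^{-1} \FreeEq r_1 w' \FreeEq w$; and the associated sum collapses to $0 + \|u_2\| + \sum_{i=2}^{N-1}\|u_i^{-1}u_{i+1}\| + \|u_N\| \leq |w| + L(2N-1) \leq |w| + 2LN$, completing the induction.

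The main technical subtlety I anticipate is the case where the chosen $2$-cell $c_1$ meets $\partial\Delta$ in more than one connected arc (a "bridging" $c_1$): removing such a $c_1$ disconnects $\Delta$ into several disc components, so $\Delta\smallsetminus c_1$ is no longer a single disc diagram for a single word $w'$.  I would address this by running the inductive argument componentwise on the resulting sub-disc diagrams and concatenating the outputs with the prepended term $(\emptyset, r_1)$.  Careful bookkeeping shows the component areas sum to $N-1$ and the component boundary lengths combine favourably with the contribution of $\alpha$ and $\alpha'$, so the overall bound $|w| + 2LN$ is preserved.
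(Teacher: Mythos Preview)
Your overall architecture --- induction on the area $N$ via peeling off a boundary $2$-cell of a minimal van Kampen diagram --- is exactly what the paper does. However, the step where you buy yourself $u_1 = \emptyset$ is broken. The claim ``if $w$ is freely reduced then the first boundary edge of $\Delta$ lies on a $2$-cell'' is false: free reducedness of $w$ rules out \emph{terminal} spurs (edges whose two boundary traversals are consecutive), but not bridge edges. For instance, with $\mathcal{P} = \langle a,b \mid [a,b]\rangle$ and $w \equiv b\,[a,b]\,b^{-1}\,[a,b]$, the word $w$ is freely reduced, has area $2$, and the obvious minimal diagram has its first edge (labelled $b$) bounding no $2$-cell. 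So in general the initial boundary segment before the first thick edge is a nonempty word $v_1$, and you cannot force $u_1$ to be trivial.

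The paper avoids this by \emph{not} freely reducing. It proves a sharper auxiliary claim directly on the diagram: with $v_1$ the label of the initial boundary segment of $\Delta$ and $s_1$ the (cyclic-conjugate) label of the first $2$-cell $c$, one has $w \FreeEq v_1 s_1 v_1^{-1} w'$, where $w'$ is the boundary label of $\Delta' = \Delta \smallsetminus c$. The crucial observation is that the initial boundary segment of $\Delta'$ is the initial segment of $\Delta$ \emph{followed by} some further arc, so $v_2 \equiv v_1\alpha$ and hence $|v_2| = |v_1| + |\alpha| \geq |v_1| + \|v_1^{-1}v_2\|$. This single inequality is what makes the telescoping sum close up; your reduction to freely reduced words discards exactly the structure needed here. (There is also a minor slip: your $r_1 = \alpha(\alpha')^{-1}$ is only a cyclic conjugate of an element of $\mathcal{R}^{\pm 1}$, not an element of $\mathcal{R}^{\pm 1}$, so $(\emptyset, r_1)$ is not a legal term of a $\mathcal{P}$-expression. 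The paper handles this separately, first proving the bound $|w| + LN$ with cyclic conjugates $s_i$ and then writing $s_i \FreeEq x_i r_i x_i^{-1}$ with $|x_i|\le L/2$ and $u_i = v_i x_i$, which costs the extra $LN$.)
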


The proof of this result makes use of the following notions concerning van Kampen diagrams.  Say the anticlockwise boundary cycle of a van Kampen diagram $D$, read from the base vertex, is given by the edge path $e_1 \cdot \ldots \cdot e_k$, where $e_1, \ldots, e_k$ are edges of $D$ (possibly with repetition) and $\cdot$ denotes concatenation.  Let $e_i$ be the first edge lying in the boundary of some $2$-cell of $D$.  Then we call the edge $e_i$ the \emph{first thick boundary edge} of $D$ and the edge path $e_1 \cdot \ldots \cdot e_{i-1}$ the \emph{initial boundary segment} of $D$.

\begin{proof}
  We actually prove the following:

  \begin{clm}
    Let $\Delta$ be a $\mc{P}$-van Kampen diagram for the word $w$ with area $N$.  Then there exist words $s_1, \ldots, s_N \in \mathcal{A}^{\pm\bast}$ labelling $2$-cells of $\Delta$, each read anticlockwise from some vertex, and there exist words $v_1, \ldots, v_N \in \mathcal{A}^{\pm\bast}$ with $v_1$ the label on the initial boundary segment of $\Delta$, such that $(v_i, s_i)_{i=1}^N$ is a $\mc{P}$-expression for $w$ and $$|v_1| + \sum_{i=1}^{N-1} \|v_i^{-1} v_{i+1} \| + \|v_N\| \leq |w| + LN.$$
  \end{clm}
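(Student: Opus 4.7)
The plan is to prove the Claim by induction on the area $N$ of $\Delta$. The case $N = 0$ is vacuous: $w$ is freely trivial and the empty $\mc{P}$-expression satisfies the (essentially empty) inequality trivially.

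For the inductive step with $N \geq 1$, let $e_i$ denote the first thick boundary edge of $\Delta$, $v_1$ the label on the initial boundary segment, and $c$ the $2$-cell of $\Delta$ containing $e_i$ on its boundary. Read $\partial c$ anticlockwise starting at the initial endpoint of $e_i$ as $s_1 \equiv \alpha \beta$, where $\alpha$ is the label of the maximal arc of consecutive edges beginning with $e_i$ shared between $\partial c$ and $\partial \Delta$, and let $t$ be the suffix of $w$ after $v_1 \alpha$. A direct computation gives $w \FreeEq (v_1 s_1 v_1^{-1}) \cdot w'$ where $w' \equiv v_1 \beta^{-1} t$. The geometric step is to produce a singular disc diagram $\Delta'$ of area $N - 1$ with boundary label $w'$, obtained by peeling $c$ off $\partial \Delta$ along $\alpha$. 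Applying the inductive hypothesis to $\Delta'$ yields data $(v_i, s_i)_{i=2}^N$ with $v_2$ the initial boundary segment label of $\Delta'$; concatenation with $(v_1, s_1)$ gives the required $\mc{P}$-expression for $w$.

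Since the $2$-cells of $\Delta'$ form a subset of those of $\Delta$ and $v_1$ consists entirely of thin edges, the initial boundary segment of $\Delta'$ extends that of $\Delta$: $v_2 \equiv v_1 \gamma$ for some word $\gamma$. Hence $v_1^{-1} v_2 \FreeEq \gamma$, so $\|v_1^{-1} v_2\| \leq |\gamma|$ and thus $|v_1| + \|v_1^{-1} v_2\| \leq |v_2|$. Since $|\alpha| \geq 1$ and $|\alpha| + |\beta| = |s_1| \leq L$, we have $|w'| - |w| = |\beta| - |\alpha| \leq L - 2$. Combining these,
\[
|v_1| + \sum_{i=1}^{N-1} \|v_i^{-1} v_{i+1}\| + \|v_N\| \leq |v_2| + \sum_{i=2}^{N-1} \|v_i^{-1} v_{i+1}\| + \|v_N\| \leq |w'| + L(N-1) \leq |w| + LN.
\]
The Bounded Noise Lemma then follows by taking $u_i := v_i$ and applying $\|v_1\| \leq |v_1|$, which yields the looser constant $2LN$.

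The principal obstacle is the geometric construction of $\Delta'$: when $\partial c$ meets $\partial \Delta$ in several disjoint arcs, naively deleting $c$ may destroy simple-connectivity. The remedy is to peel $c$ only along the single arc $\alpha$ containing $e_i$ and to verify that the resulting object is a bona fide singular disc diagram of area $N - 1$, a check where the flexibility of singular (rather than topological) disc diagrams is essential. A minor subtlety arises at $N = 1$, where the inductive hypothesis on the area-$0$ diagram $\Delta'$ produces no usable data; here one bounds $|v_1| + \|v_1\|$ directly by observing that each thin edge of $\Delta$ is traversed twice in $\partial \Delta$, whence $|v_1| + \|v_1\| \leq |w|$. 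Once these two geometric points are handled, the length bookkeeping displayed above is entirely routine.
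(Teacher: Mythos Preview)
Your proposal is correct and follows essentially the same inductive approach as the paper: peel off the $2$-cell $c$ meeting the first thick boundary edge, apply the induction hypothesis to the resulting diagram $\Delta'$, and use that the initial boundary segment of $\Delta'$ extends that of $\Delta$ to telescope the length estimate. The only substantive difference is that the paper removes just the single edge $e$ (the first thick boundary edge) together with the interior of $c$, whereas you remove the whole maximal shared arc $\alpha$; the paper's choice is slightly simpler because deleting a single open edge and an open $2$-cell manifestly leaves a singular disc diagram, so the ``multiple arcs'' issue you flag never arises. Your explicit treatment of the $N=1$ base case is a genuine improvement in exposition: the paper's inductive step tacitly assumes $N\geq 2$ when it invokes $v_2$, and one does need a short separate argument (or the convention that the initial boundary segment of an area-$0$ diagram is its entire boundary word) to close that gap.
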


  The lemma as stated follows from the claim since each $s_i$ is a cyclic conjugate of some relator $r_i \in \mathcal{R}^{\pm1}$ and so is freely equal to a word $x_i r_i x_i^{-1}$ for some $x_i \in \mathcal{A}^{\pm\bast}$ with $|x_i| \leq |r_i|/2 \leq L/2$.  It follows that we can set $u_i = v_i x_i$ and then $(u_i, r_i)_{i=1}^N$ is a $\mc{P}$-expression for $w$ and \begin{align*}
    &\|u_1\| + \sum_{i=1}^{N-1} \|u_i^{-1} u_{i+1}\| + \|u_N\| \\
    &\quad = \|v_1x_1\| + \sum_{i=1}^{N-1} \|x_i^{-1} v_i^{-1} v_{i+1} x_{i+1}\| + \|v_N x_N\| \\
    &\quad \leq \|v_1\| + \|x_1\| + \sum_{i=1}^{N-1} \left( \|v_i^{-1} v_{i+1}\| + \|x_i\| + \|x_{i+1}\| \right) + \|v_N\| + \|x_N\| \\
    &\quad \leq |v_1| + |x_1| + \sum_{i=1}^{N-1} \left( \|v_i^{-1} v_{i+1}\| + |x_i| + |x_{i+1}| \right) + \|v_N\| + |x_N| \\
    &\quad \leq |w| + LN + L/2 + L/2 + (N-1)(L/2 + L/2) \\
    &\quad = |w| + 2LN.
  \end{align*}

  The claim is proved by induction on the area of $\Delta$.  If $\Delta$ has area $0$ the conclusion is trivial.  Now suppose that $\Delta$ has area $N \geq 1$ and that the claim is true for diagrams with smaller area.  Say $\Delta$ has boundary label $w$ and initial boundary segment labelled by the word $v_1$.  Let $e$ be the first thick boundary edge of $\Delta$ and let $c$ be the unique $2$-cell of $\Delta$ that contains $e$ in its boundary.  The anticlockwise orientation of the boundary cycle of $\Delta$ induces an orientation on the edge $e$.  Say $c$ has boundary label $s_1$ read anticlockwise from the origin of $e$.

  Let $\Delta'$ be the van Kampen diagram of area $N-1$ formed from $\Delta$ by deleting the (interior of the) $2$-cell $c$ and the (interior of the) edge $e$.  Say $\Delta'$ has boundary label $w'$.  Observe that $w$ is freely equal to the word $v_1 s_1 v_1^{-1} w'$ and that $|w'| \leq |w| + L$. Applying the induction hypothesis to $\Delta'$ gives that there exist $v_2, \ldots, v_N \in \mathcal{A}^{\pm\bast}$ with $v_2$ the label on the initial boundary segment of $\Delta'$ and there exist $s_2, \ldots, s_N \in \mathcal{A}^{\pm\bast}$ labelling $2$-cells of $\Delta'$ such that $$w' \FreeEq \prod_{i=2}^N v_i s_i v_i^{-1}$$ and $$|v_2| + \sum_{i=2}^{N-1} \|v_i^{-1}v_{i+1}\| + \|v_N\| \leq |w'| + L(N-1).$$  Thus $$w \FreeEq \prod_{i=1}^N v_i s_i v_i^{-1}.$$

  By construction the initial boundary segment of $\Delta'$ is formed by concatenating the initial boundary segment of $\Delta$ with a (possibly empty) edge path $\gamma$.  Let $\alpha$ be the label on $\gamma$, so $v_2 \equiv v_1 \alpha$.  Then $|v_2| = |v_1| + |\alpha| \geq |v_1| + \|\alpha\| = |v_1| + \|v_1^{-1} v_2\|$ and so \begin{align*}
    |v_1| + \sum_{i=1}^{N-1} \|v_i^{-1} v_{i+1} \| + \|v_N\| &\leq |v_2| -
    \|v_1^{-1} v_2\| + \sum_{i=1}^{N-1} \|v_i^{-1} v_{i+1} \| + \|v_N\| \\
    &= |v_2| + \sum_{i=2}^{N-1} \|v_i^{-1} v_{i+1} \| + \|v_N\| \\
    &\leq |w'| + L(N-1) \\
    &\leq |w| + LN.
  \end{align*}
\end{proof}

\section{Infinite presentations} \label{sec1}

In the process of deriving a finite presentation for a group, we will sometimes find it useful to first produce, as an intermediate stage, a presentation with infinitely many relations.  Care must be taken when dealing with the isoperimetry of such non-finite presentations.  The Dehn functions of different finite presentations of a fixed group all have the same asymptotic behaviour.  However, the same is not true for presentations with an infinite number of relators, where the behaviour of the Dehn functions may differ markedly.  Indeed, for any group, if we take the set of relators to consist of all null-homotopic words then we obtain a presentation whose Dehn function is constant.  In order to regain some control over how the Dehn function changes when changing between (possibly non-finite) presentations, we introduce the following notions.

\begin{defn} \label{def1}
  An \emph{index} on a set $\mc{X}$ is a function $\| \cdot \| :   \mc{X} \rightarrow \N$.  This is extended to an index on the set   $\mc{X}^{\pm1}$ by setting $\| x^{-1} \| = \| x \|$.  An   \emph{indexed presentation} is a pair $(\mc{P}, \| \cdot \|)$   where $\mc{P} = \langle \mc{A} \, | \, \mc{R} \rangle$ is a presentation   and $\| \cdot \|$ is an index on $\mc{R}$.

  Let $(\mc{P}, \| \cdot \|)$ be an indexed presentation whose set of   generators $\mc{A}$ is finite.  A pair $(\alpha, \pi)$ of   functions $\alpha, \pi : \N \rightarrow \N$ is said to be an \emph{area-penetration pair} for $(\mc{P}, \| \cdot \|)$ if for all null-homotopic words   $w \in \mc{A}^{\pm\bast}$ with $|w| \leq n$ there exists a   null-$\mc{P}$-expression $(x_i, r_i)_{i=1}^m$ for $w$ with area   $m \leq \alpha(n)$ and with $\|r_i\| \leq \pi(n)$ for each $i$.

Let $\mc{Q} = \langle \mc{A} \, | \, \mc{S} \rangle$ be a presentation with each $s \in \mc{S}$ null-homotopic over $\mc{P}$ and each $r \in \mc{R}$ null-homotopic over $\mc{Q}$.  Thus $\mc{P}$ and $\mc{Q}$ present the same group.    The \emph{relational area   function} of $(\mc{P}, \| \cdot \|)$ over   $\mc{Q}$ is defined to be the function $\N \rightarrow \N \cup \{\infty\}$ given   by $$\RArea (n) = \max \{ \Area_\mc{Q} (r) \,   : \, r \in \mc{R}, \|r\| \leq n \}.$$
\end{defn}

\begin{prop} \label{prop1}
  Let $(\mc{P}, \| \cdot \|)$ and $\mc{Q}$ be as in definition   \ref{def1}.  Let $(\alpha, \pi)$ be an area-penetration   pair for $(\mc{P}, \| \cdot \|)$ and let $\RArea$ be the relational   area function of $(\mc{P}, \| \cdot \|)$ over   $\mc{Q}$.  Then the Dehn function $\delta_\mc{Q}$ of the   presentation $\mc{Q}$ satisfies $$\delta_\mc{Q}(n) \leq   \alpha(n) \RArea(\pi(n)).$$ \end{prop}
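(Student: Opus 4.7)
The proof is essentially a substitution argument: we take a $\mc{P}$-expression provided by the area-penetration pair and expand each of its relators into a $\mc{Q}$-expression, using the relational area function to control each expansion.

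More concretely, the plan is as follows. Let $w \in \fm{A}$ be null-homotopic with $|w| \leq n$. Since $\mc{P}$ and $\mc{Q}$ present the same group, $w$ is also null-homotopic over $\mc{P}$, so the area-penetration pair $(\alpha,\pi)$ produces a $\mc{P}$-expression $(x_i, r_i)_{i=1}^m$ for $w$ with $m \leq \alpha(n)$ and $\|r_i\| \leq \pi(n)$ for every $i$. Now fix $i$. By hypothesis each $r \in \mc{R}$ is null-homotopic over $\mc{Q}$, and by definition of the relational area function we have $\Area_\mc{Q}(r_i) \leq \RArea(\pi(n))$. Hence for each $i$ there exists a $\mc{Q}$-expression $(y_{i,j}, s_{i,j})_{j=1}^{m_i}$ for $r_i$ with $m_i \leq \RArea(\pi(n))$.

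Substituting these into the original $\mc{P}$-expression gives
\[
w \FreeEq \prod_{i=1}^m x_i r_i x_i^{-1} \FreeEq \prod_{i=1}^m x_i \left( \prod_{j=1}^{m_i} y_{i,j} s_{i,j} y_{i,j}^{-1} \right) x_i^{-1} \FreeEq \prod_{i=1}^m \prod_{j=1}^{m_i} (x_i y_{i,j}) s_{i,j} (x_i y_{i,j})^{-1},
\]
which is a $\mc{Q}$-expression for $w$ (since each $s_{i,j} \in \mc{S}^{\pm1}$) of area $\sum_{i=1}^m m_i \leq \alpha(n)\RArea(\pi(n))$. Taking the maximum over all null-homotopic words of length at most $n$ yields the claimed bound on $\delta_\mc{Q}(n)$.

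There is no real obstacle here: the argument is purely formal and amounts to a two-level nested expression, with the outer layer controlled by $\alpha$ and the inner layer controlled by $\RArea\circ\pi$. The only point requiring any care is the verification that the concatenated product is indeed a legitimate $\mc{Q}$-expression for $w$, which follows from freely reducing the nested product and reading off the conjugating words $x_i y_{i,j}$ — a routine manipulation in $\Fr(\mc{A})$.
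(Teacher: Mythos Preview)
Your proof is correct and is essentially identical to the paper's own argument: both obtain a $\mc{P}$-expression of area $\leq \alpha(n)$ with each relator of index $\leq \pi(n)$, replace each $r_i$ by a $\mc{Q}$-expression of area $\leq \RArea(\pi(n))$, and read off the nested product as a $\mc{Q}$-expression of total area $\leq \alpha(n)\RArea(\pi(n))$.
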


\begin{proof}
  Let $w \in \fm{A}$ be a null-homotopic word with $|w| \leq   n$.  Then there exist $\sigma_1, \ldots, \sigma_N \in \fm{A}$   and $r_1, \ldots, r_N \in \mc{R}^{\pm1}$ with $N \leq   \alpha(n)$ and $\|r_i\| \leq \pi(n)$   for each $i$ such that $$w \FreeEq \prod_{i=1}^N \sigma_i r_i   \sigma_i^{-1}.$$  For each $i$ we have that   $\Area_\mc{Q}(r_i) \leq \RArea(\|r_i\|) \leq \RArea(\pi(n))$ and therefore   there exist $\tau_{i1}, \ldots, \tau_{i M_i} \in   \fm{A}$ and $s_{i 1}, \ldots, s_{i M_i} \in   \mc{S}^{\pm1}$ with $M_i \leq \RArea(\pi(n))$ such that   $$r_i \FreeEq \prod_{j=1}^{M_i} \tau_{ij} s_{ij} \tau_{ij}^{-1}.$$  Hence $$w   \FreeEq \prod_{i=1}^N \prod_{j=1}^{M_i} (\sigma_i \tau_{ij}) s_{ij}   (\sigma_i \tau_{ij})^{-1}$$  and so $\Area_\mc{Q}(w) \leq   \sum_{i=1}^N M_i  \leq \alpha(n) \, \RArea(\pi(n))$.
\end{proof}

Section~\ref{sec2} contains a result, Theorem~\ref{thm1}, concerning area-penetration pairs and cyclic extensions.  Although we give a full algebraic proof of this theorem, the intuition behind it derives from the pictorial context and so we will sketch a proof of the slightly weaker Theorem~\ref{thm2} in this language.  We will thus need the pictorial analogue of area-penetration pairs.

\begin{defn}
  Let $(\mc{P}, \| \cdot \|)$ be an indexed presentation whose set of   generators $\mc{A}$ is finite.  A pair $(\alpha, \pi)$ of   functions $\alpha, \pi : \N \rightarrow \N$ is said to be a   \emph{pictorial area-penetration pair} for $(\mc{P}, \| \cdot \|)$ if for all   null-homotopic words $w \in \mc{A}^{\pm\bast}$ with $|w| \leq n$ there exists a   picture $\mathbb{P}$ with boundary label $w$ such that $\Area \mathbb{P} \leq   \alpha(n)$ and $\|r\| \leq \pi(n)$ for each relator $r$ of $\mc{P}$ labelling a relator   disc of $\mathbb{P}$. \end{defn}

\begin{prop}\label{prop2}
  A pair $(\alpha, \pi)$ of functions $\alpha, \pi : \N \rightarrow   \N$ is an area-penetration pair for a presentation   $\mc{P}$ if and only if it is a pictorial area-penetration pair   for $\mc{P}$. \end{prop}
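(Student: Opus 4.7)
The plan is to adapt the standard equivalence between $\mc{P}$-expressions and $\mc{P}$-pictures (van Kampen's Lemma) so that it respects the relator index $\|\cdot\|$. The key observation is that the usual back-and-forth conversion can be arranged to preserve both the area exactly and the multiset of relators from $\mc{R}^{\pm1}$ that actually appear; once this is shown, the penetration bound $\|r\|\leq\pi(n)$ transfers for free in both directions.

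For the $(\Rightarrow)$ direction, suppose $(\alpha,\pi)$ is an area-penetration pair for $(\mc{P},\|\cdot\|)$, and let $w$ be null-homotopic with $|w|\leq n$. Take a $\mc{P}$-expression $(x_i,r_i)_{i=1}^m$ for $w$ with $m\leq\alpha(n)$ and $\|r_i\|\leq\pi(n)$ for each $i$. Build a ``cactus'' picture $\mathbb{P}$ by embedding $m$ pairwise disjoint relator discs $D_1,\dots,D_m$ in the ambient disc (with $D_i$ labelled by $r_i$) and connecting each $D_i$ to the boundary by a family of parallel arcs labelled so as to spell $x_i$ along the connecting stem; arrange the stems to meet $\partial D$ in the cyclic order $x_1,\dots,x_m$ from the basepoint $b$. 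Reading $\partial D$ anticlockwise then yields a word freely equal to $\prod_{i=1}^m x_i r_i x_i^{-1}$, which is freely equal to $w$; performing the obvious free reductions along the boundary (inserting or deleting folded arcs) produces a picture whose boundary label is literally $w$. The relator discs are still labelled precisely by $r_1,\dots,r_m$, so $\Area(\mathbb{P})\leq\alpha(n)$ and every relator-disc label satisfies $\|r\|\leq\pi(n)$.

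For the $(\Leftarrow)$ direction, suppose $(\alpha,\pi)$ is a pictorial area-penetration pair, and let $\mathbb{P}$ be a picture for $w$ of area $m\leq\alpha(n)$ whose relator discs, labelled $r_1,\dots,r_m$, all have $\|r_i\|\leq\pi(n)$. Choose a ``transverse spanning tree'' in $\Back\mathbb{P}$: for each $i$, pick a simple transverse path $\gamma_i$ from $b$ to the basepoint $b_i$ of $D_i$, with the $\gamma_i$ meeting only along initial segments so that they form a tree. Let $x_i\in\fm{A}$ be the word read along $\gamma_i$. A standard tree-traversal argument (cutting the ambient disc along the tree and its attached relator discs to produce a word that freely reduces to $w$) shows that, after suitably ordering the discs according to a depth-first traversal of the tree, $(x_i,r_i)_{i=1}^m$ is a $\mc{P}$-expression for $w$. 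Its area is $m\leq\alpha(n)$ and the relators appearing are exactly $r_1,\dots,r_m$, so $\|r_i\|\leq\pi(n)$ for each $i$.

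The only step requiring genuine care is the tree-traversal identity in the reverse direction, i.e.\ checking that concatenating the $x_i r_i x_i^{-1}$ in the right order gives a word freely equal to $w$; this is exactly the content of the standard picture-to-expression half of van Kampen's Lemma and transfers verbatim. In both conversions, area and the multiset of relator labels are preserved, so neither $\alpha$ nor $\pi$ needs to be enlarged, proving the equivalence on the nose.
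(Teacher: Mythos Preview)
The paper explicitly omits the proof of this proposition, remarking that it is not needed for Theorem~\ref{thm1} (the algebraic proof of which bypasses the pictorial formulation entirely). So there is no paper proof to compare against.

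Your argument is correct and is the expected one: the standard back-and-forth of van Kampen's Lemma between $\mc{P}$-expressions and $\mc{P}$-pictures can be arranged to preserve not merely the area but the exact multiset of relators from $\mc{R}^{\pm1}$ that occur, and once this is observed the penetration bound $\|r_i\|\leq\pi(n)$ transfers automatically in both directions. The cactus construction for $(\Rightarrow)$ and the transverse spanning-tree extraction for $(\Leftarrow)$ are the standard moves. The only point worth stating slightly more carefully in $(\Rightarrow)$ is that passing from boundary label $\prod x_i r_i x_i^{-1}$ to boundary label literally $w$ may require free expansions as well as free reductions (since $w$ need not itself be reduced); both are realised by inserting or deleting floating boundary-to-boundary arcs and leave the relator discs, and hence their labels, untouched.
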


Since we do not rely on this proposition for the proof of Theorem~\ref{thm1}, we omit its proof.

\section{Cyclic extensions} \label{sec2}

Let $1 \rightarrow K \rightarrow \Gamma \rightarrow \Z \rightarrow 1$ be a cyclic extension with $K$ (and hence $\Gamma$) finitely generated.  In all of the applications presented in this thesis, $\Gamma$ will be finitely presented, but we do not need to make this assumption.  In the principal result of this section (Theorem~\ref{thm1}) we show how a presentation $\mc{P}_\Gamma$ of $\Gamma$ (of a certain form) gives rise to an infinite presentation $\mc{P}_K^\infty$ for $K$.  The relators of $\mc{P}_K^\infty$ come equipped with an index $\| \cdot \|$ and we prove that an area-radius pair for $\mc{P}_\Gamma$ is actually an area-penetration pair for $(\mc{P}_K^\infty, \| \cdot \|)$.  However, before we introduce this new material, we first recall a result of Baik-Harlander-Pride.

Let $\mc{A}$ be a finite generating set for $K$ and let $t \in \Gamma$ be an element whose image generates $\Gamma / K \cong \Z$.  Let $\theta$ be the automorphism of $K$ induced by conjugation by $t$.  For each $a \in \mc{A}$ and $\epsilon \in \{\pm 1\}$, let $w_a^\epsilon \in \fm{A}$ be a word representing $t^\epsilon a t^{-\epsilon}$ in $K$.  For each $\epsilon \in \{\pm 1\}$, define $\mc{S}^{\epsilon} = \{ t^\epsilon a t^{-\epsilon} (w_a^\epsilon)^{-1} : a \in \mc{A} \}$.  Furthermore, define an endomorphism $\Phi^\epsilon : \fm{A} \rightarrow \fm{A}$, commuting with the inversion automorphism, by mapping $a \mapsto w_a^\epsilon$.

\begin{thm}[Baik-Harlander-Pride {\cite[Theorem~6.1]{Baik1}}] \label{thm15}
  Let $\langle \mc{A}, t \, | \, \mc{R}, \mc{S}^+, \mc{S}^- \rangle$ be a presentation for $\Gamma$ with $\mc{R} \subseteq \fm{A}$.  Suppose that all the relations in the sets $\{ a \Phi^-(\Phi^+(a))^{-1} : a \in \mc{A} \}$ and $\{ \Phi^\epsilon(r) : \epsilon \in \{\pm1\}, r \in \mc{R} \}$ are null-homotopic over the presentation $\langle \mc{A} \, | \, \mc{R} \rangle$.  Then $K$ is presented by $\langle \mc{A} \, | \, \mc{R} \rangle$.
\end{thm}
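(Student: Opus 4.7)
The plan is to realize $G := \langle \mc{A} \mid \mc{R} \rangle$ as the kernel in a semidirect-product decomposition $\Gamma \cong G \rtimes \Z$ with $\Z$ acting via $\phi^+$, and thereby identify $G$ with $K$.

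First I would verify that the free-group endomorphisms $\Phi^+$ and $\Phi^-$ descend from $\Fr(\mc{A})$ to well-defined endomorphisms $\phi^+, \phi^- : G \to G$; this is exactly hypothesis (b), which says that each $\Phi^\epsilon$ carries every $r \in \mc{R}$ to a null-homotopic word over $\langle \mc{A} \mid \mc{R} \rangle$, so $\phi^\epsilon$ preserves the relator set in $G$. Hypothesis (a) then forces $\phi^- \circ \phi^+$ to agree with the identity on the generating set $\mc{A}$, and hence on all of $G$; the companion identity $\phi^+ \circ \phi^- = \mathrm{id}_G$ is the natural dual hypothesis obtained by swapping the roles of $\Phi^+$ and $\Phi^-$, and we assume it alongside (a). Thus $\phi^+$ and $\phi^-$ are mutually inverse automorphisms of $G$.

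Next I would form the semidirect product $\hat\Gamma := G \rtimes_{\phi^+} \Z$, whose natural presentation is $\langle \mc{A}, t \mid \mc{R}, \mc{S}^+ \rangle$, since the defining semidirect-product relations $tat^{-1} = \phi^+(a) = w_a^+$ for $a \in \mc{A}$ are precisely the relators in $\mc{S}^+$. In $\hat\Gamma$ the relators in $\mc{S}^-$ hold automatically, because for each $a \in \mc{A}$ we have $t^{-1}at = (\phi^+)^{-1}(a) = \phi^-(a) = w_a^-$. Hence $\hat\Gamma$ is also presented by $\mc{P}_\Gamma = \langle \mc{A}, t \mid \mc{R}, \mc{S}^+, \mc{S}^- \rangle$, so there is a canonical isomorphism $\hat\Gamma \cong \Gamma$ fixing $\mc{A}$ and $t$. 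Under this isomorphism the subgroup $G \leq \hat\Gamma$, which is the kernel of $\hat\Gamma \to \Z$, is identified with the kernel $K \leq \Gamma$ of $\Gamma \to \Z$, which is the desired conclusion.

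The main obstacle is the first step, specifically the assertion that $\phi^+$ is a genuine automorphism of $G$ rather than merely an injective endomorphism. Hypothesis (a) directly supplies only $\phi^- \circ \phi^+ = \mathrm{id}$; if the dual identity is not taken as part of the hypotheses, one would instead form the ascending HNN extension $G *_{\phi^+}$ (with presentation $\langle \mc{A}, t \mid \mc{R}, \mc{S}^+\rangle$), and then exploit the extra $\mc{S}^-$ relators to force this HNN extension to collapse onto a semidirect product by showing that every $a \in \mc{A}$ lies in $\phi^+(G)$. Once both compositions of $\phi^+$ and $\phi^-$ have been established as the identity, the remainder of the argument is a routine Tietze-style comparison of presentations.
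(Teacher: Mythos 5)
You should first note that the thesis does not actually prove Theorem~\ref{thm15}: it is quoted from Baik--Harlander--Pride, and the surrounding text only describes the original proof as proceeding by successively removing $t$-rings from van Kampen diagrams over $\langle \mc{A}, t \mid \mc{R}, \mc{S}^+, \mc{S}^- \rangle$. So the comparison is with that method rather than with an argument appearing in the thesis.

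Your proposal has a genuine gap at its central step. Writing $G = \langle \mc{A} \mid \mc{R} \rangle$, the stated hypotheses give well-defined endomorphisms $\phi^\pm$ of $G$ and the single identity $\phi^- \circ \phi^+ = \mathrm{id}_G$; this makes $\phi^+$ injective and $\phi^-$ surjective, and nothing more. The companion identity $\phi^+ \circ \phi^- = \mathrm{id}_G$, which you ``assume alongside (a)'', is not among the hypotheses and is not a formal consequence of them: it amounts to surjectivity of $\phi^+$ on $G$, i.e.\ to the assertion that each word $a\,\Phi^+(\Phi^-(a))^{-1}$ lies in the normal closure of $\mc{R}$ in $\Fr(\mc{A})$, and proving \emph{that} is essentially the content of the theorem. (These relations do hold in $K$, where $\Phi^{\pm}$ induce conjugation by $t^{\pm1}$; whether they already hold in $G$ is exactly the question of whether the natural surjection $G \twoheadrightarrow K$ is injective, so your argument is circular at this point.) Without the missing identity, $\langle \mc{A}, t \mid \mc{R}, \mc{S}^+ \rangle$ presents only the ascending HNN extension $G \ast_{\phi^+}$, whose kernel over $\Z$ is the direct limit $\varinjlim (G \xrightarrow{\phi^+} G)$ and properly contains the canonical copy of $G$ whenever $\phi^+$ is not onto; imposing the $\mc{S}^-$ relators then forces the new relations $a = \phi^+(\phi^-(a))$, which could \emph{a priori} collapse $G$ to a proper quotient. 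Your closing remark that one should ``exploit the extra $\mc{S}^-$ relators to force the HNN extension to collapse'' is precisely the unproved point, not a routine Tietze manipulation: a Reidemeister--Schreier computation of $K$ over $\mc{P}_\Gamma$ leaves exactly the family $(\Phi^+)^m\bigl(a\,\Phi^+(\Phi^-(a))^{-1}\bigr)$, $m \geq 0$, unaccounted for by the stated hypotheses. Discharging it requires genuinely geometric input --- in the cited proof, surgery on innermost $t$-rings of a van Kampen diagram, where the hypotheses on $\Phi^-(\Phi^+(a))$ and $\Phi^\epsilon(r)$ are used to transfer an $\mc{R}$-filling from one side of a $t$-corridor to the other --- or else the symmetric hypothesis on $\Phi^+(\Phi^-(a))$ must be added to the statement. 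As written, your proof simply assumes what needs to be shown.
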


We will apply Theorem~\ref{thm15} in Section~\ref{sec10} to derive finite presentations for certain subdirect products of free groups.  However, the proof of this result in \cite{Baik1} is based on successively removing $t$-rings from van Kampen diagrams over the presentation $\langle \mc{A}, t \, | \, \mc{R}, \mc{S}^+, \mc{S}^- \rangle$, a method which will in general only give an exponential isoperimetric function for $K$.  Since we will be interested in producing polynomial isoperimetric inequalities we adopt a different approach, which essentially involves removing all $t$-rings simultaneously.  We begin with a minor technicality.

\begin{defn}
  A presentation $\langle \mc{A}, t \, | \, \mc{T} \rangle$ for $\Gamma$ is said to be in \emph{positive normal form} if, for each $a \in \mc{A}$, there is precisely one relator in $\mc{T}$ of the form $tat^{-1} w$ with $w \in \fm{A}$, \emph{and}, all the relators in $\mc{T}$ involving $t$ are of this form.
\end{defn}

Thus, given words $w_a^+$ as defined above, a presentation $\langle \mc{A}, t \, | \, \mc{R}, \mc{S}^+ \rangle$ for $\Gamma$ with $\mc{R} \subseteq \fm{A}$ is in positive normal form.  In particular, if $\langle \mc{A} \, | \, \mc{R} \rangle$ is a presentation for $K$, then $\langle \mc{A}, t \, | \, \mc{R}, \mc{S}^+ \rangle$ is in positive normal form.  The following lemma shows that restricting our attention to positive normal form presentations does not impinge on the generality of our results.

\begin{lem}
  If $\Gamma$ is finitely presented then it is presented by some finite presentation in positive normal form.
\end{lem}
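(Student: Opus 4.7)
The plan is to first exhibit $\Gamma$ via an (a priori infinite) presentation in positive normal form, and then pass to a finite subpresentation using the finite presentability of $\Gamma$. The key structural input is that since $\Gamma/K \cong \Z$ is free, the extension $1 \to K \to \Gamma \to \Z \to 1$ splits, so $\Gamma = K \rtimes_\theta \Z$ where $\theta$ is the automorphism of $K$ induced by conjugation by $t$.

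For each $a \in \mc{A}$, choose a word $w_a^+ \in \fm{A}$ representing $\theta(a) = tat^{-1} \in K$, and set $\mc{S}^+ = \{ tat^{-1}(w_a^+)^{-1} : a \in \mc{A} \}$. Let $\mc{R}_K \subseteq \fm{A}$ be any set of defining relators for $K$ on the generating set $\mc{A}$; for concreteness one can take $\mc{R}_K$ to be the set of all null-homotopic words over $\mc{A}$, which in general will be infinite since $K$ need not be finitely presentable. I claim that $\langle \mc{A}, t \mid \mc{R}_K, \mc{S}^+ \rangle$ is a presentation of $\Gamma$: the relations $\mc{R}_K$ force the subgroup generated by $\mc{A}$ to be a quotient of $K$, the $\mc{S}^+$ relations express that conjugation by $t$ acts on $\mc{A}$ as $\theta$, and since $\theta$ is an automorphism of $K$ determined by its values on $\mc{A}$, the universal property of the semidirect product identifies this group with $K \rtimes_\theta \Z = \Gamma$. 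Concretely, one constructs mutually inverse homomorphisms between this presented group and $K \rtimes_\theta \Z$ by sending generators to generators, checking that all defining relations are satisfied on each side. This presentation is in positive normal form, since every relator is either a word in $\fm{A}$ (an element of $\mc{R}_K$) or the unique relator $tat^{-1}(w_a^+)^{-1}$ associated to some $a \in \mc{A}$.

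To finish, fix any finite presentation $\langle \mc{A}, t \mid \mc{S} \rangle$ of $\Gamma$ (obtained via Tietze transformations from any finite presentation of $\Gamma$, using that $\mc{A} \cup \{t\}$ generates $\Gamma$). By the standard observation that every relator $s \in \mc{S}$ is a product of finitely many conjugates of elements of $\mc{R}_K^{\pm 1} \cup \mc{S}^{+,\pm 1}$, collecting the finitely many elements appearing across all $s \in \mc{S}$ produces a finite subset $\mc{R}' \subseteq \mc{R}_K \cup \mc{S}^+$ with $\langle \mc{A}, t \mid \mc{R}' \rangle = \Gamma$. Splitting $\mc{R}' = \mc{R}_0 \sqcup \mc{S}_0$ with $\mc{R}_0 \subseteq \mc{R}_K$ and $\mc{S}_0 \subseteq \mc{S}^+$, and then enlarging $\mc{S}_0$ to all of the finite set $\mc{S}^+$ (harmless, since every element of $\mc{S}^+$ is null-homotopic in $\Gamma$ and hence already a consequence of $\mc{R}'$), yields the desired finite presentation $\langle \mc{A}, t \mid \mc{R}_0, \mc{S}^+ \rangle$ of $\Gamma$ in positive normal form. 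The main conceptual step is the second paragraph, where the splitness of the extension and the fact that $\theta$ is an automorphism conspire to give a canonical positive normal form presentation; the extraction of a finite subpresentation is then purely formal.
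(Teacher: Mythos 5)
Your proposal is correct and follows essentially the same route as the paper: exhibit $\Gamma$ by the (possibly infinite) positive normal form presentation $\langle \mc{A}, t \mid \mc{R}, \mc{S}^+ \rangle$ arising from the splitting $\Gamma \cong K \rtimes_\theta \Z$, then use finite presentability to pass to a finite subset of $\mc{R}$ while retaining all of the finite set $\mc{S}^+$. The only difference is that you spell out the two steps the paper leaves as standard assertions (that the semidirect product presentation with only the positive conjugation relators works because $\theta$ is an automorphism, and the extraction of a finite subpresentation), which is fine.
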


\begin{proof}
  Let $\langle \mc{A} \, | \mc{R} \rangle$ be an arbitrary (not necessarily finite) presentation for $K$.  Then $\Gamma$ is presented by the positive normal form presentation $\langle \mc{A}, t \, | \, \mc{R}, \mc{S}^+ \rangle$.  Since $\Gamma$ is finitely presented there is some finite subcollection of $\mc{R} \cup \mc{S}^+$ which suffice as a set of defining relators.  In particular, there exists a finite subset $\mc{R}' \subseteq \mc{R}$ so that $\Gamma$ is finitely presented by $\langle \mc{A}, t \, | \, \mc{R}', \mc{S}^+ \rangle$.
\end{proof}

Now let $\mc{P}_\Gamma = \langle \mc{A}, t \, | \, \mc{R}, \mc{S} \rangle$  be a positive normal form presentation for $\Gamma$ with $\mc{R} \subseteq \fm{A}$ and $\mc{S} = \{ tat^{-1}w_a^{-1} : a \in \mc{A} \}$ for some words $w_a \in \fm{A}$.  For each $k \in \Z$, let $\Phi_k : \mc{A}^{\pm\bast} \rightarrow \mc{A}^{\pm\bast}$ be an endomorphism that lifts $\theta^k : K \rightarrow K$ and commutes with the inversion involution of $\mc{A}^{\pm\bast}$.  We take $\Phi_0$ to be the identity.  Define the following collections of words in $\fm{A}$: \begin{align*}\overline{\mc{R}} &= \{ \Phi_k(r) \, : \, r \in \mc{R}, k \in \Z \}\\ \overline{\mc{S}} &= \{ \Phi_{k+1}(a) \Phi_k(w_{a})^{-1} \, : \, a \in \mc{A}, k \in \Z\}.\end{align*}  Note that each word in $\overline{\mc{R}} \cup \overline{\mc{S}}$ is null-homotopic in $K$.  Define $\mc{P}_K^\infty = \langle \mc{A} \, | \, \ol{\mc{R}}, \ol{\mc{S}} \rangle$ and define an index $\| \cdot \|$ on $\overline{\mc{R}} \cup \overline{\mc{S}}$ by setting $\|\omega \|$ to be the minimal value of $|k|$ such that either $\omega \equiv \Phi_k(r)$ for some $r \in \mc{R}$ or $\omega \equiv \Phi_{k+1}(a) \Phi_k(w_{a})^{-1}$ for some $a \in \mc{A}$.

\begin{thm} \label{thm1}
  $K$ is presented by $\mc{P}_K^\infty$.  Furthermore, if $(\alpha, \rho)$ is an area-radius pair for $\mc{P}_\Gamma$ then it is also an area-penetration pair
  for the indexed presentation $(\mc{P}_K^\infty, \| \cdot \|)$.
\end{thm}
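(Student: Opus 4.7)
The plan is to build an explicit word-rewriting $y \mapsto \bar y$ from $(\mc{A} \cup \{t\})^{\pm\bast}$ to $\fm{A}$ and to show that applying it termwise to a $\mc{P}_\Gamma$-expression for $w$ produces a $\mc{P}_K^\infty$-expression for $w$ with the same area and with relator-indices bounded by the original radius. The map is defined by processing $y$ left to right while maintaining a state $(u,k) \in \fm{A} \times \Z$ initialised at $(\emptyset, 0)$: reading a letter $a \in \mc{A}^{\pm1}$ updates the state to $(u \cdot \Phi_k(a), k)$, and reading $t^\epsilon$ updates it to $(u, k+\epsilon)$. Then $\bar y$ is the final first component, the final $k$ is the $t$-exponent sum $e(y)$, and a quick induction gives $y = \bar y \cdot t^{e(y)}$ in $\Gamma$.

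Almost everything then reduces to four properties of $\bar{\cdot}$. First, for $r \in \fm{A}$ one has $\overline{y r y^{-1}} \equiv \bar y \cdot \Phi_{e(y)}(r) \cdot \bar y^{-1}$ as words, because when $y^{-1}$ is processed after $yr$ each $\mc{A}$-letter $a$ of $y$ is met as $a^{-1}$ at the same current $t$-exponent as on the outward trip (since $\mc{A}$-letters do not alter the $t$-exponent), so contributes $\Phi_k(a)^{-1}$; the total contribution from $y^{-1}$ is thus literally $\bar y^{-1}$. Second, for $r = (tat^{-1}w_a^{-1})^{\pm1} \in \mc{S}^{\pm1}$ the analogous computation gives $\overline{y r y^{-1}} \equiv \bar y \cdot (\Phi_{e(y)+1}(a)\Phi_{e(y)}(w_a)^{-1})^{\pm1} \cdot \bar y^{-1}$, whose central factor lies in $\ol{\mc{S}}^{\pm1}$ with $\|\cdot\|$-index $|e(y)|$. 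Third, if $z_1 \cdots z_m$ is a concatenation of blocks with each $e(z_j)=0$, then $\overline{z_1 \cdots z_m} \equiv \bar{z_1} \cdots \bar{z_m}$, because the state $t$-exponent returns to zero at each block boundary. Fourth, if $u_1 \FreeEq u_2$ in $\Fr(\mc{A}\cup\{t\})$ then $\bar{u_1} \FreeEq \bar{u_2}$ in $\Fr(\mc{A})$: reducing a $tt^{-1}$ or $t^{-1}t$ pair leaves $\bar{\cdot}$ literally unchanged, while reducing an $aa^{-1}$ or $a^{-1}a$ pair introduces an extra factor of the form $\Phi_k(a)\Phi_k(a)^{-1}$, which is freely trivial in $\Fr(\mc{A})$.

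With these in hand, the theorem drops out. Given a null-homotopic $w \in \fm{A}$ with $|w| \leq n$ and a $\mc{P}_\Gamma$-expression $(y_i, r_i)_{i=1}^m$ with $m \leq \alpha(n)$ and each $|y_i| \leq \rho(n)$, the third and fourth properties give $\prod_i \overline{y_i r_i y_i^{-1}} \FreeEq \bar w = w$ in $\Fr(\mc{A})$, while the first two yield $\overline{y_i r_i y_i^{-1}} \equiv \bar{y_i}\, s_i\, \bar{y_i}^{-1}$ with $s_i \in (\ol{\mc{R}} \cup \ol{\mc{S}})^{\pm1}$ of $\|\cdot\|$-index at most $|e(y_i)| \leq |y_i| \leq \rho(n)$. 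Thus $(\bar{y_i}, s_i)_{i=1}^m$ is the required $\mc{P}_K^\infty$-expression, of area $m \leq \alpha(n)$ and penetration $\leq \rho(n)$. The claim that $\mc{P}_K^\infty$ presents $K$ comes for free, since every relator in $\ol{\mc{R}} \cup \ol{\mc{S}}$ is manifestly null-homotopic in $K$ (using that $\Phi_k$ lifts $\theta^k$ and that $w_a$ represents $\theta(a)$), and the same rewriting, applied to an arbitrary $\mc{P}_\Gamma$-expression for a word in $\fm{A}$, works without any finite-presentation hypothesis.

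The main obstacle is the first of the four properties: verifying that the letters appended while processing $y^{-1}$ are exactly the inverses, in reverse order, of those appended while processing $y$. The key observation is the $t$-exponent symmetry, that an $\mc{A}$-letter at position $j$ of $y$ and its mirror in $y^{-1}$ are encountered at the same state $t$-exponent, since $\mc{A}$-letters contribute nothing to $e$. Formalising this index-matching is the principal algebraic step; once it is in place, the remaining three properties follow by routine variations of the same book-keeping.
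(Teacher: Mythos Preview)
Your argument is correct and is essentially the same as the paper's: the paper factors through the free subgroup $N = \ker(F(\mc{A}\cup\{t\})\to\Z)$, which is free on $\{t^k a t^{-k}\}$, and defines the rewriting as the homomorphism $\Psi : N \to F(\mc{A})$ sending $t^k a t^{-k} \mapsto \Phi_k(a)$, whereas you describe the very same map as a left-to-right state machine tracking the running $t$-exponent. The only substantive difference is packaging: your four properties are exactly what the paper gets for free from $\Psi$ being a group homomorphism, so the two proofs are the same argument in different clothing.
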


The utility of Theorem~\ref{thm1} is that if one can demonstrate that each word in $\ol{\mc{R}} \cup \ol{\mc{S}}$ is null-homotopic over some finite presentation $\mc{P}_K$, then it will follow that $\mc{P}_K$ presents $K$.  Furthermore, by applying Proposition~\ref{prop1} one can obtain an upper bound on the Dehn function of $\mc{P}_K$.

The following slightly weaker version of Theorem~\ref{thm1} will actually be sufficient for our purposes.  This result also has the advantage that its proof can be seen intuitively in the language of pictures.  However, we wish to avoid having to prove the equivalence given in Propositions~\ref{prop12} and \ref{prop2} between algebraically and pictorially defined area-radius and area-penetration pairs. We thus give a proof of Theorem~\ref{thm2} in the language of pictures and follow this with an algebraic proof of Theorem~\ref{thm1}.

\begin{thm} \label{thm2}
  $K$ is presented by $\mc{P}_K^\infty$.  Furthermore, if $(\alpha, \rho)$ is an area-radius pair for   $\mc{P}_\Gamma$ then there exist functions $\alpha', \rho' :   \N \rightarrow \N$ with $\alpha \simeq \alpha'$ and $\rho \simeq \rho'$ such that $(\alpha', \rho')$ is an area-penetration pair   for the indexed presentation $(\mc{P}_K^\infty, \| \cdot \|)$. \end{thm}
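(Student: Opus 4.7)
The plan is to exploit the pictorial interpretation of area-radius pairs and then perform a combinatorial surgery on a $\mc{P}_\Gamma$-picture to eliminate the generator $t$. First I would invoke Proposition~\ref{prop12} to produce, for each null-homotopic $w \in \fm{A}$ with $|w| \leq n$, a $\mc{P}_\Gamma$-picture $\mathbb{P}$ with boundary label $w$, area at most $\alpha''(n)$, and radius at most $\rho''(n)$, where $(\alpha'', \rho'')$ is a pictorial area-radius pair with $\alpha \simeq \alpha''$ and $\rho \simeq \rho''$. Because $w$ contains no $t$, no $t$-arc of $\mathbb{P}$ meets $\partial D$, and every $\mc{S}$-disc has precisely two $t$-labelled arcs on its boundary (one for $t$ and one for $t^{-1}$).

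Next I would equip $\mathbb{P}$ with a depth function. Since every relator in $\mc{R} \cup \mc{S}$ has zero $t$-exponent, the signed count of $t$-arcs crossed along a transverse path from the basepoint $b$ depends only on the endpoint, yielding $d: \Back(\mathbb{P}) \to \Z$ with $|d(C)| \leq \rho''(n)$ for every complementary region $C$. Each $\mc{R}$-disc inherits a well-defined depth $k$, as all regions touching it are separated only by $\mc{A}$-arcs and $\mc{A}$-arcs do not affect depth; each $\mc{S}$-disc straddles two depths $k$ and $k+1$, with the $a$-arc on its boundary lying adjacent to the depth-$(k+1)$ side and the $w_a$-arcs lying adjacent to the depth-$k$ side (up to orientations of the $t$-arcs).

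I would then build a $\mc{P}_K^\infty$-picture $\mathbb{P}'$ for $w$ by the following surgery: delete every $t$-arc; replace each $\mc{R}$-disc at depth $k$ (originally labelled $r$) by an $\ol{\mc{R}}$-disc labelled $\Phi_k(r)$; replace each $\mc{S}$-disc straddling depths $k, k+1$ (originally labelled $tat^{-1}w_a^{-1}$) by an $\ol{\mc{S}}$-disc labelled $\Phi_{k+1}(a)\Phi_k(w_a)^{-1}$; and replace each $\mc{A}$-arc at depth $k$ labelled by a letter $a$ with a parallel band of $|\Phi_k(a)|$ arcs spelling $\Phi_k(a)$. The consistency to be checked is that the boundary word of each new disc, read off along the rewired arcs, matches the label prescribed by the surgery: for an $\ol{\mc{R}}$-disc this is immediate by applying $\Phi_k$ letterwise to $r$; for an $\ol{\mc{S}}$-disc the $a$-arc blown up at depth $k+1$ contributes the subword $\Phi_{k+1}(a)$, while the $w_a$-arcs blown up at depth $k$ contribute $\Phi_k(w_a)$ on the opposite side; on $\partial D$ the depth is $0$ and $\Phi_0$ is the identity, so the boundary label of $\mathbb{P}'$ is still $w$.

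Finally, $\Area \mathbb{P}' = \Area \mathbb{P} \leq \alpha''(n)$ and every relator appearing in $\mathbb{P}'$ has index at most $\rho''(n) + 1$, so setting $(\alpha', \rho') := (\alpha'', \rho'' + 1)$ proves the area-penetration claim, and the fact that every null-homotopic word in $\fm{A}$ admits such a $\mc{P}_K^\infty$-picture establishes that $\mc{P}_K^\infty$ presents $K$. The main obstacle I anticipate is making the ``parallel band'' replacement of arcs into a precise combinatorial construction, verifying that parallel bands emanating from the same disc endpoint are compatibly wired and that no spurious transverse intersections are introduced; once this bookkeeping is in place, the area and penetration estimates follow automatically.
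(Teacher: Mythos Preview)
Your proposal is correct and follows essentially the same approach as the paper: invoke Proposition~\ref{prop12} to obtain a pictorial area-radius pair, assign a $\Z$-valued height/depth to each complementary region of a $\mc{P}_\Gamma$-picture (bounded by the radius), replace each $\mc{A}$-arc at height $k$ by a parallel band spelling $\Phi_k(a)$ and delete the $t$-arcs, then read off the new disc labels as elements of $\ol{\mc{R}} \cup \ol{\mc{S}}$ with bounded index. The only cosmetic differences are that the paper defines the height via the full element $g(C) \in \Gamma$ and then projects to $\Gamma/K$ (invoking \cite[Theorem~2.3]{Pride1} for well-definedness), whereas your signed-$t$-crossing argument is a more direct route to the same function; and the paper closes by citing Proposition~\ref{prop2} to pass from the pictorial area-penetration pair back to an algebraic one, a step you should mention for completeness.
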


\begin{proof}
  Let $w \in \fm{A}$ be a null-homotopic word of length at most $n$. By Proposition~\ref{prop12} there exists a pictorial area-radius pair $(\alpha', \rho')$ for $\mc{P}_\Gamma$ such that $\alpha \simeq \alpha'$ and $\rho \simeq \rho'$.  Let $\mathbb{P}$ be a $\mc{P}_\Gamma$-picture with boundary word $w$ such that $\Area \mathbb{P} \leq \alpha'(n)$ and $\Rad \mathbb{P} \leq \rho'(n)$.  Say $\mathbb{P}$ has ambient disc $D$, basepoint $b$, relator discs
  $D_1, \ldots, D_m$ and arcs $\gamma_1, \ldots, \gamma_l$.

  We now describe how to assign to each complementary region $C$ of $\mathbb{P}$ an element $g(C)$ of $\Gamma$.  If $\sigma$ is a transverse path between points in $\Back \mathbb{P}$ then reading along $\sigma$ defines a word $W(\sigma) \in (\mc{A} \cup \{t\})^{\pm\bast}$, where we understand that if $\sigma$ crosses an arc labelled $x$ in the direction of its normal orientation then we read $x$, and if $\sigma$ crosses the arc in the opposite direction to its normal orientation then we read $x^{-1}$.  By \cite[Theorem 2.3]{Pride1} if $p_1, p_2 \in \Back \mathbb{P}$ and $\tau$ and $\tau'$ are transverse paths from $p_1$ to $p_2$ then $W(\tau)$ and $W(\tau')$ represent the same element in $\Gamma$.  Given a point $p \in \Back \mathbb{P}$ define $g(p)$ to be the element of $\Gamma$ represented by a transverse path $\sigma$ from $b$ to $p$.  If $p'$ lies in the same complementary region $C$ as $p$ then we can adjoin to $\sigma$ a path from $p$ to $p'$ lying wholly in $C$ to obtain a transverse path $\sigma'$ from $b$ to $p'$ with $W(\sigma) = W(\sigma')$.  Thus $g(p) = g(p')$ and we
  can define $g(C)$ to be this element of $\Gamma$.

  By an $\mc{A}$-arc of $\mathbb{P}$ we mean an arc labelled by a letter in $\mc{A}$.  We now show how to assign a height $h(\gamma) \in \Z$ to each $\mc{A}$-arc $\gamma$.  Let $\overline{t}$ be the image of $t$ under the quotient homomorphism $q : \Gamma \rightarrow \Gamma/K \cong \Z$ and define the height $h(C)$ of a complementary region $C$ of $\mathbb{P}$ to be the exponent of $\overline{t}$ in $q(g(C))$.  Now suppose that $\gamma$ is an arc of $\mathbb{P}$ labelled by the letter $a \in \mc{A}$.  Say $\gamma$ lies in the boundary of the complementary regions $C_1$ and $C_2$, which may or may not be distinct.  We will show that $h(C_1) = h(C_2)$ and define $h(\gamma)$ to be this number.  Let $\sigma_1$ be a transverse path from $b$ to a point $p_1 \in C_1$ and let $\tau$ be a transverse path from $p_1$ to a point $p_2 \in C_2$ which intersects $\gamma$ exactly once and intersects no other arcs of $\mathbb{P}$.  Then the composition $\sigma_2$ of $\sigma_1$ and $\tau$ is a transverse path from $b$ to $p_2$ with $W(\sigma_2) = W(\sigma_1)W(\tau) = W(\sigma_1)a^{\pm1}$ in $(\mc{A} \cup \{t\})^{\pm\bast}$.  Thus $g(C_2) =
  g(C_2)a^{\pm1}$ in $\Gamma$ and so $h(C_2) = h(C_1)$.

  Note that for each complementary region $C$ we can choose a transverse path from $b$ to a point in $C$ with intersection number at most $\Rad \mathbb{P}$ and so $|h(C)| \leq \Rad \mathbb{P}$.  It follows that for all $\mc{A}$-arcs $\gamma$ one
  similarly has $|h(\gamma)| \leq \Rad \mathbb{P}$.

  We now modify $\mathbb{P}$ to produce a $\mc{P}_K^\infty$-picture $\ol{\mathbb{P}}$ for the word $w$.  This is done by deleting each $\mc{A}$-arc $\gamma_i$ labelled by a letter $a$ and replacing it by a collection of $l_i := \left| \Phi_{h(\alpha_i)}(a) \right|$ parallel arcs $\gamma_i^1, \ldots, \gamma_i^{l_i}$ labelled by the letters of the word $\Phi_{h(\gamma_i)}(a)$. We now describe precisely what we mean by this. Say $\gamma_i$ joins $\partial \Lambda_i^\iota$ to $\partial \Lambda_i^\tau$, where $\Lambda_i^\iota, \Lambda_i^\tau \in \{D, D_1, \ldots, D_m\}$. Let $N_i^\iota$ and $N_i^\tau$ be neighbourhoods of $\gamma_i \cap \Lambda_i^\iota$ and $\gamma_i \cap \Lambda_i^\tau$ in $\partial \Lambda_i^\iota$ and $\partial \Lambda_i^\tau$ respectively.  We choose $N_i^\iota$ and $N_i^\tau$ to be homoeomorphic to the unit interval and to be disjoint from all basepoints and all other arcs of $\mathbb{P}$.  Each $\gamma_i^j$ joins $N_i^\iota$ to $N_i^\tau$ and we choose them so as they are all disjoint and their interiors are disjoint from $\cup_{k=1}^m D_k$.  We orientate and label the arcs $\gamma_i^j$ so as reading along $N_i^\iota$ in the direction of the orientation of $\gamma_i$ gives the word $\Phi_{h(\gamma_i)}(a)$.  The picture $\ol{\mathbb{P}}$ is now completed by deleting all the arcs $\gamma_i$ labelled by the
  letter $t$.

  If a disc $D_i$ had label $r \in \mc{R}^{\pm1}$ in $\mathbb{P}$ then all the arcs incident with $D_i$ in $\mathbb{P}$ had the same height $h$.  Thus the corresponding disc in $\ol{\mathbb{P}}$ has label $\Phi_h(r) \in \overline{R}^{\pm1}$ for some $h$ with $|h| \leq \Rad \mathbb{P}$.  If the disc $D_i$ had the label $\left(tat^{-1}w_{a}^{-1}\right)^{\pm1} \in \mc{S}^{\pm1}$ in $\mathbb{P}$ then the incident arc labelled $a$ had height $h$ and the incident arcs labelled by the letters of $w_{a}$ had height $h-1$, for some $h \in \Z$.  Thus the corresponding disc in $\ol{\mathbb{P}}$ has label $\left(\Phi_{h}(a) \Phi_{h-1}(w_{a})^{-1}\right)^{\pm1} \in \overline{\mc{S}}^{\pm1}$ for some $h$ with $|h|$ and $|h-1|$ at most $\Rad \mathbb{P}$.

  By a boundary arc of $\mathbb{P}$ we will mean an arc with at least one of its endpoints lying in $\partial D$.  Note that all boundary arcs of $\mathbb{P}$ are $\mc{A}$-arcs.  If $C$ is a complementary region of $\mathbb{P}$ with the boundary of its closure intersecting $\partial D$ non-trivially, then there exists a transverse path in $\mathbb{P}$ from $b$ to $C$ which intersects only boundary arcs.  Thus $C$ has zero height.  It follows that all the boundary arcs of $\mathbb{P}$ have zero height and hence that the boundary label of $\ol{\mathbb{P}}$ is $\Phi_0(w) \equiv w$.  Thus $\ol{\mathbb{P}}$ is a $\mc{P}_K^{\infty}$-picture for the word $w$, with $\Area \ol{\mathbb{P}} = \Area \mathbb{P}$ and with each relator $z \in (\ol{\mc{R}} \cup \ol{\mc{S}})^{\pm1}$ labelling a disc of $\ol{\mathbb{P}}$ having $\|z\|
  \leq \Rad \mathbb{P}$.

  Since the word $w$ was arbitrary it follows that $\mc{P}_K^\infty$ presents $K$ and has $(\alpha', \rho')$ as a pictorial area-penetration pair.  By Proposition~\ref{prop2} it follows that $(\alpha', \rho')$ is also an area-penetration pair for $\mc{P}_K^{\infty}$.
\end{proof}

\begin{proof}[Proof of Theorem \ref{thm1}]
  Let $w \in \fm{A}$ be a null-homotopic word of length at most $n$ and let $(x_i, z_i)_{i=1}^m$ be a $\mc{P}_\Gamma$-expression for $w$ with $m \leq \alpha(n)$ and with $|x_i| \leq \rho(n)$ for
  each $i$.

  We write $h(u)$ for the exponent sum in the letter $t$ of a word $u \in (\mc{A} \cup \{t\})^{\pm\bast}$ and define $\widetilde{N}$ to be the submonoid of $(\mc{A} \cup \{t\})^{\pm\bast}$ consisting of all those words $u$ with $h(u) = 0$.  Define $\mc{X}$ to be the set of words $\{t^k a t^{-k} \, : \, a \in \mc{A}, k \in \Z \} \leq (\mc{A} \cup \{t\})^{\pm\bast}$.  Let $L$ be the submonoid of $\widetilde{N}$ generated by $\mc{X}^{\pm1}$ and note that $L$ is free on this basis. If $u \in \widetilde{N}$ write $\Lambda(u)$ for the unique word in $L$ which is freely equal to $u$ in $F(\mc{A} \cup \{t\})$ and freely reduced as an element of $F(\mc{X})$.  For each $i \in \{1, \ldots, m\}$, define $\ol{x}_i \equiv \Lambda(x_i t^{-h(x_i)})$ and $\ol{z}_i = \Lambda(t^{h(x_i)} z_i t^{-h(x_i)})$.  Define $\sigma \equiv \prod_{i=1}^m \ol{x}_i \ol{z}_i \ol{x}_i^{-1}$ and note that $w
  \FreeEq \sigma$ in $F(\mc{A} \cup \{t\})$.

  Define a homomorphism $\Psi : L \rightarrow \fm{A}$, which commutes with the inversion involution of $L$, by mapping $t^k a t^{-k} \mapsto \Phi_k(a)$.  Let $N$ be the kernel of the homomorphism $F(\mc{A} \cup \{t\}) \rightarrow \Z$ defined by mapping $t$ to $1$ and each $a \in \mc{A}$ to $0$, and note that $N$ is free with basis the image of $\mc{X}$.  Thus $\Psi$ descends to a homomorphism $N \rightarrow F(\mc{A})$ and since $w \FreeEq \sigma$ in $N$ we have that $\Psi(w) \FreeEq \Psi(\sigma)$ in $F(\mc{A})$.  Observe that $\Psi(\sigma) \equiv \prod_{i=1}^m \Psi(\ol{x}_i) \Psi(\ol{z}_i) \Psi(\ol{x}_i)^{-1}$
  and $\Psi(w) \equiv w$ since $w$ contains no occurrence of the letter $t$.

  If $z_i \equiv a_1 \ldots a_l \in \mc{R}$ then $\ol{z}_i \equiv t^k a_1 t^{-k} \ldots t^k a_l t^{-k}$ for some $k \in \Z$ with $|k| = |h(x_i)| \leq |x_i|$.  Thus $\Psi(\ol{z}_i) \equiv \Phi_k(z_i)$ where $|k| \leq \rho(n)$.  If $z_i \equiv t a t^{-1} a_1 \ldots a_l \in \mc{S}$ then $\ol{z}_i \equiv t^{k+1} a t^{-k-1} t^k a_1 t^{-k} \ldots t^{k} a_l t^{-k}$ for some $k \in \Z$ with $|k| = |h(x_i)| \leq |x_i|$.  Thus $\Psi(\ol{z}_i) \equiv \Phi_{k+1}(a) \Phi_k(w_{a})^{-1}$ where $\min\{|k+1|, |k|\} \leq |k| \leq \rho(n)$.  In either case we have that $\Psi(\ol{z}_i) \in \ol{\mc{R}} \cup \ol{\mc{S}}$ and $\|\Psi(\ol{z}_i)\| \leq \rho(n)$.  Thus $(\Psi(\ol{x}_i), \Psi(\ol{z}_i))_{i=1}^m$ is a $\mc{P}_K^{\infty}$-expression for $w$ and, since $w$ was arbitrary, we see that $\mc{P}_K^\infty$ presents $K$ and that $(\alpha, \rho)$ is an area-penetration pair for $\mc{P}_K^{\infty}$.
\end{proof}

\section{Amalgamated products}

In this section we present a method for giving lower bounds on the Dehn functions of amalgamated products.  Specifically we will be concerned with finitely presented amalgamated products $\Gamma = G_1 \ast_H G_2$ of finitely generated groups $G_1$ and $G_2$ over a finitely generated subgroup $H$ which is proper in each $G_i$.

Suppose each $G_i$ is presented by $\langle\mathcal{A}_i \, | \, \mathcal{R}_i \rangle$, with $\mathcal{A}_i$ finite.  Note that we are at liberty to choose the $\mc{A}_i$ so as each $a \in \mc{A}_i$ represents an element of $G_i \smallsetminus H$.  Indeed, since $H$ is proper in $G_i$, there exists some $a' \in \mc{A}_i$ representing an element of $G_i \smallsetminus H$ and we can replace each other element $a \in \mc{A}_i$ by $a' a$ if necessary.  Let $\mathcal{B}$ be a finite generating set for $H$ and for each $b \in \mathcal{B}$ choose words $u_b \in \mathcal{A}_1^{\pm\bast}$ and $v_b \in \mathcal{A}_2^{\pm\bast}$ which equal $b$ in $\Gamma$. Define $\mc{E} \subseteq (\mc{A}_1 \cup \mc{A}_2 \cup \mc{B})^{\pm\bast}$ to be the finite collection of words $\{bu_b^{-1}, bv_b^{-1} \, : \, b \in \mc{B}\}$. Then, since $\Gamma$ is finitely presented, there exist finite subsets $\mathcal{R}_1' \subseteq \mathcal{R}_1$ and $\mathcal{R}_2' \subseteq \mathcal{R}_2$ such that $\Gamma$ is finitely presented by $$\mathcal{P} = \langle \mathcal{A}_1, \mathcal{A}_2, \mathcal{B} \, | \, \mathcal{R}_1', \mathcal{R}_2', \mc{E} \rangle.
$$

\begin{thm} \label{thm13}
Let $w \in \mc{A}_1^{\pm\bast}$ be a word representing an element $h
\in H$ and let $u \in \mathcal{A}_1^{\pm\bast}$ and $v \in
\mathcal{A}_2^{\pm\bast}$ be words representing elements $\alpha \in
G_1 \smallsetminus H$ and $\beta \in G_2 \smallsetminus H$ respectively.  If
$[\alpha, h] = [\beta , h] = 1$ then
$$\Area_\mathcal{P}([w, (uv)^n]) \geq 2n \, {\rm d}_\mathcal{B}(1, h)$$ where ${\rm d}_\mathcal{B}$ is
the word metric on $H$ associated to the generating set
$\mathcal{B}$.
\end{thm}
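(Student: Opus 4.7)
The plan is to exploit the Bass--Serre tree $T$ of the amalgamation $\Gamma = G_1 \ast_H G_2$. Since $\alpha \in G_1 \smallsetminus H$ and $\beta \in G_2 \smallsetminus H$, the element $uv = \alpha\beta$ acts hyperbolically on $T$ with translation length $2$; let $L$ be its axis, with consecutive vertices $v_0 = G_1, v_1, v_2, \ldots$ chosen so that $(uv)^n \cdot v_0 = v_{2n}$. Since $h$ stabilises the edge $H$ of $T$ and commutes with $uv$, it fixes $L$ pointwise.

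Let $\Delta$ be a reduced $\mc{P}$-van Kampen diagram of minimum area for $[w, (uv)^n]$. I would lift $\Delta$ to the Cayley $2$-complex of $\mc{P}$ and compose with the natural $\Gamma$-equivariant projection onto $T$ (collapsing each $G_i$-sheet to a vertex and sending each $H$-sheet linearly across the corresponding edge) to obtain a cellular map $\phi : \Delta \to T$. After a small perturbation, $\phi$ may be taken transverse to a chosen interior point $m_j$ of each axis edge $e_j = v_{j-1}v_j$, for $j = 1, \ldots, 2n$. Tracing the boundary, $\phi(\partial \Delta)$ traverses the axis segment from $v_0$ to $v_{2n}$ and returns, and so crosses each $m_j$ exactly twice: once during the forward $(uv)^n$-portion and once during the backward $(uv)^{-n}$-portion.

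For each $j$, the preimage $\phi^{-1}(m_j)$ is a disjoint union of embedded circles and arcs in $\Delta$, and the two boundary crossings furnish an arc $\gamma_j$ with one endpoint on the $(uv)^n$-subarc of $\partial \Delta$ and the other on the $(uv)^{-n}$-subarc. The interior of $\gamma_j$ crosses interior $\mc{B}$-edges of $\Delta$ transversely, and reading the labels of these edges (with appropriate signs) yields a word $\omega_j \in \mc{B}^{\pm\bast}$. The crucial step is to show that $\omega_j$ represents $h$ in $H$: the arc $\gamma_j$ separates $\Delta$ into two sub-discs, one of which has boundary labelled by a specific sub-word of $[w, (uv)^n]$ concatenated with $\omega_j^{\pm 1}$, and using $[\alpha, h] = [\beta, h] = 1$ to rewrite that sub-word in $\Gamma$ one identifies the $H$-element transported across $\gamma_j$ as precisely $h$.

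Given this, each $\omega_j$ has length at least $d_\mc{B}(1,h)$, and the arcs $\gamma_1, \ldots, \gamma_{2n}$ are pairwise disjoint (being preimages of distinct points of $T$). They therefore meet at least $2n \cdot d_\mc{B}(1,h)$ interior $\mc{B}$-edges of $\Delta$, and since each such $\mc{B}$-edge bounds two $\mc{E}$-cells, the area of $\Delta$ is bounded below by $2n \cdot d_\mc{B}(1,h)$, as required. The chief technical obstacle is the verification that each $\omega_j$ represents $h$; this rests on an analysis of the Bass--Serre structure of the boundary decomposition produced by the cut $\gamma_j$, combined with a careful application of the commutation hypotheses $[\alpha,h]=[\beta,h]=1$.
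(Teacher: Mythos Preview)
Your overall strategy---map the diagram to the Bass--Serre tree and read off $H$-words from separating tracks---is exactly the paper's. But your implementation of the tree map and the nature of the tracks is where the argument goes wrong.

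You describe $\phi$ as ``collapsing each $G_i$-sheet to a vertex and sending each $H$-sheet linearly across the corresponding edge'', then perturb to transversality so that $\phi^{-1}(m_j)$ is a $1$-manifold which ``crosses interior $\mathcal{B}$-edges transversely''. Under any equivariant map compatible with the coset structure, this is not what happens. The paper's map sends the vertex $g$ to the \emph{midpoint} of the edge $gH$, sends each $\mathcal{A}_i$-edge to a length-$1$ arc through a tree vertex, collapses each $\mathcal{B}$-edge to a midpoint, and cones each $2$-cell to the appropriate tree vertex. With this map the preimage of a midpoint is not a transverse arc at all: it is a subcomplex of the $1$-skeleton consisting of vertices and $\mathcal{B}$-edges (the interiors of $2$-cells and of $\mathcal{A}_i$-edges are sent strictly away from midpoints). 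If instead you perturb $m_j$ off the midpoint, the preimage becomes a $1$-manifold, but it then crosses $\mathcal{A}_i$-edges, not $\mathcal{B}$-edges, and the word you read is not in $\mathcal{B}^{\pm\ast}$. So the sentence ``reading the labels of these edges yields a word $\omega_j\in\mathcal{B}^{\pm\ast}$'' is not justified by your construction.

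The paper fixes this by staying in the $1$-skeleton. It picks the $n$ boundary vertices $p_i$ at the $u$--$v$ junctions (and the matching $q_i$ on the return arc), observes that the preimage $S=\bar f^{-1}(\bar f(p_i))$ is a union of vertices and $\mathcal{B}$-edges, and then argues topologically that $S$ separates the neighbours $s_i,t_i$ of $p_i$ in the disc component, forcing an honest $\mathcal{B}$-edge path $\gamma_i\subset S$ from $p_i$ to some other boundary vertex, which a normal-form check pins down as $q_i$. The label on $\gamma_i$ represents $h$, so $|\gamma_i|\ge d_{\mathcal{B}}(1,h)$. The $\gamma_i$ are pairwise disjoint (different midpoints), contain no repeated edges, and---since every relator of $\mathcal{P}$ has at most one $\mathcal{B}$-letter while no $\mathcal{B}$-letter occurs on $\partial\Delta$---each $\mathcal{B}$-edge of $\gamma_i$ lies on two distinct $2$-cells, none shared with another $\gamma_i$-edge. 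Hence $\Area(\Delta)\ge 2\sum_i|\gamma_i|\ge 2n\,d_{\mathcal{B}}(1,h)$. Note also that the paper uses $n$ paths, not $2n$; your final accounting (``$2n$ arcs, each edge bounds two cells, hence $\ge 2n\,d_{\mathcal B}(1,h)$'') silently drops a factor of $2$, which happens to land on the right inequality but for the wrong reason.
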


\begin{proof}
Let $\Delta$ be a $\mc{P}$-van Kampen diagram for the null-homotopic
word $[w, (uv)^n]$.  For each $i = 1 , 2, \ldots, n$ define $p_i$ to
be the vertex in $\partial \Delta$ such that the anticlockwise path
in $\partial \Delta$ from the basepoint around to $p_i$ is labelled
by the word $w(uv)^{i-1}u$.  Similarly define $q_i$ to be the vertex
in $\partial \Delta$ such that the anticlockwise path in $\partial
\Delta$ from the basepoint around to $q_i$ is labelled by the word
$w (uv)^n w^{-1} (uv)^{i-n} v^{-1}$.  We will show that for each $i$
there is a $\mathcal{B}$-path (i.e. an edge path in $\Delta$
labelled by a word in the letters $\mathcal{B}$) from $p_i$ to
$q_i$.

\begin{figure}[htbp]
  \psfrag{u}{$u$}
  \psfrag{v}{$v$}
  \psfrag{p1}{$p_1$}
  \psfrag{p2}{$p_2$}
  \psfrag{pn}{$p_n$}
  \psfrag{q1}{$q_1$}
  \psfrag{q2}{$q_2$}
  \psfrag{qn}{$q_n$}
  \psfrag{w}{$w$}
  \centering \includegraphics{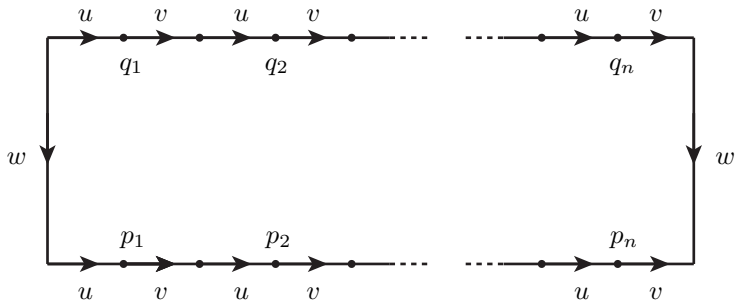}
  \caption{The van Kampen diagram $\Delta$} \label{fig12}
\end{figure}

We assume that the reader is familiar with Bass-Serre theory, as
exposited in \cite{Serre1}.  Let $T$ be the Bass-Serre tree
associated to the splitting $G_1 \ast_H G_2$. This consists of an
edge $gH$ for each coset $\Gamma / H$ and a vertex $gG_i$ for each
coset $\Gamma / G_i$.  The edge $gH$ has initial vertex $gG_1$ and
terminal vertex $gG_2$.  We will construct a continuous (but
non-combinatorial) map $\Delta \rightarrow T$ as the composition of
the natural map $\Delta \rightarrow Cay^2(\mc{P})$ with the map $f :
Cay^2(\mc{P}) \rightarrow T$ defined below.

There is a natural left action of $\Gamma$ on each of
$Cay^2(\mc{P})$ and $T$ and we construct $f$ to be equivariant with
respect to this as follows.  Let $m$ be the midpoint of the edge $H$
of $T$ and define $f$ to map the vertex $g \in Cay^2(\mc{P})$ to the
point $g \cdot m$, the midpoint of the edge $gH$.  Define $f$ to map
the edge of $Cay^2(\mc{P})$ labelled $a \in \mathcal{A}_i$ joining
vertices $g$ and $ga$ to the geodesic segment joining $g \cdot m$ to
$ga \cdot m$. Since $a \not\in H$ this segment is an embedded arc of
length $1$ whose midpoint is the vertex $gG_i$.  Define $f$ to
collapse the edge in $Cay^2(\mc{P})$ labelled $b \in \mathcal{B}$
joining vertices $g$ and $gb$ to the point $g \cdot m = gb \cdot m$.
This is well defined since $gH = gbH$. This completes the definition
of $f$ on the $1$-skeleton of $\Delta$; we now extend $f$ over the
$2$-skeleton.

Let $c$ be a $2$-cell in $Cay^2(\mc{P})$ and let $g$ be some vertex
in its boundary.  Assume that $c$ is metrised so as to be convex and
let $l$ be some point in its interior.  The form of the relations in
$\mc{P}$ ensures that the boundary label of $c$ is a word in the
letters $\mathcal{A}_i \cup \mathcal{B}$ for some $i$ and so every
vertex in $\partial c$ is labelled $gg'$ for some $g' \in G_i$. Thus
$f$ as so far defined maps $\partial c$ into the ball of radius
$1/2$ centred on the vertex $g G_i$; we extend $f$ to the interior
of $c$ by defining it to map the geodesic segment $[l, p]$, where $p
\in \partial c$, to the geodesic segment $[gG_i, f(p)]$.  This is
independent of the vertex $g \in \partial c$ chosen and makes $f$
continuous since geodesics in a tree vary continuously with their
endpoints.  We now define $\bar{f} : \Delta \rightarrow T$ to be the
map given by composing $f$ with the label-preserving map $\Delta
\rightarrow Cay^2(\mc{P})$ which sends the basepoint of $\Delta$ to
the vertex $1 \in Cay^2(\mc{P})$.

Since $w$ commutes with $u$ and $v$ we have that $\bar{f}(p_i) =
\mbox{$w(uv)^{i-1}u \cdot m$} = \mbox{$(uv)^{i-1} u \cdot m$} =
\bar{f} (q_i)$; define $S$ to be the preimage under $\bar f$ of this
point. By construction, the image of the interior of each $2$-cell
in $\Delta$ and the image of the interior of each
$\mathcal{A}_i$-edge is disjoint from $\bar f(p_i)$.  Thus $S$
consists of vertices and $\mathcal{B}$-edges and so finding a
$\mathcal{B}$-path from $p_i$ to $q_i$ reduces to finding a path in
$S$ connecting these vertices. Let $s_i$ and $t_i$ be the vertices
of $\partial \Delta$ immediately preceding and succeeding $p_i$ in
the boundary cycle.  Unless $h=1$, in which case the theorem is
trivial, the form of the word $[w, (uv)^n]$, together with the
normal form theorem for amalgamated products, implies that all the
vertices $p_i$, $s_i$ and $t_i$ lie in the boundary of the same disc
component $D$ of $\Delta$. Furthermore since $u$ and $v$ are words
in the letters $\mathcal{A}_1$ and $\mathcal{A}_2$ respectively the
points $f(s_i)$ and $f(t_i)$ are separated in $T$ by $f(p_i)$. Thus
$s_i$ and $t_i$ are separated in $D$ by $S$ and so there exists an
edge path $\gamma_i$ in $S$ from $p_i$ to some other vertex $r_i \in
\partial D$. Since $\gamma_i$ is a $\mathcal{B}$-path it follows
that the word labelling the sub-arc of the boundary cycle of
$\Delta$ from $p_i$ to $r_i$ represents an element of $H$, and, by
considering subwords of $[w, (uv)^n]$, we see that the only
possibility is that $r_i = q_i$. Thus for each $i = 1, \ldots, n$
the path $\gamma_i$ gives the required $\mathcal{B}$-path connecting
$p_i$ to $q_i$.  We choose each $\gamma_i$ to contain no repeated
edges.

For $i \neq j$ the two paths $\gamma_i$ and $\gamma_j$ are disjoint
since if they intersected there would be a $\mathcal{B}$-path
joining $p_i$ to $p_j$ and thus the word labelling the subarc of the
boundary cycle from $p_i$ to $p_j$ would represent an element of
$H$. Observe that no two edges in any of the paths $\gamma_1, \ldots
, \gamma_n$ lie in the boundary of the same $2$-cell in $\Delta$
since each relation in $\mathcal{P}$ contains at most one occurrence
of a letter in $\mathcal{B}$. Because the word labelling $\partial
\Delta$ contains no occurrences of a letter in $\mathcal{B}$ the
interior of each edge of a path $\gamma_i$ lies in the interior of
$\Delta$ and thus in the boundary of two distinct $2$-cells.  Since
each path $\gamma_i$ contains no repeated edges we therefore obtain
the bound $\Area(\Delta) \geq \sum_{i=1}^n 2|\gamma_i|$. But the
word labelling each $\gamma_i$ is equal to $h$ in $\Gamma$ and so
the length of $\gamma_i$ is at least ${\rm d}_\mathcal{B}(1, h)$
whence we obtain the required inequality.
\end{proof}

\section{Fibre products}

\begin{defn}
  Given a homomorphism $p : \Gamma \rightarrow Q$, the \emph{(untwisted) fibre product} of $p$ is defined to be the subgroup $\{(\gamma_1, \gamma_2) : p(\gamma_1) = p(\gamma_2) \} \leq \Gamma \times \Gamma$.
\end{defn}

Recall the following result of Baumslag, Bridson, Miller and Short.

\begin{thm}[The 1-2-3 Theorem \cite{baum00}] \label{thm5}
  Let $1 \rightarrow N \rightarrow \Gamma \xrightarrow{p} Q \rightarrow 1$ be a short exact sequence of groups.  Suppose that $N$ is finitely generated, $\Gamma$ is finitely presented and $Q$ is of type $\rm{F}_3$.  Then the fibre product of $p$ is finitely presented.
\end{thm}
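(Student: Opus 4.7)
The plan is to construct an explicit finite presentation of $P$ and then verify, using the $\mathrm{F}_3$ hypothesis on $Q$, that the chosen relations suffice. First, since $Q$ is finitely presented (a consequence of $\mathrm{F}_3$), I would extend a given finite presentation $\langle \mc{A} \mid \mc{R} \rangle$ of $\Gamma$ to a finite presentation $\langle \mc{A} \mid \mc{R} \cup \mc{W} \rangle$ of $Q$, with $\mc{W} = \{w_1, \ldots, w_s\} \subseteq \fm{A}$; in particular $N$ is the normal closure of $\mc{W}$ in $\Gamma$. Since every $(\gamma_1, \gamma_2) \in P$ satisfies $\gamma_1 \gamma_2^{-1} \in N$, the decomposition $(\gamma_1, \gamma_2) = (\gamma_1 \gamma_2^{-1}, 1)(\gamma_2, \gamma_2)$ shows that the finite set $\tilde{\mc{A}} \cup \bar{\mc{W}}$, where $\tilde a := (a,a)$ and $\bar w_i := (w_i, 1)$, generates $P$: the diagonal generators $\tilde a$ produce the diagonal subgroup, and the conjugates of the $\bar w_i$ by this subgroup produce all of $N \times \{1\}$.

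Next I would record three families of relations. The \emph{diagonal relations} are the words of $\mc{R}$ read in the generators $\tilde a$. The \emph{commutation relations} assert that each $\bar w_i$ commutes with the expression $\tilde w_j \bar w_j^{-1}$ (which equals $(1, w_j) \in P$), capturing the commuting of the two $\Gamma$-factors of the ambient direct product. The third family, the \emph{spherical relations}, is where $\mathrm{F}_3$ intervenes: the $\Z Q$-module $\pi_2(K_Q)$ of identities among the relators of the presentation $2$-complex $K_Q$ of $\langle \mc{A} \mid \mc{R} \cup \mc{W} \rangle$ is finitely generated precisely because $Q$ is $\mathrm{F}_3$. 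Choosing finitely many spherical van Kampen diagrams $\Sigma_1, \ldots, \Sigma_m$ whose classes generate $\pi_2(K_Q)$, each $\Sigma_l$ yields a finite identity $\prod_j u_{j} w_{i_j}^{\epsilon_j} u_{j}^{-1} \FreeEq \prod_k v_{k} r_{k}^{\delta_k} v_{k}^{-1}$ in $F(\mc{A})$, which upon replacing each $w_{i_j}$ by $\bar w_{i_j}$ and each $u_j$ by the corresponding word in $\tilde{\mc{A}}$ gives a relation $\rho_l$ in our generators that is easily verified to be trivial in $P$.

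The main obstacle is verifying that these three families suffice. Let $\tilde P$ be the group defined by the proposed finite presentation and $\phi : \tilde P \to P$ the evident surjection. Killing the normal closure $\tilde N$ of $\bar{\mc{W}}$ in $\tilde P$ leaves the presentation $\langle \mc{A} \mid \mc{R} \rangle$ of $\Gamma$, so $\phi$ descends to an isomorphism on the quotient by $\tilde N$; it therefore suffices to show that the restriction $\tilde N \to N \times \{1\}$ is injective. Using the commutation relations, an arbitrary element of $\tilde N$ can be reduced to a product of $\tilde{\mc{A}}$-conjugates of the $\bar w_i$'s, so kernel elements correspond to products $\prod_j u_j w_{i_j}^{\epsilon_j} u_j^{-1}$ that equal the identity in $\Gamma$ --- equivalently, that are expressible in $F(\mc{A})$ as products of $\mc{R}$-conjugates. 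Such data is by definition a spherical diagram over $\langle \mc{A} \mid \mc{R} \cup \mc{W} \rangle$, and its class in $\pi_2(K_Q)$ is a $\Z Q$-combination of the $[\Sigma_l]$; pulling this decomposition back into $\tilde P$ exhibits the kernel element as a consequence of the $\rho_l$ together with the diagonal and commutation relations. Finite generation of $\pi_2(K_Q)$ as a $\Z Q$-module --- equivalently, $\mathrm{F}_3$ for $Q$ --- is precisely what allows this reduction to be carried out with only finitely many $\rho_l$, and this is the technical heart of the argument.
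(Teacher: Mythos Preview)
Your overall architecture mirrors the original Baumslag--Bridson--Miller--Short argument (and the paper's proof of its twisted generalisation, Theorem~\ref{thm4}): build a candidate finite presentation and use finite generation of $\pi_2$ of the presentation $2$-complex of $Q$ to supply the missing relations. However, your specific implementation has a genuine gap, and it is exactly the gap that the hypothesis ``$N$ is finitely generated'' is designed to close.

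Notice that nowhere in your argument do you invoke finite generation of $N$. But this hypothesis is essential: take $\Gamma=F_2=\langle a,b\rangle$, $Q=\Z$ via $a,b\mapsto 1$, so $\mc{R}=\emptyset$ and $\mc{W}=\{ab^{-1}\}$. The presentation complex $K_Q$ has contractible universal cover, so $\pi_2(K_Q)=0$ and your spherical family is empty. Your proposed presentation is then the one-relator group $\langle \tilde a,\tilde b,\bar w \mid [\bar w,\tilde a\tilde b^{-1}\bar w^{-1}]\rangle$, which is finitely presented, whereas the actual fibre product is the Stallings--Bieri group $\mathrm{SB}_2$, which is not. So your relations cannot suffice.

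The failure occurs in the sentence ``pulling this decomposition back into $\tilde P$''. Your commutation relations give only $[\bar w_i,\tilde w_j\bar w_j^{-1}]=1$; they do \emph{not} imply $[\tilde u\,\bar w_i\,\tilde u^{-1},\tilde w_j\bar w_j^{-1}]=1$ for arbitrary diagonal words $\tilde u$, and it is precisely such commutation that is needed to realise a Peiffer exchange between two $\mc{W}$-terms inside $\tilde P$. Consequently Peiffer-equivalent identity sequences need not yield equal elements of $\tilde P$, and the reduction to the generating spheres $\Sigma_l$ breaks down. The paper's proof (of Theorem~\ref{thm4}, following \cite{baum00}) avoids this by taking as generators a finite generating set $\mc A_1$ for $N$ itself, together with explicit conjugation relations $x^{\epsilon}ax^{-\epsilon}=w_{ax\epsilon}$ (the set $\mc R_1$) and relations $\mc R_3$ among the $\mc A_1$; finite generation of $N$ enters here. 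With those in hand, any word can be normalised to (word in $\mc A_1$)$\cdot$(word in $\mc X_1$), and Lemma~\ref{lem10} characterises exactly when the $\mc A_1$-word is trivial in $\Gamma$; the Peiffer relations then operate on words in $\mc A_1$ rather than on products of $\mc W$-conjugates, and the commutation relations needed ($\mc S_1$ and $\mc S_4$) are genuinely finite.
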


\begin{defn}
  Given a pair of homomorphisms $p_i: \Gamma_i \rightarrow Q$, $i=1,2$, the \emph{(twisted) fibre product} of $p_1$ and $p_2$ is defined to be the subgroup $\{(\gamma_1, \gamma_2) : p_1(\gamma_1) = p_2(\gamma_2) \} \leq \Gamma_1 \times \Gamma_2$.
\end{defn}

In this section we prove a generalisation of the 1-2-3 theorem which covers twisted fibre products.

\begin{thm} \label{thm4}
  For each $i=1,2$, let $1 \rightarrow N_i \rightarrow \Gamma_i \xrightarrow{p_i} Q \rightarrow 1$ be a short exact sequence of groups.  Suppose that $N_1$ is finitely generated, $\Gamma_1$ and $\Gamma_2$ are finitely presented, and $Q$ is of type $\rm{F}_3$.  Then the fibre product of $p_1$ and $p_2$ is finitely presented.
\end{thm}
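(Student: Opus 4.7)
The strategy is to adapt the argument of Baumslag--Bridson--Miller--Short from the classical 1-2-3 theorem (Theorem~\ref{thm5}) to the twisted setting, by exhibiting an explicit finite presentation of $P = \Gamma_1 \times_Q \Gamma_2$. First I would fix compatible presentations. Since $Q$ is of type $\mathrm{F}_3$ it is in particular finitely presented, so choose a finite presentation $\langle \mathcal{X} \mid \mathcal{R} \rangle$ of $Q$. For each $i \in \{1,2\}$, lift $\mathcal{X}$ to a subset $\tilde{\mathcal{X}}_i \subset \Gamma_i$ and extend to a finite presentation $\Gamma_i = \langle \tilde{\mathcal{X}}_i \cup \mathcal{Y}_i \mid \mathcal{S}_i \rangle$ with $\mathcal{Y}_i \subset N_i$ finite; using that $N_1$ is finitely generated, arrange $\mathcal{Y}_1$ to generate $N_1$. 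For each $r \in \mathcal{R}$ and each $i$ choose a word $w_i(r) \in \mathcal{Y}_i^{\pm *}$ representing $r(\tilde{\mathcal{X}}_i) \in N_i$, and arrange the lifting relator $r(\tilde{\mathcal{X}}_i) w_i(r)^{-1}$ to belong to $\mathcal{S}_i$.

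I would then write down a candidate finite presentation of $P$ with generators
$$\{(\tilde{x}_1, \tilde{x}_2) : x \in \mathcal{X}\} \cup \{(y, 1) : y \in \mathcal{Y}_1\} \cup \{(1, y) : y \in \mathcal{Y}_2\}$$
and four families of relators: commutators $[(y, 1), (1, y')]$ for $y \in \mathcal{Y}_1$, $y' \in \mathcal{Y}_2$, capturing the commutation of the two factors of $\Gamma_1 \times \Gamma_2$; relators obtained by rewriting each element of $\mathcal{S}_i$ in terms of the appropriate set of generators (with a corrective term, expressed via the $\mathcal{Y}_{3-i}$-generators, to cancel any parasitic contribution to the other factor); and for each $r \in \mathcal{R}$ a diagonal relator equating $r(\tilde{x}_1, \tilde{x}_2)$ with $w_1(r) \cdot w_2(r)$. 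The evident evaluation map $\phi$ from the group $\tilde{P}$ defined by this presentation into $\Gamma_1 \times \Gamma_2$ takes values in $P$, and surjectivity onto $P$ is routine: write any $(g_1, g_2) \in P$ as a diagonal word representing $q = p_1(g_1) = p_2(g_2)$ times residual factors in $N_1 \times \{1\}$ and $\{1\} \times N_2$.

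The crux of the argument, which I expect to be the main obstacle, is injectivity of $\phi$. Given a word $W$ in the generators of $\tilde{P}$ with $\phi(W) = 1$, I would first use the commutation relators to segregate the three families of letters, then apply the rewrites of $\mathcal{S}_1$ and $\mathcal{S}_2$ to reduce the $N_i$-contributions independently in each factor, leaving a residual identity among the relators $\mathcal{R}$ of $Q$. Showing that any such identity can be derived from the listed data amounts to proving finite generation of $\pi_2$ of the presentation 2-complex of $Q$ as a $\mathbb{Z}Q$-module, which is precisely the homotopical content of $Q$ being of type $\mathrm{F}_3$. The twisted setting introduces no new obstruction at this final step: the $\mathcal{S}_i$-relators resolve the lifts $w_1(r)$ and $w_2(r)$ in each factor separately, and once the two factors are made to commute via the commutator relators, the $\pi_2$-argument for $Q$ proceeds exactly as in the untwisted proof, since the module calculation sees only the common quotient $Q$.
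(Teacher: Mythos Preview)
Your overall strategy---adapting the Baumslag--Bridson--Miller--Short argument by writing down compatible presentations, diagonal generators, commutation relators, and invoking the $\pi_2$ of the presentation complex of $Q$---is exactly the paper's approach. However, your explicit list of relators is incomplete, and the gap is precisely at the step where you invoke $\pi_2$.

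You enumerate four families of relators: commutators, rewrites of $\mathcal{S}_1$, rewrites of $\mathcal{S}_2$, and the diagonal relators $r(\tilde{x}_1,\tilde{x}_2)=w_1(r)w_2(r)$. You then say that showing the residual identity among the relators of $Q$ is derivable ``from the listed data amounts to proving finite generation of $\pi_2$''. But finite generation of $\pi_2(Q)$ does not imply that identities among relators are consequences of your four families; it implies that there is a \emph{finite} set of Peiffer identities generating all of them, and these must be \emph{added} to the presentation as a fifth family. In the paper's proof these appear as the relators $\mathcal{S}_6=\{Z_\sigma(\overline{\mathcal{A}}_1):\sigma\in\Sigma\}$, where $\Sigma$ is a finite generating set for $\pi_2(Q)$ as a $\mathbb{Z}Q$-module and each $Z_\sigma$ is obtained from the Peiffer sequence $\sigma$ by lifting to $\Gamma_1$ and then rewriting in $\mathcal{A}_1$. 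The paper also needs a family $\mathcal{S}_4$ of commutators $[r(\overline{\mathcal{X}},\overline{\mathcal{A}}_1),\bar a]$ for $r$ a lifting relator, which your list does not obviously contain. The citation of Lemma~\ref{lem10} from \cite{baum00} makes explicit that both of these families are needed to kill a word in $\overline{\mathcal{A}}_1$.

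Two smaller points. First, your sketch treats the two factors symmetrically (``reduce the $N_i$-contributions independently in each factor''), but the hypothesis is asymmetric: only $N_1$ is assumed finitely generated, so only on the $\Gamma_1$ side can one push $N$-letters past $\mathcal{X}$-letters via conjugation relations. The paper accordingly first isolates a word in $\overline{\mathcal{A}}_1$, kills the $(\overline{\mathcal{X}},\overline{\mathcal{A}}_2)$-part using the full presentation of $\Gamma_2$ with correctives in $\overline{\mathcal{A}}_1$ (not $\overline{\mathcal{A}}_2$), and only then applies the $\pi_2$ machinery inside $N_1$. Second, your assertion that $w_2(r)\in\mathcal{Y}_2^{\pm *}$ exists is not automatic, since $\mathcal{Y}_2$ need not generate $N_2$; this is harmless (enlarge $\mathcal{Y}_2$ by the finitely many elements $r(\tilde{\mathcal{X}}_2)$), but should be said.
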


Note that we do not need to make any assumptions about $N_2$.  The proof of Theorem~\ref{thm4} given below closely follows the proof of Theorem~\ref{thm5} given in \cite{baum00}.  We will require the following lemma.

\begin{lem} \label{lem9}
  For each $i=1,2$, let $p_i : \Gamma_i \rightarrow Q$ be a surjective homomorphism.  Suppose that $\Gamma_1$ and $\Gamma_2$ are finitely generated and that $Q$ is finitely presented.  Then the fibre product $P$ of $p_1$ and $p_2$ is finitely generated.  If $\alpha$ is an isoperimetric function for some finite presentation of $Q$ then the distortion function $\Delta$ of $P$ in $\Gamma_1 \times \Gamma_2$ satisfies $\Delta \preceq \alpha$.

  More specifically, let $\mc{X}_1$ be a finite generating set for $\Gamma_1$ and let $\mc{X}$ be the image of $\mc{X}_1$ in $Q$.  Let $\mc{X}_2$ be a choice of lifts of the elements of $\mc{X}$ under $p_2$ and let $\mc{A} \subseteq \ker p_2$ be a finite collection of elements such that $\Gamma_2$ is generated by $\mc{A} \cup \mc{X}_2$.  Let $\langle \mc{X} \, | \, \mc{R} \rangle$ be a finite presentation for $Q$.
  Then $P$ is generated by the union of the following sets of elements: \begin{align*}
    \omc{X} &= \{ (x_1, x_2) : x_i \in \mc{X}_i, p_1(x_1) = p_2(x_2) \}; \\
    \omc{A} &= \{ (1, a) : a \in \mc{A} \}; \\
    \omc{R} &= \{ (r(\mc{X}_1),1) : r(\mc{X}) \in \mc{R} \}.
  \end{align*}
\end{lem}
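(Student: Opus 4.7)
The plan is constructive: for each $(\gamma_1, \gamma_2) \in P$ I will produce an explicit word over $\omc{X} \cup \omc{A} \cup \omc{R}$ representing it, then control its length to extract both generation and the distortion bound from a single construction. Choose $u \in \mc{X}_1^{\pm \bast}$ and $v \in (\mc{A} \cup \mc{X}_2)^{\pm \bast}$ with $u(\mc{X}_1) = \gamma_1$ and $v(\mc{A}, \mc{X}_2) = \gamma_2$; if $(\gamma_1, \gamma_2)$ has length at most $l$ in $\Gamma_1 \times \Gamma_2$ we may take $|u|, |v| \leq l$, since the projections to the two factors are distance non-increasing. The formal substitution $a \mapsto (1, a) \in \omc{A}$ and $\xi \mapsto (\xi', \xi) \in \omc{X}$ (with $\xi' \in \mc{X}_1$ any lift of $p_2(\xi)$) converts $v$ into a word over $(\omc{A} \cup \omc{X})^{\pm 1}$ evaluating in $P$ to $(v(\emptyset, \mc{X}_1), \gamma_2)$, so it suffices to realize the residue $(\gamma_1 \cdot v(\emptyset, \mc{X}_1)^{-1}, 1) \in P$ using $\omc{X} \cup \omc{R}$.

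The word $u(\mc{X}) \cdot v(\emptyset, \mc{X})^{-1}$ is null-homotopic in $Q$ since $p_1(\gamma_1) = p_2(\gamma_2)$. Any $\langle \mc{X} \mid \mc{R} \rangle$-expression $u(\mc{X}) v(\emptyset, \mc{X})^{-1} \FreeEq \prod_{i=1}^N x_i r_i x_i^{-1}$ lifts letter-by-letter to the corresponding identity in $\Fr(\mc{X}_1)$, yielding
\[
  (\gamma_1 \cdot v(\emptyset, \mc{X}_1)^{-1}, 1) \; = \; \prod_{i=1}^N (x_i(\mc{X}_1), x_i(\mc{X}_2))\,(r_i(\mc{X}_1), 1)\,(x_i(\mc{X}_1), x_i(\mc{X}_2))^{-1}
\]
in $P$, with every factor lying in $\langle \omc{X} \cup \omc{R} \rangle$. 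This already proves $P = \langle \omc{X} \cup \omc{A} \cup \omc{R} \rangle$.

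For the distortion estimate, the null-homotopic word above has length at most $2l$ in $Q$ and hence area at most $\alpha(2l)$. A naive expansion of the product costs $\sum 2|x_i|$ letters, which could be of order $\alpha(l)^2$. I would instead invoke the Bounded Noise Lemma (Lemma~\ref{lem28}) to choose the expression with $N \leq \alpha(2l)$ and $\|x_1\| + \sum_{i=1}^{N-1}\|x_i^{-1} x_{i+1}\| + \|x_N\| \leq 2l + 2LN$ (where $L = \max_{r \in \mc{R}} |r|$), then rewrite the product in telescoped form
\[
  (x_1, x_1)(r_1, 1)(x_1^{-1} x_2, x_1^{-1} x_2)(r_2, 1) \cdots (r_N, 1)(x_N^{-1}, x_N^{-1}).
\]
This rewriting is legitimate precisely because $\omc{X}$ contains a matched pair $(\xi', \xi)$ for every $\xi \in \mc{X}_2$, so each bracketed diagonal element lies in $\langle \omc{X} \rangle$. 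Using the freely reduced form of each bracketed subword, together with the $|v| \leq l$ letters from the decomposition step, the overall $(\omc{X} \cup \omc{A} \cup \omc{R})$-length is at most $(2l + 2LN) + NL + l \preceq l + L\alpha(l) \preceq \alpha(l)$.

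The heart of the argument is avoiding the $\alpha(l)^2$ blow-up in the final step. Two ingredients make this possible: the enlarged generating set $\omc{X}$ of diagonal pairs, which permits simultaneous conjugation in both coordinates of $\Gamma_1 \times \Gamma_2$; and the Bounded Noise Lemma, which replaces a bound on $\sum \|x_i\|$ with the much smaller telescoping bound on $\sum \|x_i^{-1} x_{i+1}\|$. Without either ingredient only a quadratic-in-$\alpha$ distortion bound would survive, so the main technical obstacle is aligning these two observations so that the conjugator cost becomes comparable to the area rather than the area times the radius.
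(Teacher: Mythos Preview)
Your proposal is correct and follows essentially the same route as the paper: separate the element into a second-factor piece handled by $\omc{X}\cup\omc{A}$ and a first-factor residue that maps to a null-homotopic word in $Q$, then use the Bounded Noise Lemma to rewrite that residue as an $\omc{X}$-interspersed product of $\omc{R}$-letters with total length $\preceq \alpha$. The only cosmetic difference is that the paper writes the controlled expression directly in the linearised form $x_0 r_1 x_1 \cdots r_n x_n$ with $x_0\cdots x_n \FreeEq \emptyset$, whereas you start from $\prod x_i r_i x_i^{-1}$ and telescope; these are the same computation, and your $NL$ term in the final count is a harmless overestimate (each $(r_i(\mc{X}_1),1)$ is a single $\omc{R}^{\pm1}$-letter, so $N$ suffices).
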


\begin{rem}
  Note that the bound on $\Delta$ is only defined up to $\simeq$-equivalence, not the stronger $\approx$-equivalence usually used with distortion functions.
\end{rem}

\begin{proof}[Proof of Lemma~\ref{lem9}]
  Fix compatible orderings on $\mc{X}$, $\mc{X}_1$, $\mc{X}_2$ and $\ol{\mc{X}}$; and on $\mc{A}$ and $\ol{\mc{A}}$.  By Lemma~\ref{lem27}, the Dehn function $\delta$ of $\langle \mc{X} \, | \, \mc{R} \rangle$ satisfies $\delta \preceq \alpha$.

  Let $w = w(\mc{X}_1, \mc{X}_2, \mc{A})$ be a word representing an element $\gamma$ of $P$.  Then $w \stackrel{P}{=} w(\mc{X}_1, \emptyset, \emptyset) w(\emptyset, \mc{X}_2, \mc{A}) \FreeEq w(\mc{X}_1, \emptyset, \emptyset) w^{-1}(\emptyset, \mc{X}_1, \emptyset) w(\emptyset, \mc{X}_1, \emptyset) w(\emptyset, \mc{X}_2, \mc{A}) \stackrel{P}{=} w(\mc{X}_1, \emptyset, \emptyset) w^{-1}(\emptyset, \mc{X}_1, \emptyset) w(\emptyset, \ol{\mc{X}}, \ol{\mc{A}})$.  Define $w_1(\mc{X}_1) \equiv w(\mc{X}_1, \emptyset, \emptyset) w^{-1}(\emptyset, \mc{X}_1, \emptyset)$ and note that $|w_1| \leq |w|$.  Furthermore $p_1(w_1(\mc{X}_1))$ is trivial in $Q$ and so $w_1(\mc{X})$ is null-homotopic.  Define $L = \max \{ |r| : r \in \mc{R} \}$.  By Lemma~\ref{lem28}, there exist words $r_1, \ldots r_n \in \mc{R}^{\pm1}$ and words $x_0, \ldots, x_n \in \fm{X}$ with $n \leq \delta(|w_1|)$ and $\sum |x_i| \leq |w_1| + 2Ln$ so that $w_1(\mc{X}) \FreeEq x_0 r_1 x_1 \ldots r_n x_n$ and the word $x_0 \ldots x_n \FreeEq \emptyset$.  Thus $w_1(\mc{X}_1) \FreeEq x_0(\mc{X}_1) r_1(\mc{X}_1) \ldots r_n(\mc{X}_1) x_n(\mc{X}_1) \stackrel{P}{=} x_0(\omc{X}) (r_1(\mc{X}_1), 1) \ldots (r_n(\mc{X}_1), 1) x_n(\omc{X})$ and so $\gamma$ is represented by a word in the letters $\ol{\mc{X}}$, $\ol{\mc{A}}$ and $\ol{\mc{R}}$ of length at most $(2L+1)\delta(|w|) + 2|w|$.  Thus $\Delta \preceq \delta \preceq \alpha$.
\end{proof}

\begin{proof}[Proof of Theorem~\ref{thm4}]
  Let $\mc{X}_1$ be a finite ordered generating set for $\Gamma_1$ and let $\mc{X}$ be the image of $\mc{X}_1$ in $Q$.  Then there is an induced ordering on $\mc{X}$ and $\mc{X}$ generates $Q$.  Let $\mc{A}_1$ be a finite ordered generating set for $N_1$.  For each $a \in \mc{A}_1$, $x \in \mc{X}_1$ and $\epsilon \in \{\pm1\}$, choose a word $w_{ax\epsilon} \in \mc{A}_1^{\pm\bast}$ such that $x^\epsilon a x^{-\epsilon} = w_{ax\epsilon}$ in $\Gamma_1$.  Let $\langle \mc{X} \, | \, \mc{R} \rangle$ be a finite presentation for $Q$ and for each $r = r(\mc{X}) \in \mc{R}$ choose a word $w_r \in \mc{A}_1^{\pm\bast}$ such that $r(\mc{X}_1) = w_r$ in $\Gamma_1$.  Define $$\mc{R}_1 = \big\{x^\epsilon a x^{-\epsilon}w_{ax\epsilon}^{-1} \, : \, a \in \mc{A}_1, x \in \mc{X}_1, \epsilon \in \{\pm1\}\big\}$$ and $$\mc{R}_2 = \big\{r(\mc{X}_1)w_r^{-1} \, : \, r \in \mc{R} \big\}.$$  If $w = w(\mc{A}_1, \mc{X}_1)$ is null-homotopic in $\Gamma_1$ then, modulo relators in $\mc{R}_1$, $w$ is equal to a word of the form $u(\mc{A}_1)v(\mc{X}_1)$.  The word $v(\mc{X})$ is null-homotopic in $Q$ and hence there is a free equality $v(\mc{X}_1) = \prod \rho_i(\mc{X}_1) r_i(\mc{X}_1) \rho_i(\mc{X}_1)^{-1}$ for some $r_i = r_i(\mc{X}) \in \mc{R}$ and some words $\rho_i$.  Thus, modulo relators in $\mc{R}_1$ and $\mc{R}_2$, $w$ is equal to a word in the letters $\mc{A}_1$.  It follows that there exists a finite collection of relations $\mc{R}_3 \subset \mc{A}_1^{\pm\bast}$ such that $\Gamma_1$ is presented by $\langle \mc{A}_1, \mc{X}_1 \, | \, \mc{R}_1, \mc{R}_2, \mc{R}_3 \rangle$.

  Let the finite set $\mc{X}_2 \subset \Gamma_2$ be a choice of lifts of the elements of $\mc{X}$ under $p_2$ ordered compatibly with $\mc{X}$.  Then there exists a finite ordered collection of elements $\mc{A}_2 \subset N_2$ so that $\mc{X}_2 \cup \mc{A}_2$ generates $\Gamma_2$.  Note that $\mc{A}_2$ may not generate $N_2$.  Let $\langle \mc{A}_2, \mc{X}_2 \, | \, \mc{R}_4\rangle$ be a finite presentation for $\Gamma_2$.

  By the argument in the proof of Lemma~\ref{lem9}, the fibre product $P$ of $p_1$ and $p_2$ is generated by the union of the following sets of elements: \begin{align*}
    \omc{X} &= \{(x_1, x_2) \, : \, x_i \in \mc{X}_i, p_1(x_1) =
    p_2(x_2) \};\\
    \omc{A}_1 &= \{(a,1) \, : \, a \in \mc{A}_1\};\\
    \omc{A}_2 &= \{(1,a) \, : \, a \in \mc{A}_2\}.
  \end{align*}  Order the elements of $\omc{X}$, $\omc{A}_1$ and $\omc{A}_2$ compatibly with the $\mc{X}_i$, $\mc{A}_1$ and $\mc{A}_2$ respectively.   We now define some relations which hold amongst these generators: \begin{align*}
    \mc{S}_1 &= \big\{ [\bar{a}_1, \bar{a}_2] \, : \, \bar{a}_i \in \omc{A}_i\big\}\\
    \mc{S}_2 &= \big\{ r(\omc{X}, \omc{A}_1) \, : \, r=r(\mc{X}_1, \mc{A}_1) \in \mc{R}_1 \big\}\\
    \mc{S}_3 &= \big\{ r(\omc{A}_1) \, : \, r = r(\mc{A}_1) \in \mc{R}_3 \big\}\\
    \mc{S}_4 &= \big\{ [r(\omc{X}, \omc{A}_1), \bar{a}] \, : \, r=r(\mc{X}_1, \mc{A}_1) \in \mc{R}_2, \bar{a} \in \omc{A}_1 \big\}
  \end{align*}  For each $r = r(\mc{X}_2, \mc{A}_2) \in \mc{R}_4$, choose a word $w_r \in \omc{A}_1^{\pm\bast}$ so that $r(\omc{X}, \omc{A}_2) = w_r$ in $\Gamma_1 \times \Gamma_2$.  Then we can define the set of relations $$\mc{S}_5 = \big\{ r(\omc{X}, \omc{A}_2) w_r(\omc{A}_1)^{-1} \, : \, r(\mc{X}_2, \mc{A}_2) \in \mc{R}_4 \big\}.$$ Let $\Sigma$ be a finite generating set of Peiffer sequences for $\pi_2(Q)$ as a $Q$-module.  Each $\sigma \in \Sigma$ is a sequence $(u_1 r_1 u_1^{-1}, \ldots, u_n r_n u_n^{-1})$ where each $r_i = r_i(\mc{X}) \in \mc{R}$, each $u_i = u_i(\mc{X})$ is a word in $\mc{X}^{\pm\bast}$ and the word $$\zeta_\sigma(\mc{X}) = \prod_i u_i(\mc{X}) r_i(\mc{X}) u_i(\mc{X})^{-1}$$ is freely equal to the empty word.  Observe that, modulo relations in $\mc{R}_2$, the word $\zeta_\sigma(\mc{X}_1)$ is equal to $$\prod_i u_i(\mc{X}_1) w_{r_i}(\mc{A}_1) u_i(\mc{X}_1)^{-1}$$ and this is equal, modulo relations in $\mc{R}_1$, to a word $Z_\sigma = Z_\sigma(\mc{A}_1)$.  We define $$\mc{S}_6 = \big\{ Z_\sigma(\omc{A}_1) \, : \, \sigma \in \Sigma \big\}.$$

  We claim that $P$ is presented by $\langle \omc{X}, \omc{A}_1, \omc{A}_2 \, | \, \mc{S}_1, \mc{S}_2, \mc{S}_3, \mc{S}_4, \mc{S}_5, \mc{S}_6 \rangle$.  Indeed suppose that $w = w(\omc{X}, \omc{A}_1, \omc{A}_2)$ is null-homotopic.  Then the relations in $\mc{S}_1$ and $\mc{S}_2$ are sufficient to convert $w$ to a word $w_1(\omc{A}_1) w_2(\omc{X}, \omc{A}_2)$.  Projecting onto the factor $\Gamma_2$ demonstrates that the word $w_2(\mc{X}_2, \mc{A}_2)$ is null-homotopic.  There thus exists a free equality $$w_2(\mc{X}_2, \mc{A}_2) \FreeEq \prod_i u_i(\mc{X}_2, \mc{A}_2) r_i(\mc{X}_2, \mc{A}_2) u_i(\mc{X}_2, \mc{A}_2)^{-1}$$ for some words $u_i$ and some relations $r_i \in \mc{R}_4$ and hence a free equality $$w_2(\omc{X}, \omc{A}_2) \FreeEq \prod_i u_i(\omc{X}, \omc{A}_2) r_i(\omc{X}, \omc{A}_2) u_i(\omc{X}, \omc{A}_2)^{-1}.$$  The relations in $\mc{S}_5$ and in $\mc{S}_1$ and $\mc{S}_2$ are sufficient to convert $w_2(\omc{X}, \omc{A}_2)$ to the word $$\prod_i u_i(\omc{X}, \omc{A}_2) w_{r_i}(\omc{A}_1)^{-1} u_i(\omc{X}, \omc{A}_2)^{-1}$$ and thence to some word in the letters $\omc{A}_1$.  The word $w(\omc{X}, \omc{A}_1, \omc{A}_2)$ can thus be converted to a word $w' = w'(\omc{A}_1)$.  We now recall the following result of Baumslag, Bridson, Miller and Short:

  \begin{lem}[\cite{baum00}] \label{lem10}
    A word $v=v(\mc{A}_1)$ is null-homotopic in $\Gamma_1$ if and only if it is freely equal in $F(\mc{A}_1 \cup \mc{X}_1)$ to a product of conjugates of the following relations:
    \begin{itemize}
      \item $\mc{R}_1$
      \item $\mc{R}_3$
      \item $\big\{Z_\sigma(\mc{A}_1) \, : \, \sigma \in \Sigma\big\}$
      \item $\big\{ [r(\mc{A}_1, \mc{X}_1), a] \, : \, r \in \mc{R}_2, a \in \mc{A}_1 \big\}$
    \end{itemize}
  \end{lem}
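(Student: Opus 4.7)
The ``if'' direction is immediate. Each relation in $\mc{R}_1$ and $\mc{R}_3$ is a defining relator of $\Gamma_1$; each commutator $[r,a]$ with $r \in \mc{R}_2$ is null-homotopic because $r$ is; and each $Z_\sigma(\mc{A}_1)$ is null-homotopic by the chain of congruences established in its construction, tracing back to the freely trivial word $\zeta_\sigma(\mc{X}_1)$ via $\mc{R}_2$- and $\mc{R}_1$-moves.

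For the converse, start from the standard expression $v \FreeEq \prod_i g_i \rho_i g_i^{-1}$ in $F(\mc{A}_1 \cup \mc{X}_1)$ with each $\rho_i \in (\mc{R}_1 \cup \mc{R}_2 \cup \mc{R}_3)^{\pm 1}$, and proceed in three stages. First, use conjugates of $\mc{R}_1$ to normalise each $g_i$ to the form $a_i(\mc{A}_1)\,x_i(\mc{X}_1)$; the terms with $\rho_i \in \mc{R}_1^{\pm 1}$ or $\mc{R}_3^{\pm 1}$ are then already conjugates of $\mc{R}_1$ or $\mc{R}_3$ by elements of $F(\mc{A}_1 \cup \mc{X}_1)$. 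Second, for each remaining term $a_i x_i \rho_i x_i^{-1} a_i^{-1}$ with $\rho_i \in \mc{R}_2^{\pm 1}$, apply the free-group identity $a\, y\, a^{-1} \FreeEq [y,a]^{-1}\, y$ letter by letter through $a_i$ (with $y = x_i \rho_i x_i^{-1}$), using further conjugates of $\mc{R}_1$ to transport the resulting $\mc{A}_1$-letters back across $x_i$. This expresses $v$, up to free equality, as a product of conjugates of $\mc{R}_1$, $\mc{R}_3$ and $[r,a]$-commutators, times a residual product $\prod x_i \rho_i x_i^{-1}$ with $x_i \in F(\mc{X}_1)$ and $\rho_i \in \mc{R}_2^{\pm 1}$.

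Third, apply the projection $\pi : F(\mc{A}_1 \cup \mc{X}_1) \to F(\mc{X})$ that kills $\mc{A}_1$ and identifies $\mc{X}_1$ with $\mc{X}$. Since $\pi(v) \equiv \emptyset$ and all conjugates used so far project trivially, the residual product yields $\prod \pi(x_i)\, r_i(\mc{X})\, \pi(x_i)^{-1} \FreeEq \emptyset$ in $F(\mc{X})$ --- an identity sequence for $\langle \mc{X} \mid \mc{R} \rangle$. Because $\Sigma$ generates $\pi_2(Q)$ as a $Q$-module, Peiffer operations convert this sequence into a product of conjugates (by $F(\mc{X})$-elements) of elements of $\Sigma^{\pm 1}$, yielding an honest free equality in $F(\mc{X})$. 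Lift this back to $F(\mc{X}_1)$, then use the defining congruences of $Z_\sigma$ (which pass from $\zeta_\sigma(\mc{X}_1)$ through $\prod u_i(\mc{X}_1)\, w_{r_i}\, u_i(\mc{X}_1)^{-1}$ to $Z_\sigma(\mc{A}_1)$ via $\mc{R}_1$ and $\mc{R}_2$) to replace the residual product by a product of conjugates of $Z_{\sigma_j}(\mc{A}_1)$. Any $\mc{R}_2$-conjugates that appear in this last conversion are absorbed by a further pass through stage 2 into conjugates of $\mc{R}_1$ and $[r,a]$.

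The main obstacle is the bookkeeping in stage 3: the Peiffer equivalence naturally lives in $F(\mc{X})$, but lifting it to a free equality in $F(\mc{A}_1 \cup \mc{X}_1)$ using only conjugates of the allowed families forces one to account carefully for the discrepancy between $r_i(\mc{X}_1)$ and $w_{r_i}$. Verifying that each individual Peiffer exchange and cancellation lifts so that only conjugates of $\mc{R}_1$, $\mc{R}_3$, $[r,a]$ and $Z_\sigma$ are introduced is the heart of the argument, and is precisely what the construction of the $Z_\sigma$ was engineered to make possible.
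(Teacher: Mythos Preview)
The paper does not prove this lemma; it is quoted as a result of Baumslag, Bridson, Miller and Short \cite{baum00} and invoked without argument inside the proof of Theorem~\ref{thm4}. There is therefore no in-paper proof to compare against.

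Your outline follows exactly the strategy of \cite{baum00}: normalise conjugators using $\mc{R}_1$, strip off $\mc{A}_1$-conjugation of $\mc{R}_2$-terms via the $[r,a]$-commutators, and resolve the residual $\mc{R}_2$-product by lifting the Peiffer calculus for $\pi_2(Q)$ through the $Z_\sigma$. You have correctly identified both the architecture and the crux. However, your final paragraph is an admission of a gap rather than a resolution of one. You have not actually verified that each Peiffer move on the identity sequence in $F(\mc{X})$ --- in particular a Peiffer exchange, which replaces $u r u^{-1}\cdot v s v^{-1}$ by $v s v^{-1}\cdot (vsv^{-1})^{-1} u r u^{-1}(vsv^{-1})$ --- lifts to a free equality in $F(\mc{A}_1\cup\mc{X}_1)$ introducing only conjugates of the four allowed families. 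This is precisely where the $[r,a]$-relations earn their keep: after lifting to $\mc{X}_1$ and pushing the $w_r$-words through via $\mc{R}_1$, the discrepancy produced by an exchange is a commutator of an $\mc{R}_2$-element with a word in $\mc{A}_1$, which decomposes letter-by-letter into the $[r,a]$ family. Until that computation is written out (and the analogous, easier one for Peiffer cancellations), stage~3 is a plan rather than a proof.
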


  Projecting $\Gamma_1 \times \Gamma_2$ onto the first factor demonstrates that $w'(\mc{A}_1)$ is null-homotopic in $\Gamma_1$ and hence there is an equality $$w'(\mc{A}_1) \FreeEq \prod_i u_i(\mc{A}_1, \mc{X}_1) s_i(\mc{A}_1, \mc{X}_1) u_i(\mc{A}_1, \mc{X}_1)^{-1}$$ for some words $u_i$ and some relations $s_i$ from the sets given in Lemma~\ref{lem10}.  It follows that there is an equality $$w'(\omc{A}_1) \FreeEq \prod_i u_i(\omc{A}_1, \omc{X}) s_i(\omc{A}_1, \omc{X}) u_i(\omc{A}_1, \omc{X})^{-1}$$ where the $s_i =s_i(\omc{A}_1, \omc{X})$ are relations in $\mc{S}_2 \cup \mc{S}_3 \cup \mc{S}_4 \cup \mc{S}_6$. This completes the proof of the claim.
\end{proof}

\section{Close fillings} \label{sec8}

Let $H$ be a subgroup of a group $G$.  In this section we establish criteria for $H$ to be finitely generated or to be finitely presented.  The utility of these  criteria is that they are phrased entirely in terms of properties of generating sets (respectively presentations) for $G$, and so one avoids having to explicitly determine a generating set (respectively a presentation) for $H$.  In the language of course geometry, the criteria amount to showing that $H$ is coarsely connected (respectively coarsely simply connected) in $G$.

Suppose that $G$ is finitely generated, and consider the vertices in the Cayley graph of $G$ that represent elements of $H$.  We will show that $H$ is finitely generated if this set is coarsely connected.   More explicitly, the criterion amounts to showing that every element of $H$ can be represented by a word in the generators of $G$ that, considered as a path in the Cayley graph of $G$, lies uniformly close to $H$.  By considering the length of such words, one obtains a bound on the distortion of $H$ in $G$.

If $G$ is finitely presented then an analogous criterion will establish that $H$ is itself finitely presented: this amounts to showing that an embedding of the Cayley graph of $H$ in the Cayley complex of $G$ is coarsely simply connected.  In the language of van Kampen diagrams one demonstrates that every null-homotopic edge loop in the Cayley $2$-complex of $G$ which lies close to $H$ can be filled by a diagram which lies close to $H$.  We translate this notion into the language of $\mc{P}$-expressions.  By considering the areas of such expressions one obtains an isoperimetric function for $H$.

\begin{defn}
  Let $\mc{X}$ be a generating set for $G$.

  Given $g \in G$ define $$d_\mc{X}(g, H) = \min_{h \in H} d_\mc{X}(g,h),$$ where $d_\mc{X}$ is the word metric on $G$
  associated to $\mc{X}$.  Define the \emph{departure} from $H$ of a word $w \in
  \fm{X}$ by $$\Dep_\mc{X}(w, H) = \max_{0 \leq i \leq |w|} d_\mc{X} (w[i], H).$$
\end{defn}

\begin{prop} \label{prop3}
  Let $\mc{X}$ be a finite generating set for the group $G$.  Suppose that there exists a constant $K \geq 0$ such that for all $h \in H$ there exists a word $w_h \in \fm{X}$ representing $h$ in $G$ with $\Dep_\mc{X}(w_h, H) \leq K$.  Then there exists a finite generating set $\mc{Y}$ for $H$ and the distortion function $\Delta$ of $H$ in $G$ with respect to $\mc{Y}$ and $\mc{X}$ satisfies $$\Delta(l) \leq \max \{ |w_h| \, : \, d_\mc{X}(1, h) \leq l \}.$$
\end{prop}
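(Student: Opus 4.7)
The plan is to build the generating set $\mc{Y}$ by taking $\mc{Y}$ to consist of all elements of $H$ whose $\mc{X}$-word-length in $G$ is at most $2K+1$; this is finite because $\mc{X}$ is finite. The idea is that since every $h\in H$ has a representative word $w_h$ whose every prefix lies within $\mc{X}$-distance $K$ of $H$, we can ``shadow'' this path by a sequence of elements of $H$ and read off an expression of $h$ as a product of bounded-length chunks lying in $H$.

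More precisely, given $h\in H$ write $n=|w_h|$. For each $i\in\{0,1,\dots,n\}$ choose $h_i\in H$ with $d_\mc{X}(w_h[i],h_i)\leq K$, taking $h_0=1$ and $h_n=h$ (which is possible since $w_h[0]$ represents $1\in H$ and $w_h[n]$ represents $h\in H$). For consecutive indices the triangle inequality gives
\[
d_\mc{X}(h_{i-1},h_i)\;\leq\; d_\mc{X}(h_{i-1},w_h[i-1])+d_\mc{X}(w_h[i-1],w_h[i])+d_\mc{X}(w_h[i],h_i)\;\leq\;K+1+K,
\]
so each element $y_i:=h_{i-1}^{-1}h_i$ lies in $\mc{Y}$. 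Then $h=y_1y_2\cdots y_n$ expresses $h$ as a product of $n=|w_h|$ elements of $\mc{Y}$, which simultaneously shows that $\mc{Y}$ generates $H$ and that $d_\mc{Y}(1,h)\leq |w_h|$.

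Taking the maximum over all $h\in H$ with $d_\mc{X}(1,h)\leq l$ then yields
\[
\Delta(l)\;=\;\max\{d_\mc{Y}(1,h):h\in H,\,d_\mc{X}(1,h)\leq l\}\;\leq\;\max\{|w_h|:d_\mc{X}(1,h)\leq l\},
\]
as required. There is no real obstacle here; the only point requiring a moment's care is to observe that the shadowing choice of $h_i$'s can be made to satisfy $h_0=1$ and $h_n=h$ exactly (rather than merely close to $1$ and $h$), which is immediate from $1,h\in H$.
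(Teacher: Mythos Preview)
Your proof is correct and follows essentially the same approach as the paper: both arguments shadow the path $w_h$ by a sequence of elements of $H$ (the paper does this via a fixed choice of closest-point elements $\gamma_g$ and the map $\Pi(g,x)=\gamma_g x \gamma_{gx}^{-1}$, while you choose the $h_i$ ad hoc), and then take $\mc{Y}$ to be the resulting bounded-length increments in $H$. The only cosmetic difference is that the paper packages the construction as a function $\Psi:\fm{X}\to H^{\pm\bast}$ and bounds $\mc{Y}$ inside the ball of radius $2K+2$, whereas your direct triangle-inequality estimate gives the slightly sharper $2K+1$.
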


\begin{proof}
  For each $g \in G$, choose an element $\gamma_g \in G$ such that $g \gamma_g^{-1} \in H$ and $d_\mc{X}(1, \gamma_g) = d_\mc{X}(g, H)$.  Define a function $\Pi : G \times \mc{X}^{\pm1} \rightarrow H$ by $\Pi(g, x) = \gamma_g x \gamma_{gx}^{-1}$.  Define a function $\Psi : \fm{X} \rightarrow H^{\pm\bast}$ by $$\Psi(x_1 \ldots x_n) = \Pi(1, x_1) \, \Pi(x_1, x_2) \, \Pi(x_1 x_2, x_3) \, \ldots \, \Pi(x_1 \ldots x_{n-1}, x_n)$$ and note that if $w \in \mc{X}^{\pm\bast}$ represents an element of $H$ then $\Psi(w) = w$ in $H$.

  Given $r \in \N$, define $N_r = \{g \in G \, : \, d_\mc{X}(g, H) \leq r\}$.  Define $\mc{Y} = \Pi(N_K \times \mc{X}^{\pm1}) \subseteq H$ and note that $\mc{Y}$ is finite since it is contained in the finite set $\{h \in H \, : \, d_\mc{X}(1, h) \leq 2K + 2 \}$.  Observe that, for every $h \in H$, the word $\Psi(w_h) \in \fm{Y}$ represents $h$ and hence $\mc{Y}$ generates $H$.  Furthermore $d_\mc{Y}(1, h) \leq |\Psi(w_h)| = |w_h|$ so $\Delta$ satisfies the given inequality.
\end{proof}

\begin{defn}
  Let $\mc{P}$ be a presentation of the group $G$.  Define the \emph{departure} from $H$ of a $\mc{P}$-expression $\mc{E} = (x_i, r_i)_{i=1}^m$ to be $$\Dep_\mc{X}(\mc{E}, H)= \max_{1 \leq i \leq m} \Dep_\mc{X}(x_i, H).$$
\end{defn}

\begin{prop} \label{prop4}
  Let $\mc{P} = \langle \mc{X} \, | \, \mc{R} \rangle$ be a finite
  presentation of the group $G$ and let $H \leq G$ be a finitely
  generated subgroup with finite generating set $\mc{Y}$. \begin{enumerate}
    \item Suppose that there exists a function $K : \N \rightarrow \N$ such that, for each null-homotopic word $w \in \fm{X}$, there exists a $\mc{P}$-expression $\mc{E}_w$ for $w$ with $\Dep_\mc{X}(\mc{E}_w, H) \leq K(\Dep_\mc{X}(w, H))$.  Then there exists a finite set of words $\mc{S} \subseteq \fm{Y}$ so that $H$ is presented by $\mc{Q} = \langle \mc{Y} \, | \, \mc{S} \rangle$.

    \item Suppose, in addition, that there exists a function $\alpha : \N \rightarrow \N$ so that $\Area(\mc{E}_w) \leq \alpha(|w|)$ for each $w$.  Then $\alpha$ is an isoperimetric function for $H$.

    \item Suppose, in addition, that there exists a function $\rho : \N \rightarrow \N$ so that $\Rad(\mc{E}_w) \leq \rho(|w|)$ for each $w$.  Then there exist functions $\alpha', \rho' : \N \rightarrow \N$ with $\alpha' \simeq \alpha$ and $\rho' \simeq \rho$ so that $(\alpha', \rho')$ is an area-radius pair for $\mc{Q}$.
  \end{enumerate}
\end{prop}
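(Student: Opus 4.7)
The plan is to extend the coset-representative construction from the proof of Proposition~\ref{prop3} from the level of generators to the level of relators. For each $g \in G$, choose $\gamma_g \in G$ of minimal $\mc{X}$-length with $g\gamma_g^{-1} \in H$, put $\pi(g) := g\gamma_g^{-1} \in H$, and for $g \in G$ and $x \in \mc{X}^{\pm 1}$ set $\Pi(g, x) = \gamma_g x \gamma_{gx}^{-1} \in H$. Letter-by-letter this induces maps $\Psi_g : \fm{X} \to H^{\pm\bast}$ (write $\Psi := \Psi_1$) with the following routinely-verified properties: \emph{(a)} as an element of $H$, $\Psi_g(w) = \pi(g)^{-1} \pi(g\bar w)$, where $\bar w$ is the value of $w$ in $G$; \emph{(b)} $\Psi_g$ depends only on the right coset $Hg$, so in particular $\Psi_h = \Psi$ for all $h \in H$; \emph{(c)} if every prefix of $w$ lies within $D$ of $H$ then the letters of $\Psi(w)$ belong to the finite set $\mc{Z}_D = \{\Pi(g, x) : d_\mc{X}(g, H) \leq D,\ x \in \mc{X}^{\pm 1}\} \subseteq H$; and \emph{(d)} $\Psi$ preserves free equality, the key point being the identity $\Pi(gx, x^{-1}) = \Pi(g, x)^{-1}$, which makes each inserted cancellation $xx^{-1}$ map to an adjacent cancelling pair of letters.

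For each $y \in \mc{Y}$ fix a word $u_y \in \fm{X}$ representing $y$, and set $C_0 = \max_y |u_y|$, $L = \max_{r \in \mc{R}} |r|$, $D = K(C_0) + L$. Let $\mc{S}_0 \subseteq \mc{Z}_D^{\pm\bast}$ be the finite set of all null-homotopic words in $\mc{Z}_D^{\pm\bast}$ of length $\leq L$; let $\mc{S}_1 = \{y\cdot \Psi(u_y)^{-1} : y \in \mc{Y}\}$; and let $\mc{S}_2 = \{z \cdot (v'_z)^{-1} : z \in \mc{Z}_D\}$, where $v'_z \in \fm{Y}$ is a chosen word representing $z$. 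Given a null-homotopic $w = y_1 \cdots y_n \in \fm{Y}$ with $n \leq l$, form $\tilde w := u_{y_1} \cdots u_{y_n} \in \fm{X}$; this is null-homotopic, of length $\leq C_0 l$, with $\Dep_\mc{X}(\tilde w, H) \leq C_0$. The hypothesis provides a $\mc{P}$-expression $\tilde w \FreeEq \prod_{i=1}^m x_i r_i x_i^{-1}$ with $\Dep_\mc{X}(x_i, H) \leq K(C_0)$, so every prefix appearing on the right stays within $D$ of $H$. Combining \emph{(d)} with the observation that each block $x_i r_i x_i^{-1}$ has trivial value in $G$ (hence by \emph{(b)} successive blocks compose without drift) and with the telescoping identity $\Psi_1(x_i)\cdot \Psi_{\bar{x}_i}(x_i^{-1}) \FreeEq \emptyset$ produces
\[
\Psi(\tilde w) \FreeEq \prod_{i=1}^m \Psi_1(x_i)\, \Psi_{\bar{x}_i}(r_i)\, \Psi_1(x_i)^{-1}
\]
in $F(\mc{Z}_D)$, each inner factor being an element of $\mc{S}_0$. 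Separately, property \emph{(b)} applied block-by-block (since each $y_1 \cdots y_k \in H$) gives the literal equality $\Psi(\tilde w) = \Psi(u_{y_1}) \cdots \Psi(u_{y_n})$. Therefore, in the finite presentation $\langle \mc{Y} \cup \mc{Z}_D \mid \mc{S}_0 \cup \mc{S}_1 \cup \mc{S}_2 \rangle$, the word $w$ is first converted to $\Psi(\tilde w)$ in $n \leq l$ applications of $\mc{S}_1$ and then collapses via $m$ conjugates of $\mc{S}_0$-relators. Tietze transformations eliminating the auxiliary generators via $\mc{S}_2$ produce the required finite presentation $\mc{Q} = \langle \mc{Y} \mid \mc{S} \rangle$, proving (1); tracking areas yields $\Area_\mc{Q}(w) \preceq l + \alpha(C_0 l) \preceq \alpha$, proving (2); tracking radii yields the bound $\max(C_0 l, \rho(C_0 l))$, and Proposition~\ref{prop11} transfers the resulting area-radius pair to $\mc{Q}$ with the claimed $\simeq$-equivalence, proving (3).

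The main obstacle is the careful bookkeeping underlying property \emph{(d)} and the telescoping identity $\Psi_1(x_i)\cdot \Psi_{\bar{x}_i}(x_i^{-1}) \FreeEq \emptyset$: both reduce to the single algebraic identity $\Pi(gx, x^{-1}) = \Pi(g, x)^{-1}$, but one must track how the conjugator words at the boundary of each block align to produce genuine conjugates of elements of $\mc{S}_0$. A secondary point is verifying that the depth bound $D = K(C_0) + L$ really does accommodate every prefix arising in $\prod_i x_i r_i x_i^{-1}$, including those reaching into the interior of some $r_i$; this ensures that the auxiliary alphabet $\mc{Z}_D$ and the short-relator set $\mc{S}_0$ are both finite.
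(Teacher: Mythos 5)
Your proof is correct and follows essentially the same strategy as the paper's: use the bounded-departure hypothesis to obtain a $\mc{P}$-expression for $\tilde w$ lying uniformly close to $H$, then push it into $H$ via the closest-point projections $\gamma_g$ (the identity $\Pi(gx,x^{-1})=\Pi(g,x)^{-1}$ making the projection compatible with free reduction), so that the projected relators form a finite set of short null-homotopic words. The only real difference is cosmetic: you retain the projected elements $\Pi(g,x)$ as auxiliary generators $\mc{Z}_D$ and eliminate them at the end by Tietze transformations (which makes finiteness of $\mc{S}_0$ immediate), whereas the paper rewrites them directly as words in $\mc{Y}$ via a choice of geodesics $\xi$ and instead bounds the lengths of the resulting relators using the distortion function of $H$ in $G$.
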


\begin{proof}
  For each $y \in \mc{Y}$ choose a word $u_y \in \fm{X}$ with $u_y = y$ in $G$.  Define $L = \max \{\Dep_\mc{X}(u_y, H) \, : \, y \in \mc{Y} \}$.

  For each $g \in G$, choose an element $\gamma_g \in G$ with $g \gamma_g^{-1} \in H$ and $d_\mc{X}(1, \gamma_g) = d_\mc{X}(g, H)$.  Choose $\xi$ to be a function $H \times H \rightarrow \mc{Y}^{\pm \bast}$ such that $\xi(h_1, h_2)$ represents $h_1^{-1} h_2$ in $H$, $|\xi(h_1, h_2)| = d_\mc{Y}(h_1, h_2)$ and $\xi(h_2, h_1) = \xi(h_1, h_2)^{-1}$.  Geometrically $\xi$ is a choice of a preferred edge path connecting each pair of vertices in the Cayley graph of $H$ that is compatible with reversing orientation.  Define a function $\Omega : G \times \mathcal{X}^{\pm1} \rightarrow \mc{Y}^{\pm \bast}$ by $\Omega(g, x) = \xi(g \gamma_g^{-1}, gx \gamma_{gx}^{-1})$.  Then $\Omega(g, x)$ represents the element $\gamma_g x \gamma_{gx}^{-1}$ of $H$ and $\Omega(gx, x^{-1}) = \Omega(g, x)^{-1}$.  Extend $\Omega$ to a function $G \times \fm{X} \rightarrow \fm{Y}$ by setting $$\Omega(g, x_1 \ldots x_n) = \Omega(g, x_1) \, \Omega(gx_1, x_2) \, \ldots \, \Omega(g x_1 \ldots x_{n-1}, x_n).$$  Geometrically, we can think of $\Omega$ as a map from edge paths in the Cayley graph of $G$ to edge paths in the Cayley graph of $H$ which is compatible with reversing the orientation of paths.

  Note that $\Omega(g, w) = \gamma_g w \gamma_{gw}^{-1}$ in $G$ for any $w \in \fm{X}$.  Thus if $w$ is null-homotopic then so is $\Omega(g, w)$.  Given $r \in \N$, define $N_r = \{ g \in G \, : \, d_\mc{X}(g, H) \leq r \}$ and define $\mc{S}_1$ to be the collection of null-homotopic words $\Omega(N_{K(L)} \times \mc{R}^{\pm1}) \subseteq \fm{Y}$.  We will show that $\mc{S}_1$ is finite by demonstrating that there is a uniform bound on the length of all words in this set.  Indeed, note that, for all $g \in G$ and $x \in \mc{X}^{\pm1}$, one has $d_\mc{X}(g \gamma_g^{-1}, gx \gamma_{gx}^{-1}) = d_\mc{X}(1, \gamma_g x \gamma_{gx}^{-1}) \leq d_\mc{X}(g, H) + 1 + d_\mc{X}(gx, H) \leq 2d_\mc{X}(g, H) + 2$.  Thus $|\Omega(g, x)| = d_\mc{Y}(g \gamma_g^{-1}, g x \gamma_{gx}^{-1}) \leq \Delta^G_H(2 d_\mc{X}(g, H) + 2)$, where $\Delta^G_H$ is the distortion function of $H$ in $G$ with respect to the generating sets $\mc{Y}$ and $\mc{X}$ respectively. It follows that, for any word $w \in \fm{X}$, one has $|\Omega(g, w)| \leq |w| \Delta^G_H(2 \max_{0 \leq i < |w|} d_\mc{X}(gw[i], H) + 2) \leq |w| \Delta^G_H(2 d_\mc{X}(g, H) + 2|w| + 2)$.  Thus $|s| \leq R \Delta^G_H(2K(L) + 2R + 2)$ for all $s \in \mc{S}_1$, where $R = \max_{r \in \mc{R}}|r|$, and hence $\mc{S}_1$ is indeed finite.

  If $w \in \fm{X}$ represents an element of $H$ then as group elements $\Omega(1, w) = \gamma_1 w \gamma_w^{-1} = w$.  Thus, for each $y \in \mc{Y}$, we have that $\Omega(1, u_y) = y$ in $H$. Define $\mc{S}_2$ to be the collection of null-homotopic words $\{y \Omega(1, u_y)^{-1} \, : \, y \in \mc{Y}\} \subseteq \fm{Y}$.  We will show that $H$ is presented by $\mc{Q} = \langle \mc{Y} \, | \, \mc{S}_1, \mc{S}_2 \rangle$.

  Let $\sigma = y_1 \ldots y_n \in \fm{Y}$ be an arbitrary null-homotopic word.  Define $\sigma'$ to be the word $u_{y_1} \ldots u_{y_n} \in \fm{X}$.  Then $\Dep_\mc{X}(\sigma', H) \leq L$ so there exists a null $\mc{P}$-expression $\mc{E} = (x_i, r_i)_{i=1}^m$ for $\sigma'$ with $\Dep_\mc{X}(\mc{E}, H) \leq K(L)$. Define $\ol{\mc{E}}$ to be the null $\mc{Q}$-expression $\big(\Omega(1, x_i), \, \Omega(x_i, r_i) \big)_{i=1}^m$.  The relationship between $\mc{E}$ and $\ol{\mc{E}}$ is represented schematically in Figure~\ref{fig2}

  \begin{figure}[htbp]
  \psfrag{a}{$x_i$}
  \psfrag{b}{$\Omega(1,x_i)$}
  \psfrag{c}{$r_i$}
  \psfrag{d}{$\Omega(x_i, r_i)$}
  \psfrag{e}{$\Omega(1, \sigma')$}
  \psfrag{f}{$\sigma'$}
  \psfrag{g}{$H$}
  \psfrag{h}{$\mc{E}$}
  \psfrag{i}{$\ol{\mc{E}}$}
  \centering \includegraphics{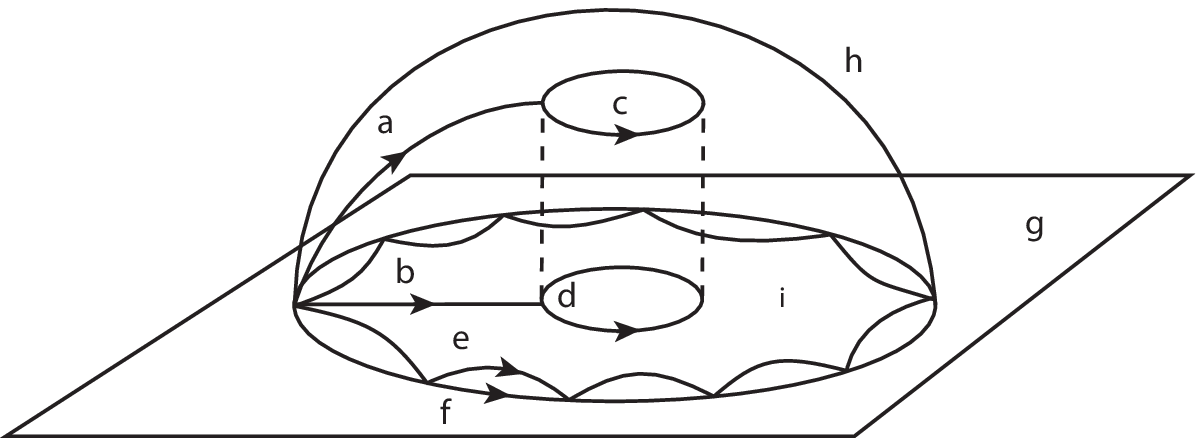}
  \caption{The relationship between $\mc{E}$ and $\ol{\mc{E}}$.} \label{fig2}
  \end{figure}

   Recall that if $g \in G$ and $x \in \mc{X}^{\pm1}$ then $\Omega(g, x)^{-1} \equiv \Omega(gx, x^{-1})$.  Thus if $w_1 \in \fm{X}$ and $w_2 \in \fm{X}$ are freely equal then for any $g \in G$ one has that $\Omega(g, w_1) \in \fm{Y}$ and $\Omega(g, w_2) \in \fm{Y}$ are freely equal.  In particular $\Omega(1, \sigma')$ is freely equal to $\Omega(1, \partial \mc{E})$.  Also \begin{align*} \Omega(1, x_i r_i x_i) &\equiv \Omega(1, x_i) \Omega(x_i, r_i) \Omega(x_i r_i, x_i^{-1}) \\ &\equiv \Omega(1, x_i) \Omega(x_i, r_i) \Omega(x_i r_i x_i^{-1}, x_i)^{-1} \\ &\equiv \Omega(1, x_i) \Omega(x_i, r_i) \Omega(1, x_i)^{-1} \end{align*} and so $\Omega(1, \partial \mc{E}) \equiv \partial \ol{\mc{E}}$.  Thus $\ol{\mc{E}}$ is a $\mc{Q}$-expression for $\Omega(1, \sigma')$.

  For each $i = 1, \ldots, n$, define $\mc{E}_i$ to be the area $1$ $\mc{Q}$-expression $(y_1 \ldots y_{i-1}, y_i \Omega(1, u_{y_i})^{-1})$.  Then $\partial(\mc{E}_n \ldots \mc{E}_1)$ is freely equal to $y_1 \ldots y_n \Omega(1, u_{y_n})^{-1} \Omega(1, u_{y_1})^{-1} \equiv \sigma \Omega(1, \sigma')^{-1}$ and so $\mc{E}_n \ldots \mc{E}_1 \ol{\mc{E}}$ is a $\mc{Q}$-expression for $\sigma$.

  Now suppose that there exists a function $\alpha$ as in assertion (2).  If we define $C = \max \{|u_y| \, : \, y \in \mc{Y}\}$ then $|\sigma'| \leq C |\sigma|$ and so we can choose $\mc{E}$ so that $\Area(\mc{E}) \leq \alpha(C|\sigma|)$.  Hence $\Area_\mc{Q}(\sigma) \leq \Area(\mc{E}_n \ldots \mc{E}_1 \ol{\mc{E}}) \leq \alpha(C |\sigma|) + |\sigma|$.  Define $\alpha'$ by $\alpha'(l) = \alpha(Cl) +l$.  Then $\alpha'$, and hence $\alpha$, is an isoperimetric function for $H$.

  If furthermore there exists a function $\rho$ as in assertion (3) then we can choose $\mc{E}$ so that additionally $\Rad(\mc{E}) \leq \rho(C|\sigma|)$.  Then $\Rad(\mc{E}_n \ldots \mc{E}_1 \ol{\mc{E}}) \leq \max \{\rho(C|\sigma|), |\sigma|-1 \} \leq \rho(C|\sigma|) + |\sigma|$.  Define $\rho'$ by $\rho'(l) = \rho(Cl) + l$.  Then $(\alpha', \rho')$ is an area-radius pair for $\mc{Q}$.
\end{proof}

\begin{proof}[Proof of Lemma~\ref{lem22}~(2)]
    Say $\mc{Q} = \langle \mc{X} \, | \, \mc{R} \rangle$ and that $H$ is finitely generated by $\mc{Y}$.  Let $\mc{C}$ be a finite set of right coset representatives for $H$ in $G$.  For each $c \in \mc{C}$, choose a word $w_c \in \fm{X}$ representing $c$ in $G$.  Define $L = \max_{c \in \mc{C}} \{ |w_c| \}$.  Then for each $g \in G$, there exists $c \in \mc{C}$ so that $gc^{-1} \in H$ and hence $d_\mc{X}(g, H) \leq L$.  Thus, for any $\mc{Q}$-expression $\mc{E}$, one has that $\Dep_\mc{X}(\mc{E}, H) \leq L$.  Proposition~\ref{prop4} therefore gives a finite collection of words $\mc{S} \in \fm{Y}$ and functions $\bar{\alpha}, \bar{\rho} : \N \rightarrow \N$ with $\bar{\alpha} \simeq \alpha$ and $\bar{\rho} \simeq \rho$ so that $H$ is presented by $\bar{\mc{P}} = \langle \mc{Y} \, | \, \mc{S} \rangle$ and $(\bar{\alpha}, \bar{\rho})$ is an area-radius pair for $\bar{\mc{P}}$.  The result then follows by Proposition~\ref{prop11}.
\end{proof}

\section{Full coabelian subdirect products} \label{sec7}

\subsection{The main theorem} \label{sec16}

\begin{defn}
   Let $H$ be a subgroup of a group $G$.  If $[G, G] \leq H$, then we say that $H$ is \emph{coabelian} in $G$.  If there exists a finite index subgroup $G' \leq G$ so that $[G', G'] \leq H$, then we say that $H$ is \emph{virtually-coabelian} in $G$.  In this situation, we define the \emph{corank} of $H$ in $G$ to be $\dim \left(\frac{G'}{G' \cap H} \otimes \Q\right)$.  Note that this is independent of the finite index subgroup $G' \leq G$ chosen.
\end{defn}

\begin{defn}
  Let $H$ be a subgroup of a direct product $D = \Gamma_1 \times \ldots \times \Gamma_n$.  If $\Gamma_i H = D$ for each $i$, then we say that $H$ is \emph{full} in $D$.  If $[D : \Gamma_iH] < \infty$ for each $H$, then we say $H$ is \emph{virtually-full} in $D$.  Note that these definitions are dependent upon a choice of a particular decomposition of $D$ as a direct product.
\end{defn}

Section~\ref{sec7} of this thesis is dedicated to proving the following result.

\begin{thm} \label{thm6}
  Let $H$ be a virtually-full, virtually-coabelian subgroup of a direct product $D = \Gamma_1 \times \ldots \times \Gamma_n$, with corank $r$. \begin{enumerate}
    \item Suppose each $\Gamma_i$ is finitely generated and $n \geq 2$.  Then $H$ is finitely generated and the distortion function $\Delta$ of $H$ in $D$ satisfies $\Delta(l) \preccurlyeq l^2$.

    \item Suppose each $\Gamma_i$ is finitely presented and $n \geq 3$.  Then $H$ is finitely presented.

    \item Suppose each $\Gamma_i$ is finitely presented and $n \geq 3$.  For each $i$, let $(\alpha_i, \rho_i)$ be an area-radius pair for some finite presentation of $\Gamma_i$.  Define $$\alpha(l) = \max ( \{l^2\} \cup \{ \alpha_i(l) \, : \, 1 \leq i \leq n\})$$ and $$\rho(l) = \max ( \{l\} \cup \{ \rho_i(l) \, : \, 1 \leq i \leq n\}).$$  Then $\rho^{2r} \alpha$ is an isoperimetric function for $H$

    \item Suppose that each $\Gamma_i$ is finitely presented and that $n \geq \max\{3, 2r\}$.  Let $\beta_1$ and $\beta_2$ be the Dehn functions of some finite presentations of $\Gamma_1 \times \ldots \times \Gamma_{n-r}$ and $\Gamma_{n-r+1} \times \ldots \times \Gamma_n$ respectively.  Then the function $\beta$ defined by $$\beta(l) = l\beta_1(l^2) + \beta_2(l)$$ is an isoperimetric function for $H$.
  \end{enumerate}
\end{thm}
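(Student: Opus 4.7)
The plan is to reduce first to a clean setup and then address the four parts using the close-fillings machinery of Section~\ref{sec8}. By Lemma~\ref{lem22} and Corollary~\ref{cor1}, replacing $D$ and $H$ by finite-index subgroups preserves all invariants in the statement up to the relevant equivalences, so the hypotheses allow me to assume throughout that $D=\Gamma_1\times\cdots\times\Gamma_n$ with $[D,D]\le H$, that $\Gamma_iH=D$ for every $i$, and that $D/H\cong\Z^r$. Writing $\phi=\sum_i\phi_i\colon D\to\Z^r$ for the resulting surjection, fullness then forces each $\phi_i=\phi|_{\Gamma_i}$ to be surjective. For part~(1) I would apply Proposition~\ref{prop3}. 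For each $x\in\mc{X}_i^{\pm1}$ from a fixed generating set of $\Gamma_i$, pre-fix a bounded-length antidote word $u^n_x\in\mc{X}_n^{\pm\bast}$ with $\phi_n(u^n_x)=-\phi_i(x)$. Given $h\in H$ with $d_D(1,h)\le l$, decompose $h=h_1\cdots h_n$ with $|h_i|_{\Gamma_i}\le l$, and assemble $w_h$ by reading $h_1,\ldots,h_{n-1}$ letter-by-letter while inserting antidotes so that each prefix lies within a fixed constant of $H$; a trailing word in $\ker\phi_n\cap\Gamma_n\subseteq H$ then brings the product to $h$ (the $\phi_n$-value of this trailing piece is determined by $\sum_i\phi_i(h_i)=0$). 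The word $w_h$ has length $O(l^2)$ and bounded departure, whence Proposition~\ref{prop3} gives finite generation and $\Delta(l)\preccurlyeq l^2$.

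For part~(2) I would deduce finite presentability from part~(1) via Theorem~\ref{thm4}. Decompose $D=D^-\times\Gamma_n$ with $D^-=\Gamma_1\times\cdots\times\Gamma_{n-1}$, and set $\phi^-=\sum_{i<n}\phi_i\colon D^-\to\Z^r$. Then $H$ is precisely the twisted fibre product of $\phi^-$ and $-\phi_n$. The kernel $\ker\phi^-$ is a full coabelian subgroup of $D^-$ with $n-1\ge 2$ factors, hence finitely generated by part~(1); both $D^-$ and $\Gamma_n$ are finitely presented, and $\Z^r$ is of type $\mathrm{F}_\infty$, so Theorem~\ref{thm4} applies and yields that $H$ is finitely presented.

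For parts~(3) and~(4) I would use Proposition~\ref{prop4}: the task reduces to producing, for each null-homotopic word $w$ in $D$-generators of length $\le l$, a $\mc{P}_D$-expression for $w$ with bounded departure from $H$ and the claimed area (and implicit radius). For part~(3) the strategy is an induction on $r$. When $r=0$, $H=D$ and the given $(\alpha,\rho)$ suffices. For the inductive step, pick a surjection $\Z^r\to\Z$ and let $K\le D$ be the preimage under $\phi$; then $H\le K\le D$ with $K$ of corank~$1$ in $D$. Theorem~\ref{thm1} converts an area-radius pair for $D$ into an area-penetration pair for an infinite presentation $\mc{P}_K^\infty$, and Proposition~\ref{prop1}, combined with the fact that the new relators are null-homotopic over a finite presentation of $K$ at quadratic cost in their radius, promotes this to a Dehn bound for $K$ of the form $\rho^2\alpha$. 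Iterating $r$ times down a chain $D\supset K_1\supset\cdots\supset K_r=H$, while carefully carrying area-radius data through the sequence, yields the $\rho^{2r}\alpha$ bound. For part~(4), the inequality $n\ge 2r$ ensures that $D_1=\Gamma_1\times\cdots\times\Gamma_{n-r}$ has at least $r$ factors, so $D_1\cap H$ is a full coabelian subgroup of $D_1$ of corank $r$ with $n-r\ge 2$ factors; part~(1) applied to $D_1$ then supplies quadratic distortion of $D_1\cap H$ in $D_1$. A null-homotopic $w$ of length $l$ is split into a $D_1$-piece and a $D_2$-piece at the cost of $O(l^2)$ commutator relators, each lying in $[D,D]\subseteq H$ and thus free from the departure viewpoint; the $D_2$-piece is filled directly with area $\beta_2(l)$, while the $D_1$-piece is first replaced by an $O(l^2)$-length representative close to $D_1\cap H$ (via the quadratic distortion) and then filled by $\beta_1$, contributing $l\cdot\beta_1(l^2)$.

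The technical crux I expect to be hardest is the area-radius bookkeeping in the induction for part~(3). Theorem~\ref{thm1} only promises an area-penetration pair for the intermediate kernel, whereas iterating requires a genuine area-radius pair at each stage, and the departure control demanded by Proposition~\ref{prop4} must be simultaneously maintained. Reconciling these will likely require tracking radii alongside areas through all $r$ stages, rather than reducing to a purely algebraic cyclic-extension argument; in part~(4) a similar, if milder, delicate interplay is needed between the quadratic rewriting supplied by part~(1) and the filling by $\beta_1$ on the $D_1$-side.
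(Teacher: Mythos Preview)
Your reduction and your treatments of parts~(1), (2) and (4) are broadly in line with the paper, though with different packaging: the paper uses the fibre-product description directly via Lemma~\ref{lem9} for part~(1) and Theorem~\ref{thm4} for part~(2), and a semidirect-product argument (Theorem~\ref{thm12}) for part~(4). Your antidote sketch for part~(1) is a bit rough as stated: the ``trailing word in $\ker\phi_n\cap\Gamma_n$'', written naively in $\mc{X}_n$, has departure comparable to its length, not bounded; the paper sidesteps this by invoking Lemma~\ref{lem9}, where the quadratic Dehn function of $\Z^r$ directly supplies the $l^2$.

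The real gap is in part~(3), and it is exactly the one you flag. Iterating down a chain $D\supset K_1\supset\cdots\supset K_r=H$ via Theorem~\ref{thm1} requires, at each step, a genuine area-\emph{radius} pair for $K_j$ to feed into the next application; but Theorem~\ref{thm1} plus Proposition~\ref{prop1} only return a Dehn bound, with no radius control on the resulting $\mc{P}_{K_j}$-expressions (the words $\Psi(\bar x_i)$ in the proof of Theorem~\ref{thm1} can be long). So the induction does not close. The paper does \emph{not} use the cyclic-extension machinery for part~(3). Instead (Theorem~\ref{thm10}, via Propositions~\ref{prop7}--\ref{prop9}) it works directly inside $D$: it introduces $r$ height functions on $\mc{P}_D$-expressions and uses the pull-down maps $\Phi_k$ to flatten an expression one coordinate at a time. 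Each application of $\Phi_k$ reduces $\height_k$ to a uniform constant while multiplying the area by at most a factor $(\height_k(\mc{E})+1)^2$ and leaving the other heights essentially unchanged (Proposition~\ref{prop7}); after $r$ passes one has a $\mc{P}_D$-expression for $w$ with all heights bounded in terms of $\height_i(w)$ and area $\preceq \rho^{2r}\alpha$ (Proposition~\ref{prop8}), and then Proposition~\ref{prop4} converts bounded-departure fillings into an isoperimetric bound for $H$. The point is that the paper never descends through intermediate kernels $K_j$, so it never needs area-radius information at those levels; your proposed route would need a strengthened version of Theorem~\ref{thm1} that also controls radius, which is not available.
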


Note that the finite generation of the $\Gamma_i$ ensures that $H$ has finite corank in $D$.  Furthermore, for a fixed $D$, the corank of a virtually-coabelian subgroup $H \leq D$ is bounded by the corank of $[D,D]$.  It follows that there is a uniform polynomial isoperimetric function for all virtually-full, virtually-coabelian subgroups of $D$.  Also observe that the finiteness properties of the Stallings-Bieri groups $\mathrm{SB}_1$ and $\mathrm{SB}_2$ demonstrate the necessity of the conditions $n \geq 2$ and $n \geq 3$ respectively.

\subsection{Reductions of the main theorem}

\renewcommand{\labelenumi}{(\roman{enumi})}

\begin{prop} \label{prop15}
  Theorem~\ref{thm6} is true if and only if it holds under the following additional hypotheses: \begin{enumerate}
    \item $H$ is full in $D$.

    \item $H$ is coabelian in $D$.

    \item $D/H$ is finitely generated free abelian.
  \end{enumerate}
\end{prop}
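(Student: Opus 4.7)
The $(\Leftarrow)$ direction is trivial. For the $(\Rightarrow)$ direction, the plan is to replace $(H,D)$ with a commensurable pair $(H^\flat, D^\flat)$ satisfying (i)--(iii), apply the restricted version of Theorem~\ref{thm6} to $(H^\flat, D^\flat)$, and transfer the conclusions back via Lemma~\ref{lem22} (for Dehn functions and area-radius pairs) and Corollary~\ref{cor1} (for distortion). The subgroups $\Gamma_i^\flat$ will be chosen of finite index in $\Gamma_i$, so by Lemma~\ref{lem22} the filling functions of the finite direct products appearing in parts (3) and (4) of Theorem~\ref{thm6} will be $\simeq$-equivalent to those of the original $\Gamma_i$.

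First I will use virtually-coabelian to obtain a finite-index $\widetilde{D} \leq D$ with $[\widetilde{D},\widetilde{D}] \leq H$, then replace $\widetilde{D}$ by the sub-direct-product $D^\dagger := \prod_i \Gamma_i^\dagger$ where $\Gamma_i^\dagger := \widetilde{D} \cap \Gamma_i$; the commutator containment persists, so $H^\dagger := H \cap D^\dagger$ is coabelian in $D^\dagger$. Writing $D^\dagger/H^\dagger \cong \Z^r \oplus T$ with $T$ the (finite) torsion subgroup, I let $\pi \colon D^\dagger \to \Z^r$ be the composition with the torsion-killing projection, and set $C_i := \pi(\Gamma_i^\dagger)$. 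A short argument using the product structure (together with the identity $\Gamma_i^\dagger H \cap D^\dagger = \Gamma_i^\dagger (H \cap D^\dagger)$) shows that virtually-fullness of $H$ in $D$ forces each $C_i$ to have finite index in $\Z^r$.

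The key step is to balance the factors so that their images in $\Z^r$ coincide. I set $C := \bigcap_{i=1}^n C_i$, still of finite index in $\Z^r$, and define $\Gamma_i^\flat := (\pi|_{\Gamma_i^\dagger})^{-1}(C)$, a finite-index subgroup of $\Gamma_i^\dagger$ (and hence of $\Gamma_i$). Setting $D^\flat := \prod_i \Gamma_i^\flat$, I take $H^\flat \leq D^\flat$ to be the preimage of the torsion subgroup of $D^\flat/(H \cap D^\flat)$. Then $H^\flat \supseteq H \cap D^\flat$ with finite index, so $H^\flat$ and $H$ are commensurable through the common finite-index subgroup $H \cap D^\flat$; the inclusion $[D^\flat, D^\flat] \leq [D^\dagger, D^\dagger] \leq H^\flat$ gives (ii); the quotient $D^\flat/H^\flat$ is canonically identified with $C \cong \Z^r$, giving (iii) and preserving the corank $r$; and by construction $\pi(\Gamma_i^\flat) = C$ for every $i$, so each $\Gamma_i^\flat$ surjects onto $D^\flat/H^\flat$, yielding (i).

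The most delicate point, and the main thing to guard against, is that fullness does \emph{not} follow from virtually-fullness even in the coabelian setting: for instance $H = \ker(\phi \colon \Z^2 \to \Z)$ with $\phi(a,b) = 2a + 3b$ is virtually-full but not full. The balancing trick of taking all $\Gamma_i^\flat$ to have the same image $C$ (namely the intersection of the original images in $\Z^r$) is what forces fullness after torsion is killed. Once $(H^\flat, D^\flat)$ has been constructed, the translation of the four conclusions back to $(H,D)$ is routine: finite generation/presentation of $H$ follows from that of $H^\flat$ via commensurability, the distortion function transfers by Corollary~\ref{cor1}, and the isoperimetric bounds in parts (3) and (4) transfer by Lemma~\ref{lem22} applied to $H^\flat \sim H$ and to each $\Gamma_i^\flat \leq \Gamma_i$.
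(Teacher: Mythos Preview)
Your construction of the commensurable pair $(H^\flat, D^\flat)$ is correct and takes a different route from the paper's. The paper packages the reduction into Lemma~\ref{lem29}: after passing to $\bar D$ with $[\bar D,\bar D]\leq H$, it sets $D'=\bigcap_i \bar\Gamma_i\bar H$ and checks directly that $(\bar\Gamma_k\cap D')\bar H=\bar\Gamma_k\bar H\cap D'=D'$, obtaining fullness in one stroke; it then kills torsion by further shrinking $D'$. Your balancing trick---pulling each factor back to the common image $C=\bigcap_i C_i\leq\Z^r$ and then \emph{enlarging} $H$ to $H^\flat$ to absorb the residual torsion---is a clean dual approach. It has the minor advantage that $D^\flat=\prod_i\Gamma_i^\flat$ is manifestly a direct product of finite-index subgroups of the $\Gamma_i$, so no separate verification of that is needed; and since $H\cap D^\flat$ is a common finite-index subgroup of $H$ and $H^\flat$, commensurability is immediate.

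One substantive point you undersell as ``routine'': the transfer of part~(4) is not purely formal $\simeq$-bookkeeping. Unpacking $\beta_1'\simeq\beta_1$ inside $l\beta_1'(l^2)$ produces a stray term of order $l^3$, and to absorb this into $l\beta_1(l^2)$ one needs $\beta_1$ to be essentially at least quadratic. The paper handles this via Lemma~\ref{lem31}: since $n-r\geq 2$ and (when $r\geq 1$) each $\Gamma_i$ surjects onto $\Z$ and hence contains an element of infinite order, the product $\Gamma_1\times\cdots\times\Gamma_{n-r}$ contains $\Z^2$, is therefore non-hyperbolic, and so satisfies $l^2\leq C\beta_1(l)+C$ for some constant $C$. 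This step should be flagged rather than swept into ``routine''. The analogous check for part~(3) is genuinely easier, since $\alpha(l)\geq l^2$ and $\rho(l)\geq l$ hold by definition.
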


\renewcommand{\labelenumi}{(\arabic{enumi})}

Note that these stronger hypotheses hold precisely when $H$ is the kernel of a homomorphism $\theta : \Gamma_1 \times \ldots \times \Gamma_n \rightarrow \Z^r$ with the restriction of $\theta$ to each factor $\Gamma_i$ surjective.  In order to perform the reduction of Proposition~\ref{prop15} we will need the following two lemmas.

\begin{lem} \label{lem29}
  Let $H$ be a virtually-full, virtually-coabelian subgroup of the direct product $D = \Gamma_1 \times \ldots \times \Gamma_n$.  Then there exists a finite index subgroup $D' \leq D$ so that $H \cap D'$ is full and coabelian in $D' = (D' \cap \Gamma_1) \times \ldots \times (D' \cap \Gamma_n)$.
\end{lem}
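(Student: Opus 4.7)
The plan is to build $D'$ in three stages. First, use virtual-coabelianness to fix a finite-index subgroup $D_0 \leq D$ with $[D_0, D_0] \leq H$, then set $K_i := D_0 \cap \Gamma_i$ (which is finite-index in $\Gamma_i$) and $D_1 := K_1 \times \cdots \times K_n$. This $D_1$ is finite-index in $D$, sits inside $D_0$, and automatically has the direct-product decomposition $D_1 = (D_1 \cap \Gamma_1) \times \cdots \times (D_1 \cap \Gamma_n)$. Setting $H_1 := H \cap D_1$, the containment $[D_1, D_1] \leq [D_0, D_0] \leq H$ shows $H_1$ is coabelian in $D_1$, which also makes $H_1$ normal in $D_1$. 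It then remains to shrink $D_1$ further to achieve fullness.

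The second stage is a technical descent: I claim that virtual-fullness of $H$ in $D$ implies $K_i H_1$ has finite index in $D_1$ for each $i$. To verify this, consider the coordinate-deletion projection $\pi_i \colon D \to \prod_{j \neq i} \Gamma_j$, whose image of $H$ has finite index in $\prod_{j \neq i} \Gamma_j$ by hypothesis. Let $H^* := \{ h \in H : \pi_i(h) \in \prod_{j \neq i} K_j \}$, so $\pi_i(H^*) = \pi_i(H) \cap \prod_{j \neq i} K_j$ still has finite index in $\prod_{j \neq i} K_j$. The assignment $h \mapsto h_i K_i$ induces an injection of coset spaces $H^*/H_1 \hookrightarrow \Gamma_i / K_i$, so $[H^* : H_1] \leq [\Gamma_i : K_i] < \infty$. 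Therefore $\pi_i(H_1)$ has finite index in $\pi_i(H^*)$ and hence in $\prod_{j \neq i} K_j$, which is equivalent to $K_i H_1$ having finite index in $D_1$. This descent is the main technical step in the proof: it is where both hypotheses of the lemma genuinely interact.

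With this in hand, I would promote virtual-fullness to honest fullness via the abelian quotient. Form $Q := D_1 / H_1$ (a group, abelian since $[D_1, D_1] \leq H_1$) and let $q \colon D_1 \to Q$ be the quotient; the descent shows each $q(K_i)$ has finite index in $Q$, so $Q^* := \bigcap_{i=1}^n q(K_i)$ is a finite-index subgroup of $Q$. Define $L_i := K_i \cap q^{-1}(Q^*)$ and set $D' := L_1 \times \cdots \times L_n$. Because $Q^* \leq q(K_i)$, the surjection $q|_{K_i} \colon K_i \to q(K_i)$ restricts to a surjection $L_i \to Q^*$, giving $q(L_i) = Q^*$ and $[K_i : L_i] = [q(K_i) : Q^*] < \infty$.

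Finally I would verify the conclusions. $D'$ is finite-index in $D$, its construction gives the required decomposition $D' = (D' \cap \Gamma_1) \times \cdots \times (D' \cap \Gamma_n)$ with $D' \cap \Gamma_i = L_i$, and $[D', D'] \leq [D_1, D_1] \leq H$ confirms coabelianness of $H \cap D'$ in $D'$. For fullness: since $Q$ is abelian, $q(D') = \sum_i q(L_i) = Q^*$, and $\ker(q|_{D'}) = H_1 \cap D' = H \cap D'$ yields an isomorphism $D' / (H \cap D') \cong Q^*$. Under this isomorphism the image of $L_i$ is $q(L_i) = Q^* = q(D')$, so $L_i$ surjects onto $D' / (H \cap D')$, which is precisely the statement $L_i \cdot (H \cap D') = D'$. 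Hence $H \cap D'$ is both coabelian and full in $D'$, completing the argument.
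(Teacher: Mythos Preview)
Your proof is correct but takes a longer route than the paper. After fixing a finite-index $\bar D\le D$ with $[\bar D,\bar D]\le H$ and setting $\bar H=H\cap\bar D$, $\bar\Gamma_i=\Gamma_i\cap\bar D$, the paper simply defines $D'=\bigcap_{i}\bar\Gamma_i\bar H$. A brief index computation (using $[D:\Gamma_iH]<\infty$ together with $[\Gamma_iH:\bar\Gamma_i\bar H]<\infty$) shows each $\bar\Gamma_i\bar H$ has finite index in $\bar D$; since $\bar H\le D'$, Dedekind's modular law then gives $(\bar\Gamma_k\cap D')\bar H=\bar\Gamma_k\bar H\cap D'=D'$ for every $k$, which is precisely fullness. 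Your three-stage construction --- first forcing a product $D_1=K_1\times\cdots\times K_n$, then the projection-based descent showing $K_iH_1$ has finite index in $D_1$, then refining factor-by-factor through the abelian quotient $Q=D_1/H_1$ --- reaches the same conclusion with considerably more machinery. What your approach buys is that the decomposition $D'=(D'\cap\Gamma_1)\times\cdots\times(D'\cap\Gamma_n)$ is automatic by construction, whereas for the paper's $D'$ this identity is not verified (and is not obvious).
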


\begin{proof}
  Since $H$ is virtually-coabelian in $D$, there exists a finite index subgroup $\bar{D} \leq D$ with $[\bar{D}, \bar{D}] \leq H$.  Define $\bar{H}$ to be the finite index subgroup $H \cap \bar{D} \leq H$, and, for each $i$, define $\bar{\Gamma}_i$ to be the finite index subgroup $\Gamma_i \cap \bar{D} \leq \Gamma_i$.

Since $H$ is virtually-full in $D$, $[D: \Gamma_i H] < \infty$ for each $i$.  Thus each $\bar{\Gamma}_i \bar{H}$ has finite index in $\bar{D}$, since $[\bar{D}: \bar{\Gamma}_i \bar{H}] \leq [D: \bar{\Gamma}_i \bar{H}] = [D : \Gamma_iH] [ \Gamma_iH : \bar{\Gamma}_i \bar{H} ] < \infty$.  Define $D'$ to be the finite index subgroup $\cap_{i=1}^n \bar{\Gamma}_i \bar{H} \leq D$ and, for each $i$, define $\Gamma_i' = \bar{\Gamma}_i \cap D'$.  Note that $\bar{H} \leq D'$ and hence that $\bar{H} = H \cap D'$.  For each $k$, $\Gamma_k' \bar{H} = (\bar{\Gamma}_k \cap D') \bar{H} = \bar{\Gamma}_k \bar{H} \cap D' = D'$ and so $\bar{H} = H \cap D'$ is full in $D'$.  Furthermore, $[D', D'] \leq [\bar{D}, \bar{D}] \leq \bar{H}$ and so $\bar{H}$ is coabelian in $D'$.
\end{proof}

\begin{lem} \label{lem31}
  Let $G$ be a non-hyperbolic, finitely presented group, and let $\delta$ be the Dehn function of some finite presentation of $G$.  Then there exists $C \in \N$ so that $l^2 \leq C\delta(l) + C$.
\end{lem}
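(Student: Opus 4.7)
The plan is to derive this lemma directly from the Gromov gap theorem for Dehn functions. I would invoke the strong form of that theorem --- due originally to Ol'shanskii, with simpler proofs by Bowditch and by Papasoglu --- which asserts that for any finitely presented, non-hyperbolic group $G$ there exist constants $c > 0$ and $l_0 \in \N$, depending only on the chosen finite presentation, such that the Dehn function $\delta$ satisfies $\delta(l) \geq c\, l^2$ for every $l \geq l_0$. Note that the weak form ``$\delta(n) = o(n^2)$ implies hyperbolicity'' only yields a $\limsup$ statement for the contrapositive, which combined with monotonicity of $\delta$ is not enough to control $\delta(l)$ at every $l$; it is the uniform Bowditch--Papasoglu threshold that does the work here.

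Granted the strong form, the rest is bookkeeping. For $l \geq l_0$ we have $l^2 \leq c^{-1} \delta(l)$ directly. For the finitely many $l < l_0$ we have $l^2 \leq l_0^2$, a constant independent of $l$. Taking any integer $C$ with $C \geq \max\{\lceil c^{-1} \rceil,\, l_0^2\}$ then yields $l^2 \leq C\delta(l) + C$ uniformly in $l \in \N$, since the first term on the right handles $l \geq l_0$ and the additive constant absorbs the small cases. Independence of the conclusion from the particular choice of finite presentation is ensured by Lemma~\ref{lem27}: passing to another finite presentation replaces $\delta$ by a $\simeq$-equivalent function, and one adjusts $C$ accordingly.

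The substantive ingredient, and the main obstacle, is the gap theorem itself, whose proof involves delicate combinatorial geometry of van Kampen diagrams (small cancellation in Ol'shanskii's version, diagram surgery and combings in Bowditch's and Papasoglu's versions). I would cite this rather than reproduce it; everything else in the proof is the elementary inequality manipulation above.
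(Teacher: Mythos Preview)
Your approach is correct and, at its core, uses the same input as the paper's: the quadratic lower bound on Dehn functions of non-hyperbolic groups. The difference is one of packaging. The paper cites the gap theorem in the $\preceq$ form --- $l^2 \preceq \delta$, meaning there exists $K$ with $l^2 \leq K\delta(Kl+K) + Kl + K$ for all $l$ --- and then spends a few lines of elementary algebra (absorbing the linear term into the quadratic, then substituting $l \mapsto \lfloor l/(2K)\rfloor$ and using that $\delta$ is increasing) to upgrade this to $l^2 \leq C\delta(l) + C$. You instead cite the explicit inequality $\delta(l) \geq c\,l^2$ for $l \geq l_0$ directly, which makes the remaining bookkeeping trivial. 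Your caveat that the bare ``$\delta(n)=o(n^2)$ implies hyperbolic'' statement (whose contrapositive only gives a $\limsup$) would not suffice is well taken; but note that the $\preceq$ form the paper actually cites, once unpacked, is already equivalent to your ``strong form'', so neither argument needs a sharper external input than the other --- the paper just carries out by hand the step you outsource to Bowditch--Papasoglu.
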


\begin{proof}
  Since $G$ is not hyperbolic, the function $\delta$ satisfies $\delta(l) \succeq l^2$ \cite[Theorem~6.1.5]{brid02}.  Hence there exists $K \in \N$ such that $l^2 \leq K \delta(Kl + K) + Kl + K$ whence $l^2 \leq K \delta(2Kl) + 2Kl$.  This implies that $\frac{1}{2} l^2 + \frac{1}{2} l^2 -2Kl \leq K \delta(2Kl)$.  Note that $\frac{1}{2} l^2 -2Kl \geq -2K^2$, so $\frac{1}{2} l^2 -2K^2 \leq K \delta(2Kl)$, which implies that $l^2 \leq 2K \delta(2Kl) + 4K^2$.  We thus have that \begin{align*}
  l^2 &= 4K^2(l/(2K))^2 \\
  &\leq 4K^2 \lfloor l/(2K) \rfloor^2 + 4K^2 \\
  &\leq 4K^2 (2K \delta(2K \lfloor l/(2K) \rfloor ) + 4K^2) + 4K^2 \\
  &\leq 8K^3 \delta(l) + 16K^4 + 4K^2.
\end{align*}
\end{proof}

\begin{proof}[Proof of Proposition~\ref{prop15}]
  Let $H$ be a virtually-full, virtually-coabelian subgroup of a direct product $D = \Gamma_1 \times \ldots \times \Gamma_n$ with corank $r$.  Suppose that each $\Gamma_i$ finitely generated and that Theorem~\ref{thm6} is true under the additional hypotheses (i), (ii) and (iii).

By Lemma~\ref{lem29}, there exists a finite index subgroup $D' \leq D$ so that $H \cap D'$ is full and coabelian in $D'$.  Since $D$ is finitely generated, we may, by replacing $D'$ by a finite index subgroup if necessary, assume that $D' / (H \cap D')$ is free abelian of rank $r$.  Define $H' = H \cap D'$ and, for each $i$, define $\Gamma_i' = \Gamma_i \cap D'$.  Note that $[H : H'] < \infty$ and $[\Gamma_i : \Gamma_i'] < \infty$.  Thus each $\Gamma_i'$ is finitely generated.

  Now suppose that $n \geq 2$.  Since we assumed that Part~(1) of Theorem~\ref{thm6} is true under the additional hypotheses, it follows that $H'$ is finitely generated and that the distortion function $\Delta'$ of $H'$ in $D'$ satisfies $\Delta'(l) \preccurlyeq l^2$.  Since $D'$ has finite index in $D$ it is undistorted.  Thus by Lemma~\ref{lem24}, the distortion function $\Delta$ of $H$ in $D$ satisfies $\Delta(l) \preccurlyeq l^2$.

  Now suppose that $n \geq 3$ and that each $\Gamma_i$ is finitely presented.  Then each $\Gamma_i'$ is finitely presented.  Since we assumed that Part~(2) of Theorem~\ref{thm6} is true under the additional hypotheses, it follows that $H'$, and hence $H$, is finitely presented.  Let $\alpha, \rho, \alpha_i, \rho_i$ be as in the statement of Part~(3) of Theorem~\ref{thm6}.  By Lemma~\ref{lem22}~(2), there exists, for each $i$, functions $\alpha_i', \rho_i' : \N \rightarrow \N$ with $\alpha_i' \simeq \alpha_i$ and $\rho_i' \simeq \rho_i$ so that $(\alpha_i', \rho_i')$ is an area-radius pair for some finite presentation of $\Gamma_i'$.  Define $\alpha'(l) = \max ( \{l^2\} \cup \{ \alpha_i'(l) \, : \, 1 \leq i \leq n\})$ and $\rho'(l) = \max ( \{l\} \cup \{ \rho_i'(l) \, : \, 1 \leq i \leq n\})$.  Then, by the assumption that Part~(3) of Theorem~\ref{thm6} is true under the additional hypotheses, ${\rho'}^{2r} \alpha'$ is an isoperimetric function for $H'$.  By definition of the equivalence $\simeq$, there exists a constant $K$ such that $\alpha_i'(l) \leq K\alpha_i(Kl + K) + Kl + K$ and $\rho_i'(l) \leq K\rho_i(Kl+K) + Kl+K$ for all $i$.  Thus $\alpha'(l) \leq K\alpha(Kl + K) + Kl + K$ and $\rho'(l) \leq K\rho(Kl+K) + Kl+K$.  Since $\rho(l) \geq l$ and $\alpha(l) \geq l^2 \geq l$ we have that $\alpha'(l) \leq (K+1) \alpha(Kl + K)$ and $\rho'(l) \leq (K+1) \rho(Kl + K)$.  Thus $(\rho'(l))^{2r}  \alpha'(l) \leq (K+1)^{2r+1} (\rho(Kl + K))^{2r} \alpha(Kl + K)$ and hence $\rho'^{2r}\alpha' \preceq \rho^{2r} \alpha$.  It follows that $\rho^{2r}\alpha$ is an isoperimetric function for $H'$ and hence, by Lemma~\ref{lem22}~(1), an isoperimetric function for $H$.

  Finally, suppose that $n \geq \max \{3, 2r\}$.  Let $\beta_1, \beta_2, \beta$ be as in the statement of Part~(4) of Theorem~\ref{thm6}.  Let $\beta_1'$ and $\beta_2'$ be the Dehn functions of some finite presentations of $\Gamma_1' \times \ldots \times \Gamma_{n-r}'$ and $\Gamma_{n-r+1}' \times \ldots \times \Gamma_n'$ respectively and define $\beta'(l) = l\beta_1'(l^2) + \beta_2'(l)$.  Then, by the assumption that Part~(4) of Theorem~\ref{thm6} is true under the additional hypotheses, $\beta'$ is an isoperimetric function for $H'$.  By Lemma~\ref{lem22}~(1), $\beta_1' \simeq \beta_1$ and $\beta_2' \simeq \beta_2$ and so, by the definition of $\simeq$-equivalence, there exists a constant $K \in \N$ so that $\beta_1'(l) \leq K\beta_1(Kl+K) + Kl + K$ and $\beta_2'(l) \leq K\beta_2(Kl+K) + Kl + K$.  Then \begin{align*}
     \beta'(l) &\leq l[ K \beta_1(Kl^2 + K) + Kl^2 + K] + K\beta_2(Kl + K) + Kl + K\\
     &= Kl \beta_1(Kl^2 + K) + K\beta_2(Kl + K) + Kl^3 + 2Kl + K.
\end{align*}  By construction, $H'$ is the kernel of a homomorphism $\Gamma_1' \times \ldots \times \Gamma_n' \rightarrow \Z^r$ that is surjective on each factor $\Gamma_i'$.  Theorem~\ref{thm6}~(4) is trivially true when $r=0$, so we may assume that $r \geq 1$.  It follows that each $\Gamma_i'$, and hence each $\Gamma_i$, contains an element of infinite order.  The condition $n \geq \{3, 2r\}$ implies that $n-r \geq 2$, and so $\Gamma_1 \times \ldots \times \Gamma_{n-r}$ contains $\Z^2$ as a subgroup and hence is not hyperbolic.  By Lemma~\ref{lem31} there thus exists $C \in \N$ so that $l^2 \leq C\beta_1(l) + C$.  We now have \begin{align*}
  \beta'(l) &\leq Kl \beta_1(Kl^2 + K) + K \beta_2(Kl+K) + KCl \beta_1(l) + KCl + 2Kl + K \\
  &\leq K(C+1) l \beta_1(Kl^2 + K) + K\beta_2(Kl+K) + K(C+2)l + K \\
  &\leq K(C+1) \beta(Kl+K) + K(C+2)l +K.
\end{align*}  Thus $\beta' \preceq \beta$ and so $\beta$ is an isoperimetric function for $H'$ and hence, by Lemma~\ref{lem22}, an isoperimetric function for $H$.
\end{proof}

\subsection{Finite generation, distortion and finite presentation}

Combined with the reduction of Proposition~\ref{prop15}, the following result proves Parts~(1) and~(2) of Theorem~\ref{thm6}.

\begin{thm} \label{thm7}
  Let $\theta$ be a homomorphism from a direct product $D = \Gamma_1 \times \ldots \times \Gamma_n$ of groups to a finitely generated free abelian group $A$ such that, for each $i$, the restriction of $\theta$ to $\Gamma_i$ is surjective. \begin{enumerate}
    \item If each $\Gamma_i$ is finitely generated and $n \geq 2$ then $\ker \theta$ is finitely generated and the distortion function $\Delta$ of $\ker \theta$ in $D$ satisfies $\Delta(l) \preccurlyeq l^2$.
    \item If each $\Gamma_i$ is finitely presented and $n \geq 3$ then $\ker \theta$ is finitely presented.
  \end{enumerate}
\end{thm}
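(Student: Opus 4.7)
The plan is to recognise $H = \ker\theta$ as a (twisted) fibre product in each part and to reduce to results already established earlier in the paper.

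For Part~(1), I would view the ambient group as the two-factor product $D = \Gamma_1 \times (\Gamma_2 \times \cdots \times \Gamma_n)$ and consider the homomorphisms $p_1 := \theta|_{\Gamma_1} \colon \Gamma_1 \to A$ and $p_2 := -(\theta|_{\Gamma_2} + \cdots + \theta|_{\Gamma_n}) \colon \Gamma_2 \times \cdots \times \Gamma_n \to A$. Both maps are surjective, since the surjectivity of any single restriction $\theta|_{\Gamma_i}$ already forces surjectivity of the corresponding $p_j$. The fibre product of $p_1$ and $p_2$ is, by inspection, exactly $H$. Since $A \cong \Z^r$ is finitely presented with quadratic Dehn function, Lemma~\ref{lem9} applies directly: it delivers finite generation of $H$ together with the bound $\Delta \preceq l^2$ on the distortion of $H$ in $D$, which is equivalent to $\Delta(l) \preccurlyeq l^2$.

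For Part~(2), assuming $n \geq 3$, I would apply the twisted 1-2-3 Theorem (Theorem~\ref{thm4}) after regrouping as $D = (\Gamma_1 \times \Gamma_2) \times (\Gamma_3 \times \cdots \times \Gamma_n)$, with surjections $p_1 := \theta|_{\Gamma_1} + \theta|_{\Gamma_2}$ and $p_2 := -(\theta|_{\Gamma_3} + \cdots + \theta|_{\Gamma_n})$. Both domains are finitely presented (as finite direct products of finitely presented groups), $A \cong \Z^r$ is of type $\mathrm{F}_\infty$ and hence of type $\mathrm{F}_3$, and the fibre product is again $H$. The only hypothesis of Theorem~\ref{thm4} that requires verification is finite generation of $N_1 := \ker p_1 \leq \Gamma_1 \times \Gamma_2$. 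But $N_1$ is itself the kernel of a surjective homomorphism from a direct product of two finitely generated groups to the finitely generated free abelian group $A$, with surjective restriction to each factor --- precisely the hypothesis of Part~(1) in the case $n=2$. Thus Part~(1) supplies the needed finite generation of $N_1$, and Theorem~\ref{thm4} concludes that $H$ is finitely presented.

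The one point demanding care is the choice of grouping in Part~(2): Theorem~\ref{thm4} is asymmetric, requiring only that $N_1$ (not $N_2$) be finitely generated, so the regrouping must place the two-factor subproduct on the ``$N_1$ side'' so that Part~(1) can be invoked there. I do not anticipate any genuine technical obstacle beyond arranging the groupings correctly; everything else is a routine invocation of Lemma~\ref{lem9} for Part~(1) and of Theorem~\ref{thm4} for Part~(2).
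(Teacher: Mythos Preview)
Your proposal is correct and matches the paper's proof essentially line for line: the same two regroupings, the same appeal to Lemma~\ref{lem9} for Part~(1), and the same use of the twisted 1-2-3 Theorem (Theorem~\ref{thm4}) with $N_1 = \ker(\theta|_{\Gamma_1 \times \Gamma_2})$ for Part~(2). The only place the paper adds a detail you gloss over is the passage from $\Delta \preceq l^2$ (the conclusion of Lemma~\ref{lem9}) to $\Delta \preccurlyeq l^2$ --- these are genuinely distinct order relations in this paper, and the author does a one-line computation to pass from the weaker to the stronger; your phrase ``which is equivalent to'' slightly overstates this, but the deduction is routine.
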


\begin{proof}
  For Part~(1), observe that $\ker \theta$ is the fibre product of the homomorphisms $\theta |_{\Gamma_1}$ and $-\theta |_{\Gamma_2 \times \ldots \times \Gamma_n}$.  Since $A$ is finitely generated free abelian, it admits a quadratic isoperimetric function.  Thus, by Lemma~\ref{lem9}, $\ker \theta$ is finitely generated and $\Delta(l) \preceq l^2$.  Hence, by definition of the relation $\preceq$, there exists $C \in \N$ so that $\Delta(l) \leq C(Cl+C)^2 + Cl + C \leq (3C^3 + C)l^2 + C^3 + C$.  Thus $\Delta(l) \preccurlyeq l^2$.

  For Part~(2), observe that $\ker \theta$ is the fibre product of the homomorphisms $p_1 := \theta |_{\Gamma_1 \times \Gamma_2}$ and $p_2 := -\theta |_{\Gamma_3 \times \ldots \times \Gamma_n}$.  Since $\ker p_1$ is the fibre product of the homomorphisms $\theta |_{\Gamma_1}$ and $-\theta |_{\Gamma_2}$, it is finitely generated by Lemma~\ref{lem9}.  Thus $\ker \theta$ is finitely presented by Theorem~\ref{thm4}.
\end{proof}

\subsection{Heights} \label{sec5}

Throughout the remainder of Section~\ref{sec7}, we will be considering a homomorphism $\theta$ from a direct product $D = \Gamma_1 \times \ldots \times \Gamma_n$ of groups to a finitely generated free abelian group $A$ such that the restriction of $\theta$ to each $\Gamma_i$ is surjective.  After establishing some notation, which will be maintained throughout Sections~\ref{sec5}--\ref{sec6}, we will define certain \emph{height functions} that measure the departure from $\ker \theta$ of words, expressions and sequences in each of the $r$ directions given by the $\Z$-factors of $A \cong \Z^r$.

Let $t_1, \ldots, t_r$ be a free abelian basis for $A$.  For each $i$, let $\mc{A}_i = \{ a_1^{(i)}, \ldots, a_r^{(i)} \} \subseteq \Gamma_i$ be a collection of elements with $\theta(a_k^{(i)}) = t_k$, and let $\mc{B}_i \subseteq \Gamma_i$ be a collection of elements with $\theta(\mc{B}_i) = \{1\}$ and so that $\mc{X}_i:= \mc{A}_i \cup \mc{B}_i$ generates $\Gamma_i$.  Define $\mc{X}$ to be the generating set $\cup_{i=1}^n \mc{X}_i$ for $D$.

For each $i = 1, \ldots, r$, define $\Z^{(i)}$ to be the quotient of $A$ by the subgroup generated by $\{t_j : j \neq i \}$.  Define $\theta_i : D \rightarrow \Z$ to be the composition of $\theta$ with the quotient homomorphism $A \twoheadrightarrow \Z^{(i)}$.  Abusing notation, we will also write $\theta_i$ for the map $\fm{X} \times \N \rightarrow \Z$ given by $\theta_i(w, l) = \theta_i(w[l])$.

For each $i = 1, \ldots, r$, we define the \emph{$i$-height} of a word to be the departure of the word in the $\Z^{(i)}$-direction, as measured by $\theta_i$.  Specifically, given $w \in\ \fm{X}$, define $$\height_i(w) = \max_{0 \leq j \leq |w|} \{ |\theta_i(w, j)|  \}.$$  If $\mc{P} = \langle \mc{X} \, | \, \mc{S} \rangle$ is a presentation for $D$, then the $i$-heights of $\mc{P}$-sequences and $\mc{P}$-expressions are defined similarly.  Given a $\mc{P}$-expression $\mc{E} = (x_j, s_j)_{j=1}^m$ and a $\mc{P}$-sequence $\Sigma = (\sigma_j)_{j=0}^m$ define \begin{align*}
  \height_i(\mc{E}) &= \max_{1 \leq j \leq m} \{ \height_i(x_j) \}, \\
  \height_i(\Sigma) &= \max_{0 \leq j \leq m} \{ \height_i(\sigma_j) \}.
\end{align*}  The following lemma makes precise the relationship between heights and departures.

\begin{lem} \label{lem12}
  \begin{align*}
    \height_i(w) &\leq \Dep_\mc{X}(w, \ker \theta) \leq \sum_{j=1}^r \height_j(w) \\
    \height_i(\mc{E}) &\leq \Dep_\mc{X}(\mc{E}, \ker \theta) \leq \sum_{j=1}^r \height_j(\mc{E}) \\
    \height_i(\Sigma) &\leq \Dep_\mc{X}(\Sigma, \ker \theta) \leq \sum_{j=1}^r \height_j(\Sigma)
  \end{align*}
\end{lem}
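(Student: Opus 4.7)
The plan is to first establish a pointwise inequality for individual group elements, then lift it to words by taking the maximum over prefixes, and finally to expressions and sequences by taking the maximum over their constituent words.

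First I would prove the two-sided estimate
\[
  |\theta_i(g)| \;\leq\; d_\mc{X}(g, \ker\theta) \;\leq\; \sum_{j=1}^r |\theta_j(g)|
\]
for every $g \in D$. For the lower bound, note that every element of $\mc{X}$ maps under $\theta_i$ to an integer of absolute value at most $1$ (the generators in $\mc{B}_k$ map to $0$, while those in $\mc{A}_k$ map to elements of $\{t_1,\dots,t_r\}$, which have $i$-th coordinate in $\{0,1\}$). Hence if $w \in \fm{X}$ has length $L$ and represents $gk^{-1}$ for some $k \in \ker\theta$, then $|\theta_i(g)| = |\theta_i(g) - \theta_i(k)| \leq L$. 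Taking the minimum over such $w$ gives $|\theta_i(g)| \leq d_\mc{X}(g, \ker\theta)$. For the upper bound, the element
\[
  k := g \cdot \prod_{j=1}^r \bigl(a_j^{(1)}\bigr)^{-\theta_j(g)}
\]
lies in $\ker\theta$ because $\theta(a_j^{(1)}) = t_j$, and $gk^{-1}$ is represented by a word in $\mc{X}$ of length $\sum_{j=1}^r |\theta_j(g)|$.

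Second, I would apply this pointwise to the prefixes $w[k]$ of a word $w \in \fm{X}$. The first inequality gives
\[
  |\theta_i(w,k)| = |\theta_i(w[k])| \leq d_\mc{X}(w[k], \ker\theta),
\]
so maximising over $k$ yields $\height_i(w) \leq \Dep_\mc{X}(w, \ker\theta)$. The second inequality gives
\[
  d_\mc{X}(w[k], \ker\theta) \leq \sum_{j=1}^r |\theta_j(w[k])| \leq \sum_{j=1}^r \height_j(w),
\]
and again maximising over $k$ gives $\Dep_\mc{X}(w, \ker\theta) \leq \sum_{j=1}^r \height_j(w)$. This handles the first line of the lemma.

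Third, the statements for $\mc{P}$-expressions $\mc{E} = (x_j, s_j)_{j=1}^m$ and $\mc{P}$-sequences $\Sigma = (\sigma_j)_{j=0}^m$ follow immediately. By the definitions, $\height_i(\mc{E}) = \max_j \height_i(x_j)$ and $\Dep_\mc{X}(\mc{E},\ker\theta) = \max_j \Dep_\mc{X}(x_j, \ker\theta)$, so applying the word-level inequalities term by term and taking maxima gives the desired bounds; the $\mc{P}$-sequence case is identical, replacing $x_j$ by $\sigma_j$. There is no real obstacle here; the only mild subtlety is keeping track of whether the maximum is taken over prefixes, over expression entries, or over sequence terms, but in each instance the inequalities of the previous step are uniform enough that swapping max and sum (with the inequality $\max_k \sum_j f_j(k) \leq \sum_j \max_k f_j(k)$) is all that is needed.
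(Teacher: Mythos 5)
Your proposal is correct and follows essentially the same route as the paper: the lower bound comes from the fact that each generator changes $\theta_i$ by at most $1$, and the upper bound from appending $(a_j^{(1)})^{-\theta_j(u)}$ to land in the kernel, after which one maximises over prefixes and then over the constituent words of an expression or sequence. Your reorganisation into a pointwise statement about group elements first is only a cosmetic difference.
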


\begin{proof}
  For any prefix $u$ of $w$, there exists a word $v \in \fm{X}$ with $|v| \leq \Dep_\mc{X}(w, \ker \theta)$ such that $uv^{-1} \in \ker \theta$.  Then $| \theta_i(u)| = |\theta_i(v)| \leq |v|$ and so $\height_i(w) \leq \Dep_\mc{X}(w, \ker \theta)$.

  For any word $u \in \fm{X}$, the word $u (a^{(1)}_1)^{-\theta_1(u)} \ldots (a^{(1)}_r)^{-\theta_r(u)}$ represents an element of the kernel, so $\dist_\mc{X}(u, \ker \theta) \leq \sum_{j=1}^r |\theta_j(u)|$.  Taking the maximum over all prefixes $u$ of $w$ gives the inequality $\Dep_\mc{X}(w, \ker \theta) \leq \sum_{j=1}^m \height_j(w)$.

  Maximising over all the words $x_j$ or all the words $\sigma_j$ gives the other inequalities.
\end{proof}

The following lemma asserts that in order to produce a $\mc{P}$-expression for a word $w$ with some bounds on its area and heights, it suffices to produce a null $\mc{P}$-sequence for $w$ satisfying the given bounds.

\begin{lem} \label{lem13}
  Let $\Sigma$ be a $\mc{P}$-sequence converting $\tau$ to $\tau'$.  Then there exists a $\mc{P}$-expression $\mc{E}$ for $\tau (\tau')^{-1}$ with $\Area(\mc{E}) = \Area(\Sigma)$ and $\height_i(\mc{E}) \leq \height_i(\Sigma)$ for each $i$.
\end{lem}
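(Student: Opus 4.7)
The plan is to induct on the length $m$ of the sequence $\Sigma = (\sigma_j)_{j=0}^m$, with the base case $m=0$ being vacuous (take the empty $\mc{P}$-expression). For the inductive step, write $\Sigma = \Sigma' \cdot (\sigma_{m-1}, \sigma_m)$ and apply the inductive hypothesis to $\Sigma'$ to obtain a $\mc{P}$-expression $\mc{E}'$ for $\tau \sigma_{m-1}^{-1}$ with the required area and height bounds. It then suffices to append at most one entry to $\mc{E}'$ to produce the desired $\mc{P}$-expression $\mc{E}$ for $\tau(\tau')^{-1}$, with area matching that of $\Sigma$ and with each $i$-height still bounded by $\height_i(\Sigma)$.

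If the last move of $\Sigma$ is a free expansion or free contraction then $\sigma_{m-1} \FreeEq \sigma_m$, and we simply take $\mc{E} = \mc{E}'$ (no new entry is added, and both invariants are preserved). In the remaining case, an application-of-a-relator move, $\sigma_{m-1} \equiv urv$ and $\sigma_m \equiv usv$ with $rs^{-1}$ a cyclic conjugate of some $\rho \equiv y_1 \ldots y_N \in \mc{R}^{\pm 1}$; fix $k$ with $rs^{-1} \equiv y_{k+1} \ldots y_N y_1 \ldots y_k$. Then $\sigma_{m-1}\sigma_m^{-1} \FreeEq u(rs^{-1})u^{-1} \FreeEq (u\gamma)\rho(u\gamma)^{-1}$ for either of two natural conjugators: $\gamma_1 \equiv y_k^{-1} \ldots y_1^{-1}$ (the inverse of the length-$k$ prefix of $\rho$) or $\gamma_2 \equiv y_{k+1} \ldots y_N$ (the length-$(N-k)$ suffix of $\rho$). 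Appending the single entry $(u\gamma, \rho)$ to $\mc{E}'$ yields $\mc{E}$ of the correct area, and the boundary identity $\partial \mc{E} \FreeEq \tau(\tau')^{-1}$ reduces to a short free-group calculation.

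The crux is the height bound for $u\gamma$, and the hard part will be making the correct choice between $\gamma_1$ and $\gamma_2$. The key observation I plan to use is that at least one of the two natural choices has the property that $u\gamma$ is literally a prefix of $\sigma_{m-1}$ or $\sigma_m$. Indeed, if $k \leq |s|$ then $\gamma_1$ coincides with the length-$k$ initial segment of $s$, so $u\gamma_1$ is a prefix of $\sigma_m = usv$; if $k > |s|$, equivalently $N-k < |r|$, then $\gamma_2$ coincides with the length-$(N-k)$ initial segment of $r$, so $u\gamma_2$ is a prefix of $\sigma_{m-1} = urv$. Selecting $\gamma$ accordingly immediately gives $\height_i(u\gamma) \leq \height_i(\Sigma)$ for every $i$, and this combined with the inductive bound on $\mc{E}'$ completes the step. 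Apart from this prefix observation, the argument is pure bookkeeping in the free monoid and I expect no additional obstacles.
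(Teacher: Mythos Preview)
Your proposal is correct and follows essentially the same approach as the paper: induct on the length of $\Sigma$, treat free moves trivially, and for a relator move choose the conjugator so that it is literally a prefix of either $\sigma_{m-1}$ or $\sigma_m$, which immediately bounds the height. The paper phrases the dichotomy as ``either $uv^{-1} \FreeEq u' s (u')^{-1}$ with $u'$ a prefix of $u$, or $uv^{-1} \FreeEq v' s (v')^{-1}$ with $v'$ a prefix of $v$,'' while you make the same split explicit via the cyclic-shift parameter $k$ versus $|s|$; these are equivalent formulations of the same observation.
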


\begin{proof}
  Say $\Sigma = (\sigma_j)_{j=0}^m$, where $\sigma_0 \equiv \tau$ and $\sigma_m \equiv \tau'$.  Define $\Sigma_1$ to be the $\mc{P}$-sequence $(\sigma_j)_{j=0}^{m-1}$.  By induction, there exists a $\mc{P}$-expression $\mc{E}_1$ for $\sigma_0 (\sigma_{m-1})^{-1}$ with $\Area(\mc{E}_1) = \Area(\Sigma_1)$ and $\height_i(\mc{E}_1) \leq \height_i(\Sigma_1) \leq \height_i(\Sigma)$ for each $i$.  If $\sigma_m$ is obtained from $\sigma_{m-1}$ by a free expansion or reduction then $\sigma_0 {\sigma_m}^{-1} \FreeEq \sigma_0 {\sigma_{m-1}}^{-1}$ and the result follows on taking $\mc{E} = \mc{E}_1$.  The other possibility is that $\sigma_m$ is obtained from $\sigma_{m-1}$ by an application-of-a-relator move.  Then $\sigma_{m-1} \equiv \alpha u \beta$ and $\sigma_m \equiv \alpha v \beta$ where $uv^{-1}$ is a cyclic conjugate of a relator $s \in \mc{S}^{\pm1}$ and $\alpha$ and $\beta$ are some words in $\fm{X}$.  Observe that either $uv^{-1} \FreeEq  u' s {u'}^{-1}$ where $u'$ is a prefix of $u$ or else $u v^{-1} \FreeEq v' s {v'}^{-1}$ where $v'$ is a prefix of $v$.

  In the first case, we have that $\sigma_{m-1} \sigma_m^{-1} \FreeEq \alpha u v^{-1} \alpha^{-1} \FreeEq \alpha u' s (\alpha u')^{-1}$.  Note that $\height_i(\alpha u') \leq \height_i(\Sigma)$ since $\alpha u'$ is a prefix of $\sigma_{m-1}$.  If we take $\mc{E}_2 = (\alpha u', s)$ then $\mc{E} = \mc{E}_1 \mc{E}_2$ has the required properties.  In the second case we can take $\mc{E}_2 = (\alpha v', s)$ and the result follows similarly.
\end{proof}

When considering the areas of sequences, one only has to take account of the application-of-a-relator moves, and can ignore the free expansions and contractions.  The following lemma shows that the same is true when one is considering the heights of sequences.

\begin{lem} \label{lem14}
  Let $w_1, w_2 \in \fm{X}$ be freely equal words.  Then there exists a $\mc{P}$-sequence $\Sigma$ converting $w_1$ to $w_2$ with $\Area(\Sigma) = 0$ and $\height_i(\Sigma) \leq \max \{\height_i(w_1), \height_i(w_2) \}$ for each $i$.
\end{lem}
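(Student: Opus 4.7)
The plan is to exploit the fact that a $\mc{P}$-sequence of zero area consists entirely of free expansions and contractions, and that a single contraction never increases any $\height_i$. Granting this, I would route the sequence through the common free reduction of $w_1$ and $w_2$.

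First I would establish the key monotonicity claim: if $\sigma \equiv u x x^{-1} v$ and $\sigma' \equiv u v$, then $\height_i(\sigma') \leq \height_i(\sigma)$ for every $i$. This is a direct computation on prefixes: a prefix of $\sigma'$ of length $j \leq |u|$ is identical to the prefix of $\sigma$ of the same length, while a prefix $u \cdot v[k]$ of $\sigma'$ (with $k \leq |v|$) has $\theta_i$ equal to that of the prefix $u \cdot x x^{-1} \cdot v[k]$ of $\sigma$, since $\theta_i(xx^{-1}) = 0$. Hence every $|\theta_i|$-value attained on a prefix of $\sigma'$ is also attained on some prefix of $\sigma$, giving the inequality.

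Next I would apply this repeatedly. Let $w$ denote the free reduction of $w_1$ (equivalently of $w_2$). Fix any reduction sequence $\Sigma_1 = (w_1 = \tau_0, \tau_1, \ldots, \tau_p = w)$ in which each $\tau_{j+1}$ is obtained from $\tau_j$ by a single free contraction. By the claim above, $\height_i(\tau_j) \leq \height_i(w_1)$ for every $j$ and every $i$, so $\height_i(\Sigma_1) \leq \height_i(w_1)$. Likewise choose a reduction sequence $(w_2 = \eta_0, \eta_1, \ldots, \eta_q = w)$; reversing it gives a $\mc{P}$-sequence $\Sigma_2 = (w, \eta_{q-1}, \ldots, \eta_0 = w_2)$ consisting only of free expansions, with the same intermediate words and therefore $\height_i(\Sigma_2) \leq \height_i(w_2)$.

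Finally, concatenate: $\Sigma := \Sigma_1 \Sigma_2$ converts $w_1$ to $w_2$, uses no application-of-a-relator moves (so $\Area(\Sigma) = 0$), and satisfies $\height_i(\Sigma) = \max\{\height_i(\Sigma_1), \height_i(\Sigma_2)\} \leq \max\{\height_i(w_1), \height_i(w_2)\}$. There is really only one substantive step, namely the prefix-matching verification for a single free contraction; the rest is assembly. No obstacle beyond that bookkeeping is anticipated.
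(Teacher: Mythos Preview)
Your proposal is correct and follows essentially the same route as the paper: reduce each $w_k$ to the common freely reduced word by free contractions only (using that a single contraction cannot increase any $\height_i$), then reverse the second sequence and concatenate. You supply slightly more detail on the prefix-matching verification than the paper does, but the structure of the argument is identical.
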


\begin{proof}
  Let $\bar{w}$ be the unique freely reduced word in the free equivalence class of $w_1$ and $w_2$.  For each $k = 1, 2$, let $\Sigma_k = \left(\sigma_j^{(k)}\right)_{j=0}^{m_k}$ be a $\mc{P}$-sequence converting $w_k$ to $\bar{w}$ where each $\sigma_{j+1}^{(k)}$ is obtained from $\sigma_j^{(k)}$ by a free reduction.  Then $\height_i(\sigma_{j+1}^{(k)}) \leq \height_i(\sigma_{j}^{(k)})$ and so $\height_i(\Sigma_k) \leq \height_i(w_k)$.  Define $\Sigma_2'$ to be the $\mc{P}$-sequence $\sigma_{m_2}^{(2)}, \sigma_{m_2 - 1}^{(2)}, \ldots, \sigma_{0}^{(2)}$ converting $\bar{w}$ to $w_2$.  Then $\Sigma = \Sigma_1 \Sigma_2'$ has the required properties.
\end{proof}

The area of a null-homotopic word is equal to the area of its inverse.  The following lemma is the analogous result for heights.

\begin{lem} \label{lem15}
  Let $\Sigma$ be a null $\mc{P}$-sequence for the word $w \in \fm{X}$.  Then there exists a null $\mc{P}$-sequence $\Sigma'$ for $w^{-1}$ with $\Area(\Sigma') = \Area(\Sigma)$ and $\height_k(\Sigma') = \height_k(\Sigma)$ for each $k$.
\end{lem}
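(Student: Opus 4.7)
The plan is to set $\Sigma' = (\sigma_m^{-1}, \sigma_{m-1}^{-1}, \ldots, \sigma_0^{-1})$ where $\Sigma = (\sigma_j)_{j=0}^m$ — or equivalently, to reverse the sequence of inverses — and to verify the three assertions (validity as a $\mc{P}$-sequence, equality of areas, equality of heights) move by move. (In fact inversion term by term, without reversal, also works because the three move-types are symmetric under inversion; I will work with the reversed-inverse version so that $\Sigma'$ really starts at $w^{-1}$ and ends at $\emptyset$.)

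First I would check that consecutive pairs in $\Sigma'$ differ by moves of the three allowed types. If $\sigma_{i+1}$ is obtained from $\sigma_i$ by a free contraction $uxx^{-1}v \to uv$, then $\sigma_{i+1}^{-1} \equiv v^{-1}u^{-1}$ and $\sigma_i^{-1} \equiv v^{-1}xx^{-1}u^{-1}$, so $\sigma_i^{-1}$ is obtained from $\sigma_{i+1}^{-1}$ by a free expansion and vice versa. For an application-of-a-relator move $urv \to usv$, with $rs^{-1}$ a cyclic conjugate of some $r_0 \in \mc{R}^{\pm 1}$, the pair $(\sigma_{i+1}^{-1}, \sigma_i^{-1}) = (v^{-1}s^{-1}u^{-1}, v^{-1}r^{-1}u^{-1})$ differs by the substitution $s^{-1} \to r^{-1}$; and $s^{-1}(r^{-1})^{-1} = s^{-1}r$ is a cyclic conjugate of $rs^{-1}$, hence of $r_0 \in \mc{R}^{\pm 1}$. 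So $\Sigma'$ is a valid $\mc{P}$-sequence, and the count of application-of-a-relator moves is unchanged, giving $\Area(\Sigma') = \Area(\Sigma)$.

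The height claim rests on the observation that $\Sigma$ is \emph{null}: every $\sigma_i$ represents the same group element as $\sigma_0 \equiv w$ and as $\sigma_m \equiv \emptyset$, namely the identity of $D$, so $\theta_k(\sigma_i) = 0$ for each $i$ and each $k$. For any word $\tau \in \fm{X}$ and any prefix $\tau[j]$, writing $\tau \equiv \tau[j]\,\tau'$, one has $\theta_k(\tau^{-1}[|\tau|-j]) = \theta_k({\tau'}^{-1}) = -\theta_k(\tau') = \theta_k(\tau[j]) - \theta_k(\tau)$. Applied to each $\sigma_i$, for which $\theta_k(\sigma_i) = 0$, this gives $|\theta_k(\sigma_i^{-1}[j])| = |\theta_k(\sigma_i[|\sigma_i|-j])|$ as $j$ ranges over $0,\dots,|\sigma_i|$, so the set of prefix values of $\sigma_i^{-1}$ and of $\sigma_i$ agree in absolute value. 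Hence $\height_k(\sigma_i^{-1}) = \height_k(\sigma_i)$ for every $i$ and $k$, and taking maxima over $i$ yields $\height_k(\Sigma') = \height_k(\Sigma)$.

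There is no real obstacle: the only subtle point is the observation that, because $\Sigma$ is a null sequence, every intermediate word has trivial image under $\theta$, which is exactly what is needed to pass from the telescoping identity $\theta_k(\sigma_i^{-1}[j]) = \theta_k(\sigma_i[|\sigma_i|-j]) - \theta_k(\sigma_i)$ to equality of heights. Without the null hypothesis one would only get $\height_k(\sigma_i^{-1}) \leq \height_k(\sigma_i) + |\theta_k(\sigma_i)|$, and the lemma as stated would fail.
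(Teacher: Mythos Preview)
Your argument is correct and is essentially the same as the paper's: the paper simply takes $\Sigma' = (\sigma_i^{-1})_{i=0}^m$ and notes that $\theta_k(\sigma_i) = 0$ forces $\height_k(\sigma_i^{-1}) = \height_k(\sigma_i)$, which is exactly the computation you spell out. One small slip: the sequence $(\sigma_m^{-1}, \ldots, \sigma_0^{-1})$ you write down starts at $\emptyset$ and ends at $w^{-1}$, not the other way round; the sequence that actually starts at $w^{-1}$ is the unreversed one $(\sigma_0^{-1}, \ldots, \sigma_m^{-1})$, which you yourself note works and which is what the paper uses.
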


\begin{proof}
  For any null-homotopic word $u \in \fm{X}$, one has that $\theta_k(u) = 0$ and hence $\height_k(u) = \height_k(u^{-1})$.  Thus if $\Sigma = (\sigma_i)_{i=0}^m$ then we can take $\Sigma'$ to be $(\sigma_i^{-1})_{i=0}^m$.
\end{proof}

Throughout Section~\ref{sec7} we will frequently wish to assert that there exists a $\mc{P}$-sequence for a word with some stated bounds on its area and heights.  However, for reasons of space and readability we wish to avoid having to present all of the data required to define a particular such sequence.  We therefore redefine the notion of a $\mc{P}$-scheme to additionally take account of heights.   Thus, throughout this section, a $\mc{P}$-scheme is defined to consist of a sequence $(\sigma_i)_{i=1}^m$ of words in $\fm{X}$ and sequences $\Big(\alpha_i\Big)_{i=1}^{m-1}, \left(h^{(1)}_i\right)_{i=1}^{m-1}, \ldots, \left(h^{(r)}_i\right)_{i=1}^{m-1}$ of integers so that, for each $i$, there exists a $\mc{P}$-sequence $\Sigma_i$ converting $\sigma_i$ to $\sigma_{i+1}$ with $\Area(\Sigma) \leq \alpha_i$ and $\height_k(\Sigma) \leq h^{(k)}_i$ for each $k$.  The notion of a null $\mc{P}$-scheme is redefined similarly.

\subsection{Distortion}

In this section we prove the following result.

\begin{thm} \label{thm8}
  Let $\theta$ be a homomorphism from a direct product $D = \Gamma_1 \times \ldots \times \Gamma_n$ of $n \geq 2$ finitely generated groups to a finitely generated free abelian group $A$ such that the restriction of $\theta$ to each $\Gamma_i$ is surjective.  Then $\ker \theta$ is finitely generated and the distortion function $\Delta$ of $\ker \theta$ in $D$ satisfies $\Delta(l) \preccurlyeq l^{r+1}$, where $r = \dim A \otimes \Q$.
\end{thm}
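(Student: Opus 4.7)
The plan is to apply Proposition~\ref{prop3}: it suffices to construct, for each $h \in \ker\theta$ with $d_\mc{X}(1,h) \leq l$, a word $w_h \in \fm{X}$ representing $h$ with $|w_h| \leq C l^{r+1}$ and with $\Dep_\mc{X}(w_h, \ker\theta)$ bounded by a constant $K$. By Lemma~\ref{lem12} the departure bound is equivalent to uniformly bounding each $\height_k(w_h)$, so I will phrase everything in terms of heights. Proposition~\ref{prop3} then gives both finite generation of $\ker\theta$ and the distortion estimate $\Delta(l) \preccurlyeq l^{r+1}$ at once.

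I propose induction on $r$. The base case $r = 0$ is immediate since $\ker\theta = D$ and one may take $w_h = w$. For the inductive step, let $\pi : A \to A / \langle t_r\rangle \cong \Z^{r-1}$; since each restriction $\theta|_{\Gamma_i}$ is surjective, so is $(\pi\circ\theta)|_{\Gamma_i}$, and $\ker\theta \subseteq \ker(\pi\circ\theta)$. Starting with a geodesic word $w$ for $h$ and applying the inductive hypothesis to $\pi\circ\theta$, I obtain an intermediate word $w_1 \in \fm{X}$ representing $h$ with $|w_1| = O(l^r)$ and with $\height_k(w_1)$ bounded for $1 \leq k < r$. The height $\height_r(w_1)$ is however still potentially of order $|w_1|$.

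The heart of the argument is to reduce $\height_r$ to a constant while preserving the bounds on the other heights, paying at most a multiplicative factor of $l$ in length. The hypothesis $n \geq 2$ is crucial here: it supplies distinct factors $\Gamma_i, \Gamma_{i'}$ each containing a preimage of $t_r$, so that $a_r^{(i)} (a_r^{(i')})^{-1} \in \ker\theta$. These ``compensating'' elements have $\theta_k = 0$ for every $k \neq r$, and so any modification of $w_1$ obtained by inserting them will leave $\height_k$ unchanged for $k < r$, safeguarding the inductive bounds. The $\theta_r$-walk along $w_1$ is a $\pm 1$-walk from $0$ to $0$, and I pair each $+1$ event with its partner $-1$ event by the standard parenthesis pairing of such walks. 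For each matched pair at positions $j < j'$ in factors $\Gamma_{i_j}$ and $\Gamma_{i_{j'}}$ respectively, I insert a compensating $(a_r^{(i^*)})^{-1}$ just after position $j$ and a matching $(a_r^{(i^*)})$ just before position $j'$, chosen so that the resulting walk does not stray more than a constant from $0$.

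The hard part will be the bookkeeping in the rewriting step: the compensating letters must be sent to meet their partners via commutations in $D$, and this is free whenever the intervening letters lie in factors other than $\Gamma_{i^*}$ but may cost additional insertions when they do not. A judicious choice of $i^*$ per matched pair makes this manageable — straightforwardly when $n \geq 3$, since one can always pick $i^*$ distinct from both paired factors, and more delicately when $n = 2$, where the only obstruction to free commutation must be absorbed at the price of a factor $O(l)$ in the length. Summing over all $O(|w_1|)$ matched pairs yields $|w_h| = O(l \cdot |w_1|) = O(l^{r+1})$, and the resulting word satisfies $\height_k(w_h) \leq K'$ for every $k$. Proposition~\ref{prop3} now applies, giving both finite generation of $\ker\theta$ and the claimed distortion bound. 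The factor of $l$ arising from the $n=2$ analysis is exactly the source of the weaker-than-optimal $l^{r+1}$ bound here, as compared with the $l^2$ bound obtained in Theorem~\ref{thm7}(1) via the fibre-product approach; the virtue of the present argument is that the same height-tracking machinery extends to the isoperimetric estimates required elsewhere in the thesis.
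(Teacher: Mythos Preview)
Your high-level strategy coincides with the paper's: invoke Proposition~\ref{prop3}, control departure via the height functions of Section~\ref{sec5}, and flatten the heights one coordinate at a time. The paper, however, does not induct on $r$ via $\pi\circ\theta$; instead it builds an explicit letter-by-letter ``pull-down'' operator $\Phi_k:\fm{X}\times\Z\to\fm{X}$ (Proposition~\ref{prop5}) and applies $\Phi_1,\ldots,\Phi_r$ in succession (Proposition~\ref{prop6}). The crucial ingredient is Proposition~\ref{prop5}(3): $\height_i(\Phi_k(w,h))\le\height_i(w)$ for $i\ne k$. Because of this, after the first $j$ applications the height in direction $j{+}1$ is still bounded by the \emph{original} $\height_{j+1}(w)\le|w|$, so each successive $\Phi_{j+1}$ multiplies the length by at most $4(\height_{j+1}(w_j)+1)\le 8|w|$, yielding $|w_r|\le 8^r|w|^{r+1}$.

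Your parenthesis-pairing variant could in principle be made to work, but there is a genuine gap in the bookkeeping. You write that $\height_r(w_1)$ ``is still potentially of order $|w_1|$'', i.e.\ of order $l^r$, yet then claim that flattening direction $r$ costs only a multiplicative factor of $l$. These two statements are in tension: any direct flatten of a walk reaching height $H$ costs a factor comparable to $H$, and nothing in your outline links $H$ to $l$ rather than to $|w_1|$. What is actually needed, and what you do not verify, is that your inductive construction for directions $1,\ldots,r{-}1$ keeps $\height_r$ bounded by $O(l)$. You note that compensators in direction $r$ have $\theta_k=0$ for $k<r$; the symmetric fact that compensators in directions $k<r$ have $\theta_r=0$ gets you part of the way, but your proposed ``commutations in $D$'' can permute the $\theta_r=\pm1$ letters of the original word and hence may increase $\height_r$. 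Without an explicit mechanism guaranteeing $\height_r(w_1)=O(l)$, the induction does not close at the stated exponent, and the vaguely described commutation step leaves it unclear that the modified word even represents the same group element with the claimed length bound. The paper sidesteps all of this by making the pull-down purely local: $\Phi_k$ replaces each letter $x$ sitting at $k$-height $h$ by a fixed word $\Phi_k(x,h)$ of length $O(|h|)$ (using only the two factors $\Gamma_1,\Gamma_2$, so $n\ge 2$ suffices uniformly), nothing is ever commuted, Proposition~\ref{prop5}(3) is immediate from the definition, and the length and height estimates fall out without any pairing combinatorics.
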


Note that when combined with Proposition~\ref{prop15}, this result provides an alternative proof of the assertion of finite generation in Part~(1) of Theorem~\ref{thm6}.  However, the main purpose of Theorem~\ref{thm8} is to act as a warm up for the proof of Theorem~\ref{thm10}, which is analogous but more involved.

We continue with the notation of the previous section.  Recall that $t_1, \ldots, t_r$ is a free abelian basis for $A$.  For each $i$, $\mc{X}_i = \mc{A}_i \cup \mc{B}_i$ is a generating set for $\Gamma_i$, with $\mc{A}_i = \{ a_1^{(i)}, \ldots, a_r^{(i)} \}$ satisfying $\theta(a_k^{(i)}) = t_k$ and with $\theta(\mc{B}_i) = \{1\}$.  Thus $D$ is generated by $\mc{X} = \cup_{i=1}^n \mc{X}_i$.  For each $i = 1, \ldots, r$, $\Z^{(i)}$ is the infinite cyclic subgroup of $A$ generated by $t_i$, and $\theta_i : D \rightarrow \Z$ is the composition of $\theta$ with the projection homomorphism $A \twoheadrightarrow \Z^{(i)}$.  We also write $\theta_i$ for the map $\fm{X} \times \N \rightarrow \Z$ given by $\theta_i(w, l) = \theta_i(w[l])$.

The proof of Theorem~\ref{thm8} makes use of Proposition~\ref{prop3}: we show that every element $g \in \ker \theta$ can be represented by a word in $\fm{X}$ which has uniformly bounded departure.  We first represent $g$ by an arbitrary geodesic word, representing an edge path in the Cayley graph of $D$, which we then `pull down' until it lies close to the kernel.  Recall the height functions, defined in Section~\ref{sec5}, which measure departure in each of the $r$ different directions given by the $\Z$-factors of $A$. Proposition~\ref{prop5} shows that it is possible to pull down a word in a particular direction without increasing its height in the other directions.  The trade off to this process is that the length of the word is increased.  In Proposition~\ref{prop6} we show that, by applying Proposition~\ref{prop5} repeatedly, an arbitrary word can be pulled down to a word which has small height in every direction.  This word thus has small departure from the kernel.

For each $k = 1, \ldots, r$, we will define a function $\Phi_k$ that will be used to pull down words in the $k^\text{th}$ direction.  The idea is that if $w \in \fm{X}$ represents an element of $\ker \theta$ then $\Phi_k(w)$ will represent the same element as $w$ but will have $\height_k(\Phi_k(w)) \leq 1$.  In actual fact, we will find it useful to define $\Phi_k$ to be a function $\fm{X} \times \Z \rightarrow \fm{X}$, with $\Phi_k(w, h)$ representing the element $\left(a_k^{(1)}\right)^h w \left(a_k^{(1)}\right)^{-h-\theta_k(w)} \in \ker \theta$.  Geometrically, one thinks of the input to $\Phi_k$ as being an edge path in the Cayley graph of $G$ which starts at height $h$ and is labelled by the word $w$.  This pulling down process is represented schematically in Figure~\ref{fig3}.

The reader should note that $\Phi_k$ is only defined when $n \geq 2$, and from now on we assume that this is the case.  For brevity write $e_k$ for $a_k^{(1)}$ and $f_k$ for $a_k^{(2)}$.  Define $\Phi_k$ on $\mc{X}^{\pm1} \times \Z$ by $$\Phi_k(x, h) \equiv \begin{cases}
    (e_k {f_k}^{-1})^h x {f_k}^{-\theta_k(x)} (e_k {f_k}^{-1})^{-h-\theta_k(x)} \quad &\text{if $x \in \mc{X}_1$,}\\
    x {e_k}^{-\theta_k(x)} \quad &\text{if $x \in \mc{X}_2 \cup \ldots \cup \mc{X}_n$}
  \end{cases}$$ and $$\Phi_k(x^{-1}, h) \equiv \begin{cases}
    (e_k {f_k}^{-1})^h {f_k}^{\theta_k(x)} x^{-1} (e_k {f_k}^{-1})^{-h+\theta_k(x)} \quad &\text{if $x \in \mc{X}_1$,}\\
    {e_k}^{\theta_k(x)} x^{-1} \quad &\text{if $x \in \mc{X}_2 \cup \ldots \cup \mc{X}_n$.}
  \end{cases}$$  Extend $\Phi_k$ over $\fm{X} \times \Z$ by setting $$\Phi_k(w, h) \equiv \prod_{j=1}^{|w|} \Phi_k(w(j), \theta_k(w, j-1)+h).$$

\begin{figure}[htpb]
\psfrag{a}{$\theta_k$}
\psfrag{b}{$h'$}
\psfrag{c}{$h$}
\psfrag{d}{$x_1$}
\psfrag{e}{$x_2$}
\psfrag{f}{$x_3$}
\psfrag{g}{$x_4$}
\psfrag{h}{$e_k^h$}
\psfrag{i}{$e_k^h$}
\psfrag{j}{$e_k^{h'}$}
\psfrag{k}{$\Phi_k(x_1,0)$}
\psfrag{l}{$\Phi_k(x_2,h)$}
\psfrag{m}{$\Phi_k(x_3,h)$}
\psfrag{n}{$\Phi_k(x_4,h')$}
\psfrag{o}{$\ker \theta$}
\centering \includegraphics{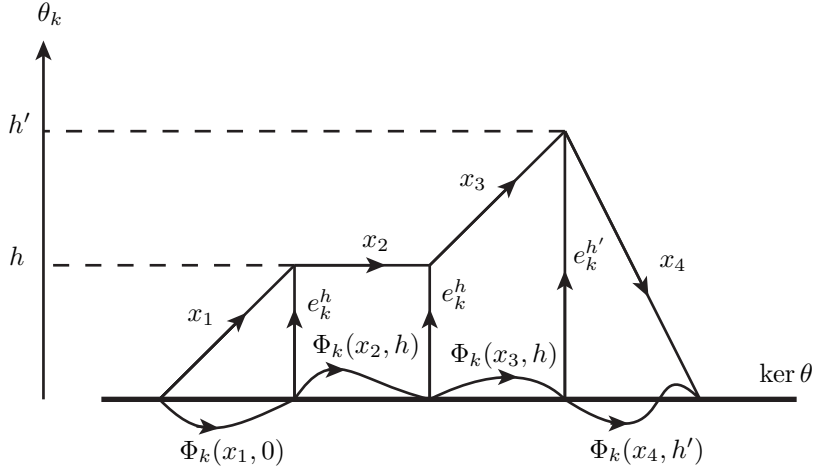}
\caption{Pulling down a word $w \equiv x_1 x_2 x_3 x_4$.} \label{fig3}
\end{figure}

\begin{prop} \label{prop5}
  Let $w, w' \in \fm{X}$, $h \in \Z$ and $k \in \{1, \ldots, r\}$.  Then $\Phi_k$ enjoys the following properties: \begin{enumerate}
    \item $\Phi_k(w, h) = {e_k}^h w {e_k}^{-h - \theta_k(w)}$ in $D$.

    \item $|\Phi_k(w, h)| \leq 4 |w| (\height_k(w) + |h| + 1)$.

    \item $\height_i(\Phi_k(w,h)) \leq \begin{cases}
      1 \quad &\text{if $i=k$,}\\
      \height_i(w) \quad &\text{if $i\neq k$}.
    \end{cases}$

    \item $\Phi_k(w, h)^{-1} \equiv \Phi_k(w^{-1}, \theta_k(w) + h)$.

    \item $\Phi_k(ww', h) \equiv \Phi_k(w, h) \Phi_k(w', \theta_k(w) + h)$.

    \item If $w \FreeEq w'$ then $\Phi_k(w, h) \FreeEq \Phi_k(w', h)$.
  \end{enumerate}
\end{prop}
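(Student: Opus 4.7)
\medskip

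The plan is to prove the six parts in the order (5), (1), (3), (2), (4), (6), because (5) is essentially built into the definition and, once available, reduces all other statements to checks on a single letter of $w$.

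First, I would observe that (5) is immediate from the defining formula $\Phi_k(w,h) \equiv \prod_{j=1}^{|w|} \Phi_k(w(j), \theta_k(w,j-1)+h)$: splitting this product at position $|w|$ and using $\theta_k(ww',j-1) = \theta_k(w) + \theta_k(w', j-1-|w|)$ for $j > |w|$ gives the factorisation. With (5) in hand, both (1) and (3) reduce to one-letter verifications. For (1), I would separately check $\Phi_k(x,h) = e_k^h x e_k^{-h-\theta_k(x)}$ in $D$ for $x \in \mc{X}_1^{\pm1}$ and for $x \in (\mc{X}_2 \cup \ldots \cup \mc{X}_n)^{\pm1}$; the $\mc{X}_1$ case uses that $e_k \in \Gamma_1$ and $f_k \in \Gamma_2$ commute, so $(e_k f_k^{-1})^h = e_k^h f_k^{-h}$, and the $f_k$ powers telescope with $x \in \Gamma_1$ which commutes with $f_k$; the other case is immediate since $e_k$ commutes with $x$. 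Then (1) follows for general $w$ by induction via (5).

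For (3), the key observation is that $\theta_k$ of the entire word $\Phi_k(x,h)$ is zero for every single letter $x$ (by direct computation of exponent sums, noting $\theta_k(e_k)=\theta_k(f_k)=1$ and $\theta_k(x) \in \{-1,0,1\}$ for any letter $x$). Hence at every concatenation boundary in $\prod_j \Phi_k(w(j), h_j)$ the running $\theta_k$-value returns to $0$, so it suffices to bound $|\theta_k(P)|$ for prefixes $P$ of each block $\Phi_k(w(j), h_j)$; direct inspection of the three subwords $(e_k f_k^{-1})^{\pm}$, the middle letter, and the closing $f_k$ and $(e_k f_k^{-1})^{\pm}$ runs shows the prefix $\theta_k$-values oscillate in $\{-1,0,1\}$, giving $\height_k \leq 1$. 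For $i \neq k$, I would note $\theta_i(e_k) = \theta_i(f_k) = 0$, so the added $e_k$/$f_k$ letters do not change $\theta_i$; thus inside each block the running $\theta_i$ value takes only the two values $\theta_i(w,j-1)$ and $\theta_i(w,j)$, both bounded by $\height_i(w)$. The length bound (2) is a straightforward tally: each letter $x$ gives $|\Phi_k(x,h_j)| \leq 4|h_j| + 4$ in the $\mc{X}_1$ case and $\leq 2$ otherwise, and $|h_j| \leq |h| + \height_k(w)$; summing over $|w|$ positions yields the stated bound.

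Part (4) is proved by checking it on a single letter (both for $x \in \mc{X}_1^{\pm1}$ and $x \in (\mc{X}_2 \cup \ldots \cup \mc{X}_n)^{\pm1}$), where the two defining formulas for $\Phi_k$ on $x$ and $x^{-1}$ are designed to mirror each other under inversion; then one uses (5) inductively together with the identity $\theta_k(w^{-1}) = -\theta_k(w)$ to extend to arbitrary words. Finally, (6) reduces via (5) to showing $\Phi_k(xx^{-1}, h') \FreeEq \emptyset$ for any letter $x$ and any $h' \in \Z$: by (5), $\Phi_k(xx^{-1}, h') \equiv \Phi_k(x, h') \Phi_k(x^{-1}, \theta_k(x)+h')$, and by (4) the second factor is exactly $\Phi_k(x, h')^{-1}$, so the whole word freely reduces to the empty word. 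The main (modest) obstacle is keeping careful track of the two different forms of $\Phi_k$ on $\mc{X}_1$ versus the remaining $\mc{X}_i$, but once (5) is established everything else is a mechanical letter-by-letter verification.
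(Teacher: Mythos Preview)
Your proposal is correct and follows essentially the same approach as the paper: both reduce each assertion to a one-letter check (using the definition or, equivalently, property (5)) and then extend multiplicatively, with (6) handled via the one-letter inverse identity underlying (4). The only cosmetic difference is that you make the reduction to (5) explicit at the outset, whereas the paper carries out the telescoping directly in each item; the substance is identical.
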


\begin{proof} \mbox{}
  \begin{enumerate}
    \item If $w \in \mc{X}^{\pm1}$ then one checks directly that property (1) holds.  Thus for an arbitrary $w \in \fm{X}$ \begin{align*}
      \Phi_k(w, h) &\stackrel{D}{=} \prod_{j=1}^{|w|} {e_k}^{\theta_k(w, j-1) + h} w(j) {e_k}^{-\theta_k(w, j-1) - h - \theta_k(w(j))} \\
      &\equiv \prod_{j=1}^{|w|} {e_k}^{\theta_k(w, j-1) + h} w(j) {e_k}^{-\theta_k(w, j) - h} \\
      &\FreeEq {e_k}^{\theta_k(w,0) + h} \left( \prod_{j=1}^{|w|} w(j) \right) {e_k}^{-\theta_k(w, m) - h} \\
      &\equiv {e_k}^h w {e_k}^{-h - \theta_k(w)}.
    \end{align*}

    \item If $x \in \mc{X}^{\pm1}$ then $|\Phi_k(x, h)| \leq 4(|h| + 1)$.  Thus \begin{align*}
      |\Phi_k(w, h)| &\leq |w| \max_{1 \leq j \leq |w|} \big\{ |\Phi_k(w(j), \theta_k(w, j-1) + h)| \big\} \\
      &\leq |w| \max_{1 \leq j \leq |w|} \big\{ 4(|\theta_k(w, j-1)| + |h| + 1) \big\} \\
      &\leq 4 |w| (\height_k(w) + |h| + 1).
    \end{align*}

    \item If $x \in \mc{X}^{\pm1}$ and $u$ is a prefix of $\Phi_k(x, h)$ then $$\theta_i(u) \in \begin{cases}
        \{0, 1\} \quad &\text{if $i = k$,}\\
        \{0, \theta_i(x)\} \quad &\text{if $i \neq k$.}
    \end{cases}$$  Furthermore $$\theta_i(\Phi_k(x, h)) = \begin{cases}
        0 \quad &\text{if $i=k$,}\\
        \theta_i(x) \quad &\text{if $i \neq k$.}
    \end{cases}$$  Thus if $v$ is a prefix of $\Phi_k(w, h)$ then $\theta_k(v) \in \{-1, 0, 1\}$ and so $\height_k(\Phi_k(w, h)) \leq 1$. If $i \neq k$ then $\theta_i(v) = \theta_i(v')$ for some prefix $v'$ of $w$.  Thus $\height_i(\Phi_k(w, h)) \leq \height_i(w)$.

    \item  One checks directly that if $x \in \mc{X}^{\pm1}$ then $\Phi_k(x^{-1}, h) \equiv \Phi_k(x, h - \theta_k(x))^{-1}$.  Thus \begin{align*}
          &\Phi_k\big(w^{-1}, \theta_k(w) + h \big) \\
          &\quad \equiv \prod_{j=1}^{|w|} \Phi_k\big(w^{-1}(j), \theta_k(w^{-1}, j-1) + \theta_k(w)  + h \big) \\
          &\quad \equiv \prod_{j=1}^{|w|} \Phi_k\big(w(|w| - j + 1)^{-1}, \theta_k(w, |w| - j + 1) + h \big) \\
          &\quad \equiv \prod_{j=1}^{|w|} \Phi_k\big(w(|w| - j + 1), \theta_k(w, |w| - j + 1) + h - \theta_k(w(|w| - j + 1)) \big)^{-1} \\
          &\quad \equiv \prod_{j=1}^{|w|} {\Phi_k\big(w(|w| - j + 1), \theta_k(w, |w| - j) + h \big)}^{-1} \\
          &\quad \equiv \left(\prod_{l=1}^{|w|} \Phi_k\big(w(l), \theta_k(w, l-1) + h \big) \right)^{-1} \\
          &\quad \equiv {\Phi_k\big(w, h\big)}^{-1}
        \end{align*}

    \item \begin{align*}
      \Phi_k(ww', h) &\equiv \prod_{j=1}^{|ww'|} \Phi_k((ww')(j), \theta_k(ww', j-1) + h) \\
      &\equiv \left( \prod_{j=1}^{|w|} \Phi_k(w(j), \theta_k(w, j-1) + h) \right) \\
       &\qquad \left( \prod_{j=1}^{|w'|} \Phi_k(w'(j), \theta_k(w', j-1) + \theta_k(w) + h) \right) \\
      &\equiv \Phi_k(w, h) \Phi_k(w', \theta_k(w) + h)
    \end{align*}

    \item It suffices to consider the case where $w'$ is obtained from $w$ by a free expansion.  Say $w \equiv uv$ and $w' \equiv u x x^{-1} v$ where $u, v \in \fm{X}$ and $x \in \mc{X}^{\pm1}$.  Then \begin{align*}
          &\Phi_k(w', h) \\
          &\quad \equiv \Phi_k(u, h) \; \Phi_k(x, \theta_k(u) + h) \; \Phi_k(x^{-1}, \theta_k(ux) + h) \; \Phi_k(v, \theta_k(uxx^{-1}) + h) \\
          &\quad \equiv \Phi_k(u, h) \; \Phi_k(x, \theta_k(u) + h) \; {\Phi_k(x, \theta_k(ux) + h + \theta_k(x^{-1}))}^{-1} \; \Phi_k(v, \theta_k(u) + h) \\
          &\quad \equiv \Phi_k(u, h) \; \Phi_k(x, \theta_k(u) + h) \; {\Phi_k(x, \theta_k(u) + h)}^{-1} \; \Phi_k(v, \theta_k(u) + h) \\
          &\quad \FreeEq \Phi_k(u, h) \; \Phi_k(v, \theta_k(u) + h) \\
          &\quad \equiv \Phi_k(w, h)
        \end{align*}
  \end{enumerate}
\end{proof}

\begin{prop} \label{prop6}
  Suppose $n \geq 2$.  Then for all words $w \in \fm{X}$ with $\theta(w) = 1$, there exists a word $w' \in \fm{X}$ with the following properties: \begin{enumerate}
    \item $w' = w$ in $D$.

    \item $|w'| \leq 8^r |w|^{r+1}$.

    \item $\height_i(w') \leq 1$ for all $i$.
  \end{enumerate}
\end{prop}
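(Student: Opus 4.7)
The plan is to iteratively apply the pulldown operator $\Phi_k$ from Proposition~\ref{prop5}, one direction at a time. Set $w_0 \equiv w$ and recursively $w_k \equiv \Phi_k(w_{k-1}, 0)$ for $k = 1, \ldots, r$, and take $w' \equiv w_r$. Intuitively, each application of $\Phi_k$ ``crushes'' the word into a band of $k$-height at most $1$ around the kernel, while Proposition~\ref{prop5}(3) guarantees that this crushing does not disturb the heights in any of the other directions; thus the $r$ pulldowns can be stacked without interfering.

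First, property (1) follows by induction on $k$ using Proposition~\ref{prop5}(1): since $w \in \ker\theta$ we have $\theta_k(w_{k-1}) = \theta_k(w) = 0$ for every $k$, so $\Phi_k(w_{k-1}, 0) = e_k^0\, w_{k-1}\, e_k^{0} = w_{k-1}$ in $D$, giving $w' = w$ in $D$. For property (3), I would maintain the invariant
\[
\height_i(w_k) \leq 1 \ \text{for } i \leq k, \qquad \height_i(w_k) \leq \height_i(w) \leq |w| \ \text{for } i > k.
\]
At step $k$, Proposition~\ref{prop5}(3) gives $\height_k(w_k) \leq 1$ and $\height_i(w_k) \leq \height_i(w_{k-1})$ for $i \neq k$, so the invariant propagates. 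Taking $k = r$ yields $\height_i(w_r) \leq 1$ for all $i$.

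For the length bound (2), Proposition~\ref{prop5}(2) applied with $h = 0$ together with the height invariant $\height_k(w_{k-1}) \leq |w|$ gives
\[
|w_k| \;\leq\; 4|w_{k-1}|\bigl(\height_k(w_{k-1}) + 1\bigr) \;\leq\; 4|w_{k-1}|(|w| + 1) \;\leq\; 8|w|\cdot|w_{k-1}|
\]
whenever $|w| \geq 1$; the case $|w| = 0$ is trivial (take $w' \equiv \emptyset$). Iterating gives $|w_r| \leq 8^r |w|^{r+1}$, as required. I do not expect a real obstacle here: Proposition~\ref{prop5} packages all the properties of $\Phi_k$ needed, and the only delicate bookkeeping is confirming that pulldowns in different directions commute well enough not to spoil one another — which is precisely the content of the ``$\height_i(\Phi_k(w,h)) \leq \height_i(w)$ for $i \neq k$'' clause in Proposition~\ref{prop5}(3).
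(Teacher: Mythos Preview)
Your proposal is correct and matches the paper's proof essentially verbatim: the paper also sets $w_0 \equiv w$, $w_{j+1} \equiv \Phi_{j+1}(w_j, 0)$, maintains the same height invariant via Proposition~\ref{prop5}(3), and derives the same length recursion $|w_{j+1}| \leq 4|w_j|(|w|+1) \leq 8^{j+1}|w|^{j+2}$ from Proposition~\ref{prop5}(2).
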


\begin{proof}
  If $w \equiv \emptyset$ then the result is trivial.  We may thus assume that $|w| \geq 1$.  We claim that for all $j \in \{0, \ldots, r\}$ there exists a word $w_j \in \fm{X}$ with the following properties:
  \renewcommand{\labelenumi}{(\roman{enumi})}
  \begin{enumerate}
    \item $w_j = w$ in $D$.

    \item $|w_j| \leq 8^j |w|^{j+1}$.

    \item $\height_l(w_j) \leq \begin{cases}
      1 \quad &\text{if $1 \leq l \leq j$,}\\
      |w| \quad &\text{if $j+1 \leq l \leq r$.}
    \end{cases}$
  \end{enumerate}
  The proposition then follows by taking $j = r$.  We prove the claim by induction on $j$, with $w_0 \equiv w$.  Suppose that for some $j$ there exists a $w_j$ with the given properties.  Then define $w_{j+1} \equiv \Phi_{j+1}(w_j, 0)$.  It is immediate by Proposition~\ref{prop5}~(1) and (3) that $w_{j+1}$ satisfies (i) and (iii).  Furthermore, by Proposition~\ref{prop5}~(2), \begin{align*}
  |w_{j+1}| &\leq 4 |w_j| (\height_{j+1}(w_j) + 1) \\
  &\leq 4 \cdot 8^j |w|^{j+1} (|w| + 1) \\
  &\leq 8^{j+1} |w|^{j+2}.
  \end{align*}
\end{proof}

\begin{proof}[Proof of Theorem~\ref{thm8}]
   Since each $\Gamma_i$ is finitely generated we can take each $\mc{B}_i$ to be finite and so $D$ is finitely generated by $\mc{X}$.

  Let $g$ be an arbitrary element of $\ker \theta$ and choose a geodesic word $w \in \fm{X}$ representing $g$ in $D$.  Let $w' \in \fm{X}$ be a word equal to $w$ in $D$ and satisfying properties (2) and (3) of Proposition~\ref{prop6}.  Then $\Dep_\mc{X}(w', \ker \theta) \leq r$ by Lemma~\ref{lem12} and $|w'| \leq 8^r |w|^{r+1} = 8^r (d_\mc{X}(1, g))^{r+1}$ since $w$ is geodesic.  The result follows by applying Proposition~\ref{prop3}.
\end{proof}

\subsection{Isoperimetric functions 1} \label{sec6}

In this section we prove the following result, which, when combined with Proposition~\ref{prop15}, gives Parts~(2) and (3) of Theorem~\ref{thm6}.  Note that this provides an alternative proof, in addition to that given in Theorem~\ref{thm7}, of the finite presentability $\ker \theta$.

\begin{thm} \label{thm10}
   Let $\theta$ be a homomorphism from a direct product $D = \Gamma_1 \times \ldots \times \Gamma_n$ of $n \geq 3$ finitely presented groups to a finitely generated free abelian group $A$ such that the restriction of $\theta$ to each $\Gamma_i$ is surjective. Then $\ker \theta$ is finitely presented.

   Suppose additionally that, for each $i$, $(\alpha_i, \rho_i)$ is an area-radius pair for some finite presentation of $\Gamma_i$.  Then $\rho^{2r} \alpha$ is an isoperimetric function for $\ker \theta$, where $r = \dim A \otimes \Q$ and $\alpha$ and $\rho$ are given by $$\alpha(l) = \max ( \{l^2\} \cup \{ \alpha_i(l) \, : \, 1 \leq i \leq n\})$$ and $$\rho(l) = \max ( \{l\} \cup \{ \rho_i(l) \, : \, 1 \leq i \leq n\}).$$
\end{thm}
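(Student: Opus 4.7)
The plan is to invoke Proposition~\ref{prop4}~(2). By Lemma~\ref{lem12} (which controls departure from $\ker\theta$ in terms of the height functions $\height_k$ introduced in Section~\ref{sec5}), it is enough to produce, for each null-homotopic word $w\in\fm{X}$ with $|w|\leq l$, a $\mathcal{Q}$-expression $\mathcal{E}_w$ for $w$ such that $\height_k(\mathcal{E}_w)$ is bounded by a universal constant for every $k\in\{1,\ldots,r\}$ and $\Area(\mathcal{E}_w)\leq C\rho(l)^{2r}\alpha(l)$, where $\mathcal{Q}$ is a suitable finite presentation of $D$. I would take $\mathcal{Q}=\langle\bigcup_i\mathcal{X}_i\mid\bigcup_i\mathcal{R}_i\cup\mathcal{C}\rangle$, assembled from finite presentations $\mathcal{P}_i=\langle\mathcal{X}_i\mid\mathcal{R}_i\rangle$ realising $(\alpha_i,\rho_i)$ (with $\mathcal{X}_i\supseteq\mathcal{A}_i$ as in Section~\ref{sec5}) and the standard family $\mathcal{C}$ of commutator relators between generators of distinct factors; a routine factor-by-factor argument (project to each $\Gamma_i$, fill using $(\alpha_i,\rho_i)$, then sort letters into factor-blocks at quadratic cost) shows $(c\alpha,c\rho)$ is an area-radius pair for $\mathcal{Q}$.

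The construction of $\mathcal{E}_w$ is iterative, pulling down heights one direction at a time using the operator $\Phi_k$ from Proposition~\ref{prop5}. Starting from a $\mathcal{Q}$-expression $\mathcal{E}^{(0)}$ with $\Area(\mathcal{E}^{(0)})\leq c\alpha(l)$ and $\height_k(\mathcal{E}^{(0)})\leq c\rho(l)$ for all $k$, I would build $\mathcal{E}^{(1)},\ldots,\mathcal{E}^{(r)}$ so that $\mathcal{E}^{(k)}$ is a $\mathcal{Q}$-expression for $w$ with $\height_k(\mathcal{E}^{(k)})$ bounded by a universal constant, $\height_i(\mathcal{E}^{(k)})\leq c\rho(l)$ for $i\neq k$, and $\Area(\mathcal{E}^{(k)})\leq C\rho(l)^2\Area(\mathcal{E}^{(k-1)})$. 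The transition from $\mathcal{E}^{(k-1)}=(x_j,r_j)_{j=1}^m$ to $\mathcal{E}^{(k)}$ proceeds by applying $\Phi_k(-,0)$: by parts~(4)--(6) of Proposition~\ref{prop5} together with $\theta_k(r_j)=0$, the word $\Phi_k(x_jr_jx_j^{-1},0)$ is freely equal to $\Phi_k(x_j,0)\,\Phi_k(r_j,\theta_k(x_j))\,\Phi_k(x_j,0)^{-1}$, and the first and third factors serve as conjugators in the new expression. By Proposition~\ref{prop5}(3) the outer conjugators $\Phi_k(x_j,0)$ have $\height_k\leq 1$ and $\height_i\leq\height_i(x_j)$ for $i\neq k$.

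The principal obstacle, and the source of the $\rho(l)^2$-factor per iteration, is to produce efficient $\mathcal{Q}$-expressions for each null-homotopic word $\Phi_k(r_j,\theta_k(x_j))$. These have length $O(\rho(l))$, and naively invoking the Dehn function of $\mathcal{Q}$ gives area $\alpha(O(\rho(l)))$, which is in general too weak. Instead, one exploits the specific form of $\Phi_k$: for $r_j\in\mathcal{R}_i$ with $i\neq 1$, or $r_j\in\mathcal{C}$ not involving $\mathcal{X}_1$, the $\Phi_k$-correction consists only of inserted $f_k$-powers that commute with every letter of $r_j$, so $\Phi_k(r_j,h)$ collapses to bounded length at constant cost; for $r_j\in\mathcal{R}_1$, one uses $[e_k,f_k]\in\mathcal{C}$ to rewrite $(e_kf_k^{-1})^h$ as $e_k^hf_k^{-h}$ modulo $O(h^2)$ commutator relators, slides the $f_k^{\pm h}$ past the letters of $r_j$ at cost $O(hL)$, and then fills the resulting conjugate $e_k^hr_je_k^{-h}$ by a single application of $r_j$. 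The total cost is $O(\rho(l)^2)$, and all intermediate conjugators involve only letters from $\{e_k,f_k\}\cup\mathcal{X}_1$, so their $\theta_i$-values for $i\neq k$ are bounded by a constant and hence do not inflate the heights that drive the cost of subsequent iterations.

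Summing over the $m\leq\Area(\mathcal{E}^{(k-1)})$ pieces yields the inductive bound, while a careful account of the small discrepancy between an expression for $\Phi_k(w,0)$ and one for $w$ itself (null-homotopic of controlled length because $\theta_k(w)=0$, and handled by the same structural rewriting) is absorbed into $C$. After $r$ iterations, $\Area(\mathcal{E}^{(r)})\leq C^r\rho(l)^{2r}\alpha(l)$ with all heights bounded by a universal constant. Parts~(1) and~(2) of Proposition~\ref{prop4}, applied with $K$ constant and the area bound above, then deliver finite presentation of $\ker\theta$ together with the isoperimetric function $\rho^{2r}\alpha$.
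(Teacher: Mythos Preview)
Your overall architecture matches the paper's: start from a factor-by-factor filling with area $\alpha$ and heights $\rho$, then iterate $\Phi_k$ over $k=1,\dots,r$, picking up a factor $\rho^2$ at each step, and finish with Proposition~\ref{prop4}. The gap is in the claim that $\height_k(\mathcal{E}^{(k)})$ is bounded by a universal constant. For $r_j\in\mathcal{R}_1$ you propose to rewrite $(e_kf_k^{-1})^h$ as $e_k^hf_k^{-h}$ and then fill $e_k^h r_j e_k^{-h}$; but the conjugator $e_k^h$ has $\height_k=|h|$, so your filling of $\Phi_k(r_j,h)$ has $\height_k$ up to $|\theta_k(x_j)|\leq\height_k(\mathcal{E}^{(k-1)})$, not a constant. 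Consequently $\height_k(\mathcal{E}^{(k)})$ is only $\leq\height_k(\mathcal{E}^{(k-1)})\leq c\rho(l)$, and after the full iteration $\Dep(\mathcal{E}^{(r)})$ is bounded in terms of $|w|$, not $\Dep(w)$; Proposition~\ref{prop4}(1) then does not apply. The paper's fix (Lemma~\ref{lem18}) is precisely \emph{not} to unfold: one observes that $\Phi_k(s,h)$ is freely equal to $(e_kf_k^{-1})^h\Phi_k(s,0)(e_kf_k^{-1})^{-h}$, the outer blocks have $\height_k\leq 1$, and $\Phi_k(s,0)$ has a pre-computed null-sequence of constant area and constant heights (there are only finitely many $s\in\mathcal{R}$).

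A second gap is that you omit the commutator relators $[x,y]\in\mathcal{C}$ with $x\in\mathcal{X}_1$. These are the heart of the matter: the case $x\in\mathcal{X}_1$, $y\in\mathcal{X}_2$, $\theta_k(x)=\theta_k(y)=0$ cannot be handled with only $e_k,f_k$ if one insists on bounded $\height_k$, because both letters of $\{e_k,f_k\}$ fail to commute with one of $x,y$. The paper (Lemma~\ref{lem19}, Case~5) brings in $g_k=a_k^{(3)}\in\mathcal{X}_3$ to route around this, and it is exactly here that the hypothesis $n\geq 3$ is used. Your sketch never invokes a third factor, so as written it cannot produce fillings of these $\Phi_k([x,y],h)$ with $\height_k$ bounded independently of $h$. (Minor slip: for letters in $\mathcal{X}_i$ with $i\geq 2$ the $\Phi_k$-correction is an $e_k$-power, not an $f_k$-power.)
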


The proof of Theorem~\ref{thm10} is analogous to the proof of Theorem~\ref{thm8}, except that instead of pulling down words (representing edge paths in the Cayley graph of $D$) one pulls down $\mc{P}$-expressions (representing filling discs in the Cayley $2$-complex of $D$).  We first establish some notation.

Recall that, for each $i$, $\mc{X}_i = \mc{A}_i \cup \mc{B}_i$ is a generating set for $\Gamma_i$, with $\mc{A}_i = \{ a_1^{(i)}, \ldots, a_r^{(i)} \}$ satisfying $\theta(a_k^{(i)}) = t_k$ and with $\theta(\mc{B}_i) = \{1\}$.  Since each $\Gamma_i$ is finitely generated we may take each $\mc{B}_i$ to be finite.  Thus $D$ is finitely generated by $\mc{X} = \cup_{i=1}^n \mc{X}_i$.  For each $i$, let $\mc{P}_i = \langle \mc{X}_i \, | \, \mc{R}_i \rangle$ be a finite presentation for $\Gamma_i$.  Define $\mc{R} = \cup_{i=1}^n \mc{R}_i$ and define $\mc{C}$ to be the set of relators $\{ [x, y] \, : \, x \in \mc{X}_i, y \in \mc{X}_j, 1 \leq i < j \leq n \} \subseteq \fm{X}$.  Then $D$ is finitely presented by $\mc{P} = \langle \mc{X} \, | \, \mc{C}, \mc{R} \rangle$.

The structure of the proof is as follows.  Given a null-homotopic word $w \in \fm{X}$, we will apply Proposition~\ref{prop9} to give a $\mc{P}$-expression for $w$ whose area and heights (as defined in Section~\ref{sec5}) are bounded in terms of $\alpha$ and $\rho$.  We then pull this down to give an `almost flat' $\mc{P}$-expression for $w$, i.e. one which has all its heights small, in the sense of being bounded in terms of the heights of $w$.  The departure of this $\mc{P}$-expression is then bounded in terms of the departure of $w$ and so the result will follow by Proposition \ref{prop4}.

As in the $1$-dimensional case, we will use the functions $\Phi_i$ to successively pull down expressions in each of the $r$ different directions.  Proposition~\ref{prop7} asserts that it is possible to pull down a $\mc{P}$-expression in a particular direction without (essentially) increasing the heights in the other directions.  In Proposition~\ref{prop8} we apply this result repeatedly to show that an arbitrary $\mc{P}$-expression can be pulled down to one that is almost flat.

Lemmas \ref{lem16}--\ref{lem19} give various calculations required in the proof of Proposition~\ref{prop7}.  When an expression for a word $w$ is pulled down in the $i^\text{th}$ direction, one does not immediately obtain an expression for $w$, but in fact an expression for $\Phi_i(w, 0)$.  The point is that if $w$ is almost flat, then $\Phi_i(w, 0)$ will lie close to $w$ and so one can be converted to the other at low cost.  This calculation is performed in Lemmas \ref{lem16} and \ref{lem17}.

In order to pull down an expression $\mc{E}$ in the $i^\text{th}$ direction, one needs almost flat fillings for the words $\Phi_i(s, h)$, where $s \in \mc{C}^{\pm1} \cup \mc{R}^{\pm1}$.  These are provided by Lemmas \ref{lem18} and \ref{lem19}.

\begin{lem} \label{lem16}
  Suppose $n \geq 2$.  Then for all $x \in \mc{X}^{\pm1}$, $h \in \Z$ and $k \in \{1, \ldots, r\}$ there exists a $\mc{P}$-sequence $\Sigma$ converting $\Phi_k(x, h)$ to ${e_k}^h x {e_k}^{-h - \theta_k(x)}$ with \begin{align*}
    \Area(\Sigma) &\leq 2(|h| + 1)^2 \\
    \height_i(\Sigma) &\leq \begin{cases}
      |h| + 1 \quad &\text{if $i = k$,} \\
      1 \quad &\text{if $i \neq k$.}
    \end{cases} \end{align*}
\end{lem}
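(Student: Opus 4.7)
\textbf{Proof proposal for Lemma~\ref{lem16}.}

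The plan is to split on whether $x$ lies in $\mc{X}_1^{\pm 1}$ or in $\mc{X}_j^{\pm 1}$ for some $j \geq 2$, since $\Phi_k$ is defined by two different formulas in these two cases. In each case one writes down an explicit $\mc{P}$-sequence from $\Phi_k(x,h)$ to $e_k^h x e_k^{-h-\theta_k(x)}$ and verifies the bounds. One may assume $h \geq 0$ throughout; the case $h < 0$ follows by an identical argument with roles of positive and negative exponents swapped. Note that $\theta_k(x) \in \{-1,0,1\}$ for any $x \in \mc{X}^{\pm 1}$ and that $\theta_i(e_k) = \theta_i(f_k) = 0$ for $i \neq k$.

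\emph{Case $x \in \mc{X}_j^{\pm 1}$, $j \geq 2$.} Here $\Phi_k(x,h)$ is (up to inverses of letters) $x e_k^{-\theta_k(x)}$. Since $e_k \in \mc{X}_1$ commutes with $x \in \mc{X}_j$ via a relator in $\mc{C}$, I would first freely expand to insert $e_k^h e_k^{-h}$ in front, and then commute the $e_k^{-h}$ past $x$ one letter at a time: this costs at most $|h|$ applications of the commutator relator $[e_k, x]$, producing the target word after a free reduction. The prefixes encountered have $\theta_k$-values in $\{0,1,\dots,h\} \cup \{h-1,\dots\}$ together with a single $\pm 1$ shift coming from $x$, giving $\height_k(\Sigma) \leq |h|+1$; prefixes have $\theta_i$-values in $\{0,\theta_i(x)\}$ for $i \neq k$, hence $\height_i(\Sigma) \leq 1$. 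The area $|h| \leq 2(|h|+1)^2$ comfortably.

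\emph{Case $x \in \mc{X}_1^{\pm 1}$.} This is the main case. Here $\Phi_k(x,h)$ has the form
\[
(e_k f_k^{-1})^h \cdot u \cdot (e_k f_k^{-1})^{-h-\theta_k(x)}
\]
where $u$ is one of $x f_k^{-\theta_k(x)}$ or $f_k^{\theta_k(x)} x^{-1}$. Since $e_k$ and $f_k$ commute (both pairs lie in different factors of $D$), applying $[e_k, f_k]$-relators lets me sort the left block $(e_k f_k^{-1})^h$ into $e_k^h f_k^{-h}$ by bubbling each $f_k^{-1}$ to the right past the subsequent $e_k$'s; this costs $\binom{h}{2}$ relator applications. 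I do the same for the right block $(e_k f_k^{-1})^{-h-\theta_k(x)} = (f_k e_k^{-1})^{h+\theta_k(x)}$, sorting it to $f_k^{h+\theta_k(x)} e_k^{-h-\theta_k(x)}$ at cost $\binom{h+\theta_k(x)}{2}$. After a free reduction the word $f_k^{-\theta_k(x)} f_k^{h+\theta_k(x)}$ in the middle collapses to $f_k^h$, leaving $e_k^h f_k^{-h} x f_k^h e_k^{-h-\theta_k(x)}$ (similarly in the $x^{-1}$ subcase). Finally I commute $f_k^{-h}$ past $x$ using $|h|$ applications of $[f_k, x]$ (valid since $f_k \in \mc{X}_2$ and $x \in \mc{X}_1$), and the central $f_k^{-h} \cdot x \cdot f_k^h$ free-reduces (after moving $x$ out) to $x$, yielding the target.

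The area is bounded by $\binom{h}{2}+\binom{h+1}{2}+h \leq 2(|h|+1)^2$. The key calculation will be the height control during the sorting phase. For $i \neq k$, $\theta_i$ ignores every $e_k$ and $f_k$, so $\theta_i$ of any prefix depends only on how much of $x^{\pm 1}$ has been traversed, giving $\height_i(\Sigma) \leq 1$. For $i = k$, the longest prefixes encountered have the form $e_k^a f_k^{-b}(\cdots)$ or their mirrors with $0 \leq a,b \leq h$ or $h+\theta_k(x)$; since $\theta_k(e_k) = \theta_k(f_k) = 1$, such a prefix has $\theta_k$-value $a - b$, which ranges through values of absolute value at most $\max(h, h+|\theta_k(x)|) \leq |h|+1$. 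A short check of the intermediate stages involving $x$ itself adds at most one further unit, giving $\height_k(\Sigma) \leq |h|+1$.

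The main obstacle I expect is the bookkeeping for heights in the sorting phase of Case $x \in \mc{X}_1^{\pm 1}$: one must choose the order of commutator applications carefully (bubble each $f_k^{-1}$ fully to the right before starting the next) so that the partial prefixes never accumulate a net $\theta_k$-imbalance larger than $|h|+1$. Once the sorting order is fixed, the bounds fall out by direct inspection.
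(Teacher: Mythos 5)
Your proposal is correct and follows essentially the same route as the paper: a case split on whether $x$ lies in $\mc{X}_1^{\pm1}$ or in another factor, sorting each block $(e_k f_k^{-1})^{\pm h}$ into separated powers via $[e_k,f_k]$-relators at quadratic cost, commuting the resulting $f_k$-power (resp.\ $e_k$-power) past $x$ at cost $|h|$, and reading the heights off the prefixes exactly as the paper's $\mc{P}$-scheme tables do. The only cosmetic difference is that you state the sorting cost as $\binom{h}{2}+\binom{h+1}{2}$ rather than the paper's slightly larger $\tfrac12|h|(|h|+1)+\tfrac12(|h|+1)(|h|+2)$; both sit comfortably under $2(|h|+1)^2$.
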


\begin{proof}
  We consider 4 separate cases.

  \medskip

  \noindent \textbf{Case 1.} $x \in \mc{X}_1$.

  The following table presents a $\mc{P}$-scheme converting the word $\Phi_k(x, h)$ to the word ${e_k}^h x {e_k}^{-h-\theta_k(x)}$.  In lines 3 and 5 we have applied Lemma~\ref{lem14}.

  \bigskip

  \noindent \begin{minipage}{15cm}
  \begin{tabular}{c  c  c  c  c}
    \hline
    \\[-10pt]
    $j$ & $\sigma_j$ & $\Area$ & $\begin{gathered} \height_i \\[-6pt] (i \neq k) \end{gathered}$ & $\height_k$ \\
    \\[-10pt]
    \hline
    \\[-10pt]
    1 & $(e_k {f_k})^h x {f_k}^{-\theta_k(x)} (e_k {f_k}^{-1})^{-h-\theta_k(x)}$ & $\frac{1}{2} |h| (|h| + 1)$ & 1 & $\max\{|h|, 1\}$ \\
    2 & ${e_k}^h {f_k}^{-h} x {f_k}^{-\theta_k(x)} (e_k {f_k}^{-1})^{-h-\theta_k(x)}$ & $\frac{1}{2} (|h| + 1)(|h| + 2)$ & $1$ & $|h| + 1$ \\
    3 & ${e_k}^h {f_k}^{-h} x {f_k}^{-\theta_k(x)} {f_k}^{h+\theta_k(x)} {e_k}^{-h-\theta_k(x)}$ & $0$ & $1$ & $|h| + 1$ \\
    4 & ${e_k}^h {f_k}^{-h} x {f_k}^{h} {e_k}^{-h-\theta_k(x)}$ & $|h|$ & $1$ & $|h| + 1$ \\
    5 & ${e_k}^h {f_k}^{-h} {f_k}^{h} x {e_k}^{-h-\theta_k(x)}$ & $0$ & $1$ & $|h| + 1$ \\
    6 & ${e_k}^h x {e_k}^{-h-\theta_k(x)}$ & & & \\
    \\[-10pt]
    \hline
  \end{tabular}
  \end{minipage}

  \bigskip

  \noindent \textbf{Case 2.} $x \in \mc{X}_2 \cup \ldots \cup \mc{X}_n$.

  The following table presents a $\mc{P}$-scheme converting the word $\Phi_k(x, h)$ to the word ${e_k}^h x {e_k}^{-h-\theta_k(x)}$.  In lines 1 and 3 we have applied Lemma~\ref{lem14}.

  \begin{center}
  \begin{tabular}{c  c  c  c  c  }
    \hline
    \\[-10pt]
    $j$ & $\sigma_j$ & $\Area$ & $\begin{gathered} \height_i \\[-6pt] (i \neq k) \end{gathered}$ & $\height_k$ \\
    \\[-10pt]
    \hline
    \\[-10pt]
    1 & $x {e_k}^{-\theta_k(x)}$ & $0$ & $1$ & $\max\{ |h|, 1 \}$ \\
    2 & ${e_k}^h {e_k}^{-h} x {e_k}^{-\theta_k(x)}$ & $|h|$ & $1$ & $|h| + 1$ \\
    3 & ${e_k}^h x {e_k}^{-h} {e_k}^{-\theta_k(x)}$ & $0$ & $1$ & $|h| + 1$ \\
    4 & ${e_k}^h x {e_k}^{-h-\theta_k(x)}$ & & & \\
    \\[-10pt]
    \hline
  \end{tabular}
  \end{center}

  \bigskip

  \noindent \textbf{Case 3.} $x \in \mc{X}_1^{-1}$.

  Similar to the case $x \in \mc{X}_1$.

  \medskip

  \noindent \textbf{Case 4.} $x \in \mc{X}_2^{-1} \cup \ldots \cup \mc{X}_n^{-1}$

  Similar to the case $x \in \mc{X}_2 \cup \ldots \cup \mc{X}_n$.
\end{proof}

\begin{lem} \label{lem17}
  Suppose $n \geq 2$.  Let $w \in \fm{X}$, $h \in \Z$ and $k \in \{1, \ldots, r\}$.  Then there exists a $\mc{P}$-sequence $\Sigma$ converting $\Phi_k(w, h)$ to ${e_k}^h w {e_k}^{-h-\theta_k(w)}$ with \begin{align*}
    \Area(\Sigma) &\leq 2|w| (\height_k(w) + |h| + 1)^2 \\
    \height_i(\Sigma) &\leq \begin{cases}
      \height_k(w) + |h| + 1 \quad &\text{if $i=k$,} \\
      \height_i(w) + 1 \quad &\text{if $i \neq k$.}
    \end{cases}
  \end{align*}
\end{lem}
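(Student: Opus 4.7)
The plan is to induct on $|w|$, reducing to Lemma~\ref{lem16} (the single-letter case) one letter at a time. The base case $w \equiv \emptyset$ is trivial. For the inductive step, write $w \equiv w' x$ with $x \in \mc{X}^{\pm 1}$. By Proposition~\ref{prop5}~(5), $\Phi_k(w, h) \equiv \Phi_k(w', h) \cdot \Phi_k(x, \theta_k(w')+h)$. I will build a null $\mc{P}$-scheme converting $\Phi_k(w, h)$ to $e_k^h w e_k^{-h-\theta_k(w)}$ in three phases: Phase~1 applies the inductive hypothesis to the leading factor while the trailing factor is left untouched, converting the word to $e_k^h w' e_k^{-h-\theta_k(w')} \cdot \Phi_k(x, \theta_k(w')+h)$; Phase~2 applies Lemma~\ref{lem16} to the trailing factor, producing $e_k^h w' e_k^{-h-\theta_k(w')} \cdot e_k^{\theta_k(w')+h} x e_k^{-\theta_k(w')-h-\theta_k(x)}$; Phase~3 applies Lemma~\ref{lem14} to cancel the two middle $e_k$-blocks freely, yielding $e_k^h w e_k^{-h-\theta_k(w)}$.

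The area bounds add up cleanly: Phase~1 contributes at most $2|w'|(\height_k(w')+|h|+1)^2 \leq 2|w'|(\height_k(w)+|h|+1)^2$ by induction, Phase~2 contributes at most $2(|\theta_k(w')+h|+1)^2 \leq 2(\height_k(w)+|h|+1)^2$ by Lemma~\ref{lem16}, and Phase~3 contributes $0$, giving the desired total $2|w|(\height_k(w)+|h|+1)^2$.

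The main work is tracking heights across the junction between the evolving factor and the fixed factor in each of Phases~1 and~2, where the bounds for the two sides must be composed. Two observations make this tractable. First, the evolving factor represents the same group element throughout its phase, so its total $\theta_j$-value at the junction is constant: in Phase~1 the leading portion has total $\theta_k = 0$ (as $\theta_k(\Phi_k(w', h)) = 0$) and total $\theta_i = \theta_i(w')$, while in Phase~2 the leading portion is fixed at $e_k^h w' e_k^{-h-\theta_k(w')}$ with the same totals. Second, the fixed factor has controlled heights: in Phase~1 the trailing $\Phi_k(x,\theta_k(w')+h)$ satisfies $\height_k \leq 1$ and $\height_i \leq \height_i(x) \leq 1$ by Proposition~\ref{prop5}~(3); in Phase~2 the leading $e_k^h w' e_k^{-h-\theta_k(w')}$ has prefix $\theta_k$-values of modulus at most $\height_k(w')+|h|$ and prefix $\theta_i$-values coinciding (for $i \neq k$) with $\theta_i$ of prefixes of $w'$, since $e_k$ contributes nothing to $\theta_i$.

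Combining these, any prefix of an intermediate word decomposes as (fully or partially processed leading portion)$\,\cdot\,$(prefix of the fixed trailing portion), and its $\theta_j$-value is the sum of the leading total with a prefix contribution from the trailing side bounded by its $\height_j$. The worst case in Phase~1 is $|0| + 1 = 1$ for $\theta_k$ and $|\theta_i(w')| + 1 \leq \height_i(w)+1$ for $\theta_i$, together with the internal bounds $\height_k(w')+|h|+1$ and $\height_i(w')+1$ supplied by the induction; in Phase~2 the worst case is $0 + (|\theta_k(w')+h|+1) \leq \height_k(w)+|h|+1$ for $\theta_k$ and $|\theta_i(w')|+1 \leq \height_i(w)+1$ for $\theta_i$. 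Phase~3 invokes Lemma~\ref{lem14} between two words whose heights are already within the claimed bounds, so these are preserved. The main obstacle is thus really the bookkeeping at junctions, not any conceptual difficulty; it works because the group-element-invariance of $\theta_j$ pins down the junction offset exactly.
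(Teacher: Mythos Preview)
Your inductive argument is correct; the junction bookkeeping you outline does go through. The paper, however, proceeds non-inductively: it introduces the single intermediate word
\[
\sigma_2 \equiv \prod_{j=1}^{|w|} {e_k}^{h+\theta_k(w,j-1)}\, w(j)\, {e_k}^{-h-\theta_k(w,j)},
\]
converts $\Phi_k(w,h)\equiv\sigma_1$ to $\sigma_2$ by applying Lemma~\ref{lem16} once to each of the $|w|$ factors (obtaining the same area bound $2|w|(\height_k(w)+|h|+1)^2$), and then converts $\sigma_2$ to $e_k^h w e_k^{-h-\theta_k(w)}$ in one free-reduction step via Lemma~\ref{lem14}. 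So the ingredients are identical --- $|w|$ invocations of Lemma~\ref{lem16} plus free cancellation --- but the paper collapses all the $e_k$-cancellations into a single application of Lemma~\ref{lem14} at the end, whereas you interleave a Lemma~\ref{lem14} step after each letter. The paper's organisation slightly reduces the junction-tracking burden (one junction analysis for $\sigma_1\to\sigma_2$ rather than one per inductive step), but your approach has the virtue of making the recursion explicit and would adapt more readily if one wanted sharper letter-by-letter control.
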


\begin{proof}
  Define \begin{align*}
    \sigma_1 &\equiv \prod_{j=1}^{|w|} \Phi_k(w(j), \theta_k(w, j-1) + h), \\
    \sigma_2 &\equiv \prod_{j=1}^{|w|} {e_k}^{h+ \theta_k(w, j-1)} w(j) {e_k}^{-h-\theta_k(w, j)}, \\
    \sigma_3 &\equiv {e_k}^h w {e_k}^{-h-\theta_k(w)}.
  \end{align*}

  By Lemma~\ref{lem16}, there exists a $\mc{P}$-sequence $\Sigma_1$ converting $\sigma_1$ to $\sigma_2$ with $\Area(\Sigma_1) \leq 2|w| \max_{1 \leq j \leq |w|} (|\theta_k(w, j-1) + h| + 1)^2 \leq 2 |w| (\height_k(w) + |h| + 1)^2$ and $$\height_i(\Sigma_1) \leq \begin{cases}
    \height_k(w) + |h| + 1 \quad &\text{$i=k$,}\\
    1 \quad &\text{$i \neq k$.}
  \end{cases}$$

  By Lemma~\ref{lem14}, there exists a $\mc{P}$-sequence $\Sigma_2$ converting $\sigma_2$ to $\sigma_3$ with $\Area(\Sigma_2) = 0$ and $$\height_i(\Sigma_2) \leq \begin{cases}
    \height_k(w) + |h| \quad &\text{$i=k$,}\\
    \height_i(w) \quad &\text{$i \neq k$.}
  \end{cases}$$ Take $\Sigma = \Sigma_1 \Sigma_2$.
\end{proof}

\begin{lem} \label{lem18}
  Suppose $n \geq 2$.  Then there exist constants $C_A \in \N$ and $C_H \in \N$ so that for all $s \in \mc{R}^{\pm1}$, $h \in \Z$ and $k \in \{1, \ldots, r\}$ there exists a null $\mc{P}$-sequence $\Sigma$ for the word $\Phi_k(s, h)$ with $\Area(\Sigma) \leq C_A$ and $\height_i(\Sigma) \leq C_H$ for all $i$.
\end{lem}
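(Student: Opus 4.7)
The strategy is to split on which $\mc{R}_i$ contains $s$ and, in each case, exhibit an explicit null $\mc{P}$-sequence whose area and heights are bounded uniformly in $s$, $h$, and $k$. Write $L = \max_{r \in \mc{R}} |r|$.

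If $s \in \mc{R}_i^{\pm 1}$ with $i \geq 2$, then every letter of $s$ lies in $\mc{X}_i^{\pm 1}$ and by definition $\Phi_k(s,h) \equiv \prod_j s(j) e_k^{-\theta_k(s(j))}$, a word independent of $h$ and of length at most $2L$. Using commutators $[e_k, s(j)] \in \mc{C}$, push each $e_k$-power to the right at cost $O(L^2)$ application-of-relator moves; since $\theta_k(s) = 0$, the resulting $e_k$-tail free-reduces away, leaving $s$, which one application of the relator $s$ converts to $\emptyset$.

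The main case is $s \equiv x_1 \ldots x_m \in \mc{R}_1^{\pm 1}$. Setting $h_j = \theta_k(s, j-1) + h$, the identity $h_{j+1} = h_j + \theta_k(x_j)$ means that the adjacent factors $\Phi_k(x_j, h_j) \Phi_k(x_{j+1}, h_{j+1})$ of $\Phi_k(s,h)$ contain consecutive inverse blocks $(e_k f_k^{-1})^{-h_{j+1}}$ and $(e_k f_k^{-1})^{h_{j+1}}$ that cancel freely. Performing these cancellations inside-out yields $(e_k f_k^{-1})^h \tau(s) (e_k f_k^{-1})^{-h}$, where $\tau(s) \equiv \prod_j x_j f_k^{-\theta_k(x_j)}$ depends only on $s$ and $k$ and has length at most $2L$. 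Using $O(L^2)$ commutators $[x_\ell, f_k] \in \mc{C}$, push each $f_k$-power past the subsequent $x_\ell$'s; since $\theta_k(s) = 0$, free reduction then removes the resulting $f_k$-tail, leaving the word $(e_k f_k^{-1})^h \cdot s \cdot (e_k f_k^{-1})^{-h}$. A single application of the relator $s$ followed by free reduction of $(e_k f_k^{-1})^h (e_k f_k^{-1})^{-h}$ produces $\emptyset$.

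The total number of application-of-relator moves is at most $O(L^2) + 1$, giving a uniform area bound $C_A$. For the heights, note that $\theta_k(e_k) = \theta_k(f_k) = 1$ implies $\theta_k(e_k f_k^{-1}) = 0$, so every prefix of $(e_k f_k^{-1})^{\pm h}$ has $\theta_k \in \{0, 1\}$, and for $l \neq k$, every letter of $(e_k f_k^{-1})^{\pm h}$ has $\theta_l = 0$. Every intermediate word in the construction has the form $(e_k f_k^{-1})^h M (e_k f_k^{-1})^{-h}$, where $M$ is obtained from $\tau(s)$ by commuting letters and possibly removing an $s$-subword; hence $|M| \leq 2L$ and $\theta_l(M) = \theta_l(\tau(s)) - \theta_l(s) = 0$ for every $l$. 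Every prefix therefore has $|\theta_l| \leq 2L + 1$, yielding a uniform height bound $C_H$. The main technical subtlety lies in the initial free-reduction step: one must perform the cancellations in an inside-out order so that every intermediate word genuinely retains the claimed form; since free contractions preserve prefix heights, this introduces no new height spikes and the bounds carry through.
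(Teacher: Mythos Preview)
Your argument is correct and reaches the same conclusion, but by a different route from the paper's proof.

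The paper gives a soft existence argument: since $\mc{R}$ is finite and $r$ is finite, one simply fixes, for each $s \in \mc{R}$ and each $k$, an arbitrary null $\mc{P}$-sequence $\Sigma_{s,k}$ for $\Phi_k(s,0)$, and takes $C_A$ and $C_H$ to be the maxima of their areas and heights. The dependence on $h$ is then removed by the observation that $\Phi_k(s,h) \equiv \Phi_k(s,0)$ when $s \in \mc{R}_i$ with $i \geq 2$, and that $\Phi_k(s,h)$ is freely equal to $(e_k f_k^{-1})^h \Phi_k(s,0)(e_k f_k^{-1})^{-h}$ when $s \in \mc{R}_1$; in the latter case the null scheme for $\Phi_k(s,0)$ is simply conjugated through. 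No explicit filling of $\Phi_k(s,0)$ is ever produced.

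Your approach instead constructs explicit fillings, yielding concrete bounds $C_A = O(L^2)$ and $C_H = O(L)$ in terms of $L = \max_{r \in \mc{R}}|r|$. This is more informative but also more laborious. Two minor remarks: first, your formula $\tau(s) \equiv \prod_j x_j f_k^{-\theta_k(x_j)}$ is only literally correct when every letter of $s$ lies in $\mc{X}_1$ rather than $\mc{X}_1^{-1}$; for negative letters the $f_k$-power sits on the left, so $\tau(s)$ is really an interleaving of the letters of $s$ with $f_k^{\pm 1}$'s, but this does not affect the rest of the argument. Second, your caveat about performing the initial free cancellations ``inside-out'' is unnecessary: by Lemma~\ref{lem14}, any sequence of free contractions keeps heights bounded by those of the starting word, which here are already at most $\max\{1, L\}$ by Proposition~\ref{prop5}(3).
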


\begin{proof}
  If $\mc{R}$ is empty then there is nothing to prove, so we assume that this is not the case.  Note that, by Proposition~\ref{prop5}~(4), $\Phi_k(s^{-1}, h) \equiv \Phi_k(s, h - \theta_k(s))^{-1} \equiv \Phi_k(s, h)^{-1}$.  Thus, by Lemma~\ref{lem15}, it suffices to consider only those $s \in \mc{R}$.

  For each $s \in \mc{R}$ and $k = 1, \ldots, r$, choose a null $\mc{P}$-sequence $\Sigma_{s, k}$ for $\Phi_k(s, 0)$.  Define $C_A = \max \{ \Area(\Sigma_{s, k})\, : \, s \in \mc{R}, 1 \leq k \leq r \}$ and $C_H = \max \{ \height_i(\Sigma_{s, k}) \, : \, s \in \mc{R}, 1 \leq i,k \leq r \}$.

  Note that, if $i \neq k$, then $\height_i(\Phi_k(s, h)) = \height_i(\Phi_k(s, 0)) \leq C_H$.  Furthermore $$\height_k(\Phi_k(s, h)) = \begin{cases}
    0 \quad &\text{if $h=0$ and $\height_k(s) = 0$,} \\
    1 \quad &\text{otherwise,}
  \end{cases}$$ and so $\height_k(\Phi_k(s, h)) \leq 1$.  Note that $C_H \geq 1$ and so we have that $\height_i(\Phi_k(s, h)) \leq C_H$ for all $s \in \mc{R}$, $h \in \Z$ and $i, k \in \{1, \ldots, r\}$.

  If $s \in \mc{R}_2 \cup \ldots \cup \mc{R}_n$, then for all $h \in \Z$ we have $\Phi_k(s, h) \equiv \Phi_k(s, 0)$ and so the result is immediate.  If $s \in \mc{R}_1$, then note that $\Phi_k(s, h)$ is freely equal to $(e_k {f_k}^{-1})^h \Phi_k(s, 0) (e_k {f_k}^{-1})^{-h}$.  The following table presents a null $\mc{P}$-scheme for $\Phi_k(s, h)$.  In lines 1 and 3 we have used Lemma~\ref{lem14}.

\medskip

\begin{center}
    \begin{tabular}{cccc}
      \hline
      \\[-10pt]
      $j$ & $\sigma_i$ & $\Area$ & $\height_i$ \\
      \\[-10pt]
      \hline
      \\[-10pt]
      $1$ & $\Phi_k(s, h)$ & $0$ & $C_H$ \\
      $2$ & $(e_k {f_k}^{-1})^h \Phi_k(s, 0) (e_k {f_k}^{-1})^{-h}$ & $C_A$ & $C_H$ \\
      $3$ & $(e_k {f_k}^{-1})^h (e_k {f_k}^{-1})^{-h}$ & $0$ & $C_H$ \\
      \\[-10pt]
      \hline
    \end{tabular}
  \end{center}
\end{proof}

\begin{lem} \label{lem19}
  Suppose $n \geq 3$.  Let $s \in \mc{C}^{\pm1}$, $h \in \Z$ and $k \in \{1, \ldots, r\}$.  Then there exists a null $\mc{P}$-sequence for the word $\Phi_k(s, h)$ with $\Area(\Sigma) \leq 7(|h|+1)^2$ and $\height_i(\Sigma) \leq 2$ for each $i$.
\end{lem}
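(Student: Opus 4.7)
\textbf{Proof plan for Lemma~\ref{lem19}.}

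The strategy parallels Lemma~\ref{lem18} but with more intricate bookkeeping. By Proposition~\ref{prop5}~(4) and Lemma~\ref{lem15} we may assume $s \in \mc{C}$, so $s = [x,y] = xyx^{-1}y^{-1}$ with $x \in \mc{X}_i$, $y \in \mc{X}_j$, $1 \leq i < j \leq n$. Set $\alpha = \theta_k(x)$, $\beta = \theta_k(y)$, both of absolute value at most $1$. Using Proposition~\ref{prop5}~(5) and (4), we expand
\[
\Phi_k(s,h) \equiv \Phi_k(x,h)\,\Phi_k(y,h+\alpha)\,\Phi_k(x,h+\beta)^{-1}\,\Phi_k(y,h)^{-1}.
\]
This is a word of length $O(|h|+1)$ whose letters come from $\{e_k, f_k, x, y\}^{\pm 1}$, and by Proposition~\ref{prop5}~(3) it already has $k$-height $\leq 1$. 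We also record the free equality $\Phi_k(x,h+\beta)^{-1} \FreeEq (e_kf_k^{-1})^{\beta}\Phi_k(x,h)^{-1}(e_kf_k^{-1})^{-\beta}$, obtained directly from the definition.

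I would then split into three cases. \textbf{Case A:} $i \geq 2$. Both $\Phi_k(x,\cdot)$ and $\Phi_k(y,\cdot)$ are height-independent and of length at most $2$, so $\Phi_k(s,h)$ has bounded length. A null $\mc{P}$-scheme using the relators $[e_k,x], [e_k,y] \in \mc{C}$ (to slide $e_k^{\pm 1}$'s past $x$ and $y$) followed by one application of $s$ itself produces a null-sequence of area $\leq |\alpha|+|\beta|+1 \leq 3$ and heights $\leq 2$. \textbf{Case B:} $i=1$, $j \geq 3$. Since $y \in \mc{X}_j$, it commutes modulo $\mc{C}$ with every letter in $\mc{X}_1 \cup \mc{X}_2$, hence with every letter of the middle word $e_k^{-\beta}\Phi_k(x,h+\beta)^{-1}e_k^{\beta}$. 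Commuting $y$ rightwards to meet $y^{-1}$ costs $O(|h|)$ applications; what remains freely reduces, using $e_k^{-1}(e_kf_k^{-1}) \FreeEq f_k^{-1}$ and $(e_kf_k^{-1})^{-1}e_k \FreeEq f_k$, either to the empty word (if $\beta=0$) or to a commutator $[\Phi_k(x,h), f_k^{\mp 1}]$ (if $\beta = \pm 1$), which is null-homotoped by commuting $f_k^{\pm 1}$ past $\Phi_k(x,h)^{-1}$ via $[f_k,e_k], [f_k,x] \in \mc{C}$ in a further $O(|h|)$ steps.

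\textbf{Case C:} $i=1$, $j=2$, which is the heart of the argument and the reason the lemma requires more care than Lemma~\ref{lem18}. Here all four distinguished letters lie in $\mc{X}_1 \cup \mc{X}_2$, and critically $y \in \mc{X}_2$ does \emph{not} commute with $f_k \in \mc{X}_2$ via any relator in $\mc{C}$, so the approach of Case B fails. I would proceed by induction on $|h|$. The base case $|h| \leq 1$ requires null-homotoping a word of bounded length, which is possible with constant area and height $\leq 2$ using $[e_k,y], [x, f_k], [e_k, f_k] \in \mc{C}$ together with one application of the relator $s=[x,y]$. For the inductive step I would establish that $\Phi_k(s,h)$ can be converted to $(e_kf_k^{-1})\,\Phi_k(s,h-1)\,(f_ke_k^{-1})$ (for $h \geq 1$, with an analogous statement for $h \leq -1$) via a $\mc{P}$-scheme of area $O(|h|)$ and heights $\leq 2$. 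The reduction is carried out by sliding the outermost $(e_kf_k^{-1})^{\pm 1}$ past the letters of the inner word using the commutators $[x, f_k]$ and $[e_k, y]$ in $\mc{C}$, mediated where necessary by the relator $s$ itself (which provides the ``missing'' swap of $x$ and $y$). Telescoping yields a null $\mc{P}$-sequence of area $\sum_{l=0}^{|h|} O(l) \leq 7(|h|+1)^2$, with heights bounded by $2$ throughout because the telescoping sandwich $(e_kf_k^{-1})^{\pm 1}(\,\cdot\,)(f_ke_k^{-1})^{\pm 1}$ oscillates $\theta_k$ by at most $\pm 1$ at each stage.

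\textbf{Main obstacle.} Case C is the only genuine difficulty. The obstruction is that the pair $(y, f_k)$ lies entirely inside the factor $\Gamma_2$ and so has no commutator relator available in $\mc{C}$. Surmounting this requires simultaneously (i) using the given relator $s$ itself as a ``surrogate commutator'' to move $x$ past $y$ whenever a cancellation is needed, and (ii) telescoping the height by conjugations by $e_kf_k^{-1}$ rather than by $e_k$ (as was done for Lemma~\ref{lem17}), ensuring the intermediate words never leave the strip $\{|\theta_k|\leq 2\}$. Verifying that these two devices combine to give the quadratic area bound $7(|h|+1)^2$ with heights $\leq 2$ is the technical content of the proof.
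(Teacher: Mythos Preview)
Your Cases A and B are fine and match the paper's treatment (its Cases~1--4), but Case~C has a genuine gap, precisely in the sub-case $\theta_k(x)=\theta_k(y)=0$ (the paper's Case~5; the remaining sub-cases where $x=e_k$ or $y=f_k$ turn out to be freely trivial). Your inductive step asks for a $\mc{P}$-sequence converting
\[
(e_kf_k^{-1})^h x (e_kf_k^{-1})^{-h}\, y\, (e_kf_k^{-1})^h x^{-1} (e_kf_k^{-1})^{-h}\, y^{-1}
\]
to its conjugate by $e_kf_k^{-1}$ at height $\leq 2$. Carrying this out requires moving a factor $e_kf_k^{-1}$ (or its inverse) past $y^{\pm1}$. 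The relator $[e_k,y]\in\mc{C}$ handles the $e_k$-part, but what remains is $f_k y f_k^{-1}\to y$, i.e.\ the relation $[f_k,y]$. Both $f_k$ and $y$ lie in $\mc{X}_2$, so this commutator is \emph{not} in $\mc{C}$ (nor need it hold in $\Gamma_2$), and your proposed surrogate $s=[x,y]$ is of no help here: it relates $x\in\mc{X}_1$ to $y$, not $f_k\in\mc{X}_2$ to $y$. Without a way to commute $f_k$ past $y$, the telescoping stalls.

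The paper's resolution is different and explains the hypothesis $n\geq 3$: it introduces the letter $g_k:=a_k^{(3)}\in\mc{X}_3$, which commutes via $\mc{C}$ with each of $e_k,f_k,x,y$. One then rewrites the offending powers $(e_kf_k^{-1})^{\pm h}$ through combinations of $(g_ke_k^{-1})^{\pm h}$ and $(g_kf_k^{-1})^{\pm h}$, routing around the $(f_k,y)$ obstruction entirely. This is done directly by an explicit eleven-line $\mc{P}$-scheme with area $6|h|^2+14|h|+1$ and heights $\leq 2$, not by induction on $|h|$. So the missing idea in your Case~C is the use of the third factor; once you bring in $g_k$, the argument goes through, but the inductive telescoping you describe does not.
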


\begin{proof}
  By Lemma~\ref{lem15} and Proposition~\ref{prop5}~(4), we may assume that $s \in \mc{C}$.  We consider 6 disjoint cases.  For each case we give a table presenting a null $\mc{P}$-sequence for the word $\Phi_k(s, h)$.  Say $s \equiv [x, y]$ where $x \in \mc{X}_i$ and $y \in \mc{X}_j$ and $1 \leq i < j \leq n$.

  \medskip

  \noindent \begin{minipage}{15cm}
  \textbf{Case 1.}  $i, j \geq 2$.

  \medskip

  \begin{tabular}{cccc}
    \hline
    \\[-10pt]
    $j$ & $\sigma_j$ & $\Area$ & $\height_i$ \\
    \\[-10pt]
    \hline
    \\[-10pt]
    1 & $x {e_k}^{-\theta_k(x)} y {e_k}^{-\theta_k(y)} {e_k}^{\theta_k(x)} x^{-1} {e_k}^{\theta_k(y)} y^{-1}$ & $1$ & $2$ \\
    2 & $x {e_k}^{-\theta_k(x)} {e_k}^{\theta_k(x)} y {e_k}^{-\theta_k(y)} x^{-1} {e_k}^{\theta_k(y)} y^{-1}$ & $0$ & $2$ \\
    3 & $x y {e_k}^{-\theta_k(y)} x^{-1} {e_k}^{\theta_k(y)} y^{-1}$ & $1$ & $2$ \\
    4 & $x y {e_k}^{-\theta_k(y)} {e_k}^{\theta_k(y)} x^{-1} y^{-1}$ & $0$ & $2$ \\
    5 & $x y x^{-1} y^{-1}$ & $1$ & $2$ \\
    \\[-10pt]
    \hline
    \\[-10pt]
    & Total & $3$ & $2$ \\
    \\[-10pt]
    \hline
  \end{tabular}
  \end{minipage}

  \bigskip
  \bigskip

  \noindent \begin{minipage}{15cm}
  \textbf{Case 2.} $i=1, 2 \leq j \leq n$.  $\theta_k(x) = 1$.

  \medskip

  \begin{tabular}{cccc}
    \hline
    \\[-10pt]
    $j$ & $\sigma_j$ & $\Area$ & $\height_i$ \\
    \\[-10pt]
    \hline
    \\[-10pt]
    1 & $(e_k {f_k}^{-1})^h e_k {f_k}^{-1} (e_k {f_k}^{-1})^{-h-1} y {e_k}^{-\theta_k(y)} \ldots$ & & \\
      & $\quad \ldots (e_k {f_k}^{-1})^{h + 1 + \theta_k(y)} f_k {e_k}^{-1} (e_k {f_k}^{-1})^{-h-\theta_k(y)} {e_k}^{\theta_k(y)} y^{-1}$ & $0$ & $1$ \\
    \\[-10pt]
    \hline
    \\[-10pt]
    & Total & $0$ & $1$ \\
    \\[-10pt]
    \hline
  \end{tabular}
  \end{minipage}

  \bigskip
  \bigskip

  \noindent \begin{minipage}{15cm}
  \textbf{Case 3.} $i=1, 3 \leq j \leq n$.  $\theta_k(x) = 0, \theta_k(y) = 0$.

  \medskip

  \begin{tabular}{ccccc}
    \hline
    \\[-10pt]
    $j$ & $\sigma_j$ & $\Area$ & $\begin{gathered} \height_i \\[-6pt] (i \neq k) \end{gathered}$ & $\height_k$ \\
    \\[-10pt]
    \hline
    \\[-10pt]
    1 & $(e_k {f_k}^{-1})^h x (e_k {f_k}^{-1})^{-h} y (e_k {f_k}^{-1})^h x^{-1} (e_k {f_k}^{-1})^{-h} y^{-1}$ & $2|h|$ & $2$ & $1$ \\
    2 & $(e_k {f_k}^{-1})^h x y (e_k {f_k}^{-1})^{-h} (e_k {f_k}^{-1})^h x^{-1} (e_k {f_k}^{-1})^{-h} y^{-1}$ & $0$ & $2$ & $1$ \\
    3 & $(e_k {f_k}^{-1})^h x y x^{-1} (e_k {f_k}^{-1})^{-h} y^{-1}$ & $2|h|$ & $2$ & $1$ \\
    4 & $(e_k {f_k}^{-1})^h x y x^{-1} y^{-1} (e_k {f_k}^{-1})^{-h}$ & $1$ & $2$ & $1$\\
    5 & $(e_k {f_k}^{-1})^h (e_k {f_k}^{-1})^{-h}$ & $0$ & $0$ & $1$ \\
    \\[-10pt]
    \hline
    \\[-10pt]
    & Total & $4|h| + 1$ & $2$ & $1$ \\
    \\[-10pt]
    \hline
  \end{tabular}
  \end{minipage}

  \bigskip
  \bigskip

  \noindent \begin{minipage}{15cm}
  \textbf{Case 4.} $i=1, 3 \leq j \leq n$.  $\theta_k(x) = 0, \theta_k(y) = 1$.

  \medskip

  \begin{tabular}{ccccc}
    \hline
    \\[-10pt]
    $j$ & $\sigma_j$ & $\Area$ & $\begin{gathered} \height_i \\[-6pt] (i \neq k) \end{gathered}$ & $\height_k$ \\
    \\[-10pt]
    \hline
    \\[-10pt]
    1 & $(e_k {f_k}^{-1})^h x (e_k {f_k}^{-1})^{-h} y {e_k}^{-1} (e_k {f_k}^{-1})^{h+1} x^{-1} (e_k {f_k}^{-1})^{-h-1} e_k y^{-1}$ & $3|h|$ & $1$ & $2$ \\
    2 & $(e_k {f_k}^{-1})^h x y {e_k}^{-1} (e_k {f_k}^{-1})^{-h} (e_k {f_k}^{-1})^{h+1} x^{-1} (e_k {f_k}^{-1})^{-h-1} e_k y^{-1}$ & $0$ & $1$ & $1$ \\
    3 & $(e_k {f_k}^{-1})^h x y {e_k}^{-1} e_k {f_k}^{-1} x^{-1} (e_k {f_k}^{-1})^{-h-1} e_k y^{-1}$ & $3|h|$ & $1$ & $2$ \\
    4 & $(e_k {f_k}^{-1})^h x y {e_k}^{-1} e_k {f_k}^{-1} x^{-1} (e_k {f_k}^{-1})^{-1} e_k y^{-1} (e_k {f_k}^{-1})^{-h}$ & $0$ & $1$ & $1$\\
    5 & $(e_k {f_k}^{-1})^h x y {f_k}^{-1} x^{-1} f_k y^{-1} (e_k {f_k}^{-1})^{-h}$ & $2$ & $1$ & $1$ \\
    6 & $(e_k {f_k}^{-1})^h (e_k {f_k}^{-1})^{-h}$ & $0$ & $0$ & $1$ \\
    \\[-10pt]
    \hline
    \\[-10pt]
    & Total & $6|h| + 2$ & $1$ & $2$ \\
    \\[-10pt]
    \hline
  \end{tabular}
  \end{minipage}

  \bigskip

  \noindent \begin{minipage}{15cm}
  \textbf{Case 5.} $i=1, j=2$.  $\theta_k(x) = 0, \theta_k(y) = 0$.

  \medskip

  As shorthand, write $g_k$ for the letter $a_k^{(3)} \in \mc{X}_3$.

  \medskip

  \begin{tabular}{ccccc}
    \hline
    \\[-10pt]
    $j$ & $\sigma_j$ & $\Area$ & $\begin{gathered} \height_i \\[-6pt] (i \neq k) \end{gathered}$ & $\height_k$ \\
    \\[-10pt]
    \hline
    \\[-10pt]
    1 & $(e_k {f_k}^{-1})^h x (e_k {f_k}^{-1})^{-h} y (e_k {f_k}^{-1})^h x^{-1} (e_k {f_k}^{-1})^{-h} y^{-1}$ & $0$ & $2$ & $1$ \\
    2 & $(e_k {f_k}^{-1})^h x (e_k {f_k}^{-1})^{-h} (g_k {e_k}^{-1})^{-h} (g_k {e_k}^{-1})^{h} y \ldots$ &&& \\
    & $ \ldots (e_k {f_k}^{-1})^h x^{-1} (e_k {f_k}^{-1})^{-h} y^{-1}$ & $2|h|$ & $2$ & $1$ \\
    3& $(e_k {f_k}^{-1})^h x (e_k {f_k}^{-1})^{-h} (g_k {e_k}^{-1})^{-h} y  \ldots$ &&& \\
    & $ \ldots (g_k {e_k}^{-1})^{h} (e_k {f_k}^{-1})^h x^{-1} (e_k {f_k}^{-1})^{-h} y^{-1}$ & $\frac{3}{2}|h|(|h| + 1)$ & $2$ & $2$ \\
    4 & $(e_k {f_k}^{-1})^h x (g_k {f_k}^{-1})^{-h} y  \ldots$ &&& \\
    & $ \ldots (g_k {e_k}^{-1})^{h} (e_k {f_k}^{-1})^h x^{-1} (e_k {f_k}^{-1})^{-h} y^{-1}$ & $\frac{3}{2}|h|(|h| + 1)$ & $2$ & $2$ \\
    5 & $(e_k {f_k}^{-1})^h x (g_k {f_k}^{-1})^{-h} y (g_k {f_k}^{-1})^{h} x^{-1} (e_k {f_k}^{-1})^{-h} y^{-1}$ & $2|h|$ & $2$ & $1$ \\
    6& $(e_k {f_k}^{-1})^h (g_k {f_k}^{-1})^{-h} x y (g_k {f_k}^{-1})^{h} x^{-1} (e_k {f_k}^{-1})^{-h} y^{-1}$ & $2|h|$ & $2$ & $1$ \\
    7& $(e_k {f_k}^{-1})^h (g_k {f_k}^{-1})^{-h} x y x^{-1} (g_k {f_k}^{-1})^{h} (e_k {f_k}^{-1})^{-h} y^{-1}$ & $\frac{3}{2}|h|(|h| + 1)|$ & $2$ & $2$ \\
    8& $(e_k {g_k}^{-1})^h x y x^{-1} (g_k {f_k}^{-1})^{h} (e_k {f_k}^{-1})^{-h} y^{-1}$ & $\frac{3}{2}|h|(|h| + 1)|$ & $2$ & $2$ \\
    9& $(e_k {g_k}^{-1})^h x y x^{-1} (g_k {e_k}^{-1})^{h} y^{-1}$ & $2|h|$ & $2$ & $1$ \\
    10& $(e_k {g_k}^{-1})^h x y x^{-1} y^{-1} (g_k {e_k}^{-1})^{h}$ & $1$ & $2$ & $1$ \\
    11& $(e_k {g_k}^{-1})^h (g_k {e_k}^{-1})^{h}$ & $0$ & $0$ & $1$ \\
    \\[-10pt]
    \hline
    \\[-10pt]
    & Total & $6|h|^2 + 14|h| + 1$ & $2$ & $2$ \\
    \\[-10pt]
    \hline
  \end{tabular}
  \end{minipage}

  \bigskip
  \bigskip

  \noindent \begin{minipage}{15cm}
  \textbf{Case 6.} $i=1, j=2$.  $\theta_k(x) = 0, \theta_k(y) = 1$.

  \medskip

  \begin{tabular}{ccccc}
    \hline
    \\[-10pt]
    $j$ & $\sigma_j$ & $\Area$ & $\begin{gathered} \height_i \\[-6pt] (i \neq k) \end{gathered}$ & $\height_k$ \\
    \\[-10pt]
    \hline
    \\[-10pt]
    1 & $(e_k {f_k}^{-1})^h x (e_k {f_k}^{-1})^{-h} f_k {e_k}^{-1} \ldots$ & & & \\
     & $\ldots (e_k {f_k}^{-1})^{h+1} x^{-1} (e_k {f_k}^{-1})^{-h-1} e_k {f_k}^{-1}$ & $0$ & $1$ & $1$ \\
    \\[-10pt]
    \hline
    \\[-10pt]
    & Total & $0$ & $1$ & $1$ \\
    \\[-10pt]
    \hline
  \end{tabular}
  \end{minipage}

  \medskip

\end{proof}

\begin{prop} \label{prop7}
  Suppose $n \geq 3$.  Then there exist constants $C_A' \in \N$ and $C_H' \in \N$ so that for any $k \in \{1, \ldots, r\}$ and any $\mc{P}$-expression $\mc{E}$ for a word $w \in \fm{X}$ there exists a $\mc{P}$-expression $\ol{\mc{E}}$ for $w$ with \begin{align*}
    \Area(\ol{\mc{E}}) &\leq C_A' \Area(\mc{E})(\height_k(\mc{E}) + 1)^2 + 2|w|(\height_k(w) + 1)^2 \\
    \height_i(\ol{\mc{E}}) &\leq \begin{cases}
      \max\{ \height_i(w) + 1, C_H' \} \quad &\text{if $i = k$,} \\
      \max\{ \height_i(w) + 1, C_H',  \height_i(\mc{E}) \} \quad &\text{if $i \neq k$.}
    \end{cases} \end{align*}
\end{prop}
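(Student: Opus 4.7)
The plan is to pull the expression $\mc{E}$ down in the $k$-th coordinate direction by applying $\Phi_k$, mimicking at the level of expressions the pull-down of words performed in Proposition~\ref{prop6}. Write $\mc{E} = (x_j, r_j)_{j=1}^m$, so $\partial \mc{E} \FreeEq w$. Since every relator $r_j \in (\mc{R} \cup \mc{C})^{\pm 1}$ is null-homotopic, $\theta_k(r_j) = 0 = \theta_k(x_j r_j x_j^{-1})$ for each $j$. Iterating Proposition~\ref{prop5}(4)--(6) then gives
$$\Phi_k(w, 0) \FreeEq \prod_{j=1}^m \Phi_k(x_j, 0) \, \Phi_k(r_j, h_j) \, \Phi_k(x_j, 0)^{-1}, \qquad h_j := \theta_k(x_j),$$
with $|h_j| \leq \height_k(x_j) \leq \height_k(\mc{E})$.

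Each pulled-down relator is filled cheaply: Lemma~\ref{lem18} (for $r_j \in \mc{R}^{\pm 1}$) or Lemma~\ref{lem19} (for $r_j \in \mc{C}^{\pm 1}$) yields a null $\mc{P}$-sequence for $\Phi_k(r_j, h_j)$ of area at most $O((|h_j|+1)^2) \leq O((\height_k(\mc{E})+1)^2)$ and with all heights uniformly bounded by a universal constant $C^*$. Passing through Lemma~\ref{lem13} converts each such sequence to a $\mc{P}$-expression $\mc{G}_j$ for $\Phi_k(r_j, h_j)$ with the same bounds. Prepending $\Phi_k(x_j, 0)$ to every outer conjugator of $\mc{G}_j$ and concatenating gives a $\mc{P}$-expression $\mc{E}_1 := \prod_{j=1}^m \bigl(\Phi_k(x_j, 0) \, \mc{G}_j\bigr)$ for $\Phi_k(w, 0)$. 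To return to $w$, apply Lemma~\ref{lem17} with $h = 0$ (using $\theta_k(w) = 0$) to produce a $\mc{P}$-sequence from $\Phi_k(w, 0)$ to $w$ of area at most $2|w|(\height_k(w)+1)^2$; reverse it and pass through Lemma~\ref{lem13} to obtain a $\mc{P}$-expression $\mc{F}$ for $w \, \Phi_k(w, 0)^{-1}$ with the same area and the height bounds supplied by Lemma~\ref{lem17}. The concatenation $\ol{\mc{E}} := \mc{F} \, \mc{E}_1$ is then a $\mc{P}$-expression for $w$, and summing areas yields the claimed area bound after selecting $C_A'$ to absorb the constants from Lemmas~\ref{lem18} and~\ref{lem19}.

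The main obstacle is controlling the heights of $\ol{\mc{E}}$. Each outer conjugator of $\mc{E}_1$ is a concatenation $\Phi_k(x_j, 0) \, y_l^{(j)}$, where $y_l^{(j)}$ is an outer conjugator of $\mc{G}_j$, and the sharp inequality
$$\height_i(uv) \leq \max\bigl\{\height_i(u), \; |\theta_i(u)| + \height_i(v)\bigr\}$$
is used throughout. For $i = k$ the crucial cancellation $\theta_k(\Phi_k(x_j, 0)) = 0$ (immediate from $\Phi_k(x_j, 0) = x_j e_k^{-\theta_k(x_j)}$ in $D$ by Proposition~\ref{prop5}(1)) kills the potentially large $\theta_k$-shift, so $\height_k(\Phi_k(x_j, 0) \, y_l^{(j)}) \leq C^*$; combining with $\height_k(\mc{F}) \leq \height_k(w)+1$ yields the bound $\max\{\height_k(w)+1, C_H'\}$. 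For $i \neq k$ one has $\theta_i(\Phi_k(x_j, 0)) = \theta_i(x_j)$ with $|\theta_i(x_j)| \leq \height_i(\mc{E})$; combined with $\height_i(y_l^{(j)}) \leq C^*$ and $\height_i(\mc{F}) \leq \height_i(w) + 1$ via the same inequality, this delivers the three-term maximum after choosing $C_H'$ large enough to dominate $C^*$ and the other universal constants arising in the estimates.
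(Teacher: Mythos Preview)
Your proof is essentially identical to the paper's: both apply $\Phi_k$ to pull the expression down in the $k$-th direction, invoke Lemmas~\ref{lem18}/\ref{lem19} (via Lemma~\ref{lem13}) to fill each $\Phi_k(r_j,h_j)$, prepend $\Phi_k(x_j,0)$ to the outer conjugators to obtain an expression for $\Phi_k(w,0)$, and then use Lemma~\ref{lem17} (reversed, via Lemma~\ref{lem13}) to bridge from $w$ to $\Phi_k(w,0)$; the area and height bookkeeping is the same.

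One small caution on the $i\neq k$ height estimate: your inequality gives
\[
\height_i\bigl(\Phi_k(x_j,0)\,y_l^{(j)}\bigr)\;\leq\;\max\{\height_i(\Phi_k(x_j,0)),\ |\theta_i(x_j)|+C^*\}\;\leq\;\height_i(\mc{E})+C^*,
\]
which cannot be absorbed into $\max\{\height_i(w)+1,\,C_H',\,\height_i(\mc{E})\}$ simply by enlarging $C_H'$ when $\height_i(\mc{E})$ is large. The paper's proof has the same slip (it asserts $\theta_i(\Phi_k(x_j,0))=0$, which is only true for $i=k$); the honest bound is $\height_i(\mc{E})+C_H'$, and this additive constant is harmless for the induction in Proposition~\ref{prop8}.
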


\begin{proof}
  Say $\mc{E} = (x_j, r_j)_{j=1}^m$.  Let $C_A$ and $C_H$ be the constants given by Lemma~\ref{lem18} and define $C_A' = \max\{C_A, 7\}$ and $C_H' = \max \{ C_H, 2 \}$.  Then, by Lemmas~\ref{lem18}, \ref{lem19} and \ref{lem13}, for each $j$ there exists a $\mc{P}$-expression $\mc{E}_j$ for $\Phi_k(r_j, \theta_k(x_j))$ with $\Area(\mc{E}_j) \leq C_A' (|\theta_k(x_j)| + 1)^2 \leq C_A' (\height_k(\mc{E}) + 1)^2$ and $\height_i(\mc{E}_j) \leq C_H'$ for all $i$.  Say $\mc{E}_j = (x_{jl}, r_{jl})_{l=1}^{m_j}$.  For each $j$, define $\mc{E}'_j$ to be the $\mc{P}$-expression  $(\Phi_k(x_j, 0)x_{jl}, r_{jl})_{l=1}^{m_j}$ and define $\mc{E}'$ to be the $\mc{P}$-expression $\mc{E}'_1 \ldots \mc{E}'_m$.  Then $\Area(\mc{E}') = \sum_{j=1}^m m_j \leq C_A' \Area(\mc{E}) (\height_k(\mc{E}) + 1)^2$ and \begin{align*}
    \height_i(\mc{E}') &\leq \max_{\substack{1 \leq j \leq m \\ 1 \leq l \leq m_j}} \{ \height_i(\Phi_k(x_j, 0) x_{jl}) \} \\
    &\leq \max_{\substack{1 \leq j \leq m \\ 1 \leq l \leq m_j}} \{ \height_i(\Phi_k(x_j, 0)), \height_i(x_{jl}) \} \\
    &\leq \max_{1 \leq j \leq m} \{ \height_i(\Phi_k(x_j, 0)), C_H' \} \\
    &\leq \begin{cases}
      \max \{1, C_H'\} \quad &i=k, \\
      \max \{\height_i(x_j), C_H' \} &i\neq k
    \end{cases} \\
    &\leq \begin{cases}
      C_H' \quad &i = k, \\
      \max\{ \height_i(\mc{E}),  C_H' \} &i \neq k,
    \end{cases}
  \end{align*} where we have made use of Proposition~\ref{prop5} (3) and the fact that \mbox{$\theta_i(\Phi_k(x_j, 0)) = 0$.}  Furthermore \begin{align*}
    \partial \mc{E}' &\equiv \prod_{j=1}^m \partial \mc{E}_j' \\
    &\equiv \prod_{j=1}^m \prod_{l=1}^{m_j} \Phi_k(x_j, 0) x_{jl} r_{jl} {x_{jl}}^{-1} \Phi_k(x_j, 0)^{-1} \\
    &\FreeEq \prod_{j=1}^m \Phi_k(x_j, 0) \left( \prod_{l=1}^{m_j} x_{jl} r_{jl} {x_{jl}}^{-1} \right) \Phi_k(x_j, 0)^{-1} \\
    &\equiv \prod_{j=1}^m \Phi_k(x_j, 0) \partial \mc{E}_j \Phi_k(x_j, 0)^{-1} \\
    &\FreeEq \prod_{j=1}^m \Phi_k(x_j, 0) \Phi_k(r_j, \theta_k(x_j)) \Phi_k(x_j, 0)^{-1} \\
    &\equiv \prod_{j=1}^m \Phi_k(x_j r_j, 0) \Phi_k({x_j}^{-1}, \theta_k(x_j)) \\
    &\equiv \prod_{j=1}^m \Phi_k(x_j r_j, 0) \Phi_k({x_j}^{-1}, \theta_k(x_jr_j)) \\
    &\equiv \prod_{j=1}^m \Phi_k(x_j r_j {x_j}^{-1}, 0) \\
    &\equiv \Phi_k \left( \prod_{j=1}^m x_j r_j {x_j}^{-1}, 0 \right) \\
    &\equiv \Phi_k \left( \partial \mc{E}, 0 \right) \\
    &\FreeEq \Phi_k(w, 0),
  \end{align*} where we have made use of Proposition~\ref{prop5} (4), (5) and (6).

  Since $w$ is null-homotopic, $\theta_k(w) = 0$ and hence, by Proposition~\ref{prop5} (1), $\Phi_k(w, 0) = w$ in $D$.  By Lemma~\ref{lem17}, there exists a $\mc{P}$-sequence $\Sigma = (\sigma_j)_{j=0}^m$ converting $\Phi_k(w, 0)$ to $w$ with $\Area(\Sigma) \leq 2|w| (\height_k(w) + 1)^2$ and $\height_i(\Sigma) \leq \height_i(w) + 1$ for each $i$. Let $\Sigma^{-1}$ be the $\mc{P}$-sequence $\sigma_m, \sigma_{m-1}, \ldots, \sigma_0$ converting $w$ to $\Phi_k(w, 0)$.  Then $\Area(\Sigma^{-1}) = \Area(\Sigma)$ and $\height_i(\Sigma^{-1}) = \height_i(\Sigma)$ for each $i$.  By Lemma~\ref{lem13}, there exists a $\mc{P}$-expression $\mc{E}''$ for $w \left( \Phi_k(w, 0)\right) ^{-1}$ with $\Area(\mc{E}'') = \Area(\Sigma^{-1})$ and $\height_i(\mc{E}'') \leq \height_i(\Sigma^{-1})$ for each $i$.  Define $\ol{\mc{E}} = \mc{E}'' \mc{E}' $.  Then $\ol{\mc{E}}$ is a $\mc{P}$-expression for $w$ with the required bounds on its area and heights.
\end{proof}

\begin{prop} \label{prop8}
  Suppose $n \geq 3$.  Then there exist constants $C_A'' \in \N$ and $C_H'' \in \N$ so that given any $\mc{P}$-expression $\mc{E}$ for a word $w \in \fm{X}$ there exists a $\mc{P}$-expression $\ol{\mc{E}}$ for $w$ with \begin{align*}
    \Area(\ol{\mc{E}}) &\leq C_A'' (\Area(\mc{E}) + |w|) \prod_{j=1}^r \zeta_j^2, \\
    \height_i(\ol{\mc{E}}) &\leq \max \{ \height_i(w) + 1, C_H'' \}, \\
  \end{align*} where $\zeta_j = \max\{ \height_j(w) + 1, \height_j(\mc{E}) + 1, C_H'' \}$.
\end{prop}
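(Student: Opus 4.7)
The plan is to apply Proposition~\ref{prop7} once in each of the $r$ directions. Starting from $\mc{E}_0 := \mc{E}$, for $j = 1, \ldots, r$ I would let $\mc{E}_j$ be the $\mc{P}$-expression for $w$ obtained by applying Proposition~\ref{prop7} to $\mc{E}_{j-1}$ with $k = j$, and then take $\overline{\mc{E}} := \mc{E}_r$. The heuristic is that pulling down in direction $j$ flattens the $j$-th height while leaving the other heights essentially unchanged, so that after all $r$ pulls every height is small.

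First I would prove by induction on $j$ that
\[
\height_i(\mc{E}_j) \leq
\begin{cases}
\max\{\height_i(w)+1,\, C_H'\} & \text{if } i \leq j, \\
\max\{\height_i(w)+1,\, C_H',\, \height_i(\mc{E})\} & \text{if } i > j,
\end{cases}
\]
which follows immediately from the two cases of the height bound in Proposition~\ref{prop7}: when the $j$-th step is applied, the height in direction $j$ drops to the first form, and the heights in directions $i \neq j$ receive a bound of $\max\{\height_i(w)+1,\, C_H',\, \height_i(\mc{E}_{j-1})\}$, which by induction falls into the appropriate case. Setting $C_H'' := C_H'$ then yields the claimed bound on $\height_i(\overline{\mc{E}}) = \height_i(\mc{E}_r)$.

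Next I would bound the area. Fix $C_H'' := C_H' + 1$ and define $\zeta_j$ as in the statement. The height induction above gives $\height_j(\mc{E}_{j-1}) + 1 \leq 2\zeta_j$ for every $j$, so Proposition~\ref{prop7} yields
\[
\Area(\mc{E}_j) \leq C_A'\,\Area(\mc{E}_{j-1})\,(2\zeta_j)^2 + 2|w|\zeta_j^2 \leq K\bigl(\Area(\mc{E}_{j-1}) + |w|\bigr)\zeta_j^2
\]
for $K := \max\{4C_A',\, 2\}$. Setting $B_j := \Area(\mc{E}_j)/\prod_{i \leq j} \zeta_i^2$ (with $B_0 = \Area(\mc{E})$) and using $\zeta_i \geq 1$, the recursion becomes $B_j \leq K B_{j-1} + K|w|$. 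Unrolling $r$ times gives $B_r \leq C_A''\,(\Area(\mc{E}) + |w|)$ for some $C_A''$ depending only on $K$ and $r$; multiplying back through by $\prod_{j=1}^r \zeta_j^2$ gives the required area bound.

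The proof is essentially a bookkeeping exercise on top of Proposition~\ref{prop7}; the only subtlety is making sure that the heights in the ``as yet unflattened'' directions do not deteriorate at intermediate stages, which is precisely why Proposition~\ref{prop7} is stated with $\height_i(\overline{\mc{E}}) \leq \max\{\height_i(w)+1,\, C_H',\, \height_i(\mc{E})\}$ (rather than something involving $\height_i$ of auxiliary terms) for $i \neq k$.
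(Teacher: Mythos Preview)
Your approach is exactly the paper's: iterate Proposition~\ref{prop7} once in each direction $j=1,\ldots,r$, track the heights by the same two-case induction, and derive the area bound by unrolling the resulting recursion (the paper writes out an explicit inductive formula rather than your rescaled $B_j$, but the content is identical). The only blemish is that you set $C_H'' := C_H'$ in the height paragraph and then $C_H'' := C_H'+1$ in the area paragraph; pick one (either works) and the argument is clean.
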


\begin{proof}
  Let $C_A'$ and $C_H'$ be the constants given by Proposition~\ref{prop7}.  We claim that, for each $l \in \{0, 1, \ldots, r\}$, there exists a $\mc{P}$-expression $\mc{E}_l$ for $w$ with \begin{align*}
    \Area(\mc{E}_l) &\leq (C_A')^{l-1} (C_A' \Area(\mc{E}) + 2l|w|) \prod_{j=1}^l \zeta_j^2 \\
    \height_i(\mc{E}_l) &\leq \begin{cases}
      \max \{\height_i(w) + 1, C_H' \} \quad &\text{if $1 \leq i \leq l$,} \\
      \max \{\height_i(w) + 1, C_H', \height_i(\mc{E})+ 1 \} \quad &\text{if $l+1 \leq i \leq r$}.
    \end{cases}
  \end{align*}

  The claim is proved by induction on $l$.  Set $\mc{E}_0 = \mc{E}$ and, given that $\mc{E}_{l-1}$ has been defined, define $\mc{E}_l$ be the $\mc{P}$-expression given by applying Proposition~\ref{prop7} to $\mc{E}_{l-1}$ with $k=l$.  Then $\mc{E}_l$ certainly satisfies the required bounds on its heights and \begin{align*}
    \Area(\mc{E}_l) &\leq C_A' \Area(\mc{E}_{l-1}) (\height_l(\mc{E}_{l-1}) + 1)^2 + 2|w|(\height_l(w) + 1)^2 \\
    &\leq C_A' \Area(\mc{E}_{l-1}) \zeta_l^2 + 2|w| \zeta_l^2 \\
    &\leq (C_A')^{l-1} (C_A' \Area(\mc{E}) + 2 (l-1) |w| ) \prod_{j=1}^l \zeta_j^2 + 2|w| \zeta_l^2 \\
    &\leq (C_A')^{l-1} (C_A' \Area(\mc{E}) + 2 (l-1) |w| ) \prod_{j=1}^l \zeta_j^2 + 2 (C_A')^l |w| \prod_{j=1}^l \zeta_j^2 \\
    &\leq (C_A')^{l-1} (C_A' \Area(\mc{E}) + 2 l |w| ) \prod_{j=1}^l \zeta_j^2
  \end{align*} as required.

  The proposition now follows by setting $C_A'' = (C_A')^{l-1} \max\{ C_A', 2r \}$ and $C_H'' = C_H'$ and taking $\ol{\mc{E}}$ to be $\mc{E}_r$.
\end{proof}

\begin{prop} \label{prop9}
  For each $i = 1, \ldots, n$, let $(\alpha_i, \rho_i)$ be an area-radius pair for some finite presentation of $\Gamma_i$, and define $\alpha(l) = \max ( \{l^2\} \cup \{ \alpha_i(l) \, : \, 1 \leq i \leq n\})$ and $\rho(l) = \max ( \{l\} \cup \{ \rho_i(l) \, : \, 1 \leq i \leq n\})$.  Then there exist functions $\ol{\alpha}, \ol{\rho} : \N \rightarrow \N$ with $\ol{\alpha} \simeq \alpha$ and $\ol{\rho} \simeq \rho$ such that the following property holds:  For any null-homotopic word $w \in \fm{X}$, there exists a $\mc{P}$-expression $\mc{E}$ for $w$ with $\Area(\mc{E}) \leq \ol{\alpha}(|w|)$ and $\height_j(\mc{E}) \leq \ol{\rho}(|w|)$ for each $j$.
\end{prop}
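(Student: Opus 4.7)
The plan is to exploit the direct product structure by first sorting $w$ into a product $w_1 \cdots w_n$ with $w_i$ a word on letters of $\mc{X}_i^{\pm 1}$, and then filling each $w_i$ using the area-radius pair of the corresponding factor $\Gamma_i$. By Proposition~\ref{prop11}, after replacing each $(\alpha_i, \rho_i)$ by an $\simeq$-equivalent pair we may assume that $(\alpha_i, \rho_i)$ is an area-radius pair for the specific presentation $\mc{P}_i = \langle \mc{X}_i \mid \mc{R}_i \rangle$; this does not affect the asymptotic form of $\alpha$ or $\rho$ in the statement.

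Given a null-homotopic word $w \in \fm{X}$ with $l := |w|$, a bubble sort carries $w$ to $w_1 w_2 \cdots w_n$, where $w_i$ is the subword of $w$ on letters of $\mc{X}_i^{\pm 1}$. Each adjacent transposition of a letter of $\mc{X}_i^{\pm 1}$ past one of $\mc{X}_j^{\pm 1}$ (with $i \neq j$) is a single application-of-a-relator move, since $xy \cdot (yx)^{-1}$ is a cyclic conjugate of a commutator relator in $\mc{C}^{\pm 1}$. The bubble sort uses at most $l^2$ such moves, producing a $\mc{P}$-sequence $\Sigma$ whose intermediate words all have length $l$. Hence $\Area(\Sigma) \leq l^2$ and $\height_k(\Sigma) \leq l$ for every $k$, since any word of length $l$ has $|\theta_k| \leq l$ on all its prefixes. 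Applying Lemma~\ref{lem13} converts $\Sigma$ into a $\mc{P}$-expression $\mc{E}_A$ for $w(w_1 \cdots w_n)^{-1}$ with the same area and height bounds.

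Projecting the null-homotopy of $w$ onto each factor $\Gamma_i$ shows that each $w_i$ is null-homotopic in $\Gamma_i$. The area-radius pair for $\mc{P}_i$ then yields a $\mc{P}_i$-expression $\mc{E}_i$ for $w_i$ with area at most $\alpha_i(|w_i|) \leq \alpha(l)$ and all conjugating words of length at most $\rho_i(|w_i|) \leq \rho(l)$. Since $\mc{R}_i \subseteq \mc{R}$ and each conjugating word lies in $\mc{X}_i^{\pm *} \subseteq \fm{X}$, this is also a valid $\mc{P}$-expression. Moreover, for any prefix $u$ of a word in $\mc{X}_i^{\pm *}$ one has $|\theta_k(u)| \leq |u|$, so $\height_k(\mc{E}_i) \leq \rho(l)$ for every $k$. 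The concatenation $\mc{E} := \mc{E}_A \mc{E}_1 \cdots \mc{E}_n$ satisfies $\partial \mc{E} \FreeEq w (w_1 \cdots w_n)^{-1} \cdot w_1 \cdots w_n \FreeEq w$, so it is a $\mc{P}$-expression for $w$. Its area is bounded by $l^2 + n\alpha(l) \leq (n+1)\alpha(l)$, and its $k$-height is bounded by $\max\{l, \rho(l)\} = \rho(l)$. Setting $\ol{\alpha}(l) := (n+1)\alpha(l)$ and $\ol{\rho}(l) := \rho(l)$ completes the proof.

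I do not anticipate a serious obstacle: the whole argument is the straightforward combination of a direct-product bubble sort with the given single-factor fillings, together with the observation that within one factor the height of any word is bounded by its length. The role of the built-in assumptions $\alpha(l) \geq l^2$ and $\rho(l) \geq l$ in the statement is precisely to absorb, respectively, the quadratic sorting cost and the trivial height bound on the sorting phase.
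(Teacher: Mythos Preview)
Your proposal is correct and is essentially identical to the paper's proof: both sort $w$ into $w_1\cdots w_n$ using at most $|w|^2$ commutator relators from $\mc{C}$ (giving the $\mc{P}$-expression via Lemma~\ref{lem13}), then fill each $w_i$ with the factor's area-radius pair (transferred to $\mc{P}_i$ via Proposition~\ref{prop11}), bounding the heights of the factor fillings by their radii; the final functions are $\ol{\alpha}=(n+1)\alpha$ and $\ol{\rho}=\rho$ in both arguments.
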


\begin{proof}
   Proposition~\ref{prop11} shows that, for each $i = 1, \ldots, n$, there exist functions $\alpha_i', \rho_i' : \N \rightarrow \N$ with $\alpha_i' \simeq \alpha_i$ and $\rho_i' \simeq \rho_i$ so that $(\alpha_i', \rho_i')$ is an area-radius pair for $\mc{P}_i$.  Define functions $\alpha', \rho' : \N \rightarrow \N$ by $\alpha'(l) = \max ( \{l^2\} \cup \{ \alpha_i'(l) \, : \, 1 \leq i \leq n\})$ and $\rho'(l) = \max ( \{l\} \cup \{ \rho_i'(l) \, : \, 1 \leq i \leq n\})$.  By the same reasoning as in Proposition~\ref{prop15}, one sees that $\alpha' \simeq \alpha$ and $\rho' \simeq \rho$.

  Now let $w \in \fm{X}$ be a null-homotopic word.  For each $i =1, \ldots n$, define $w_i$ to be the word $p_i(w)$, where $p_i$ is the projection map $\fm{X} \rightarrow \mc{X}_i^{\pm\bast}$.  Then $w = w_1 \ldots w_n$ in $D$, each $w_i$ is null-homotopic and $|w| = |w_1 \ldots w_n|$.  Observe that there exists a $\mc{P}$-sequence $\Sigma = (\sigma_l)_{l=1}^m$ converting $w$ to $w_1 \ldots w_n$ with area at most $|w|^2$ and with each $\sigma_{i+1}$ being obtained from $\sigma_i$ by applying a relator from $\mc{C}$.  It follows that, for each $l$, $\height_j(\sigma_l) \leq |\sigma_l| = |w|$.  Thus, by Lemma~\ref{lem13}, there exists a $\mc{P}$-expression $\mc{E}'$ for $w (w_1 \ldots w_n)^{-1}$ with $\Area(\mc{E}') \leq |w|^2$ and $\height_j(\mc{E}') \leq |w|$.

  For each $i$, let $\mc{E}_i$ be a $\mc{P}_i$-expression for $w_i$ with $\Area(\mc{E}_i) \leq \alpha_i'(|w_i|) \leq \alpha_i'(|w|) \leq \alpha'(|w|)$ and $\Rad(\mc{E}_i) \leq \rho_i'(|w_i|) \leq \rho_i'(|w|) \leq \rho'(|w|)$.  Then $\height_j(\mc{E}_i) \leq \Rad(\mc{E}_i) \leq \rho'(|w|)$.  If we set $\mc{E}''$ to be the $\mc{P}$-expression $\mc{E}_1 \ldots \mc{E}_n$ then $\Area(\mc{E}'') \leq n \alpha'(|w|)$ and $\height_j(\mc{E}'') \leq \rho'(|w|)$.  Define $\mc{E} = \mc{E}' \mc{E}''$. Then $\Area(\mc{E}) \leq |w|^2 + n \alpha'(|w|) \leq (n+1) \alpha'(|w|)$ and $\height_j(\mc{E}) \leq \max \{ |w| , \rho'(|w|) \} \leq \rho'(|w|)$.  Define $\ol{\alpha}$ and $\ol{\rho}$ by $\ol{\alpha}(l) = (n+1) \alpha'(l)$ and $\ol{\rho}(l) = \rho'(l)$.
\end{proof}

\begin{proof}[Proof of Theorem~\ref{thm10}]
   Suppose that $w \in \fm{X}$ is a null-homotopic word with $w \not\equiv \emptyset$, and let $C_A''$ and $C_H''$ be the constants given by Proposition~\ref{prop8}.  Since $w$ is null-homotopic, there exists a $\mc{P}$-expression $\mc{E}$ for $w$, and so Proposition~\ref{prop8} implies that there exists a $\mc{P}$-expression $\ol{\mc{E}}$ for $w$ with $\height_i(\ol{\mc{E}}) \leq \max \{ \height_i(w) + 1, C_H'' \}$ for each $i$.  Lemma~\ref{lem12} therefore gives that $\Dep_\mc{X}(\ol{\mc{E}}, \ker \theta) \leq r \max \{\Dep_\mc{X}(w, \ker \theta) + 1, C_H'' \}$.  By Theorem~\ref{thm8}, $\ker \theta$ is finitely generated and so Proposition~\ref{prop4}~(1) implies that $\ker \theta$ is finitely presented.

  Now suppose that, for each $i$, $(\alpha_i, \rho_i)$ is an area-radius pair for some finite presentation of $\Gamma_i$.  Let $\ol{\alpha}$ and $\ol{\rho}$ be as given by Proposition~\ref{prop9}.  Then we can take $\mc{E}$ to have $\Area(\mc{E}) \leq \ol{\alpha}(|w|)$ and $\height_i(\mc{E}) \leq \ol{\rho}(|w|)$ for each $i$.  Thus, by Proposition~\ref{prop8}, \begin{align*}
    \Area(\ol{\mc{E}}) &\leq C_A''( \ol{\alpha}(|w|) + |w|)\prod_{i=1}^r \left( \max \{ \height_i(w) + 1, \ol{\rho}(|w|) + 1, C_H'' \} \right)^2 \\
    &\leq 2 C_A'' \ol{\alpha}(|w|) \left( \max\{ |w|+1, \ol{\rho}(|w|) + 1, C_H'' \} \right)^{2r} \\
    &\leq 2 C_A'' \ol{\alpha}(|w|) \left( \max\{ \ol{\rho}(|w|) + 1, C_H'' \} \right)^{2r} \\
    &\leq 2 C_A'' \ol{\alpha}(|w|) \left( 2 C_H'' \ol{\rho}(|w|) \right)^{2r} \\
    &\leq 2^{2r+1} C_A'' (C_H'')^{2r} \ol{\alpha}(|w|) \ol{\rho}(|w|)^{2r}.
  \end{align*}  Therefore, by Proposition~\ref{prop4}~(2), $\ol{\alpha} \, \ol{\rho}^{2r}$ is an isoperimetric function for $H$. Since $\alpha(l), \rho(l), \ol{\alpha}(l), \ol{\rho}(l)$ are all $\geq l$, it follows that $\ol{\alpha} \, \ol{\rho}^{2r} \simeq \alpha \rho^{2r}$ and so $\alpha \rho^{2r}$ is an isoperimetric function for $H$.
\end{proof}

\subsection{Isoperimetric functions 2}

In this section we prove Theorem~\ref{thm6}~(4), which will follow directly from Corollary~\ref{cor2} and Proposition~\ref{prop15}.  The following result generalises \cite[Theorem~2.1]{brid01} which treats the $n=3, r=1$ case.

\begin{thm} \label{thm12}
  Let $\theta$ be a homomorphism from a direct product $D = \Gamma_1 \times \ldots \times \Gamma_n$ of $n \geq 3$ finitely presented groups to a finitely generated free abelian group $A$ such that the restriction of $\theta$ to each factor $\Gamma_i$ is surjective.  Suppose that $n \geq 2 r$, where $r = \dim A \otimes \Q$.  Then $\ker \theta$ is finitely presented.

  Define $D_1 = \Gamma_1 \times \ldots \times \Gamma_{n-r}$ and $D_2 = \Gamma_{n-r+1} \times \ldots \times \Gamma_{n}$.  Let $\Delta$ be the distortion function of $\ker \theta \cap D_1$ in $D_1$ with respect to some choice of finite generating sets and let $\beta_1$ and $\beta_2$ be the Dehn functions of $D_1$ and $D_2$ respectively with respect to some choice of finite presentations.  Then there exist functions $\Delta' \simeq \Delta$, $\beta_1' \simeq \beta_1$ and $\beta_2' \simeq \beta_2$ so that the function $\beta'$ defined by $$\beta'(l) = l \beta_1'(\Delta'(l)) + \beta_2'(l)$$ is an isoperimetric function for $\ker \theta$.  Furthermore, $\Delta'$, $\beta_1'$ and $\beta_2'$ can be chosen to be increasing and superlinear.
\end{thm}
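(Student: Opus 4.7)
The plan is to derive Theorem~\ref{thm12} by combining Theorem~\ref{thm4} (twisted $1$-$2$-$3$) with a careful area-tracked adaptation of the rewriting scheme in that theorem's proof. First, $\ker\theta$ is precisely the twisted fibre product of $\theta|_{D_1}$ and $-\theta|_{D_2}$. The subgroup $N_1 := \ker(\theta|_{D_1})$ is finitely generated by Theorem~\ref{thm7}~(1), applied to $D_1 = \Gamma_1 \times \ldots \times \Gamma_{n-r}$ (which has $n - r \geq 2$ factors, using $n \geq 2r$ together with $n \geq 3$). Combined with finite presentability of $D_1, D_2$ and the fact that $A \cong \Z^r$ is of type $\mathrm{F}_\infty$ (hence $\mathrm{F}_3$), Theorem~\ref{thm4} then yields that $\ker\theta$ is finitely presented.

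For the isoperimetric bound I would work directly in the explicit fibre-product presentation $\langle \omc{X} \cup \omc{A}_1 \cup \omc{A}_2 \mid \mc{S}_1, \ldots, \mc{S}_6 \rangle$ constructed in the proof of Theorem~\ref{thm4}; Lemma~\ref{lem27} then transfers any bound obtained to an arbitrary finite presentation. For a null-homotopic word $w$ of length $l$ in this presentation I plan to mimic the rewriting there while keeping track of area. The scheme has three stages. Stage~(i): use $\mc{S}_1$ (commutators between $\omc{A}_1$ and $\omc{A}_2$) and $\mc{S}_2$ (lifts of $D_1$-relations) to separate $w$ as $u(\omc{A}_1) \cdot v(\omc{X}, \omc{A}_2)$, at cost $O(l^2)$ (each of the $O(l^2)$ pairwise commutations between $\omc{A}_1$-letters and other letters costs $O(1)$ in these relators). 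Stage~(ii): since $v$ projects to a null-homotopic word of length $\leq l$ in $D_2$, apply an efficient $D_2$-filling lifted via $\mc{S}_5$, expressing $v$ as a product $\prod_i u_i\, w_{r_i}^{-1}\, u_i^{-1}$, then propagate each of these rewrites through the remaining $\omc{X}, \omc{A}_2$ letters using $\mc{S}_1, \mc{S}_2, \mc{S}_4, \mc{S}_6$ to produce a word $v'$ purely in $\omc{A}_1$. Stage~(iii): the resulting null-homotopic word $u v' \in \omc{A}_1^{\pm\bast}$ represents an element of $N_1$ and has length $O(\Delta(l))$ (by the distortion of $N_1$ in $D_1$), and is filled using Lemma~\ref{lem10} together with the finite presentation of $D_1$ at area $\beta_1(\Delta(l))$.

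The main obstacle will be the quantitative analysis of stages~(ii) and~(iii), in particular bounding the propagation cost in (ii). Each of the $O(\beta_2(l))$ rewrites coming from the $D_2$-filling must be pushed through an $\omc{X}$-prefix of length at most $l$, and I expect to show that each such push costs at most a single $D_1$-filling of a word of length $O(\Delta(l))$. Here the Peiffer relators $\mc{S}_6$ — encoding a finite generating set of Peiffer sequences for $\pi_2(A)$ as an $A$-module — are essential, together with the distortion bound $\Delta$ which controls how efficiently each $\omc{X}$-letter's $D_1$-component can be re-expressed in $N_1$-generators. Granting these estimates, summing $O(l^2) + \beta_2(l) + l \beta_1(\Delta(l)) + \beta_1(\Delta(l))$ and then replacing $\Delta$, $\beta_1$, $\beta_2$ by $\simeq$-equivalent increasing superlinear majorants (always possible) yields the claimed isoperimetric function $\beta'(l) = l \beta_1'(\Delta'(l)) + \beta_2'(l)$ for $\ker\theta$.
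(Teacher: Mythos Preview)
Your finite-presentability argument is fine, but the isoperimetric part has a genuine gap and also misses the key structural simplification.

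The paper does \emph{not} work in the full twisted $1$--$2$--$3$ presentation with Peiffer relators $\mc{S}_6$. The crucial observation you are missing is that the short exact sequence $1 \to K \to \ker\theta \to D_2 \to 1$ \emph{splits}: choosing $t_i \in \Gamma_i$ and $t_i' \in \Gamma_{n-r+i}$ with $\theta(t_i)=\theta(t_i')=x_i$ and setting $a_i = t_i(t_i')^{-1}$, the map $w(\mc{B}_2,\mc{T}') \mapsto w(\mc{B}_2,\mc{A}^{-1})$ is a well-defined section (this uses that $A$ is abelian, so exponent sums determine the image). Hence $\ker\theta \cong K \rtimes D_2$, and one gets a much simpler presentation $\langle \mc{A},\mc{B}_1,\mc{B}_2 \mid \mc{R},\mc{S}',\mc{C}\rangle$, where $\mc{S}'$ is the $D_1$-relator set $\mc{S}$ with $\mc{T}$ replaced by $\mc{A}$, and $\mc{C}=\{[b_1,b_2]:b_i\in\mc{B}_i\}$. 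No $\pi_2$ module, no Lemma~\ref{lem10}.

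This splitting enables the right area bookkeeping, which is \emph{not} the one you outline. One does not fill $v$ in $D_2$ and then push $\beta_2(l)$ many relator-rewrites through $\omc{X}$-prefixes; rather, one pushes the accumulated $(\mc{A}\cup\mc{B}_1)$-word $U_i$ past \emph{individual} $\mc{B}_2$-letters (of which there are at most $|w|$). Each such commutation $[b,\sigma]$, with $\sigma$ of zero $\mc{A}$-exponent sum and $|\sigma|\leq 2|w|$, is filled by first rewriting $\sigma(\mc{A},\mc{B}_1)$ as a word $\tau\in\mc{B}_1^{\pm\bast}$ of length $\leq\Delta'(|w|)$ --- this costs $\beta_1'(\Delta'(|w|))$ via the substitution $\mc{A}\leftrightarrow\mc{T}$ and a $D_1$-filling --- then commuting $b$ past $\tau$ using $\mc{C}$, then converting back. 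Pushes past $\mc{A}$-letters are free (conjugation). This is where the factor $l$ in $l\,\beta_1'(\Delta'(l))$ comes from.

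Your Stage~(ii) as written is internally inconsistent: you claim $O(\beta_2(l))$ pushes each costing $\beta_1(\Delta(l))$, which would sum to $\beta_2(l)\,\beta_1(\Delta(l))$, not the $l\,\beta_1(\Delta(l))$ in your final tally. And even granting the arithmetic, pushing $w_{r_i}$ through conjugators $u_i$ via $\mc{S}_2$ iterates the conjugation action of $\omc{X}$ on $\omc{A}_1$, which in general causes exponential length blow-up of the $\omc{A}_1$-word --- exactly the phenomenon that makes naive fibre-product fillings exponential. Your claim that the final $\omc{A}_1$-word has length $O(\Delta(l))$ is unsupported. The splitting and the $\mc{A}\leftrightarrow\mc{T}$ exchange are precisely what circumvent this.
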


Here a function $f: \N \rightarrow \N$ is said to be \emph{superlinear} if $f(l) \geq l$.  Note that the conditions $n \geq 2r$ and $n \geq 3$ imply that $n-r \geq 2$.  Thus $\ker \theta \cap D_1$ is the fibre product of the homomorphisms $\theta |_{\Gamma_1}$ and $-\theta|_{\Gamma_2 \times \ldots \times \Gamma_{n-r}}$ and hence is finitely generated by Lemma~\ref{lem9}.  The function $\Delta$ is therefore well-defined up to $\approx$-equivalence and hence, in particular, up to the the weaker $\simeq$-equivalence.

\begin{proof}
  Let $x_1, \ldots, x_r$ be a free abelian basis for $A$.  Note that the condition $n \geq 2r$ implies that $n - r \geq r$.  For each $i=1, \ldots, r$, let $t_i \in \Gamma_i$ and $t_i' \in \Gamma_{n-r+i}$ be such that $\theta (t_i) = \theta(t_i') = x_i$.  Define $\mc{T} = \{ t_1, \ldots, t_r\}$ and $\mc{T}' = \{ t_1', \ldots, t_r'\}$.  Define $\mc{A} = \{a_1, \ldots, a_r\}$, where $a_i = t_i (t_i')^{-1}$.

  Let $\mc{B}_1$ be a finite generating set for  $K = \ker \theta \cap D_1$.  For each $t \in \mc{T}$, $b \in \mc{B}_1$ and $\epsilon \in \{\pm1\}$, let $w_{bt\epsilon} \in \mc{B}_1^{\pm\bast}$ be a word representing $t^{\epsilon} b t^{-\epsilon}$.  Let $\mc{P}_1 = \langle \mc{B}_1, \mc{T} \, | \, \mc{S} \rangle$ be a finite presentation for $D_1$ where $\mc{S}$ includes all relations $t^{\epsilon} b t^{-\epsilon} w_{bt\epsilon}^{-1}$.  Let $\bar{\beta}_1$ be the Dehn function of $\mc{P}_1$ and define $\beta_1'(l) = \bar{\beta_1}(l) + l$.  Then $\beta_1'$ is increasing and superlinear and $\beta_1' \simeq \beta_1$.

  Let $\mc{B}_2 \subseteq \ker \theta \cap D_2$ be a finite collection of elements such that $\mc{B}_2 \cup \mc{T}'$ generates $D_2$.  Note that $\mc{B}_2$ may not generate $ker \theta \cap D_2$, which may not even be finitely generated.  Define a homomorphism $s: D_2 \rightarrow \ker \theta$ by mapping each word $w(\mc{B}_2, \mc{T}') \mapsto w(\mc{B}_2, \mc{A}^{-1})$.  Note that $s$ does indeed define a homomorphism since if $w(\mc{B}_1, \mc{T}')$ is null-homotopic then its exponent sum in each letter of $\mc{T}'$ is $0$ and hence $w(\mc{B}_1, \mc{T}')$ and $w(\mc{B}_1, \mc{A}^{-1})$ define the same group element.  Observe that $s$ is a splitting of the short exact sequence $1 \rightarrow K \rightarrow \ker \theta \rightarrow D_2 \rightarrow 1$ where the homomorphism $\ker \theta \rightarrow D_2$ is the projection homomorphism.  Define $H \cong D_2$ to be the image of $s$.  Then $\ker \theta \cong K \rtimes H$.  Let $\mc{P}_2 = \langle \mc{A}, \mc{B}_2 \, | \, \mc{R} \rangle$ be a finite presentation for $H$ and let $\bar{\beta}_2$ be the Dehn function of $\mc{P}_2$.  Define $\beta_2'(l) = \bar{\beta_2}(l) + l$.  Then $\beta_2'$ is increasing and superlinear and $\beta_2' \simeq \beta_2$.

  Define $\mc{S}' = \{ s(\mc{A}, \mc{B}_1) : s(\mc{T}, \mc{B}_1) \in \mc{S} \}$ and note that the words in $\mc{S}'$ are null-homotopic.  Define $\mc{C} = \{ [b_1, b_2] : b_i \in \mc{B}_i \}$.

  \begin{clm}
    $\ker \theta$ is presented by $\mc{P} = \langle \mc{A}, \mc{B}_1, \mc{B}_2 \, | \, \mc{R}, \mc{S}', \mc{C} \rangle$.
  \end{clm}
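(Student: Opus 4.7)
The plan is to exploit the decomposition $\ker\theta \cong K \rtimes H$ afforded by the splitting $s$: I will show that the three sets of relations play complementary roles, with $\mc{R}$ presenting the complement $H$, the commutator relations $\mc{C}$ together with certain ``action'' relations in $\mc{S}'$ encoding how to conjugate $\mc{B}_1$-letters past the other generators, and the remaining relations in $\mc{S}'$ encoding the relations of $K$. First I verify two preliminaries: that $\mc{A} \cup \mc{B}_1 \cup \mc{B}_2$ generates $\ker\theta$ (immediate from $K = \langle \mc{B}_1 \rangle$ and from $H = s(D_2) = \langle \mc{A}^{-1} \cup \mc{B}_2 \rangle$), and that each element of $\mc{R} \cup \mc{S}' \cup \mc{C}$ is null-homotopic in $\ker\theta$. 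Only $\mc{S}'$ requires argument: given $s = s(\mc{T},\mc{B}_1) \in \mc{S}$, writing $a_j = t_j(t_j')^{-1}$ and collecting the $(t_j')^{-1}$ factors to the right (using that $\mc{T}'$ commutes with $D_1$ and with itself) gives $s(\mc{A},\mc{B}_1) = s(\mc{T},\mc{B}_1) \cdot \prod_j (t_j')^{-n_j}$ in $D$, where $n_j$ is the exponent sum of $t_j$ in $s$; the first factor is trivial by hypothesis, and $n_j = 0$ since $\theta(s) = 0$ in the free abelian group $A$.

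The core of the argument is to reduce an arbitrary null-homotopic word $w \in (\mc{A} \cup \mc{B}_1 \cup \mc{B}_2)^{\pm\bast}$ to the empty word using only $\mc{R} \cup \mc{S}' \cup \mc{C}$, which I carry out in three phases. In \textbf{Phase 1} I migrate every $\mc{B}_1$-letter to the left of $w$: a $\mc{B}_1$-letter adjacent to a $\mc{B}_2$-letter is swapped past it via a single relation of $\mc{C}$, and a $\mc{B}_1$-letter $b$ adjacent to $a_j^\epsilon$ is moved past it using the relation $a_j^\epsilon b a_j^{-\epsilon} w_{b t_j \epsilon}^{-1} \in \mc{S}'$ (this lies in $\mc{S}'$ since the corresponding $t_j$-relation was explicitly built into $\mc{P}_1$), at the cost of replacing $b$ by $w_{b t_j \epsilon} \in \mc{B}_1^{\pm\bast}$. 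Iterating converts $w$ into a word of the form $v(\mc{B}_1) \cdot u(\mc{A}, \mc{B}_2)$. In \textbf{Phase 2} I apply the retraction $\ker\theta = K \rtimes H \twoheadrightarrow H$: this kills $v$ and fixes $u$, so the hypothesis $vu = 1$ in $\ker\theta$ forces $u = 1$ in $H$, and since $\mc{P}_2 = \langle \mc{A}, \mc{B}_2 \mid \mc{R}\rangle$ presents $H$ the relations $\mc{R}$ reduce $u$ to the empty word. In \textbf{Phase 3} I am left with $v \in \mc{B}_1^{\pm\bast}$ null-homotopic in $\ker\theta$, hence in $D_1$, so that there is a free equality $v \FreeEq \prod_i u_i s_i u_i^{-1}$ in $\Fr(\mc{B}_1 \cup \mc{T})$ with each $s_i \in \mc{S}^{\pm 1}$ and each $u_i \in (\mc{B}_1 \cup \mc{T})^{\pm\bast}$.

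The step I expect to be the main conceptual point, in the sense of being the idea one must spot rather than a routine calculation to grind through, is the conclusion of Phase 3: apply the free-group homomorphism $\Fr(\mc{B}_1 \cup \mc{T}) \to \Fr(\mc{B}_1 \cup \mc{A})$ sending $t_j \mapsto a_j$ and fixing $\mc{B}_1$. Since $v$ contains no $\mc{T}$-letter, the substitution fixes $v$; since $\mc{S}'$ is by definition the image of $\mc{S}$ under the same substitution, one obtains a free equality $v \FreeEq \prod_i u_i' s_i' {u_i'}^{-1}$ in $\Fr(\mc{B}_1 \cup \mc{A})$ with each $s_i' \in {\mc{S}'}^{\pm 1}$, witnessing that $v$ reduces to the empty word using only relations in $\mc{S}'$. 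Combining the three phases completes the proof of the claim. The quantitative isoperimetric estimate in the full statement of Theorem~\ref{thm12} would be obtained separately by tracking the cost of each phase, the main contributions coming from bounding $|v|$ by the distortion $\Delta'$ applied to $|w|$, filling $v$ in $D_1$ using $\beta_1'$, and filling $u$ in $D_2$ using $\beta_2'$.
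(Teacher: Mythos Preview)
Your proof is correct and follows essentially the same approach as the paper: separate $w$ into $v(\mc{B}_1)\,u(\mc{A},\mc{B}_2)$ using the conjugation relations in $\mc{S}'$ together with $\mc{C}$, kill $u$ using $\mc{R}$ via the retraction onto $H$, and then fill $v$ by taking a $\mc{P}_1$-expression over $\langle \mc{T},\mc{B}_1 \mid \mc{S}\rangle$ and applying the letter-for-letter substitution $\mc{T}\to\mc{A}$ to obtain a $\mc{P}$-expression over $\mc{S}'$. Your write-up is in fact a bit more explicit than the paper's, in that you spell out why the $\mc{S}'$-relations hold in $\ker\theta$ and identify precisely which members of $\mc{S}'$ carry out the commutation in Phase~1.
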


  To prove the claim, suppose that $w = w(\mc{A}, \mc{B}_1, \mc{B}_2)$ is a null-homotopic word.  By applying relations from $\mc{C}$ and $\mc{S}'$ we can convert $w$ to a word $w_1 w_2$, where $w_1 = w_1(\mc{B}_1)$ and $w_2 = w_2(\mc{A}, \mc{B}_2)$.  Then $w_2$ is null-homotopic in $H$ and so $w_1w_2$ can be converted to $w_1$ by applying relators from $\mc{R}$.  Furthermore, $w_1$ is null-homotopic in $D_1$, so there exists a free equality $w_1(\mc{B}_1) \FreeEq \prod u_i(\mc{T}, \mc{B}_1) s_i(\mc{T}, \mc{B}_1) u_i^{-1}(\mc{T}, \mc{B}_1)$ for some words $u_i$ and some relators $s_i \in \mc{S}$.  Thus $w_1(\mc{B}_1) \FreeEq u_i(\mc{A}, \mc{B}_1) s_i(\mc{A}, \mc{B}_1) u_i^{-1}(\mc{A}, \mc{B}_1)$, completing the proof of the claim.

  \emph{A priori}, the above scheme gives an exponential isoperimetric function for $\ker \theta$.  We now show how this can be improved.  Let $\bar{\Delta}$ be the distortion function of $K$ in $D_1$ with respect to the generating sets $\mc{B}_1$ and $\mc{B}_1 \cup \mc{T}$.  Define $\Delta'(l) = \bar{\Delta}(l) + l$.  Then $\Delta'$ is increasing and superadditive.  Furthermore, $\Delta' \simeq \Delta$ since $\bar{\Delta} \approx \Delta$.

  \begin{clm}
    Let $\sigma = \sigma(\mc{A}, \mc{B}_1)$ be a word of length at most $l$ having exponent sum $0$ in each letter $a \in \mc{A}$. Let $b \in \mc{B}_2$.  Then $\Area_\mc{P}([b, \sigma]) \leq 3 \beta_1'(\Delta'(l))$.
  \end{clm}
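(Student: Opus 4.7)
The plan is to show that the exponent-sum-zero hypothesis actually forces $\sigma$ to represent an element of $K$, and then to reduce $[b,\sigma]$ to a commutator of $b$ with a word purely in $\mc{B}_1$, which is handled directly by the relators in $\mc{C}$.

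First I would verify that $\sigma$ represents an element of $K$.  In $D = D_1 \times D_2$ the generator $a_i$ equals $(t_i, (t_i')^{-1})$ and every element of $\mc{B}_1 \subseteq D_1$ has trivial $D_2$-component, so the $D_2$-component of $\sigma$ is $\nu((\mc{T}')^{-1})$, where $\nu$ is obtained from $\sigma$ by deleting all $\mc{B}_1$-letters.  Because $t_1', \ldots, t_r'$ lie in pairwise distinct direct factors of $D_2$ they commute, so $\nu((\mc{T}')^{-1}) = \prod_{i=1}^r (t_i')^{-e_i}$ in $D_2$, where $e_i$ is the exponent sum of $a_i$ in $\sigma$.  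The hypothesis forces every $e_i = 0$, so the $D_2$-component is trivial and $\sigma \in D_1 \cap \ker\theta = K$.

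Next I would exploit the distortion of $K$ in $D_1$.  Viewed as a word in $\mc{B}_1 \cup \mc{T}$, the word $\sigma(\mc{T}, \mc{B}_1)$ has length at most $l$ and represents an element of $K$, so by the definition of $\bar{\Delta}$ there is $\bar{\sigma} \in \mc{B}_1^{\pm\bast}$ with $|\bar{\sigma}| \leq \bar{\Delta}(l)$ representing the same element.  Then $\sigma(\mc{T}, \mc{B}_1)\bar{\sigma}^{-1}$ is null-homotopic in $D_1$ of length at most $l + \bar{\Delta}(l) = \Delta'(l)$, hence has $\mc{P}_1$-area at most $\bar{\beta}_1(\Delta'(l))$.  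Relabelling every $\mc{T}$-edge in a witnessing $\mc{P}_1$-van Kampen diagram by the matching letter of $\mc{A}$ turns each $\mc{S}$-relator into its companion in $\mc{S}'$ and produces a $\mc{P}$-van Kampen diagram of the same area for $\sigma(\mc{A}, \mc{B}_1)\bar{\sigma}^{-1}$; thus $\Area_\mc{P}(\sigma\bar{\sigma}^{-1}) \leq \beta_1'(\Delta'(l))$.

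To conclude, applying this conversion in turn to each of the two occurrences of $\sigma$ in $b\sigma b^{-1}\sigma^{-1}$ rewrites $[b,\sigma]$ as $[b,\bar{\sigma}]$ at total cost at most $2\beta_1'(\Delta'(l))$.  Since $\bar{\sigma}$ is a word in $\mc{B}_1^{\pm\bast}$ of length at most $\bar{\Delta}(l) \leq \Delta'(l)$, and $b$ commutes with every letter of $\mc{B}_1^{\pm 1}$ via a single relator of $\mc{C}$, a letter-by-letter commutation gives $\Area_\mc{P}([b,\bar{\sigma}]) \leq |\bar{\sigma}| \leq \Delta'(l) \leq \beta_1'(\Delta'(l))$, where we use that $\beta_1'$ is superlinear.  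Summing the three contributions yields the desired bound $3\beta_1'(\Delta'(l))$.  The main obstacle is really the opening observation: without the exponent-sum-zero hypothesis, or without the fact that the $t_i'$ lie in pairwise distinct factors of $D_2$, the $D_2$-component of $\sigma$ could be a nontrivial element of $\ker\theta \cap D_2$ and $[b,\sigma]$ would in general fail even to be null-homotopic.  Once that reduction to $K$ is secured, the rest is a routine assembly of the distortion bound, the $\mc{T} \mapsto \mc{A}$ lifting of $\mc{P}_1$-diagrams, and the commutation relators in $\mc{C}$.
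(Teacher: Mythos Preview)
Your proof is correct and follows essentially the same approach as the paper: reduce $\sigma$ to a short word $\bar{\sigma}$ in $\mc{B}_1^{\pm\bast}$ via the distortion bound, lift a $\mc{P}_1$-filling of $\sigma(\mc{T},\mc{B}_1)\bar{\sigma}^{-1}$ to a $\mc{P}$-filling of $\sigma(\mc{A},\mc{B}_1)\bar{\sigma}^{-1}$ by the $\mc{T}\mapsto\mc{A}$ substitution, and then commute $b$ past $\bar{\sigma}$ letter-by-letter using relators in $\mc{C}$. Your explicit verification that the $D_2$-component of $\sigma$ vanishes (using that the $t_i'$ lie in pairwise distinct factors of $D_2$) is a nice addition --- the paper asserts the equivalent fact without spelling out this justification.
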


  To prove the claim, note that, since $\sigma(\mc{A}, \mc{B}_1)$ has exponent sum $0$ in each $a \in \mc{A}$, it represents the same element of $K$ as $\sigma(\mc{T}, \mc{B}_1)$.  It is thus represented by some word $\tau = \tau(\mc{B}_1)$ with $|\tau| \leq \ol{\Delta}(l)$.  Then $\sigma(\mc{T}, \mc{B}_1) \tau^{-1}(\mc{B}_1)$ is null-homotopic and so there exists a null $\mc{P}_1$-expression $(\rho_i(\mc{T}, \mc{B}_1), s_i(\mc{T}, \mc{B}_1))$ for $\sigma(\mc{T}, \mc{B}_1) \tau^{-1}(\mc{B}_1)$ with area at most $\beta_1'(\ol{\Delta}(l) + l)$.  Thus $(\rho_i(\mc{A}, \mc{B}_1), s_i(\mc{A}, \mc{B}_1))$ is a null $\mc{P}$-expression for $\sigma(\mc{A}, \mc{B}_1) \tau^{-1}(\mc{B}_1)$ and so there exists a $\mc{P}$-sequence converting $b \sigma(\mc{A}, \mc{B}_1)$ to $b \tau(\mc{B}_1)$  with area at most $\beta_1'(\ol{\Delta}(l) + l) \leq \beta_1'(\Delta'(l))$.

  By applying relators from $\mc{C}$, we see that there exists a $\mc{P}$-sequence converting $b \tau(\mc{B}_1)$ to $\tau(\mc{B}_1) b$ with area at most $|\tau| \leq \ol{\Delta}(l)$.  Finally, we can convert $\tau(\mc{B}_1) b$ to $\sigma(\mc{A}, \mc{B}_1) b$ by a $\mc{P}$-sequence of area at most $\beta_1'(\Delta'(l))$.  Thus $\Area_\mc{P}([b, \sigma]) \leq 2\beta_1'(\Delta'(l)) + \ol{\Delta}(l) \leq 3\beta_1'(\Delta'(l))$, completing the proof of the claim.

  Now, to obtain the stated isoperimetric function for $\ker \theta$, let $w \in (\mc{A} \cup \mc{B}_1 \cup \mc{B}_2)^{\pm\bast}$ be a null-homotopic word in $\ker \theta$.  Then $w$ can be written as $u_0 x_1 u_1 x_2 \ldots x_n u_n$ where each $x_i \in (\mc{A} \cup \mc{B}_2)^{\pm1}$ and each $u_i \in \mc{B}_1^{\pm\bast}$ is some (possibly empty) word.

  We will define a sequence of words $U_0, \ldots, U_n \in (\mc{A} \cup \mc{B}_1)^{\pm\bast}$ with each $U_i$ having zero exponent sum in each letter $a \in \mc{A}$ and with the word $$w_i \equiv u_0 x_1 \ldots u_{n-i-1} x_{n-i} U_i x_{n-i+1} x_{n-i+2} \ldots x_n$$ representing the same element as $w$.  Take $U_0 \equiv u_n$ and define the subsequent $U_i$ recursively as follows.  If $x_{n-i} \in \mc{A}$ then define $U_{i+1} :\equiv u_{n-i-1} x_{n-i} U_i x_{n-i}^{-1}$.  If $x_{n-i} \in \mc{B}_2$ then define $U_{i+1}:\equiv u_{n-i-1}U_i$.  In the former case we see that $w_{i+1}$ is freely equal to $w_i$.  In the latter case the above claim shows that there exists a $\mc{P}$-sequence converting $w_i$ to $w_{i+1}$ with area at most $3 \beta_1'(\Delta'(|U_{i-1}|))$.  Now, $|U_i| \leq |u_{n-i}| + |U_{i-1}| + 2 \leq |u_{n-i}| + |u_{n-i+1}| + \ldots + |u_n| + 2i \leq 2|w|$.  Thus there exists a $\mc{P}$-sequence converting $w$ to $w_n \equiv U_n x_1 \ldots x_n$ with area at most $3n \beta_1'(\Delta'(2|w|)) \leq 3|w| \beta_1'(\Delta'(2|w|))$.

  Note that $x_1 \ldots x_n$ represents an element of $H$ and, since $U_n = U_n(\mc{A}, \mc{B}_1)$ has exponent sum $0$ in each letter $a \in \mc{A}$, that $U_n$ represents an element of $K$.  Thus, since $U_n x_1 \ldots x_n$ represents the identity in the semidirect product $K \rtimes H$, it follows that $U_n$ and $x_1 \ldots x_n$ are both null-homotopic.  Since $U_n = U_n(\mc{A}, \mc{B}_1)$ has exponent sum $0$ in each letter $a \in \mc{A}$ it represents the same element as $U_n(\mc{T}, \mc{B}_1)$.  Let $(\rho_i(\mc{T}, \mc{B}_1), s_i(\mc{T}, \mc{B}_1))$ be a $\mc{P}_1$-expression for $U_n(\mc{T}, \mc{B}_1)$ with area at most $\beta_1'(|U_n|) \leq \beta_1'(2|w|)$. Then $(\rho_i(\mc{A}, \mc{B}_1), s_i(\mc{A}, \mc{B}_1))$ is a $\mc{P}$-expression for $U_n(\mc{A}, \mc{B}_1)$, so $\Area_\mc{P}(U_n) \leq \beta_1'(2|w|)$.  Any $\mc{P}_2$-expression for $x_1 \ldots x_n$ is also a $\mc{P}$-expression for $x_1 \ldots x_n$, so $\Area_\mc{P}(x_1 \ldots x_n) \leq \beta_2'(n) \leq \beta_2'(|w|)$.  Putting these bounds together demonstrates that $\Area_\mc{P}(w) \leq 3|w| \beta_1'(\Delta'(2|w|) + \beta_1'(2|w|) + \beta_2'(|w|) \leq 4|w|\beta_1'(\Delta'(2|w|)) + \beta_2'(|w|)$.
\end{proof}

\begin{cor} \label{cor2}
  We continue with the notation and hypotheses of Theorem~\ref{thm12}.  Then the function $\beta$ defined by $$\beta(l) = l \beta_1(l^2) + \beta_2(l)$$ is an isoperimetric function for $\ker \theta$.
\end{cor}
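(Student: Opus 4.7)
The plan is to reduce Corollary~\ref{cor2} to Theorem~\ref{thm12} by bounding the distortion factor $\Delta'(l)$ above by a quadratic in $l$ and then using a quadratic lower bound on $\beta_1$ to absorb the bookkeeping constants. I will start from the isoperimetric function $\beta'(l) = l\beta_1'(\Delta'(l)) + \beta_2'(l)$ supplied by Theorem~\ref{thm12}, where $\Delta' \simeq \Delta$, $\beta_1' \simeq \beta_1$, $\beta_2' \simeq \beta_2$ are increasing and superlinear. Since the class of isoperimetric functions for a finitely presented group is closed under $\simeq$-equivalence, it suffices to verify $\beta' \preceq \beta$.

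The first key estimate is $\Delta(l) \preccurlyeq l^2$. The hypotheses $n \geq 3$ and $n \geq 2r$ force $n-r \geq 2$, so $K = \ker\theta \cap D_1$ is the kernel of a homomorphism from a direct product of at least two finitely generated groups onto a finitely generated free abelian group whose restriction to each factor is surjective. Theorem~\ref{thm7}~(1) then yields $\Delta(l) \preccurlyeq l^2$, and unwinding the $\simeq$-equivalence $\Delta' \simeq \Delta$ shows that there exists a constant $C_0$ with $\Delta'(l) \leq C_0 l^2 + C_0$ for all $l$. (The case $r = 0$ is trivial, so we may assume $r \geq 1$.)

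The second key estimate is a quadratic lower bound $l^2 \leq C_1 \beta_1(l) + C_1$, which comes from Lemma~\ref{lem31} once one knows $D_1$ is non-hyperbolic. Since $r \geq 1$ and $\theta|_{\Gamma_i}$ is surjective for each $i$, each factor of $D_1$ contains an element of infinite order, hence $D_1$ contains $\Z^{n-r} \supseteq \Z^2$ and is not hyperbolic.

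Granted these two inputs, the finish is routine bookkeeping. Monotonicity of $\beta_1'$ and the $\simeq$-equivalence $\beta_1' \simeq \beta_1$ convert $\beta_1'(\Delta'(l))$ into a constant multiple of $\beta_1(C_2 l^2)$ plus a term of order $l^2$; the latter is absorbed into the former via $l^2 \leq C_1\beta_1(l) + C_1 \leq C_1\beta_1(C_2 l^2) + C_1$ and monotonicity. Multiplying by $l$, treating $\beta_2'(l)$ analogously with $\beta_2' \simeq \beta_2$, and collecting terms, one sees that $\beta'(l) \leq C\beta(Cl+C) + Cl + C$ for a single constant $C$, i.e.\ $\beta' \preceq \beta$. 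The only mildly delicate point is keeping track of the two different normalisations (the $\simeq$ for Dehn functions and the $\preccurlyeq$ for distortion) when cascading the estimates, and ensuring that the final bound lands inside one term $\beta(Cl+C)$ rather than a sum of several; no genuinely new ideas beyond the two estimates isolated above are required.
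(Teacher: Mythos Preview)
Your proposal is correct and follows essentially the same route as the paper's proof: bound $\Delta'$ quadratically (the paper invokes Lemma~\ref{lem9} directly rather than via Theorem~\ref{thm7}(1), but these amount to the same thing), use Lemma~\ref{lem31} on the non-hyperbolic $D_1$ to get $l^2 \leq C'\beta_1(l)+C'$, and then do the bookkeeping to show $\beta' \preceq \beta$. The paper's write-up makes the arithmetic fully explicit, but the structure and the two key inputs are identical to yours.
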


\begin{proof}
  Since $A$ is abelian it admits a quadratic isoperimetric function.  Thus, by Lemma~\ref{lem9}, together with the definition of $\simeq$-equivalence, there exists $K \in \N$ so that the function $\Delta$ satisfies $\Delta(l) \leq K l^2$.  By the definition of $\simeq$-equivalence, there exists $C \in \N$ so that $\Delta'(l) \leq Cl^2$.  Since $\beta_1'$ is increasing, it follows that the function $\bar{\beta}$, defined by $\bar{\beta}(l) = l\beta_1'(Cl^2) + \beta_2'(l)$, is an isoperimetric function for $\ker \theta$.

  Note that the conditions $n \geq 2r$ and $n \geq 3$ imply that $n - r \geq 2$.  If $r=0$ then the result is trivial, so we may assume $r \geq 1$.  Since the restriction of $\theta$ to each of the $\Gamma_i$ is surjective, each $\Gamma_i$ contains an element of infinite order, and hence $D_1$ contains $\Z^2$ as a subgroup.  By \cite[Theorem~6.1.10~(1)]{brid02} $D_1$ is thus not hyperbolic and hence by Lemma~\ref{lem31} there exists $C' \in \N$ so that $l^2 \leq C' \beta_1(l) + C'$.

  Let $M_1 \in \N$ and $M_2 \in \N$ be the constants arising in the definition of $\beta_1'$ and $\beta_2'$ being $\preceq$ $\beta_1$ and $\beta_2$ respectively.  Define $M = \max \{M_1, M_2 \}$.  Then \begin{align*}
    \bar{\beta}(l) &= l\beta_1'(Cl^2) + \beta_2'(l) \\
    &\leq l[M \beta_1(M(Cl^2) + M) + MCl^2 + M] + M\beta_2(Ml + M) + Ml + M \\
    &= Ml \beta_1(MCl^2 + M) + M\beta_2(Ml+M) + MCl^3 + 2Ml + M \\
    &\leq Ml \beta_1(MCl^2 + M) + M\beta_2(Ml + M) + MCl(C' \beta_1(l) + C') + 2Ml + M \\
    &\leq M (CC' + 1) l \beta_1(MCl^2 + M) + M \beta_2(Ml+M) + (CC' + 2) Ml + M \\
    &\leq M (CC' + 1) \beta(MCl + M) + (CC' + 2) Ml + M.
  \end{align*}  Thus $\bar{\beta} \preceq \beta$ and so $\beta$ is an isoperimetric function for $\ker \theta$.
\end{proof}

\section{Depth of subdirect products} \label{sec15}

\subsection{Definition}

\begin{defn}
  Let $D = \Gamma_1 \times \ldots \times \Gamma_n$ be a direct product of groups.  Write $\mc{L}_n$ for the lattice of subsets of $\{1, \ldots, n\}$.  Given a subset $\mc{S} = \{i_1, \ldots, i_k \} \in \mc{L}_n$, define $D_\mc{S}$ to be the direct product $\Gamma_{i_1} \times \ldots \times \Gamma_{i_k}$ and define $p_\mc{S}$ to be the projection homomorphism $D \rightarrow D_\mc{S}$.

  The \emph{depth} of a subgroup $H \leq D$ is defined to be $$\Depth (H) = n - \max \{ k : \text{$[D_\mc{S}: p_\mc{S}(H)] < \infty$ for all $\mc{S} \in \mc{L}_n$ with $|\mc{S}| = k$} \}.$$
\end{defn}

We remark that the depth of a subgroup $H \leq D$ depends on the choice of a particular decomposition of $D$ as a direct product.  Also note that if $D$ has $n$ factors then $0 \leq \Depth(H) \leq n$.  The depth $0$ subgroups are precisely the finite index subgroups of $D$; the depth $1$ subgroups are precisely the virtually-full subgroups of $D$; and the depth $n-1$ subgroups are precisely the subdirect products of finite index subgroups of $D$.  The following lemma shows that the definition of depth given here agrees with the definition of depth given by Meinert \cite{mein94} for coabelian subgroups $H \leq D$.

\begin{lem}
  Let $H$ be a coabelian subgroup of the direct product $D = \Gamma_1 \times \ldots \times \Gamma_n$ with quotient homomorphism $\theta : D \rightarrow D/H$.  Then \begin{align*}
    \Depth(H) &= \min\{ k : \text{$[D : D_\mc{S}H] < \infty$ for all $\mc{S} \in \mc{L}_n$ with $|\mc{S}| = k$}\} \\
    &= \min\{ k : \text{$[D/H : \theta(D_\mc{S})] < \infty$ for all $\mc{S} \in \mc{L}_n$ with $|\mc{S}| = k$}\}.
  \end{align*}
\end{lem}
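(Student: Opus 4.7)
The plan is to derive both equalities from two elementary index identities. Throughout, write $\mc{S}^c$ for the complement of $\mc{S}$ in $\{1, \ldots, n\}$, so $D = D_\mc{S} \times D_{\mc{S}^c}$ and $\ker p_\mc{S} = D_{\mc{S}^c}$. Since $[D,D] \leq H$, the subgroup $H$ is normal in $D$, so $D_\mc{S} H$ is a subgroup and $\theta(D_\mc{S})$ is a subgroup of $D/H$.

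The first step is to observe the correspondence $[D_\mc{S} : p_\mc{S}(H)] = [D : D_{\mc{S}^c} H]$. Indeed, $D_{\mc{S}^c} H$ is the full preimage of $p_\mc{S}(H)$ under $p_\mc{S}$, because $p_\mc{S}(D_{\mc{S}^c}) = 1$ and $p_\mc{S}(H) = p_\mc{S}(D_{\mc{S}^c} H)$; hence $p_\mc{S}$ induces a bijection $D / D_{\mc{S}^c} H \to D_\mc{S} / p_\mc{S}(H)$. Now setting $\mc{T} = \mc{S}^c$ gives a bijection between subsets of size $k$ and subsets of size $n-k$, so the statement ``$[D_\mc{S} : p_\mc{S}(H)] < \infty$ for every $\mc{S}$ with $|\mc{S}| = k$'' is equivalent to ``$[D : D_\mc{T} H] < \infty$ for every $\mc{T}$ with $|\mc{T}| = n-k$''. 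Therefore the largest $k$ for which the first condition holds equals $n$ minus the smallest $k$ for which the second condition holds, which after substituting into the defining formula for $\Depth(H)$ yields the first displayed equality.

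The second step is the identity $[D : D_\mc{S} H] = [D/H : \theta(D_\mc{S})]$. Since $H \leq D_\mc{S} H$, we have $D_\mc{S} H = \theta^{-1}(\theta(D_\mc{S}))$, so $\theta$ induces a bijection $D/D_\mc{S} H \to (D/H)/\theta(D_\mc{S})$. The second displayed equality is then immediate from the first.

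No step here is a real obstacle; the only thing to be careful about is keeping the set-theoretic bookkeeping straight when translating between the ``$\max$ over subsets of size $k$'' formulation of $\Depth(H)$ and the ``$\min$ over subsets of size $k$'' formulations appearing on the right-hand sides. Coabelianness of $H$ is used only to ensure that $D_\mc{S} H$ is a subgroup and that $\theta$ is a homomorphism; the combinatorial content of the lemma is just the complementary-subset duality $\mc{S} \leftrightarrow \mc{S}^c$ combined with the correspondence theorem.
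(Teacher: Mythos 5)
Your proof is correct and follows essentially the same route as the paper's: the correspondence-theorem identity $[D:D_{\mc{S}}H]=[D/H:\theta(D_{\mc{S}})]$ for the second equality, and the identification $[D:D_{\mc{S}^c}H]=[D_{\mc{S}}:p_{\mc{S}}(H)]$ (the paper phrases it by quotienting by $D_{\mc{S}^c}$ rather than pulling back along $p_{\mc{S}}$, but it is the same computation) combined with the complement bijection $\mc{S}\leftrightarrow\mc{S}^c$ to convert the $\max$ into a $\min$.
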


\begin{proof}
  That the two integers defined in the lemma are equal follows from the fact that $[D: D_\mc{S}H] = [D/H : D_\mc{S}H/H] = [D/H : \theta(D_\mc{S})]$.  To see that these are equal to the depth of $H$, note that, for any $\mc{S} \in \mc{L}_n$, $[D : D_\mc{S}H] = {[D / D_\mc{S} : D_\mc{S}H / D_\mc{S}]} = [D_{\mc{S}'} : p_{\mc{S}'}(H)]$, where $\mc{S}'$ is the complement of $\mc{S}$ in $\{1, \ldots, n\}$.  Thus $D_\mc{S}H$ has finite index in $D$ for all $\mc{S} \in \mc{L}_n$ with $|\mc{S}| = k$ if and only if $p_\mc{S}(H)$ has finite index in $D_\mc{S}$ for all $\mc{S} \in \mc{L}_n$ with $|\mc{S}| = n-k$.
\end{proof}

\subsection{Depth 1 subgroups}

The following result is essentially contained in \cite[Theorem~4.7]{Bridson2}.

\begin{prop}
  Let $H$ be a depth $1$ subgroup of a direct product $D = \Gamma_1 \times \ldots \times \Gamma_n$, where $n \geq 3$.  Then $H$ is virtually-coabelian.
\end{prop}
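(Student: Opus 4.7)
My plan is to reduce to the case where $H$ is \emph{full} in $D$ (i.e.\ $\Gamma_i H = D$ for every $i$) and then show directly that $H$ is coabelian in $D$. For the reduction, each $\Gamma_i H$ has finite index in $D$ by hypothesis, so the projection $M_i$ of $H$ onto $\prod_{j \neq i}\Gamma_j$ has finite index. Every such finite-index subgroup $M_i$ contains the box $\prod_{j \neq i}(M_i \cap \Gamma_j)$, and each $M_i \cap \Gamma_j$ is of finite index in $\Gamma_j$. Setting $\Gamma_j^0 = \bigcap_{i \neq j}(M_i \cap \Gamma_j)$ gives finite-index subgroups satisfying $\prod_{j \neq i}\Gamma_j^0 \leq M_i$ for each $i$; after a further finite-index refinement (to match up the $i$-th coordinate), the intersection $H^0 := H \cap D^0$, with $D^0 := \prod_j \Gamma_j^0$, is full in $D^0$, and it suffices to prove that $H^0$ is coabelian in $D^0$.

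Now assume $H$ is full in $D$. For each $i$ set $L_i = H \cap \Gamma_i \leq \Gamma_i$. Fullness $\Gamma_j H = D$ applied at any $j \neq i$ gives $p_i(H) = \Gamma_i$, and a routine conjugation calculation then shows $L_i \trianglelefteq \Gamma_i$. Set $Q_i := \Gamma_i / L_i$. The goal reduces to showing each $Q_i$ is abelian, since $[\Gamma_i, \Gamma_i] \leq L_i$ will give $[D, D] = \prod_i [\Gamma_i, \Gamma_i] \leq H$.

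The hypothesis $n \geq 3$ now enters essentially. By symmetry I need only show $Q_1$ is abelian. Fix two distinct indices $j, k \in \{2, \ldots, n\}$. Applying $\Gamma_\ell H = D$ successively across all $\ell \notin \{1, j\}$, for each $\delta \in \Gamma_j$ one produces an element of $H$ supported on coordinates $\{1, j\}$ with $j$-th coordinate $\delta$ and first coordinate some $\gamma \in \Gamma_1$; the assignment $\bar f_j : \delta \mapsto \gamma L_1$ is well-defined (the ratio of two choices lies in $H \cap \Gamma_1 = L_1$), giving a homomorphism $\Gamma_j \to Q_1$. Surjectivity follows from applying $\Gamma_j H = D$ to the element $(\ldots, \gamma_0, \ldots, 1, \ldots) \in \Gamma_1 \leq D$, which writes it as $\gamma_j h$ with $h \in H$ supported on $\{1, j\}$ and having first coordinate $\gamma_0$. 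The analogous construction gives a surjective homomorphism $\bar f_k : \Gamma_k \to Q_1$.

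Given witnesses $h = (\gamma, \ldots, \delta, \ldots, 1, \ldots)$ and $h' = (\gamma', \ldots, 1, \ldots, \epsilon, \ldots)$ supported on $\{1, j\}$ and $\{1, k\}$ respectively, their commutator equals $([\gamma, \gamma'], 1, \ldots, 1) \in H \cap \Gamma_1 = L_1$, so $\bar f_j(\delta)$ and $\bar f_k(\epsilon)$ commute in $Q_1$. As both $\bar f_j$ and $\bar f_k$ are surjective, $Q_1$ is generated by (indeed equals) two commuting subgroups, and hence is abelian. The main obstacle is the reduction step, which requires some bookkeeping of finite-index subgroups in the spirit of Lemma~\ref{lem29}; the core of the proof is then the short commutator argument just sketched, in which the role of $n \geq 3$ is precisely to provide two auxiliary factors $\Gamma_j, \Gamma_k$ commuting inside $D$ with one another (and both different from the target factor $\Gamma_1$).
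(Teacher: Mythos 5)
Your core argument is essentially the paper's, in heavier clothing. The paper's proof, after reducing to the case $\Gamma_i H = D'$ for all $i$, simply takes $\gamma_1, \gamma_2 \in \Gamma_i'$, writes $\gamma_1 = g_1 h_1$ and $\gamma_2 = g_2 h_2$ with $g_1 \in \Gamma_j'$, $g_2 \in \Gamma_k'$ ($i,j,k$ distinct) and $h_1, h_2 \in H$, and observes $[\gamma_1, \gamma_2] = [h_1, h_2] \in H$. Your witnesses "supported on $\{1,j\}$" are exactly these $h_1 = g_1^{-1}\gamma_1$, and your detour through normality of $L_i$, the quotients $Q_i$, and the surjections $\bar f_j, \bar f_k$ repackages the same two-auxiliary-factor commutator trick without buying anything; the direct computation avoids having to establish $p_i(H) = \Gamma_i$, normality of $H \cap \Gamma_i$, and surjectivity altogether.

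The one place your write-up is not yet a proof is the reduction, and the appeal to "a further finite-index refinement to match up the $i$-th coordinate" suggests you have not actually seen why it closes. The issue is concrete: given $d \in D^0$, matching $d$ on the coordinates away from $j$ produces some $h \in H$ with $dh^{-1} \in \Gamma_j$, but nothing forces $dh^{-1}$ into $\Gamma_j^0$ (equivalently $h$ into $D^0$), and shrinking the $\Gamma_\ell^0$ further does not obviously help. As it happens your construction does work, because $p_{\hat\imath}(H) \cap \Gamma_j = \Gamma_j \cap \Gamma_i H$ for $i \neq j$, so your $\Gamma_j^0$ coincides with $\Gamma_j \cap \bigcap_i \Gamma_i H$ --- but the clean way to see fullness, and the way the paper does it, is to set $D' = \bigcap_i \Gamma_i H$ and note that $H \leq D'$, whence $(\Gamma_i \cap D')H = \Gamma_i H \cap D' = D'$ immediately. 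The commutator argument needs only this identity, not a genuine box decomposition of $D'$, though if you insist on one you can pass to $\prod_i(\Gamma_i \cap D')$ and check, using $\Gamma_i \cap D' \leq \prod_\ell(\Gamma_\ell \cap D')$, that the factorisation $d = \gamma h$ stays inside it. You should either supply this verification or replace your reduction with the paper's.
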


\begin{proof}
  Since $H$ has depth $1$, $[D: \Gamma_iH] < \infty$ for each $i$.  Define $D'$ to be the finite-index subgroup $\cap_{i=1}^n \Gamma_i H \leq D$ and, for each $i$, define $\Gamma_i' = \Gamma_i \cap D'$.  Then, for each $k$, $\Gamma_k' H = (\Gamma_k \cap D')H = \Gamma_k H \cap D' = D'$.  Thus $H$ is full in $D'$.

We will show that $H$ is coabelian in $D'$ by demonstrating that, for each $i$, $[\Gamma_i', \Gamma_i'] \leq H$.  Given $i \in \{1, \ldots, n\}$, choose $j, k \in \{1, \ldots, n\}$ so that $i, j, k$ are pairwise distinct.  Then, given $\gamma_1, \gamma_2 \in \Gamma_i'$, there exist $g_1 \in \Gamma_j'$, $g_2 \in \Gamma_k'$ and $h_1, h_2 \in H$ so that $\gamma_1 = g_1 h_1$ and $\gamma_2 = g_2 h_2$.  Thus $[\gamma_1, \gamma_2] = [\gamma_1 g_1^{-1}, \gamma_2 g_2^{-1}] = [h_1, h_2] \in H$.
\end{proof}

We thus have the following corollary to Theorem~\ref{thm6}.  Note that Part~(2) of this result was first proved by Bridson, Howie, Miller and Short \cite{Bridson1}, but our proof is independent of theirs.

\begin{cor} \label{cor3}
  Let $H$ be a depth $1$ subgroup of a direct product $D = \Gamma_1 \times \ldots \times \Gamma_n$, where $n \geq 3$.\begin{enumerate}
      \item If each $\Gamma_i$ is finitely generated then $H$ is finitely generated and the distortion function $\Delta$ of $H$ in $D$ satisfies $\Delta(l) \preccurlyeq l^2$.

      \item If each $\Gamma_i$ is finitely presented then $H$ is finitely presented.

      \item If, furthermore, for each $i$, there exist polynomials $\alpha_i$ and $\rho_i$ such that $(\alpha_i, \rho_i)$ is an area-radius pair for some finite presentation of $\Gamma_i$, then $H$ satisfies a polynomial isoperimetric inequality.
    \end{enumerate}
\end{cor}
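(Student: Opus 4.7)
The plan is to observe that Corollary~\ref{cor3} is a direct consequence of Theorem~\ref{thm6}, once we verify that depth~$1$ subgroups satisfy its hypotheses. By definition, $H$ having depth $1$ in $D$ means exactly that $[D_\mc{S} : p_\mc{S}(H)] < \infty$ for all singleton $\mc{S} \in \mc{L}_n$, i.e.\ that each projection $p_i(H)$ has finite index in $\Gamma_i$; equivalently, by the lemma relating depth and index, each $\Gamma_i H$ has finite index in $D$. Thus $H$ is virtually-full in $D$. Combined with the preceding proposition (which shows that depth~$1$ subgroups of direct products of $n \geq 3$ groups are virtually-coabelian), we conclude that $H$ is both virtually-full and virtually-coabelian in $D$, so Theorem~\ref{thm6} applies.

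With this setup, Parts~(1) and (2) of the corollary are just Parts~(1) and (2) of Theorem~\ref{thm6} read off verbatim: the finite generation of each $\Gamma_i$ (with $n \geq 3 \geq 2$) yields finite generation of $H$ together with the quadratic distortion bound $\Delta(l) \preccurlyeq l^2$, while finite presentability of each $\Gamma_i$ (with $n \geq 3$) yields finite presentability of $H$.

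For Part~(3), I would invoke Part~(3) of Theorem~\ref{thm6} with the given polynomial area-radius pairs $(\alpha_i, \rho_i)$. The corank $r$ of $H$ in $D$ is bounded by $\dim([D',D'] \otimes \Q)$ for a suitable finite-index subgroup $D' \leq D$, hence is a fixed finite number depending only on $D$ (or indeed on $H$). Setting $\alpha(l) = \max(\{l^2\} \cup \{\alpha_i(l)\})$ and $\rho(l) = \max(\{l\} \cup \{\rho_i(l)\})$, both $\alpha$ and $\rho$ are polynomial since the maximum of finitely many polynomials is polynomial. Theorem~\ref{thm6}~(3) then yields $\rho^{2r}\alpha$ as an isoperimetric function for $H$, and this is a product of polynomials, hence itself polynomial.

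The only substantive point requiring care is confirming that the preceding proposition (virtually-coabelianness of depth~$1$ subgroups for $n \geq 3$) is in force; everything else is bookkeeping. There is no real obstacle here since the corollary is designed as an immediate consequence of the machinery already developed, and in particular no new filling-function estimates are required beyond those already carried out for Theorem~\ref{thm6}.
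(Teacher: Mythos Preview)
Your approach is correct and matches the paper's: the corollary is immediate from Theorem~\ref{thm6} once one knows that a depth~$1$ subgroup is virtually-full (stated in the paper as an observation following the definition of depth) and virtually-coabelian (the preceding proposition), and the paper gives no proof beyond the sentence ``We thus have the following corollary to Theorem~\ref{thm6}.''

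One correction, however: your unpacking of depth~$1$ is garbled. By definition, $\Depth(H)=1$ means that $[D_\mc{S}:p_\mc{S}(H)]<\infty$ for all $\mc{S}$ with $|\mc{S}|=n-1$, \emph{not} for all singletons $\mc{S}$; the latter would only say that each $p_i(H)$ has finite index in $\Gamma_i$, which is far weaker. The equivalence with virtual-fullness comes from the identity $[D:\Gamma_iH]=[D_{\mc{S}'}:p_{\mc{S}'}(H)]$ with $\mc{S}'=\{1,\ldots,n\}\smallsetminus\{i\}$ (this is the computation in the proof of the lemma immediately following the definition of depth, and it does not require $H$ to be coabelian). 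Your conclusion ``each $\Gamma_iH$ has finite index in $D$'' is correct, but the route you wrote down to reach it is not. Also, in Part~(3) you presumably meant that $r$ is bounded by $\dim\big((D'/[D',D'])\otimes\Q\big)$, not $\dim([D',D']\otimes\Q)$; in any case the only thing you need is that $r$ is finite, which is immediate since the $\Gamma_i$ are finitely generated.
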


\subsection{Subdirect products of limit groups}

The following conjecture, for which the author of this thesis makes no claims of ownership, has been suggested by various people.

\begin{conj} \label{conj1}
  Let $L_1, \ldots, L_n$ be $n \geq 2$ non-abelian limit groups and let $H$ be a subdirect product of $D = L_1 \times \ldots \times L_n$ that intersects each factor non-trivially.  Let $k$ be an integer $\geq 2$.  Then the following are equivalent:  \begin{enumerate}
  \item $H$ is of type $\mathrm{F}_k$;

  \item $H$ is of type $\mathrm{FP}_k(\Q)$;

  \item $H_i(H'; \Q)$ has finite $\Q$-dimension for all $i \leq k$ and all finite-index subgroups $H' \leq H$;

  \item $\Depth H \leq n-k$.
\end{enumerate}
\end{conj}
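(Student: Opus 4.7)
The plan is to close a cycle of implications $(1) \Rightarrow (2) \Rightarrow (3) \Rightarrow (4) \Rightarrow (1)$. The first two implications are essentially formal. Type $\mathrm{F}_k$ implies $\mathrm{FP}_k(\Z)$ and hence $\mathrm{FP}_k(\Q)$, giving $(1) \Rightarrow (2)$. For $(2) \Rightarrow (3)$, if $H$ is of type $\mathrm{FP}_k(\Q)$ then $H_i(H;\Q)$ is finite-dimensional for $i \leq k$, and any finite-index subgroup $H' \leq H$ is still of type $\mathrm{FP}_k(\Q)$ since restriction of a partial resolution of $\Q$ by finitely generated free $\Q H$-modules to $\Q H'$ remains finitely generated (using that $H'$ has finite index). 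Thus the work is concentrated in $(3) \Rightarrow (4)$ and $(4) \Rightarrow (1)$.

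For $(3) \Rightarrow (4)$, I would adapt the approach of Kouchloukova \cite{Kochloukova1}, who establishes the related implication $(2) \Rightarrow (4)$ via a Lyndon--Hochschild--Serre spectral sequence argument applied to the short exact sequences $1 \to H \cap D_\mc{S} \to H \to p_\mc{S}(H) \to 1$, using that limit groups have free coefficient $L^2$-Betti numbers controlled by rank. The input needed is finite-dimensionality of $H_i(\,\cdot\,;\Q)$ in degrees $i \leq k$, which is what (3) supplies, and the passage to finite-index subgroups in (3) is there precisely to prevent virtual obstructions from arising when one replaces $H$ by its intersection with a finite-index subgroup of $D$. Thus one should be able to rerun Kouchloukova's spectral sequence with the weaker homological hypothesis, invoking (3) at each step where she uses (2), and conclude that $p_\mc{S}(H)$ has finite index in $D_\mc{S}$ for every $\mc{S}$ of size $k$, i.e.\ $\Depth H \leq n-k$.

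The genuinely difficult direction is $(4) \Rightarrow (1)$, and it is here that I expect the main obstacle to lie. The strategy is an induction on the pair $(k,n)$, reducing $H$ to an iterated fibre product so as to feed into a higher-dimensional analogue of the 1-2-3 theorem (Theorem~\ref{thm4}), and then to invoke the structural results of Bridson--Howie--Miller--Short \cite{BridsonUP} for subdirect products of limit groups to realise $H$ (up to finite index) as the kernel of a map $L_1' \times \cdots \times L_m' \to Q$ with $Q$ of type $\mathrm{F}_{k+1}$. The base case $k=1$ is that finitely generated subdirect products exist, and the case $k=2$ with $\Depth H \leq n-2$ is covered by Theorem~\ref{thm4} (or the 1-2-3 theorem) together with the limit-group refinement of Theorem~\ref{thm18}. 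The main obstacle is that an $n$-$(n{+}1)$-$\cdots$-$(n{+}k)$ theorem for limit groups is only partially available in the literature (work of Bridson--Howie--Miller--Short, Kochloukova--Mart\'{\i}nez-P\'{e}rez), and pushing the fibre-product construction through all finiteness types $\mathrm{F}_k$ simultaneously --- in particular producing a $K(H,1)$ with finite $k$-skeleton rather than merely the homological version --- is the essential content of the conjecture and is not known in full generality. A reasonable intermediate goal would be to first establish the homological version $(4) \Rightarrow (2)$ by this inductive fibre-product scheme, and then separately upgrade from $\mathrm{FP}_k$ to $\mathrm{F}_k$ using that limit groups and their direct products are residually finite and coherent in low dimensions.
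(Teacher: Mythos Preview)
The statement you are attempting to prove is labelled \texttt{conj1} in the paper and is explicitly presented as an open conjecture, not a theorem: the author writes that it ``has been suggested by various people'' and then surveys the corroborating evidence without claiming a proof. So there is no proof in the paper for you to be compared against.

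Your analysis of the known implications is largely accurate and in fact matches the paper's own discussion. The chain $(1)\Rightarrow(2)\Rightarrow(3)$ is standard, as you say and as the paper notes. For $(3)\Rightarrow(4)$ you propose to adapt Kochloukova's argument, but the paper records this implication as already established by Kochloukova \cite{Kochloukova1} in full, not merely $(2)\Rightarrow(4)$; so no adaptation is needed there.

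You correctly locate the genuine obstruction in $(4)\Rightarrow(1)$, and your own proposal candidly acknowledges that the required higher 1-2-3 theorem (an ``$n$-$(n{+}1)$-$\cdots$-$(n{+}k)$ theorem'') is not available in general. That acknowledgement is exactly why the statement remains a conjecture: the inductive fibre-product scheme you sketch is the natural line of attack, but carrying it through to produce type $\mathrm{F}_k$ (a finite $k$-skeleton for a $K(H,1)$, not merely $\mathrm{FP}_k$) for arbitrary $k$ is the open content. Your proposal is therefore not a proof but an accurate roadmap of what is known and where the gap lies, and in that respect it agrees with the paper's assessment.
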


Note that it is easy to construct examples demonstrating that each of the 3 conditions ($H$ being subdirect; each $L_i$ being non-abelian; and each intersection $H \cap L_i$ being non-trivial) are necessary for depth to be related to finiteness in this way.

Various results provide corroborating evidence for Conjecture~\ref{conj1}.  It is standard that (1) implies (2) implies (3).  Meinert \cite{mein94} has proved that if the $L_i$ are free and $H$ is coabelian in $D$ then conditions (1), (2) and (4) are equivalent.  Bridson, Howie, Miller and Short \cite{Bridson1} have proved that, in the $k=2$ case, the conditions (1), (2) and (4) are equivalent.  It then follows from standard results that (1) and (2) are equivalent in all cases.  Building on this work, Kochloukova \cite{Kochloukova1} has proved that condition (3) implies condition (4); and that (3) and (4) are equivalent under certain stronger hypotheses.

We have the following corollary to Kochloukova's result.

\begin{cor} \label{cor4}
  Let $L_1, \ldots, L_n$ be non-abelian limit groups, with $n \geq 3$, and let $H$ be a subdirect product of $D = L_1 \times \ldots \times L_n$ that intersects each factor $L_i$ non-trivially.  Suppose that $H$ is of type $\mathrm{FP}_{n-1}(\Q)$.  Then $H$ is finitely presented and satisfies a polynomial isoperimetric inequality, and the distortion function $\Delta$ of $H$ in $D$ satisfies $\Delta(l) \preccurlyeq l^2$.
\end{cor}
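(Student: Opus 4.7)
The plan is to combine Kochloukova's depth bound, quoted in the commentary to Conjecture~\ref{conj1}, with Corollary~\ref{cor3}. First I would invoke that result: because $H$ is subdirect, each $H\cap L_i$ is non-trivial, and $H$ is of type $\mathrm{FP}_{n-1}(\Q)$ with $n-1\ge 2$, Kochloukova's theorem implies that the projection of $H$ to any $n-1$ of the factors has finite-index image, i.e.\ $L_i H$ has finite index in $D$ for every $i$. Equivalently, $\Depth(H)\le 1$.

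If $\Depth(H)=0$ then $H$ has finite index in $D$. Limit groups are finitely presented, so $D$ is finitely presented, and it is standard (and in any case follows from the fact that limit groups admit polynomial isoperimetric inequalities) that $D$ satisfies a polynomial isoperimetric inequality. Lemma~\ref{lem22}~(1) then transfers both finite presentation and the polynomial isoperimetric inequality to $H$, while Lemma~\ref{lem24}~(3) (applied with the trivial subgroup inclusion $H\le H\le D$, or more directly the fact that finite-index subgroups are undistorted) shows that the distortion function of $H$ in $D$ is linear, hence $\preccurlyeq l^{2}$.

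The substantive case is $\Depth(H)=1$, i.e.\ $H$ is virtually-full in $D$. Since $n\ge 3$, Corollary~\ref{cor3}~(1) and~(2) apply and give directly that $H$ is finitely presented and that the distortion function of $H$ in $D$ satisfies $\Delta(l)\preccurlyeq l^{2}$. For the polynomial isoperimetric inequality I would apply Corollary~\ref{cor3}~(3), whose hypothesis is that each factor $L_i$ admits a polynomial area-radius pair on some finite presentation.

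The main obstacle is therefore the following geometric input about limit groups, which is not established elsewhere in the excerpt: every limit group admits a polynomial area-radius pair. I would obtain this by induction down the Makanin--Razborov hierarchy of $L_i$: the basic building blocks (finitely generated free groups, finitely generated free abelian groups, and fundamental groups of closed surfaces of Euler characteristic~$<-1$) visibly admit linear-linear area-radius pairs, and each step of the hierarchy is an amalgamated product or HNN extension over an abelian edge group. A standard van~Kampen surgery argument along the JSJ splittings, together with Proposition~\ref{prop11} to transfer area-radius pairs across presentations, shows that the class of finitely presented groups admitting a polynomial area-radius pair is closed under these constructions. Granting this input, Corollary~\ref{cor3}~(3) furnishes the required polynomial isoperimetric function for $H$, completing the proof.
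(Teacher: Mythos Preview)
Your overall strategy matches the paper's: invoke Kochloukova to get $\Depth(H)\le 1$, then feed this into Corollary~\ref{cor3}. The paper's proof is two sentences long and does exactly this.

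The one substantive difference is how you justify the hypothesis of Corollary~\ref{cor3}~(3), namely that each limit group admits a polynomial area--radius pair. You propose an inductive argument down the Makanin--Razborov hierarchy, with a ``standard van~Kampen surgery'' to show closure of the class under amalgams and HNN extensions over abelian edge groups. The paper instead simply cites that limit groups are $\mathrm{CAT}(0)$ (Alibegovi\'c--Bestvina), and hence admit a quadratic--linear area--radius pair by \cite[Proposition~III.$\Gamma$.1.6]{brid99}. This is far cleaner: your hierarchical argument is plausible but not nearly as routine as you suggest, since controlling \emph{radii} (not just areas) through graph-of-groups surgery is delicate and would require genuine work to pin down. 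The $\mathrm{CAT}(0)$ route bypasses all of this.

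A minor point: your case split between $\Depth(H)=0$ and $\Depth(H)=1$ is unnecessary. A depth-$0$ subgroup is in particular virtually-full (indeed finite-index), so Corollary~\ref{cor3} already covers it; the paper does not separate the cases.
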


\begin{proof}
  Since $H$ is of type $\mathrm{FP}_{n-1}(\Q)$, \cite[Theorem~7]{Kochloukova1} implies that $H$ has depth $1$ in $D$.  The result then follows from Corollary~\ref{cor3} on noting that, since limit groups are CAT(0) \cite{Alibegovic06}, they admit a quadratic-linear area-radius pair \cite[Proposition~III.$\Gamma$.1.6]{brid99}.
\end{proof}

\begin{prop} \label{prop16}
  Let $L_1, \ldots, L_n$ be limit groups and let $H$ be a finitely generated subgroup of the direct product $D = L_1 \times \ldots \times L_n$.  Then there exist non-abelian limit groups $L_1', \ldots, L_{n'}'$, with $n' \leq n$, and there exists a subdirect product $H' \leq D' = L_1' \times \ldots \times L_{n'}'$ with each intersection $L_i' \cap H'$ non-trivial, so that $H' \times A'$ is isomorphic to a finite index subgroup of $H$ for some finitely generated free abelian group $A'$.  Furthermore, if $\Delta$ and $\Delta'$ are the distortion functions of $H$ in $D$ and $H'$ in $D'$ respectively, then $\Delta(l) \preccurlyeq \Delta'(l) + l$.
\end{prop}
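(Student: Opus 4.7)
The plan is to transform $(H \leq D)$ through a sequence of reductions, each either replacing $D$ by a smaller direct product of limit groups or passing from $H$ to a finite-index subgroup, while keeping track of the distortion. First I would replace each $L_i$ by $p_i(H)$; since subgroups of limit groups are limit groups, the new $L_i$'s are still limit groups, and $H$ is now subdirect in $D$. After reordering, say $L_1, \ldots, L_k$ are non-abelian and $L_{k+1}, \ldots, L_n$ are abelian, hence finitely generated free abelian (since limit groups are torsion-free). Write $D = N \times A$ with $N := L_1 \times \cdots \times L_k$ and $A := L_{k+1} \times \cdots \times L_n$, and let $q : H \to N$ be the projection.

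The crucial step is to peel off a free-abelian direct factor of $H$ after passage to a finite-index subgroup. The intersection $Z := H \cap A$ is free abelian and central in $H$, since $A$ commutes with $N$. For each $g \in q(H)$, the fibre $\{a \in A : (g,a) \in H\}$ is a coset of $Z$, giving a well-defined homomorphism $\bar\phi : q(H) \to A/Z$. Write $\bar\phi(q(H)) = T \oplus F$ with $T$ finite torsion and $F$ free abelian, and set $P_1 := \bar\phi^{-1}(F)$, a subgroup of index $|T|$ in $q(H)$. Writing $\pi : A \to A/Z$, the abelian extension $1 \to Z \to \pi^{-1}(F) \to F \to 1$ splits because $F$ is free abelian; composing a section with $\bar\phi|_{P_1}$ produces a homomorphism $\phi : P_1 \to A$ with $\pi \circ \phi = \bar\phi|_{P_1}$. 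The graph $K := \{(g, \phi(g)) : g \in P_1\}$ is then a subgroup of $H$, projects isomorphically onto $P_1$ under $q$, commutes with $Z$ and meets it trivially. Hence $H_0 := KZ \cong P_1 \times Z$ is a finite-index subgroup of $H$.

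I would next apply the first reduction to $P_1 \leq N$, taking $L_i' := p_i(P_1)$ for $i \leq k$: each $L_i'$ is a finite-index subgroup of $L_i$, hence a limit group, and it is non-abelian because non-abelian limit groups are not virtually abelian. Now $P_1$ is subdirect in $L_1' \times \cdots \times L_k'$, and whenever some intersection $P_1 \cap L_i'$ is trivial, Lemma~\ref{lem25} lets us replace $P_1$ by its isomorphic projection onto the remaining factors and discard $L_i'$. This terminating process yields a subdirect product $H' \cong P_1$ in $D' = L_1' \times \cdots \times L_{n'}'$ ($n' \leq n$) of non-abelian limit groups with every $H' \cap L_i'$ non-trivial; setting $A' := Z$ gives $H' \times A' \cong H_0$, a finite-index subgroup of $H$.

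For the distortion bound, Corollary~\ref{cor1} reduces the task to bounding the distortion of $H_0$ in $D$. With respect to the natural generating set of $H_0 \cong P_1 \times Z$, any element $h = (g, \phi(g)z) \in H_0$ satisfies $|h|_{H_0} \leq |g|_{P_1} + |z|_Z$, while $|g|_N \leq |h|_D$ and, since subgroups of finitely generated free abelian groups are undistorted and $\phi$ is Lipschitz, $|z|_Z \preccurlyeq |h|_D + |g|_{P_1}$. The distortion of $P_1$ in $N$ is $\preccurlyeq \Delta'$ up to constants, by tracking through the finite-index changes (Corollary~\ref{cor1}) and factor-removal steps (Lemma~\ref{lem25}); substituting $|g|_{P_1} \preccurlyeq \Delta'(|h|_D)$ into the previous inequality yields $\Delta(l) \preccurlyeq \Delta'(l) + l$. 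The main obstacle is the splitting step: torsion in $\bar\phi(q(H))$ obstructs a direct lift of $\bar\phi$, forcing the passage to the finite-index subgroup $P_1$.
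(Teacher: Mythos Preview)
Your splitting construction is correct and takes a route dual to the paper's: rather than passing to a finite-index subgroup $\bar A\leq A$ in which $Z=H\cap A$ becomes a direct summand and then projecting along a complement, you pass to a finite-index subgroup $P_1\leq q(H)$ over which the cocycle $\bar\phi:q(H)\to A/Z$ lifts to $A$, and take the graph of the lift as a complement to $Z$. Both produce an internal direct-product decomposition $K\times Z$ of a finite-index subgroup of $H$, and your treatment of the subsequent factor-removal and finite-index bookkeeping is fine.

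There is, however, a genuine gap in the distortion argument. At the very first step you replace each $L_i$ by $p_i(H)$, and everything that follows bounds the distortion of $H$ (and then $H_0$) in the \emph{new} ambient product $p_1(H)\times\cdots\times p_n(H)$. To conclude the stated inequality for $\Delta$, the distortion of $H$ in the \emph{original} $D=L_1\times\cdots\times L_n$, you need $p_1(H)\times\cdots\times p_n(H)$ to be undistorted in $D$, equivalently each $p_i(H)$ to be undistorted in $L_i$. This is not automatic; it is a theorem about limit groups (finitely generated subgroups of limit groups are undistorted, \cite[Corollary~3.12]{Wilton1}), and the paper invokes it explicitly at this point. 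You do not, so as written your bound is on the wrong distortion function. A minor related point: ``subgroups of limit groups are limit groups'' should read ``finitely generated subgroups''.
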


\begin{proof}
  If one of the intersections $L_i \cap H$ is trivial then the projection homomorphism $q_i : D \rightarrow L_1 \times \ldots \times L_{i-1} \times L_{i+1} \times \ldots \times L_n$ is injective on $H$.  Thus $H$ is isomorphic to a subgroup $q_i(H) \leq L_1 \times \ldots \times L_{i-1} \times L_{i+1} \times \ldots \times L_n$ and, by Lemma~\ref{lem25}, the distortion of $H$ in $D$ is at most the distortion of $q_i(H)$ in $q_i(D)$.  Thus, without loss of generality, we may assume that each of the intersections $L_i \cap H$ is non-trivial.

  For each $i$, let $p_i : D \rightarrow L_i$ be the projection homomorphism onto the factor $L_i$.  Since $H$ is finitely generated, each $p_i(H)$ is finitely generated and is thus a limit group.  By \cite[Corollary~3.12]{Wilton1}, $p_i(H)$ is undistorted in $L_i$.  Thus, by Lemma~\ref{lem24}~(1), we may assume that $H$ projects onto each $L_i$.

  If all of the $L_i$ are non-abelian then the proposition is proved.  Otherwise, define $A$ to be the direct product of those $L_i$ which are abelian, and let $L_1', \ldots, L_{n'}'$ be those $L_i$ (in some order) which are non-abelian.  Define $D' = L_1' \times \ldots \times L_{n'}'$.  Then $A$ is finitely generated free abelian and $H$ is a subdirect product of $D' \times A$ with each intersection $L_i' \cap H$ non-trivial and the intersection $A \cap H$ non-trivial.  Since $A$ is finitely generated free abelian, $A \cap H$ is a direct factor of some finite-index subgroup $\bar{A} \leq A$.  Define $K$ to be the finite-index subgroup $(D' \times \bar{A}) \cap H$ of $H$ and note that $K \leq D' \times \bar{A}$ and that $\bar{A} \cap K = A \cap H$ is a direct factor of $\bar{A}$.  Let $C$ be a choice of complement of $\bar{A} \cap K$ in $\bar{A}$ and define $\lambda$ to be the projection homomorphism $D' \times \bar{A} = D' \times (\bar{A} \cap K) \times C \rightarrow D' \times (\bar{A} \cap K)$.  Note that $\lambda$ is injective on $K$ and that $\lambda(K) = H' \times A'$ where $H'$ is the image of $K$ under the projection $D' \times \bar{A} \rightarrow D'$ and $A' = \bar{A} \cap K$.

  Given a pair of finitely generated groups $G_1 \leq G_2$, we write $\Delta_{G_1}^{G_2}$ for the distortion function of $G_1$ in $G_2$ (defined up to $\approx$-equivalence).  Then, applying Corollary~\ref{cor1} and Lemma~\ref{lem25}, we have that $\Delta = \Delta_H^D \approx \Delta_K^{D' \times \bar{A}} \preccurlyeq \Delta_{\lambda(K)}^{D' \times (\bar{A} \cap K)} = \Delta_{H' \times A'}^{D' \times A'} \approx \Delta_{H'}^{D'} + \Delta_{A'}^{A'} = \Delta' + \Delta_{A'}^{A'}$.  Thus the proof is complete on noting that for any group $G$, $\Delta_G^G(l) = l$.
\end{proof}

\begin{lem} \label{lem30}
  Let $H$ be a subgroup of a direct product $D$ of at most $2$ limit groups and suppose that $H$ is of type $\mathrm{FP}_{2}(\Q)$.  Then $H$ is finitely presented, satisfies a quadratic isoperimetric inequality, and is undistorted in $D$.
\end{lem}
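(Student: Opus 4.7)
The plan is to apply Proposition~\ref{prop16} to split $H$, up to finite index, into a product $H' \times A'$ where $H'$ is a subdirect product of $n' \leq 2$ non-abelian limit groups $L_1',\ldots,L_{n'}'$ intersecting each factor non-trivially, and $A'$ is finitely generated free abelian, and then to analyse $H'$ case by case in $n'$. Since $\mathrm{FP}_2(\Q)$ is preserved under passage to finite-index subgroups and under splitting off an $\mathrm{FP}_\infty$ direct factor such as $A'$, the hypothesis transfers to give that $H'$ is of type $\mathrm{FP}_2(\Q)$.

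I would then handle the three possibilities for $n'$. If $n'=0$, $H'$ is trivial; if $n'=1$, the subdirect condition forces $H'=L_1'$; in the remaining case $n'=2$, the $k=2$ instance of Conjecture~\ref{conj1} — i.e., the limit-group analogue of the Baumslag--Roseblade theorem due to Bridson, Howie, Miller and Short \cite{Bridson1} — gives that any $\mathrm{FP}_2$ subdirect product of two non-abelian limit groups intersecting each factor non-trivially has depth $0$, hence finite index in the ambient direct product. In every case, therefore, $H' \times A'$ is isomorphic to a finite-index subgroup of $D' \times A'$, itself a direct product of at most two limit groups and a finitely generated free abelian group.

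Finite presentation and a quadratic isoperimetric inequality for $H$ then follow. Limit groups are CAT(0) \cite{Alibegovic06} and hence finitely presented with quadratic Dehn function via \cite[Proposition~III.$\Gamma$.1.6]{brid99}; finitely generated free abelian groups satisfy a quadratic isoperimetric inequality; and the Dehn function of a direct product of two finitely presented groups is bounded, up to $\simeq$-equivalence, by the sum of the Dehn functions of the factors together with a quadratic term (from sorting letters using the commutation relations). So $D' \times A'$ is finitely presented with quadratic Dehn function, and Lemma~\ref{lem22}(1) transfers this to the finite-index subgroup of $H$ isomorphic to $H' \times A'$, and hence to $H$ itself.

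For undistortion, the distortion function $\Delta'$ of $H'$ in $D'$ is linear in each of the three cases: trivially for $n' \leq 1$ (as then $H' = D'$), and for $n'=2$ by Lemma~\ref{lem24}(2) because $H'$ has finite index in $D'$. Proposition~\ref{prop16} then yields that the distortion $\Delta$ of $H$ in $D$ satisfies $\Delta(l) \preccurlyeq \Delta'(l) + l \preccurlyeq l$, so $H$ is undistorted. The only place where nontrivial external input enters is the $n'=2$ step, where the limit-group Baumslag--Roseblade theorem is used to force $H'$ to have finite index in $L_1' \times L_2'$; the remainder is routine bookkeeping through Proposition~\ref{prop16} and the standard properties of Dehn functions and distortion.
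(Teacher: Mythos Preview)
Your argument is correct but takes a different route from the paper. The paper's proof is a two-line appeal to an external result: by \cite[Lemma~7]{Bridson07}, any $\mathrm{FP}_2(\Q)$ subgroup of a direct product of at most two limit groups is a \emph{virtual retract} of $D$; since $D$ is CAT(0) and hence has quadratic Dehn function, and since (virtual) retracts inherit isoperimetric functions and are undistorted, the conclusion is immediate.

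Your approach instead stays within the machinery already developed in the thesis: you invoke Proposition~\ref{prop16} to strip off abelian factors and pass to a subdirect product $H'$ of $n'\leq 2$ non-abelian limit groups with non-trivial intersections, then use the $k=2$ case of the Bridson--Howie--Miller--Short theorem (equivalence of (2) and (4) in Conjecture~\ref{conj1}) to force $H'$ to have finite index in $D'$ when $n'=2$. This is perfectly valid and has the advantage of avoiding the separate reference \cite{Bridson07}, relying only on \cite{Bridson1} which the paper already uses elsewhere. The cost is a longer argument with more bookkeeping. One minor remark: your citation of Lemma~\ref{lem24}(2) for ``finite index implies undistorted'' is not quite the right pointer---that lemma is a transitivity statement---but the fact itself is stated explicitly in Section~\ref{sec9} just before Lemma~\ref{lem24}, so this is cosmetic.
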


\begin{proof}
  Since limit groups are CAT(0) \cite{Alibegovic06} they admit quadratic isoperimetric functions \cite[Proposition~III.$\Gamma$.1.6]{brid99}.  Thus $D$ admits a quadratic isoperimetric function.  By \cite[Lemma~7]{Bridson07}, $H$ is a virtual retract of $D$ and so the result follows immediately.
\end{proof}

\begin{thm} \label{thm19}
  Let $L_1, \ldots, L_n$ be limit groups and let $H$ be a subgroup of the direct product $D = L_1 \times \ldots \times L_n$.  Suppose that $H$ is of type $\mathrm{FP}_{m}(\Q)$, where $m = \max \{2, n-1 \}$.  Then $H$ is finitely presented and satisfies a polynomial isoperimetric inequality, and the distortion function $\Delta$ of $H$ in $D$ satisfies $\Delta(l) \preccurlyeq l^2$.
\end{thm}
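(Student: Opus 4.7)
The plan is to reduce to one of the two previously proved results, Corollary~\ref{cor4} (which handles subdirect products of $\geq 3$ non-abelian limit groups with non-trivial intersections, using the FP$_{n-1}(\Q)$ hypothesis) and Lemma~\ref{lem30} (which handles the $\leq 2$-factor case). The bridge between the general statement of Theorem~\ref{thm19} and these special cases is Proposition~\ref{prop16}, which strips away abelian factors and trivial intersections at the expense of passing to a finite-index subgroup.

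First I would dispose of the small cases $n \leq 2$. Here $m = 2$, so $H$ is of type FP$_2(\Q)$, and Lemma~\ref{lem30} applies directly: $H$ is finitely presented, satisfies a quadratic (hence polynomial) isoperimetric inequality, and is undistorted in $D$, which certainly implies $\Delta(l) \preccurlyeq l^2$. Now assume $n \geq 3$, so $m = n-1$. Since $H$ is of type FP$_2(\Q)$, it is in particular finitely generated, so Proposition~\ref{prop16} produces non-abelian limit groups $L_1', \ldots, L_{n'}'$ with $n' \leq n$, a subdirect product $H' \leq D' := L_1' \times \cdots \times L_{n'}'$ intersecting each factor non-trivially, and a finitely generated free abelian group $A'$ such that $H' \times A'$ is isomorphic to a finite-index subgroup $\tilde H \leq H$. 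Since $\tilde H$ has finite index in $H$, it is itself of type FP$_{n-1}(\Q)$; and $H'$, being a retract of $H' \times A' \cong \tilde H$, therefore inherits type FP$_{n-1}(\Q)$ (hence FP$_{n'-1}(\Q)$ since $n' \leq n$).

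Now split into two sub-cases. If $n' \leq 2$, then Lemma~\ref{lem30} applied to $H' \leq D'$ gives that $H'$ is finitely presented, satisfies a quadratic isoperimetric inequality, and is undistorted in $D'$. If $n' \geq 3$, then Corollary~\ref{cor4} applies (the FP$_{n'-1}(\Q)$ hypothesis is satisfied since $n'-1 \leq n-1$), giving that $H'$ is finitely presented, satisfies a polynomial isoperimetric inequality, and its distortion function $\Delta'$ in $D'$ satisfies $\Delta'(l) \preccurlyeq l^2$. In either sub-case, $H'$ is finitely presented with polynomial Dehn function and $\Delta'(l) \preccurlyeq l^2$. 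Since $A'$ is finitely generated free abelian, it is finitely presented with quadratic Dehn function. The Dehn function of a direct product of two finitely presented groups with polynomial Dehn functions is easily seen to be bounded by a polynomial (for instance, by a standard ``shuffling'' argument combining the two expressions with commutator relations), so $H' \times A' \cong \tilde H$ is finitely presented and satisfies a polynomial isoperimetric inequality. By Lemma~\ref{lem22}\,(1), the same is true of $H$.

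For the distortion statement, Proposition~\ref{prop16} gives $\Delta(l) \preccurlyeq \Delta'(l) + l$, where $\Delta$ is the distortion of $H$ in $D$. By the analysis above, $\Delta'(l) \preccurlyeq l^2$ in either sub-case (in the $n' \leq 2$ branch $\Delta'$ is even linear), so $\Delta(l) \preccurlyeq l^2$ as required. I expect the only place needing care to be the bookkeeping around inheriting the FP$_m(\Q)$ condition when passing to $H'$ via a direct-factor projection, and the standard (but not previously stated in this excerpt) fact that a direct product of polynomially-filling finitely presented groups still fills polynomially; the former is immediate from closure of FP$_m(\Q)$ under retracts, and the latter is routine, so no serious obstacle is anticipated.
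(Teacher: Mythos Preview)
Your proposal is correct and follows essentially the same route as the paper: reduce via Proposition~\ref{prop16} to a subdirect product $H'$ of non-abelian limit groups with non-trivial intersections (times a free abelian factor, up to finite index), then apply Lemma~\ref{lem30} when $n' \leq 2$ and Corollary~\ref{cor4} when $n' \geq 3$, and use Proposition~\ref{prop16} for the distortion bound. The paper compresses the argument by simply invoking the closure of finite presentability, type $\mathrm{FP}_m(\Q)$, and polynomial isoperimetric inequality under taking direct products with finitely generated free abelian groups and under passage to and from finite-index subgroups, whereas you spell out the case analysis and the retract argument explicitly; the content is the same.
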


\begin{proof}
  If $A$ is a finitely generated free abelian group and $G$ is an arbitrary group, then each of the following three group-theoretic properties is possessed by $G$ if and only if it is possessed by $G \times A$: being finitely presented;  being of type $\mathrm{FP}_m(\Q)$; and satisfying a polynomial isoperimetric inequality.  Furthermore, each of these three properties is preserved under passage to finite index subgroups and finite index extensions.  The theorem thus follows directly from Corollary~\ref{cor4}, Proposition~\ref{prop16} and Lemma~\ref{lem30}.
\end{proof}

Note that the assertion that a subgroup of a direct product of $3$ limit groups that is of type $\mathrm{FP}_2(\Q)$ is finitely presented was first obtained by Bridson, Howie, Miller and Short \cite{Bridson1}.

\begin{cor}
  Let $H$ be a finitely presented subgroup of a direct product $D$ of at most $3$ limit groups.  Then $H$ satisfies a polynomial isoperimetric inequality and the distortion function $\Delta$ of $H$ in $D$ satisfies $\Delta(l) \preccurlyeq l^2$.
\end{cor}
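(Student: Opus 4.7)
The plan is to reduce the corollary directly to Theorem~\ref{thm19}. The key observation is that for $n \leq 3$, the hypothesis on the finiteness type required by Theorem~\ref{thm19} collapses to the mildest possible condition, namely type $\mathrm{FP}_2(\Q)$, since $m = \max\{2, n-1\} = 2$ whenever $n \leq 3$. So I only need to verify that a finitely presented $H$ is automatically of type $\mathrm{FP}_2(\Q)$, which is standard: a finite presentation for $H$ gives rise to the beginning of a free resolution $\Z H^{(2)} \to \Z H^{(1)} \to \Z H \to \Z \to 0$ with finitely generated modules in degrees $0, 1, 2$, and tensoring with $\Q$ preserves this.

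First I would dispose of the trivial cases. If $D$ has $0$ factors the statement is vacuous, and if $n = 1$, then $H$ is a finitely generated subgroup of a limit group; by \cite[Corollary~3.12]{Wilton1} (cited in the proof of Proposition~\ref{prop16}) such a subgroup is undistorted, and by Lemma~\ref{lem30} (or more directly by the CAT(0) structure on limit groups) it satisfies a quadratic isoperimetric inequality. This already appears within the proof of Theorem~\ref{thm19} via Lemma~\ref{lem30}, so I can just invoke those.

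For $n \in \{2,3\}$, I apply Theorem~\ref{thm19} with $m = 2$. Since $H$ is finitely presented it is of type $\mathrm{F}_2$ and hence of type $\mathrm{FP}_2(\Z)$, and therefore of type $\mathrm{FP}_2(\Q)$. Theorem~\ref{thm19} then immediately yields both conclusions: $H$ satisfies a polynomial isoperimetric inequality and $\Delta(l) \preccurlyeq l^2$.

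There is no real obstacle — the corollary is essentially a restatement of the $n \leq 3$ case of Theorem~\ref{thm19}, noting only that the finiteness hypothesis is automatic. The one point worth flagging is to be explicit that finite presentability implies $\mathrm{FP}_2(\Q)$ (rather than citing it as obvious), since the reader might otherwise wonder why no additional hypothesis on homological finiteness is being imposed.
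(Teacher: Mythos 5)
Your proposal is correct and is exactly the paper's (implicit) argument: the corollary is stated as an immediate consequence of Theorem~\ref{thm19}, since for $n \leq 3$ one has $m = \max\{2, n-1\} = 2$ and finite presentability implies type $\mathrm{FP}_2(\Q)$. Your separate handling of $n \leq 1$ is harmless but unnecessary, as Theorem~\ref{thm19} already covers those cases.
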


\section{A class of full coabelian subdirect products of free groups} \label{sec10}

In this section we study a class of full, coabelian subdirect products of free groups that have particularly regular structure.  We focus in detail on the member $K^3_2(2)$ of this class; this group is singled out as it is the simplest subdirect product of free groups which is not already well-understood. We derive a finite presentation for $K^3_2(2)$ and prove that its Dehn function $\delta$ satisfies $\delta(l) \succeq l^3$.  This is the first known example of a subdirect product of free groups that has Dehn function growing faster than that of the ambient direct product.

\subsection{Defining the class}

We first fix some notation which will be used throughout the section. Given integers $i, m \in \N$ let $F^{(i)}_m$ be the rank $m$ free group with basis $e^{(i)}_1, \ldots, e^{(i)}_m$.  Given an integer $r \in \N$ let $\Z^r$ be the rank $r$ free abelian group with basis $t_1, \ldots, t_r$.

Given positive integers $n, m \geq 1$ and $r \leq m$ we wish to define a group $K^n_m(r)$ to be the kernel of a homomorphism $\theta: F^{(1)}_m \times \ldots \times F^{(n)}_m \rightarrow \Z^r$ whose restriction to each factor $F^{(i)}_m$ is surjective.  For fixed $n$, $m$ and $r$, the isomorphism class of the group $K^n_m(r)$ is, up to an automorphism of the factors of the ambient group $F^{(1)}_m \times \ldots \times F^{(n)}_m$, independent of the homomorphism $\theta$. This is proved by the following lemma.

\begin{lem} \label{lem26}
Let $F$ be a rank $m$ free group.  Given a surjective homomorphism
$\phi : F \rightarrow \Z^r$ there exists a basis $e_1, \ldots, e_m$
of $F$ so that
\begin{equation*} \phi(e_i) = \begin{cases} t_i & \text{if $1 \leq
i \leq r$} \\ 0 & \text{if $r+1 \leq i \leq m$.} \end{cases}
\end{equation*}
\end{lem}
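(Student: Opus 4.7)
The plan is to reduce to the analogous statement for free abelian groups via the abelianisation map, and then lift a suitably chosen basis back to $F$.

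First, I would observe that, since $\Z^r$ is abelian, $\phi$ factors through the abelianisation map $\pi : F \rightarrow F^{\mathrm{ab}} \cong \Z^m$ as $\phi = \bar\phi \circ \pi$, where $\bar\phi : \Z^m \rightarrow \Z^r$ is a surjective homomorphism of free abelian groups. Since $\Z^r$ is free abelian, the short exact sequence $0 \rightarrow \ker\bar\phi \rightarrow \Z^m \rightarrow \Z^r \rightarrow 0$ splits, so $\ker \bar\phi$ is a direct summand of $\Z^m$ of rank $m - r$ and admits a complement $V \leq \Z^m$ with $\bar\phi|_V : V \rightarrow \Z^r$ an isomorphism. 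Setting $\bar e_i = (\bar\phi|_V)^{-1}(t_i)$ for $1 \leq i \leq r$ and choosing any basis $\bar e_{r+1}, \ldots, \bar e_m$ of $\ker \bar\phi$ produces a basis $\bar e_1, \ldots, \bar e_m$ of $\Z^m$ with $\bar\phi(\bar e_i) = t_i$ for $1 \leq i \leq r$ and $\bar\phi(\bar e_i) = 0$ for $r+1 \leq i \leq m$.

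Next, I would lift this abelian basis to a basis of $F$. Fix any basis $x_1, \ldots, x_m$ of $F$, so that $\pi(x_1), \ldots, \pi(x_m)$ is the standard basis of $\Z^m$. The key ingredient is the classical fact that the natural map $\Aut(F) \rightarrow \Aut(\Z^m) = GL_m(\Z)$ induced by $\pi$ is surjective: $GL_m(\Z)$ is generated by elementary matrices, and each elementary matrix lifts to a Nielsen transformation on the basis $x_1, \ldots, x_m$. Applying this to the element of $GL_m(\Z)$ which carries $\pi(x_i)$ to $\bar e_i$ produces an automorphism $\alpha \in \Aut(F)$ with $\pi(\alpha(x_i)) = \bar e_i$. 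Then $e_i := \alpha(x_i)$ is the required basis of $F$, since $\phi(e_i) = \bar\phi(\pi(e_i)) = \bar\phi(\bar e_i)$ has the prescribed values.

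The only real work is in the second paragraph, but both the construction of the desired basis of $\Z^m$ (from the splitting of surjections onto free abelian groups) and the surjectivity of $\Aut(F) \twoheadrightarrow GL_m(\Z)$ are classical, so no substantive obstacle is anticipated.
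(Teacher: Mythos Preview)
Your argument is correct and follows essentially the same route as the paper: factor through the abelianisation, use the splitting of the surjection onto $\Z^r$ to find a suitable basis of $\Z^m$, and then lift it to a basis of $F$ via the surjectivity of $\Aut(F)\to GL_m(\Z)$ (elementary matrices lift to Nielsen transformations). The only cosmetic difference is that you phrase the splitting via the short exact sequence whereas the paper writes the direct sum decomposition $A = A_1 \oplus A_2$ explicitly.
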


\begin{proof}
$\phi$ factors through the abelianisation homomorphism $\Ab : F
\rightarrow A$, where $A$ is the rank $m$ free abelian group $F /
[F, F]$, as $\phi = \bar{\phi} \circ \Ab$ for some homomorphism
$\bar{\phi} : A \rightarrow \Z^r$. Since $\bar{\phi}$ is surjective
$A$ splits as $A_1 \oplus A_2$ where $\bar{\phi}$ is an isomorphism
on the first factor and $0$ on the second factor. There thus exists
a basis $s_1, \ldots, s_m$ for $A$ so as
\begin{equation*} \bar{\phi}(s_i) = \begin{cases} t_i & \text{if
$1 \leq i \leq r$} \\ 0 & \text{if $r+1 \leq i \leq m$.}
\end{cases} \end{equation*}

We claim that the $s_i$ lift under $\Ab$ to a basis for $F$.  To see
this let $f_1, \ldots , f_m$ be any basis for $F$ and let
$\bar{f}_1, \ldots, \bar{f}_m$ be its image under $\Ab$, a basis for
$A$.  Let $\rho \in \Aut(A)$ be the change of basis isomorphism from
$\bar{f}_1, \ldots, \bar{f}_m$ to $s_1, \ldots, s_m$.  It suffices
to show that this lifts under $\Ab$ to an automorphism of $F$.  But
this is certainly the case since $\Aut(A) \cong GL_m(\Z)$ is
generated by the elementary transformations and each of these
obviously lifts to an automorphism.
\end{proof}

\begin{defn} \label{def2}
For integers $n, m \geq 1$ and $r \leq m$ define $K^n_m(r)$ to be
the kernel of the homomorphism $\theta: F^{(1)}_m \times \ldots
\times F^{(n)}_m \rightarrow \Z^r$ given by
\begin{equation*} \theta(e^{(i)}_j) =
\begin{cases} t_j & \text{if $1 \leq j \leq r$} \\ 0 & \text{if $r+1
\leq j \leq m$.}
\end{cases} \end{equation*}
\end{defn}

Note that $K^n_2(1)$ is the $n^\text{th}$ Stallings-Bieri group ${\rm SB}_n$.  By a result in Section~1.6 of \cite{mein94}, if $r \geq 1$ and $m \geq 2$ then $K^n_m(r)$ is of type $\mathrm{F}_{n-1}$ but not of type $\mathrm{FP}_{n}$.

\begin{prop} \mbox{}
  \begin{enumerate}
    \item If $n \geq 2$, then $K^n_m(r)$ is finitely generated and has distortion function $\Delta$ in $F^{(1)}_m \times \ldots \times F^{(n)}_m$ satisfying $\Delta(l) \preccurlyeq l^2$.

    \item If $n \geq 3$, then $K^n_m(r)$ is finitely presented and has Dehn function $\delta$ satisfying $\delta(l) \preccurlyeq l^{2 + 2r}$.

    \item If $n \geq \{3, 2r\}$, then $K^n_m(r)$ is finitely presented and has Dehn function $\delta$ satisfying $\delta(l) \preceq l^5$.
   \end{enumerate}
\end{prop}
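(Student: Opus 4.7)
The plan is to recognise that $K^n_m(r)$ is, by Definition~\ref{def2}, a full, coabelian subgroup of the ambient direct product $D = F^{(1)}_m \times \cdots \times F^{(n)}_m$, with corank $r$ and quotient $D/K^n_m(r)$ free abelian of rank $r$, and then to read off the three conclusions as the three substantive parts of Theorem~\ref{thm6} applied to this setup. To see the structural hypothesis, observe that fullness holds since the restriction of $\theta$ to each $F^{(i)}_m$ is surjective (so $F^{(i)}_m \cdot K^n_m(r) = D$ for every $i$), coabelianness holds since $\Z^r$ is abelian (so $[D,D] \leq \ker \theta$), and the corank equals $r$ by construction.

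For Part~(1), apply Theorem~\ref{thm6}~(1) using $n \geq 2$ and the finite generation of each free group factor to conclude that $K^n_m(r)$ is finitely generated with distortion $\Delta(l) \preccurlyeq l^2$. For Part~(2), I propose to invoke Theorem~\ref{thm6}~(3) with $n \geq 3$, taking as a finite presentation of each $F^{(i)}_m$ the empty-relator presentation $\langle e^{(i)}_1, \ldots, e^{(i)}_m \mid \emptyset \rangle$: the only null-homotopic words over this presentation are those freely equal to the empty word, so the pair of identically zero functions is an area-radius pair and I may set $\alpha_i \equiv 0$, $\rho_i \equiv 0$. The functions defined in Theorem~\ref{thm6}~(3) then reduce to $\alpha(l) = l^2$ and $\rho(l) = l$, giving $\rho(l)^{2r}\alpha(l) = l^{2r+2}$ as an isoperimetric function for $K^n_m(r)$, whence $\delta(l) \preccurlyeq l^{2r+2}$.

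For Part~(3), I apply Theorem~\ref{thm6}~(4) under $n \geq \max\{3, 2r\}$. The two auxiliary groups $F^{(1)}_m \times \cdots \times F^{(n-r)}_m$ and $F^{(n-r+1)}_m \times \cdots \times F^{(n)}_m$ are each finite direct products of free groups, so each acts properly and cocompactly by isometries on the corresponding product of trees. That product is CAT(0), and hence each of these two groups admits a quadratic isoperimetric function (by the standard CAT(0) filling inequality). Thus I may take $\beta_1, \beta_2 \simeq l^2$, from which the function $\beta(l) = l\beta_1(l^2) + \beta_2(l) \simeq l \cdot l^4 + l^2 \simeq l^5$ promised by Theorem~\ref{thm6}~(4) is an isoperimetric function for $K^n_m(r)$, proving $\delta(l) \preceq l^5$.

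Since every step is a direct appeal to Theorem~\ref{thm6} and standard facts about free groups and CAT(0) groups, there is essentially no obstacle; the only points requiring a moment's thought are the (slightly degenerate) choice of area-radius pair for a free group in Part~(2) and the standard observation that direct products of free groups have quadratic Dehn function in Part~(3).
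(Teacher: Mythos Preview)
Your proposal is correct and follows essentially the same route as the paper: both arguments simply verify that $K^n_m(r)$ is full and coabelian of corank $r$ in the ambient direct product and then read off Parts~(1)--(3) directly from Theorem~\ref{thm6}. The only cosmetic differences are that the paper quotes a \emph{linear} area-radius pair for a free group (rather than your identically-zero pair) and asserts the quadratic Dehn function of a direct product of free groups without invoking CAT(0); in both cases the maxima $\alpha(l)=l^2$, $\rho(l)=l$ and the resulting bounds $l^{2+2r}$ and $l^5$ come out the same.
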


\begin{proof}
  This follows immediately from Theorem~\ref{thm6}.  For (2), note that a finitely generated free group admits an area-radius pair $(\alpha, \rho)$ with $\alpha$ and $\rho$ linear.  For (3), note that a direct products of finitely generated free groups has Dehn function $d$ satisfying $d(l) \leq Cl^2$, for some $C \in \N$.
\end{proof}

\subsection{A splitting theorem}

Given a collection of groups $M, L_1, \ldots, L_r$ with $M \leq L_i$
for each $i$, we denote by $\hast_{i=1}^r(L_i \, ; \, M)$ the
amalgamated product $L_1 \ast_M \ldots \ast_M L_r$.

\begin{thm} \label{thm14}
If $n \geq 2$ and $r \geq 1$ then $$K^n_m(r) \cong \Big[
\hast_{k=1}^r(L_k \, ; \, M) \Big] \enspace \underset{M}{\hast}
\enspace \Big[ M \times F_{m-r} \Big]$$ where $F_{m-r}$ is a rank
$m-r$ free group, $M = K^{n-1}_m(r)$, and for each $k=1, \ldots, r$
the group $L_k \cong K^{n-1}_m(r-1)$ is the kernel of the
homomorphism $$\theta_k : F^{(1)}_m \times \ldots \times F^{(n-1)}_m
\rightarrow \Z^{r-1}$$ given by
\begin{equation*} \theta_k(e_j^{(i)}) =
\begin{cases} t_j & \text{if $1 \leq j \leq k-1$,} \\
0  & \text{if $j = k$,} \\ t_{j-1} & \text{if $k+1 \leq j \leq r$,}
\\ 0 & \text{if $r+1 \leq j \leq m$.}
\end{cases}
\end{equation*}
\end{thm}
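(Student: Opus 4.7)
The plan is to exhibit $K^n_m(r)$ as the fundamental group of a graph of groups obtained by pulling back a natural free product decomposition of $F^{(n)}_m$ via the projection onto the last factor. Write elements of $D = D_{n-1} \times F^{(n)}_m$ as pairs $(d, f)$, where $D_{n-1} = F^{(1)}_m \times \cdots \times F^{(n-1)}_m$, and let $p_n : K^n_m(r) \to F^{(n)}_m$ denote the restriction of the projection onto the second factor. Since $\theta|_{D_{n-1}}$ is surjective, for any $f \in F^{(n)}_m$ there exists $d \in D_{n-1}$ with $\theta(d) = -\theta(f)$, so $p_n$ is surjective; its kernel is $M = K^{n-1}_m(r)$.

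Next I will decompose $F^{(n)}_m$ as the free product $\langle e_1^{(n)}\rangle * \cdots * \langle e_r^{(n)}\rangle * F_{m-r}$, where $F_{m-r} := \langle e_{r+1}^{(n)}, \ldots, e_m^{(n)}\rangle$. This realises $F^{(n)}_m$ as the fundamental group of a star-shaped graph of groups with trivial central vertex and edge groups and with outer vertex groups $\langle e_1^{(n)}\rangle, \ldots, \langle e_r^{(n)}\rangle, F_{m-r}$. Letting $K^n_m(r)$ act on the associated Bass-Serre tree $T$ through $p_n$, standard Bass-Serre theory produces a decomposition of $K^n_m(r)$ as the fundamental group of a graph of groups over the same underlying graph $T/F^{(n)}_m$, with every vertex and edge group replaced by its $p_n$-preimage. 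The edge groups and central vertex group all become $M$, while the outer vertex attached to $F_{m-r}$ acquires group $\{(d,f) : f \in F_{m-r},\ \theta(d) = 0\} = M \times F_{m-r}$, a genuine direct product since $D_{n-1}$ and $F^{(n)}_m$ commute.

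It remains to identify $L_k := p_n^{-1}(\langle e_k^{(n)}\rangle)$ with $K^{n-1}_m(r-1)$. To this end I will define $\Psi_k : L_k \to D_{n-1}$ by $(d, (e_k^{(n)})^j) \mapsto d$; this is a homomorphism thanks to the commutation between $D_{n-1}$ and $F^{(n)}_m$. The defining relation of $L_k$ forces $\theta(d) = -j\,t_k$, so $\theta(d)$ has all coordinates zero except possibly the $k$-th; hence $\theta_k(d) = 0$ and $\Psi_k$ lands in $\ker\theta_k = K^{n-1}_m(r-1)$. Injectivity is clear because the value of $\theta(d)$ determines $j$, and surjectivity follows since any $d' \in \ker\theta_k$ has $\theta(d') = a\,t_k$ for a unique $a \in \Z$, so that $(d', (e_k^{(n)})^{-a})$ lies in $L_k$ and maps to $d'$. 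Crucially, $\Psi_k$ restricts to the identity on $M$, so the inclusion $M \hookrightarrow L_k$ matches, via $\Psi_k$, the standard inclusion $M \hookrightarrow K^{n-1}_m(r-1)$; assembling these identifications yields exactly the asserted splitting.

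The main obstacle is the construction and verification of the isomorphism $L_k \cong K^{n-1}_m(r-1)$ together with the check that it respects the distinguished copy of $M$ needed for the amalgamation. Once this compatibility is established, the Bass-Serre machinery delivers the decomposition automatically.
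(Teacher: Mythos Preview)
Your argument is correct and follows essentially the same strategy as the paper: exploit the surjection $p_n : K^n_m(r) \to F^{(n)}_m$ with kernel $M$, decompose $F^{(n)}_m$ as a free product of the cyclic groups $\langle e_k^{(n)}\rangle$ and $F_{m-r}$, and pull this decomposition back.

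The only difference is in packaging. The paper chooses an explicit section of $p_n$, generated by $e_k^{(n-1)}(e_k^{(n)})^{-1}$ for $k\le r$ and $e_k^{(n)}$ for $k>r$, writes $K^n_m(r)$ as $M \rtimes \hat F^{(n)}_m$, and then uses the elementary fact that a semidirect product by a free group decomposes as an iterated amalgam over $M$; the identification $M \rtimes \langle e_k^{(n-1)}(e_k^{(n)})^{-1}\rangle \cong M \rtimes \langle e_k^{(n-1)}\rangle = L_k$ is immediate because $e_k^{(n)}$ centralises $M$. You instead invoke Bass--Serre theory directly, reading off the vertex groups as $p_n$-preimages and then projecting $p_n^{-1}(\langle e_k^{(n)}\rangle)$ isomorphically onto $\ker\theta_k \leq D_{n-1}$. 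Your route avoids naming a section but requires the extra verification that $\Psi_k$ restricts to the identity on $M$; the paper's route is more elementary but needs the observation about the conjugation action. Either way the content is the same.
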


\begin{proof}
Projecting $K^n_m(r)$ onto the factor $F^{(n)}_m$ gives the short
exact sequence $1 \rightarrow K^{n-1}_m(r) \rightarrow K^n_m(r)
\rightarrow F^{(n)}_m \rightarrow 1$.  This splits to show that
$K^n_m(r)$ has the structure of an internal semidirect product $M
\rtimes \hat{F}^{(n)}_m$ where $\hat{F}^{(n)}_m \cong F^{(n)}_m$ is
the subgroup of $F^{(n-1)}_m \times F^{(n)}_m$ generated by
$$e_1^{(n-1)}(e_1^{(n)})^{-1}, \, \ldots, \,
e_r^{(n-1)}(e_r^{(n)})^{-1}, \, e_{r+1}^{(n)}, \, \ldots, \,
e_m^{(n)}.$$ Since the action by conjugation of
$e_k^{(n-1)}(e_k^{(n)})^{-1}$ on $M$ is the same as the action of
$e_k^{(n-1)}$ and since $e_k^{(n)}$ centralises $M$ we have that
\begin{align*}K^n_m(r) &= M \rtimes
\hat{F}^{(n)}_m \\
&= \bigg[ \hast_{k=1}^r \Big(M \rtimes \Big\langle
e_k^{(n-1)}(e_k^{(n)})^{-1} \Big\rangle \, ; \, M \Big) \bigg]
\enspace \underset{M}{\hast} \enspace \bigg[ \hast_{k=r+1}^m \Big(M
\rtimes \Big\langle e_k^{(n)} \Big\rangle \, ; \, M \Big) \bigg] \\
&\cong \bigg[ \hast_{k=1}^r \Big(M \rtimes \Big\langle e_k^{(n-1)}
\Big\rangle \, ; \, M \Big) \bigg] \enspace \underset{M}{\hast}
\enspace \bigg[ \hast_{k=1}^{m-r} \Big(M
\times \Z \, ; \, M \Big) \bigg] \\
&\cong \bigg[ \hast_{k=1}^r \Big(M \rtimes \Big\langle e_k^{(n-1)}
\Big\rangle \, ; \, M \Big) \bigg] \enspace \underset{M}{\hast}
\enspace \bigg[ M \times F_{m-r} \bigg]. \\
\end{align*}

Define a homomorphism $p_k : F^{(1)}_m \times \ldots \times
F^{(n-1)}_m \rightarrow \Z$ by
\begin{equation*} p_k\left(e_j^{(i)}\right) = \begin{cases} 1 & \text{if
$j=k$,} \\ 0 & \text{otherwise,} \end{cases} \end{equation*} and
note that $L_k \cap \ker p_k$ is the kernel of the standard
homomorphism $\theta : F^{(1)}_m \times \ldots \times F^{(n-1)}_m
\rightarrow \Z^r$ given in definition \ref{def2}. Considering the
restriction of $p_k$ to $L_k$ gives the short exact sequence $1
\rightarrow K^{n-1}_m(r) \rightarrow L_k \rightarrow \Z \rightarrow
1$ which demonstrates that $L_k = K^{(n-1)}_m(r) \rtimes \langle
e_k^{(n-1)} \rangle$.
\end{proof}

Note that as a special case of this proposition we obtain $${\rm
SB}_3 = K^3_2(1) \cong K^2_2(0) \ast_{K^2_2(1)} (K^2_2(1) \times \Z)
\cong (F_2 \times F_2) \dot{\ast}_{K^2_2(1)}$$ where $\dot{\ast}$
denotes the trivial HNN extension with amalgamating homomorphism the
identity. This yields the presentation of Stallings' group used in
\cite{gers95}.

\subsection{Generating sets}

We give finite generating sets for those groups $K^n_m(r)$ which are
finitely generated.

\begin{prop} \label{prop13}
If $n \geq 2$ then $K^n_m(r)$ is generated by $S_1
\cup S_2 \cup S_3$ where \begin{align*} S_1 &=
\{e_i^{(1)}(e_i^{(k)})^{-1} \, : \, 1 \leq i \leq r , 2 \leq k \leq
n\}, \\ S_2 &= \{e_i^{(k)} \, : \, r+1 \leq i \leq m, 1 \leq k \leq
n\}, \\ S_3 &= \{[e_i^{(1)}, e_j^{(1)}] \, : \, 1 \leq i < j \leq r
\}. \end{align*}  If $n \geq 3$ then $K^n_m(r)$ is generated by $S_1 \cup S_2$.
\end{prop}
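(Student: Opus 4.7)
The plan is to decompose any $g \in K^n_m(r)$ as $g = g_1 g_2 \cdots g_n$ with $g_k \in F^{(k)}_m$ (the unique direct-product decomposition), then to separate out an ``exponent-sum'' piece taking values in $\langle S_1 \rangle$ and ``commutator'' pieces in each $\ker \theta|_{F^{(k)}_m}$, and finally to show that every $\ker \theta|_{F^{(k)}_m}$ lies in $\langle S_1 \cup S_2 \cup S_3 \rangle$. Write $a_{ki}$ for the $e^{(k)}_i$-exponent sum in $g_k$ for $i \leq r$; membership of $g$ in $\ker \theta$ gives $\sum_k a_{ki} = 0$ for each $i \leq r$. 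Setting $\tau_k := \prod_{i=1}^r (e^{(k)}_i)^{a_{ki}}$ and $g_k' := g_k \tau_k^{-1} \in \ker \theta|_{F^{(k)}_m}$, and using that elements of distinct factors of $D$ commute, one rewrites $g = g_1' g_2' \cdots g_n' \cdot \tau_1 \tau_2 \cdots \tau_n$.

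I would first verify that $\tau_1 \cdots \tau_n \in \langle S_1 \rangle$: by commuting letters from different factors past each other (never swapping two letters within the same $F^{(k)}_m$), one rearranges the product as $\prod_{i=1}^r \prod_{k=1}^n (e^{(k)}_i)^{a_{ki}}$, and for each fixed $i$ the inner product lives in the \emph{abelian} subgroup $\langle e^{(1)}_i, \ldots, e^{(n)}_i \rangle \leq D$; the relation $a_{1i} = -\sum_{k \geq 2} a_{ki}$ then rewrites it as $\prod_{k=2}^n (e^{(1)}_i (e^{(k)}_i)^{-1})^{-a_{ki}}$, plainly a word in $S_1^{\pm 1}$.

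Next I would show $\ker \theta|_{F^{(k)}_m} \subseteq \langle S_1 \cup S_2 \cup S_3 \rangle$ for each $k$. This kernel is normally generated in $F^{(k)}_m$ by $\{e^{(k)}_l : l > r\} \subseteq S_2$ together with the commutators $[e^{(k)}_i, e^{(k)}_j]$ for $i < j \leq r$. For $k = 1$ these commutators are literally $S_3$; for $k \geq 2$ a component-wise computation with $s^{(k)}_i := e^{(1)}_i (e^{(k)}_i)^{-1} \in S_1$ yields the key identity $[s^{(k)}_i, s^{(k)}_j] = [e^{(1)}_i, e^{(1)}_j] \cdot [(e^{(k)}_i)^{-1}, (e^{(k)}_j)^{-1}]$ in $D$, so multiplying by the $S_3$ element $[e^{(1)}_i, e^{(1)}_j]^{-1}$ extracts $[(e^{(k)}_i)^{-1}, (e^{(k)}_j)^{-1}]$, which is $F^{(k)}_m$-conjugate to $[e^{(k)}_i, e^{(k)}_j]$. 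Inner $F^{(k)}_m$-conjugation itself is realised inside $\langle S_1 \cup S_2 \rangle$: for $x \in F^{(k)}_m$ and $j \leq r$, conjugation by $s^{(k')}_j \in S_1$ acts as conjugation by $e^{(1)}_j$ when $k = 1$ (any $k' \geq 2$) and as conjugation by $(e^{(k)}_j)^{-1}$ when $k' = k \geq 2$, because in each case the ``other'' component of $s^{(k')}_j$ commutes with $x$; for $j > r$ one simply conjugates by $e^{(k)}_j \in S_2$.

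For the $n \geq 3$ strengthening it remains to place $S_3$ inside $\langle S_1 \rangle$: picking distinct $l, l' \in \{2, \ldots, n\}$ (which requires $n \geq 3$), the analogous component-wise computation gives $[s^{(l)}_i, s^{(l')}_j] = [e^{(1)}_i, e^{(1)}_j]$, because now the non-$F^{(1)}_m$ components of $s^{(l)}_i$ and $s^{(l')}_j$ sit in the distinct, commuting factors $F^{(l)}_m$ and $F^{(l')}_m$. I expect the main obstacle to be the penultimate paragraph --- specifically, manufacturing the commutators $[e^{(k)}_i, e^{(k)}_j]$ for $k \geq 2$ despite $S_3$ only supplying first-factor commutators --- and this is exactly the step that forces $S_3$ into the $n = 2$ generating set but permits its elimination when $n \geq 3$.
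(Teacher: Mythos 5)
Your proof is correct, and it takes a genuinely different route from the thesis.  The thesis obtains the generating set in two lines by viewing $K^n_m(r)$ as the fibre product of $\theta|_{F^{(1)}_m}$ and $-\theta|_{F^{(2)}_m\times\cdots\times F^{(n)}_m}$ over $\Z^r$ and invoking the general fibre-product generation lemma (Lemma~\ref{lem9}); that lemma delivers a generating set consisting of matched pairs of generators, a partial generating set of the second kernel, and the images in $F^{(1)}_m$ of the relators $[t_i,t_j]$ of $\Z^r$ (which is precisely where $S_3$ comes from), and it only remains to rewrite those elements in terms of $S_1\cup S_2\cup S_3$.  Your argument is instead a self-contained, factor-symmetric computation: the decomposition $g=g_1'\cdots g_n'\,\tau_1\cdots\tau_n$, the absorption of the exponent-sum part into $\langle S_1\rangle$ via $\sum_k a_{ki}=0$, and the treatment of each $\ker\theta|_{F^{(k)}_m}$ through its normal generators $\{e^{(k)}_l : l>r\}\cup\{[e^{(k)}_i,e^{(k)}_j] : i<j\le r\}$ all check out, as do the component-wise identities $[e^{(1)}_i(e^{(k)}_i)^{-1},e^{(1)}_j(e^{(k)}_j)^{-1}]=[e^{(1)}_i,e^{(1)}_j]\,[(e^{(k)}_i)^{-1},(e^{(k)}_j)^{-1}]$ and the simulation of inner conjugation of $F^{(k)}_m$ inside $\langle S_1\cup S_2\rangle$.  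What the thesis's route buys is brevity (at the cost of relying on machinery whose own proof passes through the Bounded Noise Lemma); what yours buys is an elementary, fully explicit argument that never leaves the direct product.  Your final identity $[e^{(1)}_i,e^{(1)}_j]=[e^{(1)}_i(e^{(l)}_i)^{-1},\,e^{(1)}_j(e^{(l')}_j)^{-1}]$ for distinct $l,l'\ge 2$ is exactly the one the thesis uses for the $n\ge 3$ refinement (modulo an evident typo there), and your remark correctly isolates why $S_3$ cannot be dispensed with when $n=2$.
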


\begin{proof}
  Fix $n \geq 2$, $m \geq 1$ and $r \leq m$.  Let $\theta$ be the homomorphism given in Definition~\ref{def2}.  Since $n \geq 2$, $K^n_m(r)$ is the fibre product of the homomorphisms $\theta|_{F^{(1)}_m}$ and $-\theta|_{F^{(2)}_m \times \ldots \times F^{(1)}_m}$.  Define the following collections of elements of $K^n_m(r)$: \begin{align*}
    \mc{T}_1 &= \{ e^{(1)}_i (e^{(2)}_i)^{-1} : 1 \leq i \leq r \} \cup \{ e^{(1)}_i : r+1 \leq i \leq m \}; \\
    \mc{T}_2 &= \{ e^{(2)}_i (e^{(k)}_i)^{-1} : 1 \leq i \leq r, 3 \leq k \leq n \} \cup \{ e^{(k)}_i : r+1 \leq i \leq m , 2 \leq k \leq n \}; \\
    \mc{T}_3 &= \{ [e^{(1)}_i, e^{(1)}_j] : 1 \leq i < j \leq r \}.
  \end{align*}  By Lemma~\ref{lem9}, $K^n_m(r)$ is generated by $\mc{T}_1 \cup \mc{T}_2 \cup \mc{T}_3$.  Now note that each element of $\mc{T}_1 \cup \mc{T}_2 \cup \mc{T}_3$ can be expressed in terms of the $\mc{S}_1 \cup \mc{S}_2 \cup \mc{S}_3$.

If $n \geq 3$ then $S_1 \cup S_2$ suffices since as group elements $[e_i^{(1)}, e_j^{(2)}] = [e_i^{(1)} (e_i^{(2)})^{-1}, e_j^{(1)} (e_j^{(3)})^{-1}]$.
\end{proof}

\subsection{A presentation for $K^3_2(1)$} \label{sec11}

In Sections~\ref{sec11} and \ref{sec4} we derive finite presentations for the groups $K^n_m(r)$ in the case $m=2, n=3$.  To simplify notation we write $x_i$ for $e_1^{(i)}$ and $y_i$ for $e_2^{(i)}$.  Note that we have a short exact sequence $1 \rightarrow K^3_2(2) \rightarrow K^3_2(1) \rightarrow \Z \rightarrow 1$, where the homomorphism $K^3_2(1) \rightarrow \Z$ is given by mapping each $x_i \mapsto 0$ and each $y_i \mapsto 1$.  Finite presentations for $K^3_2(1)$ have been derived elsewhere; we derive a presentation in positive normal form with respect to the above short exact sequence, so as we can apply Theorem~\ref{thm15} to derive a presentation for $K^3_2(2)$.

Let $\alpha_1 = x_1 x_2^{-1}$, $\alpha_2 = x_1 x_3^{-1}$, $\beta_1 = y_1 y_2^{-1}$, $\beta_2 = y_1 y_3^{-1}$ and $t=y_1$.  Define $\mc{R}$ to be the collection of relations:
\begin{equation*}
\begin{gathered}
  {[\alpha_1, \alpha_2]}\\
  [\beta_1, \beta_2]\\
\end{gathered}
\qquad
\begin{gathered}
~[\alpha_1, \beta_2] [\alpha_2, \beta_1]^{-1}\\
[\alpha_1^{-1}, \beta_2] [\alpha_2^{-1}, \beta_1]^{-1}
\end{gathered}
\qquad
\begin{gathered}
~[\alpha_1, \beta_2^{-1}] [\alpha_2, \beta_1^{-1}]^{-1}\\
[\alpha_1^{-1}, \beta_2^{-1}] [\alpha_2^{-1}, \beta_1^{-1}]^{-1}
\end{gathered}
\end{equation*}

\begin{prop} \label{prop14}
  Each of the following presents $K_2^3(1)$: \begin{align*}
    \mc{P}_1 &= \langle \alpha_1, \alpha_2, y_1, y_2, y_3 \, | \, [\alpha_1,
    \alpha_2], \, [y_1, y_2], \, [y_1, y_3], \, [y_2, y_3], \, [\alpha_1, y_3],
    \, [\alpha_2, y_2], \, [\alpha_1^{-1}\alpha_2, y_1] \rangle \\
    \smallskip
    \mc{P}_2 &= \langle \alpha_1, \alpha_2, \beta_1, \beta_2, t \, | \, [\alpha_1,
    \alpha_2], \, [\beta_1, \beta_2], \, [t, \beta_1], [t, \beta_2], [\alpha_1,
    t\beta_2^{-1}], \, [\alpha_2, t \beta_1^{-1}], [\alpha_1^{-1} \alpha_2, t]
    \rangle \\
    \smallskip
    \mc{P}_3 &= \langle \alpha_1, \alpha_2, \beta_1, \beta_2, t \, | \, \mc{R}, \,
    [t, \beta_1] , \, [t, \beta_2], \, \alpha_1^t = \alpha_1^{\beta_2}, \,
    \alpha_2^t = \alpha_2^{\beta_1} \rangle
  \end{align*}
\end{prop}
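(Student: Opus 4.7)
The plan is to show $\mc{P}_1$ presents $K^3_2(1)$ and then to obtain $\mc{P}_2$ and $\mc{P}_3$ from it by Tietze transformations.

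First I would establish $\mc{P}_1$. The generating set $\{\alpha_1, \alpha_2, y_1, y_2, y_3\}$ is precisely the set $S_1 \cup S_2$ from Proposition~\ref{prop13}, so only the relations need justification. Observe that $K^3_2(1)$ is the twisted fibre product of $\theta|_{F_2^{(1)} \times F_2^{(2)}}$ and $-\theta|_{F_2^{(3)}}$, both surjecting onto $\Z$. Since $\Z$ is of type $\mathrm{F}_\infty$, Theorem~\ref{thm4} applies, and its proof produces an explicit finite presentation; after a Tietze change of generators that replaces the pair-style fibre-product generators by $\alpha_1, \alpha_2, y_1, y_2, y_3$, one verifies directly that each of the seven relations of $\mc{P}_1$ holds in the ambient product $F_2^{(1)} \times F_2^{(2)} \times F_2^{(3)}$ by a coordinate calculation (for example, $[\alpha_1, y_3] = 1$ because $\alpha_1 \in F_2^{(1)} \times F_2^{(2)}$ commutes with $y_3 \in F_2^{(3)}$), and checks that these seven relations suffice to derive all the relations produced by the fibre-product construction.

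For $\mc{P}_1 \leadsto \mc{P}_2$, adjoin new generators $t := y_1$, $\beta_1 := y_1 y_2^{-1}$, $\beta_2 := y_1 y_3^{-1}$ and eliminate $y_1, y_2, y_3$ via the inverse substitutions $y_1 = t$, $y_2 = \beta_1^{-1}t$, $y_3 = \beta_2^{-1}t$. Under this substitution, each relation of $\mc{P}_1$ becomes freely equivalent, modulo a subset of the remaining $\mc{P}_2$-relations, to one of the relations of $\mc{P}_2$: for instance $[y_1, y_2] \mapsto [t, \beta_1^{-1}t] \FreeEq [t, \beta_1^{-1}]$, equivalent to $[t, \beta_1]$; $[y_2, y_3] \mapsto [\beta_1^{-1}t, \beta_2^{-1}t]$, which after applying $[t, \beta_1]$ and $[t, \beta_2]$ reduces to $[\beta_1, \beta_2]$; $[\alpha_1, y_3] \mapsto [\alpha_1, \beta_2^{-1}t]$, equivalent modulo $[t, \beta_2]$ to $[\alpha_1, t\beta_2^{-1}]$; and the remaining three relations are handled analogously.

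For $\mc{P}_2 \leadsto \mc{P}_3$, the two presentations share the same generating set. Modulo $[t, \beta_2]$, the relation $[\alpha_1, t\beta_2^{-1}] = 1$ is equivalent to the conjugation relation $\alpha_1^t = \alpha_1^{\beta_2}$, and similarly $[\alpha_2, t\beta_1^{-1}]$ becomes $\alpha_2^t = \alpha_2^{\beta_1}$. It remains to verify both that the six relations of $\mc{R}$ in $\mc{P}_3$ are consequences of the $\mc{P}_2$-relations and that $[\alpha_1^{-1}\alpha_2, t]$ is a consequence of the remaining $\mc{P}_3$-relations. For the former, the key observation is that in $K^3_2(1)$ each commutator $[\alpha_i^{\varepsilon}, \beta_j^{\delta}]$ with $\{i,j\} = \{1,2\}$ projects to the common value $[x_1^{\varepsilon}, y_1^{\delta}]$ in $F_2^{(1)}$; these identities can be extracted from the conjugation relations together with $[\alpha_1, \alpha_2] = [\beta_1, \beta_2] = 1$ by short commutator arithmetic. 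For the latter, compute $t(\alpha_1^{-1}\alpha_2)t^{-1} = (\alpha_1^t)^{-1}(\alpha_2^t) = (\beta_2 \alpha_1 \beta_2^{-1})^{-1}(\beta_1 \alpha_2 \beta_1^{-1})$ and use $\mc{R}$ and $[\beta_1, \beta_2] = 1$ to rewrite this as $\alpha_1^{-1}\alpha_2$. The main obstacle I anticipate is this final collection of commutator identities, particularly the derivation of $\mc{R}$ from the more compact $\mc{P}_2$-relations; once the ambient-coordinate observation identifying the common value of these commutators is in hand, however, the calculations are routine.
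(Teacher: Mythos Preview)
Your plan for the Tietze transitions $\mc{P}_1 \leadsto \mc{P}_2 \leadsto \mc{P}_3$ matches the paper's approach closely: the paper performs exactly the substitution $t = y_1$, $\beta_i = t y_{i+1}^{-1}$ you describe, and for $\mc{P}_2 \leftrightarrow \mc{P}_3$ it exhibits explicit van Kampen diagrams where you propose commutator arithmetic --- essentially the same content in different dress.

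The genuine divergence is in establishing $\mc{P}_1$. The paper does \emph{not} go through the fibre-product machinery of Theorem~\ref{thm4}; instead it runs a direct normal-form argument adapted from Gersten. Given a null-homotopic word $w(\alpha_1,\alpha_2,y_1,y_2,y_3)$, one uses the commutation relations to rewrite it as
\[
  u(\alpha_1, y_2)\, v(\alpha_2, y_3)\, \prod_{i} y_1^{\epsilon_i \alpha_1^{n_i}},
\]
and then reads this off in the ambient product $F_2^{(1)} \times F_2^{(2)} \times F_2^{(3)}$: the factors force $u$, $v$, and the product to be freely trivial. This is short and self-contained.

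Your route via Theorem~\ref{thm4} is valid in principle, but the sentence ``checks that these seven relations suffice to derive all the relations produced by the fibre-product construction'' is doing real work that you have not sketched. The proof of Theorem~\ref{thm4} outputs six families $\mc{S}_1,\dots,\mc{S}_6$ of relations phrased in terms of an auxiliary generating set $\mc{A}_1$ for $N_1 = K^2_2(1)$ and a set of Peiffer identities; even though $Q = \Z$ makes $\mc{S}_6$ empty, reducing the remaining families to precisely the seven relations of $\mc{P}_1$ is a separate computation comparable in length to the paper's direct argument. So your approach trades one verification for another of similar size, while the paper's normal-form argument is the cleaner path here.
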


\begin{proof}
  That the stated elements generate follows from Proposition~\ref{prop13}.  The proof that the relations in presentation $\mc{P}_1$ suffice is almost identical to a proof given by Gersten~\cite{gers95}, who derives a presentation of the group $\ker(F_2^{(1)} \times F_2^{(2)} \times F_2^{(3)} \rightarrow \Z)$ where the homomorphism maps each of the chosen basis elements of $F_2^{(i)}$ to the chosen generator of $\Z$.  We briefly recount the argument.

  Let $w \equiv w(\alpha_1, \alpha_2, y_1, y_2, y_3)$ be a null-homotopic word in $K^3_2(1)$.   Note that $w$ is freely equal to a word $w'(\alpha_1, \alpha_2, y_2, y_3) \prod_{i=1}^k y_1^{\epsilon_i w_i(\alpha_1, \alpha_2, y_2, y_3)}$ for some words $w'$ and $w_i$ and some $\epsilon_i \in \{\pm1\}$, and that the relations $[\alpha_1, \alpha_2]$, $[\alpha_1, y_3]$, $[\alpha_2, y_2]$ and $[y_2, y_3]$ are sufficient to convert this to a word of the form $$u(\alpha_1, y_2) v(\alpha_2, y_3) \prod_{i=1}^k y_1^{\epsilon_i u_i(\alpha_1, y_2) v_i(\alpha_2, y_3)}$$ for some words $u$, $u_i$ and $v_i$.  The relation $[\alpha_1^{-1} \alpha_2, y_1]$ is equivalent to $y_1^{\alpha_1} = y_1^{\alpha_2}$ and this, together with the relations $[y_1, y_2]$ and $[\alpha_2, y_2]$, are sufficient to convert the above word to a word $u(\alpha_1, y_2) v(\alpha_2, y_3) \prod_{i=1}^k y_1^{\epsilon_i v'_i(\alpha_2, y_3)}$ for some words $v_i'$.  Finally this can be converted to a word $u(\alpha_1, y_2) v(\alpha_2, y_3) \prod_{i=1}^k y_1^{\epsilon_i \alpha_1^{n_i}}$, where the $n_i \in \Z$, by applying the relations $[\alpha_1^{-1} \alpha_2, y_1]$, $[\alpha_1, y_3]$ and $[y_1, y_3]$.

  As a group element this word is equal to $$u(x_1, \emptyset) v(x_1, \emptyset) u(x_2^{-1}, y_2) v(x_3^{-1}, y_3) \prod_{i=1}^k x_1^{n_i} y_1^{\epsilon_i} x_1^{-n_i}.$$  Since $\{x_2^{-1}, y_2\}$ and $\{x_3^{-1}, y_3\}$ form free bases for $F_2^{(2)}$ and $F_3^{(3)}$ respectively it must be that $u$ and $v$ are freely equal to the empty word.  Similarly the elements $\{x_1^n y_1 x_1^{-n} \, : \, n \in \Z \}$ are freely independent so the product term also freely reduces to the empty word.  This completes the proof that $\mc{P}_1$ presents $K_2^3(1)$.

  To show that presentations $\mc{P}_1$ and $\mc{P}_2$ are equivalent, substitute $t=y_1$, $\beta_1 = ty_2^{-1}$ and $\beta_2 = ty_3^{-1}$ into $\mc{P}_1$ to give the presentation $$\langle \alpha_1, \alpha_2, \beta_1, \beta_2, t \, | \, [\alpha_1, \alpha_2], \, [t, \beta_1^{-1}t], \, [t, \beta_2^{-1}t], \, [\beta_1^{-1}t, \beta_2^{-1}t], \, [\alpha_1, \beta_2^{-1}t], \, [\alpha_2, \beta_1^{-1}t], \, [\alpha_1^{-1}\alpha_2, t] \rangle$$ which can easily be converted to $\mc{P}_2$.

  Finally, we show that the presentations $\mc{P}_2$ and $\mc{P}_3$ are Tietze equivalent.  The van Kampen diagram in Figure~\ref{fig4} (together with three similar ones) demonstrates that the relations in $\mc{R}$ are null-homotopic over $\mc{P}_2$.  Conversely, the van Kampen diagram in Figure~\ref{fig5} demonstrates that the relation $[\alpha_1^{-1} \alpha_2, t]$ is null-homotopic over presentation $\mc{P}_3$.

  \begin{figure}[h]
    \psfrag{a1}{$\alpha_1$}
    \psfrag{b1}{$\beta_1$}
    \psfrag{a2}{$\alpha_2$}
    \psfrag{b2}{$\beta_2$}
    \psfrag{t}{$t$}
    \centering \includegraphics{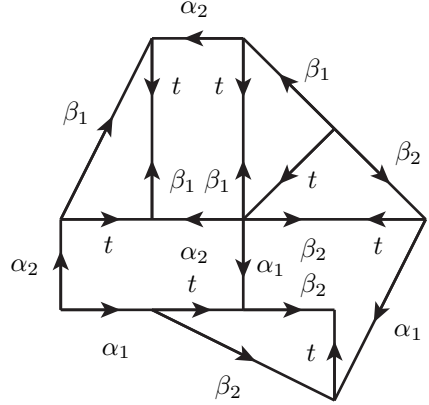}
    \caption{$\mc{P}_2$-van Kampen diagram for $[\alpha_1, \beta_2][\alpha_2, \beta_1]^{-1}$} \label{fig4}
  \end{figure}

  \begin{figure}[h]
    \psfrag{a1}{$\alpha_1$}
    \psfrag{b1}{$\beta_1$}
    \psfrag{a2}{$\alpha_2$}
    \psfrag{b2}{$\beta_2$}
    \psfrag{t}{$t$}
    \centering \includegraphics{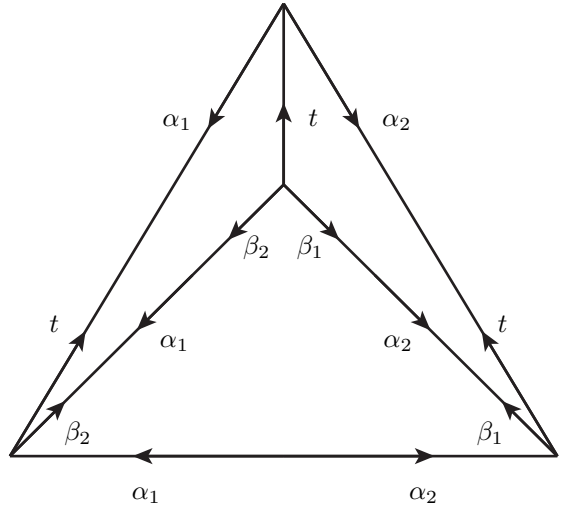}
    \caption{$\mc{P}_3$-van Kampen diagram for $[\alpha_1^{-1} \alpha_2, t]$} \label{fig5}
  \end{figure}

\end{proof}

\subsection{A presentation for $K^3_2(2)$} \label{sec4}

By Proposition~\ref{prop13}, the group $K^3_2(2)$ is generated by $\mc{X} = \{ \alpha_1, \alpha_2, \beta_1, \beta_2 \}$.  Define $\mc{R}_1$ to be the collection of relations $\mc{R}$, which we recall here for ease of use:

\begin{equation*}
\begin{gathered}
  {[\alpha_1, \alpha_2]}\\
  [\beta_1, \beta_2]\\
\end{gathered}
\qquad
\begin{gathered}
~[\alpha_1, \beta_2] [\alpha_2, \beta_1]^{-1}\\
[\alpha_1^{-1}, \beta_2] [\alpha_2^{-1}, \beta_1]^{-1}
\end{gathered}
\qquad
\begin{gathered}
~[\alpha_1, \beta_2^{-1}] [\alpha_2, \beta_1^{-1}]^{-1}\\
[\alpha_1^{-1}, \beta_2^{-1}] [\alpha_2^{-1}, \beta_1^{-1}]^{-1}
\end{gathered}
\end{equation*}
Define $\mc{R}_2$ to be the collection of relations:
\begin{equation*}
\begin{gathered}
  {[\alpha_1, \alpha_2]}\\
  [\beta_1, \beta_2]
\end{gathered}
\quad
\begin{gathered}
  {[\beta_2^{\alpha_1}, \beta_2^{-1} \beta_1]}\\
  [\beta_2^{\alpha_1^{-1}}, \beta_2^{-1} \beta_1]
\end{gathered}
\quad
\begin{gathered}
  {[\alpha_2^{\beta_1}, \alpha_2^{-1} \alpha_1]}\\
  [\alpha_2^{\beta_1^{-1}}, \alpha_2^{-1} \alpha_1]
\end{gathered}
\quad
  [\alpha_1, \beta_2] [\alpha_2, \beta_1]^{-1}
\end{equation*}

\begin{prop}
  The group $K^3_2(2)$ is presented by both $\mc{Q}_1 = \langle \mc{X} \, | \, \mc{R}_1 \rangle$ and $\mc{Q}_2 = \langle \mc{A} \, | \, \mc{R}_2 \rangle$.
\end{prop}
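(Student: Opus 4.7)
My plan is to deduce this from Theorem~\ref{thm1} applied to the positive normal form presentation $\mc{P}_3$ of $K^3_2(1)$, together with a finite Tietze calculation to pass between $\mc{Q}_1$ and $\mc{Q}_2$. The short exact sequence $1 \to K^3_2(2) \to K^3_2(1) \to \Z \to 1$ is split by $t = y_1$, and comparing the $t$-relations of $\mc{P}_3$ with the set-up of Section~\ref{sec2} shows that the associated conjugation automorphism $\Phi$ of $K^3_2(2)$ lifts on the alphabet $\mc{A}$ to the endomorphism $\tilde\Phi$ of $F(\mc{A})$ given by $\alpha_1 \mapsto \beta_2\alpha_1\beta_2^{-1}$, $\alpha_2 \mapsto \beta_1\alpha_2\beta_1^{-1}$, $\beta_i \mapsto \beta_i$. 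Then Theorem~\ref{thm1} presents $K^3_2(2)$ by $\langle \mc{A} \mid \overline{\mc{R}}, \overline{\mc{S}}\rangle$, where $\overline{\mc{R}} = \{\tilde\Phi^k(r) : r \in \mc{R}_1,\, k \in \Z\}$ and $\overline{\mc{S}} = \{\tilde\Phi^{k+1}(a)\tilde\Phi^k(w_a)^{-1} : a \in \mc{A},\, k \in \Z\}$.

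The first observation is that $\overline{\mc{S}}$ can be discarded because every one of its elements is \emph{literally} the empty word after free reduction: for example $\tilde\Phi^{k+1}(\alpha_1) \equiv \beta_2^{k+1}\alpha_1\beta_2^{-k-1}$ while $\tilde\Phi^k(\beta_2\alpha_1\beta_2^{-1}) \equiv \beta_2\cdot\beta_2^k\alpha_1\beta_2^{-k}\cdot\beta_2^{-1}$, and these are freely equal; the remaining three generators are similar. So it suffices to prove that every $\tilde\Phi^k(r)$ (for $r \in \mc{R}_1$, $k \in \Z$) lies in the normal closure $\llangle \mc{R}_1 \rrangle \leq F(\mc{A})$. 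The key reduction is that, since $\tilde\Phi$ is an \emph{endomorphism} of $F(\mc{A})$ (not only a map of generators), once we know $\tilde\Phi(r) \in \llangle \mc{R}_1 \rrangle$ for each $r \in \mc{R}_1$, the map $\tilde\Phi$ descends to an endomorphism of the group presented by $\mc{Q}_1$; applying the analogous statement to $\widetilde{\Phi^{-1}}: \alpha_1 \mapsto \beta_2^{-1}\alpha_1\beta_2$, $\alpha_2 \mapsto \beta_1^{-1}\alpha_2\beta_1$, $\beta_i \mapsto \beta_i$ shows these are mutually inverse, so iterating gives $\tilde\Phi^k(r) \in \llangle \mc{R}_1 \rrangle$ for all $k \in \Z$. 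Thus everything reduces to twelve explicit verifications: the six words $\tilde\Phi(r)$ and the six words $\widetilde{\Phi^{-1}}(r)$ for $r$ ranging over $\mc{R}_1$.

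Each such verification I would carry out by exhibiting a $\mc{Q}_1$-van Kampen diagram. For instance the main case is
\[
\tilde\Phi([\alpha_1,\alpha_2]) = [\beta_2\alpha_1\beta_2^{-1},\,\beta_1\alpha_2\beta_1^{-1}],
\]
which is handled by first using the "twisted Witt--Hall" relations 3--6 to rewrite the conjugating $\beta_i$'s in terms of each other (each application of $[\alpha_i,\beta_j^{\pm 1}] = [\alpha_{3-i},\beta_{3-j}^{\pm 1}]$ swaps an $\alpha_i$ past a $\beta_j$ at the cost of swapping the indices), then using the plain commutators $[\alpha_1,\alpha_2]$ and $[\beta_1,\beta_2]$ to cancel. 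The remaining eleven cases are similar but simpler, because at least one of $\alpha_1,\alpha_2$ is absent or the image $\tilde\Phi^{\pm 1}(r)$ is freely equivalent to $r$ itself.

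Once $\mc{Q}_1$ is established, showing that $\mc{Q}_2$ presents $K^3_2(2)$ is a Tietze equivalence on the same generating set $\mc{A}$: one verifies directly that $\llangle \mc{R}_1 \rrangle = \llangle \mc{R}_2 \rrangle$ in $F(\mc{A})$ by rewriting each relator on each side as a product of conjugates of the other. The relators of $\mc{R}_2$ are read off from $\mc{R}_1$ by noting that $[\alpha_1, \beta_2] = [\alpha_2, \beta_1]$ can be rearranged as $[\beta_2^{\alpha_1}, \beta_2^{-1}\beta_1] = 1$ using the commutator identity $[xy^{-1}, z] = [x, z][z, y]^x y^{-1}$, and symmetrically for the $\alpha$-versions; this recasts the six "twisted Witt--Hall" relations of $\mc{R}_1$ as the four conjugated commutators of $\mc{R}_2$ together with the single residual relation $[\alpha_1, \beta_2][\alpha_2, \beta_1]^{-1}$. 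The main obstacle throughout is simply the bookkeeping of the twelve $\tilde\Phi^{\pm 1}$-verifications; none is conceptually hard, but finding the shortest derivation (and depicting it as a van Kampen diagram) requires some care.
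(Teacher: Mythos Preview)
Your approach is essentially the same as the paper's. The paper invokes Theorem~\ref{thm15} (Baik--Harlander--Pride) directly rather than Theorem~\ref{thm1}, but your iteration argument---that once $\tilde\Phi(\mc{R}_1),\widetilde{\Phi^{-1}}(\mc{R}_1)\subseteq\llangle\mc{R}_1\rrangle$ the endomorphisms descend and so all $\tilde\Phi^k(r)$ lie in $\llangle\mc{R}_1\rrangle$---is precisely the content of that theorem in this setting, so the underlying verification is identical. The paper sharpens the twelve checks to three by exploiting the symmetries $\Lambda_\alpha,\Lambda_\beta:\mc{A}^{\pm\bast}\to\mc{A}^{\pm\bast}$ (sending $\alpha_i\mapsto\alpha_i^{-1}$, resp.\ $\beta_i\mapsto\beta_i^{-1}$), which permute $\mc{R}_1$ up to cyclic conjugacy; this is worth doing, since your claim that the remaining cases are ``simpler, because at least one of $\alpha_1,\alpha_2$ is absent or the image is freely equivalent to $r$'' is not quite right---for instance $\tilde\Phi([\alpha_1,\beta_2][\alpha_2,\beta_1]^{-1})$ involves both $\alpha_i$ and is not freely equal to the original relator, and the paper devotes a separate van Kampen diagram to it. The Tietze equivalence $\mc{Q}_1\leftrightarrow\mc{Q}_2$ is handled the same way in both (explicit diagrams, with a further symmetry $\ol\Lambda:\alpha_i\leftrightarrow\beta_i$ halving the work).
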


\begin{proof}
  We first prove that $\mc{Q}_1$ presents $K^3_2(2)$.  For each $x \in \mc{X}$, define words $w_x^+, w_x^- \in \fm{X}$ as in the following table.

  \begin{center}
    \begin{tabular}{c c c}
      \hline
      \\[-10pt]
      $x \in \mc{X}$  & $w_x^+$ & $w_x^-$ \\
      \\[-10pt]
      \hline
      \\[-10pt]
      $\alpha_1$ & $\beta_2 \alpha_1 \beta_2^{-1}$ & $\beta_2^{-1} \alpha_1 \beta_2$ \\
      $\alpha_2$ & $\beta_1 \alpha_2 \beta_1^{-1}$ & $\beta_1^{-1} \alpha_2 \beta_1$ \\
      $\beta_1$ & $\beta_1$ & $\beta_1$ \\
      $\beta_2$ & $\beta_2$ & $\beta_2$ \\
      \\[-10pt]
      \hline
    \end{tabular}
  \end{center}

  Define $\Phi^+$, $\Phi^-$ and $\mc{S}^+$, $\mc{S}^-$ as in the preamble to Theorem~\ref{thm15}.  By Proposition~\ref{prop14}, $K^3_2(1)$ is presented by $\langle \mc{X}, t \, | \, \mc{R}_1, \mc{S}^+ \rangle$.  The relations $\mc{S}^-$ are (easy) consequences of the relations $\mc{R}_1 \cup \mc{S}^+$ and so $K^3_2(1)$ is also presented by $\langle \mc{X}, t \, | \, \mc{R}_1, \mc{S}^+, \mc{S}^- \rangle$.  We are thus in a position to apply Theorem~\ref{thm15}.

  For each $x \in \mc{X}$, the relation $x \Phi^-(\Phi^+(x))$ is freely trivial.  It thus suffices to show that all the words $\Phi^\epsilon(r)$, where $\epsilon \in \{ \pm1 \}$ and $r \in \mc{R}_1$ are null-homotopic over $\mc{P}_1$.  These relations are given in the following table.

  \begin{center}
    \begin{tabular}{c c c}
      \hline
      \\[-10pt]
      $r \in \mc{R}_1$ & $\Phi^+(r)$ & $\Phi^-(r)$ \\
      \\[-10pt]
      \hline
      \\[-10pt]
      $[\alpha_1, \alpha_2]$ & $[\alpha_1^{\beta_2}, \alpha_2^{\beta_1}]$ & $[\alpha_1^{\beta_2^{-1}}, \alpha_2^{\beta_1^{-1}}]$ \\
      $[\beta_1, \beta_2]$ & $[\beta_1, \beta_2]$ & $[\beta_1, \beta_2]$ \\
      $[\alpha_1, \beta_2] [\alpha_2, \beta_1]^{-1}$ & $[\alpha_1^{\beta_2}, \beta_2] [\alpha_2^{\beta_1}, \beta_1]^{-1}$ & $[\alpha_1^{\beta_2^{-1}}, \beta_2] [\alpha_2^{\beta_1^{-1}}, \beta_1]^{-1}$ \\
      $[\alpha_1^{-1}, \beta_2] [\alpha_2^{-1}, \beta_1]^{-1}$ & $[\alpha_1^{-\beta_2}, \beta_2] [\alpha_2^{-\beta_1}, \beta_1]^{-1}$ & $[\alpha_1^{-\beta_2^{-1}}, \beta_2] [\alpha_2^{-\beta_1^{-1}}, \beta_1]^{-1}$ \\
      $[\alpha_1, \beta_2^{-1}] [\alpha_2, \beta_1^{-1}]^{-1}$ & $[\alpha_1^{\beta_2}, \beta_2^{-1}] [\alpha_2^{\beta_1}, \beta_1^{-1}]^{-1}$ & $[\alpha_1^{\beta_2^{-1}}, \beta_2^{-1}] [\alpha_2^{\beta_1^{-1}}, \beta_1^{-1}]^{-1}$ \\
      $[\alpha_1^{-1}, \beta_2^{-1}] [\alpha_2^{-1}, \beta_1^{-1}]^{-1}$ & $[\alpha_1^{-\beta_2}, \beta_2^{-1}] [\alpha_2^{-\beta_1}, \beta_1^{-1}]^{-1}$ & $[\alpha_1^{-\beta_2^{-1}}, \beta_2^{-1}] [\alpha_2^{-\beta_1^{-1}}, \beta_1^{-1}]^{-1}$ \\
      \\[-10pt]
      \hline
    \end{tabular}
  \end{center}
\end{proof}

Define a monoid endomorphism $\Lambda_\alpha: \fm{X} \rightarrow \fm{X}$, which commutes with the inversion automorphism, by mapping $\alpha_i \mapsto \alpha_i^{-1}$ and $\beta_i \mapsto \beta_i$.  Similarly, define an endomorphism $\Lambda_\beta: \fm{X} \rightarrow \fm{X}$ which commutes with the inversion automorphism by mapping $\alpha_i \mapsto \alpha_i$ and $\beta_i \mapsto \beta_i^{-1}$.  Note that if $r \in \mc{R}_1$, then both $\Lambda_\alpha(r)$ and $\Lambda_\beta(r)$ are cyclic conjugates of relations also in $\mc{R}_1$. It follows that if $w \in \fm{X}$ is null-homotopic over $\mc{Q}_1$, then so are are $\Lambda_\alpha(r)$ and $\Lambda_\beta(r)$.  Taking this symmetry into account, it thus suffices to show that the words $\Phi^+([\alpha_1, \alpha_2])$, $\Phi^+([\alpha_1, \beta_2] [\alpha_2, \beta_1]^{-1})$ and $\Phi^-([\alpha_1, \beta_2] [\alpha_2, \beta_1]^{-1})$ are null-homotopic over $\mc{Q}_1$.  $\mc{Q}_1$-van Kampen diagrams for these words are displayed in Figures~\ref{fig6}, \ref{fig7} and \ref{fig8}.

  \begin{figure}[h]
    \psfrag{a1}{$\alpha_1$}
    \psfrag{b1}{$\beta_1$}
    \psfrag{a2}{$\alpha_2$}
    \psfrag{b2}{$\beta_2$}
    \psfrag{t}{$t$}
    \centering \includegraphics{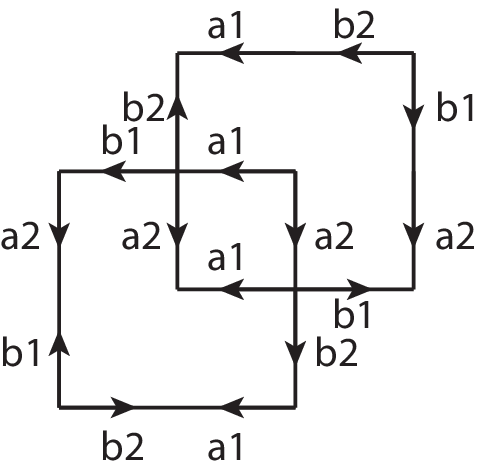}
    \caption{$\mc{Q}_1$-van Kampen diagram for $\Phi^+([\alpha_1, \alpha_2])$} \label{fig6}
  \end{figure}

  \begin{figure}[h]
    \psfrag{a1}{$\alpha_1$}
    \psfrag{b1}{$\beta_1$}
    \psfrag{a2}{$\alpha_2$}
    \psfrag{b2}{$\beta_2$}
    \psfrag{t}{$t$}
    \centering \includegraphics{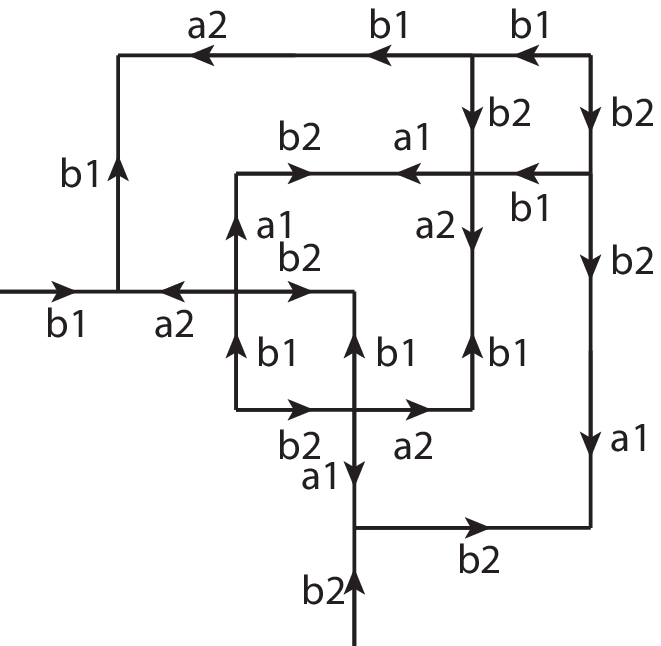}
    \caption{$\mc{Q}_1$-van Kampen diagram for $\Phi^+([\alpha_1, \beta_2] [\alpha_2, \beta_1]^{-1})$} \label{fig7}
  \end{figure}

  \begin{figure}[h]
    \psfrag{a1}{$\alpha_1$}
    \psfrag{b1}{$\beta_1$}
    \psfrag{a2}{$\alpha_2$}
    \psfrag{b2}{$\beta_2$}
    \psfrag{t}{$t$}
    \centering \includegraphics{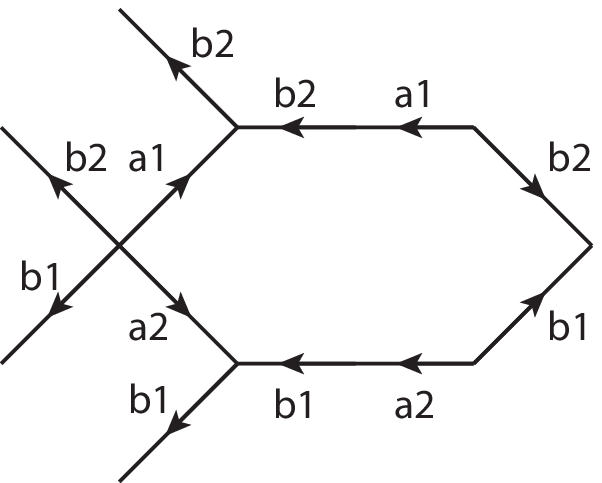}
    \caption{$\mc{P}_3$-van Kampen diagram for $\Phi^-([\alpha_1, \beta_2] [\alpha_2, \beta_1]^{-1})$} \label{fig8}
  \end{figure}

Finally, we show that $\mc{Q}_1$ and $\mc{Q}_2$ define the same group.  Define a monoid endomorphism $\ol{\Lambda} : \fm{X} \rightarrow \fm{X}$, commuting with the inversion automorphism, by mapping $\alpha_i \mapsto \beta_i$ and $\beta_i \mapsto \alpha_i$.  Note that, for $i=1$ or $2$, if $r$ is a relation in $\mc{R}_i$, then $\ol{\Lambda}(r)$ is a cyclic conjugate of some relation also in $\mc{R}_i$.  We show that each of $\mc{Q}_1$ and $\mc{Q}_2$ is Tietze equivalent to the presentation $\langle \mc{X} \, | \, \mc{R}_1, \mc{R}_2 \rangle$.  For the first equivalence, note that $\mc{R}_2$ contains 4 relations distinct from those in $\mc{R}_1$.  Taking into account the symmetries $\Lambda_\alpha$, $\Lambda_\beta$ and $\ol{\Lambda}$, it suffices to show that the word $[\beta_2^{\alpha_1}, \beta_2^{-1} \beta_1]$ is null-homotopic over $\mc{Q}_1$.  A $\mc{Q}_1$-van Kampen diagram for this word is displayed in Figure~\ref{fig9}.  For the other equivalence, note that $\mc{R}_1$ contains 3 relations distinct from those in $\mc{R}_2$.  Taking into account the symmetry $\ol{\Lambda}$, it suffices to show that the words $[\alpha_1^{-1}, \beta_2] [\alpha_2^{-1}, \beta_1]^{-1}$ and $[\alpha_1^{-1}, \beta_2^{-1}] [\alpha_2^{-1}, \beta_1^{-1}]^{-1}$ are null-homotopic over $\mc{Q}_2$.  $\mc{Q}_2$-van Kampen diagrams for these words are displayed in Figures~\ref{fig10} and \ref{fig11}.

  \begin{figure}[h]
    \psfrag{a1}{$\alpha_1$}
    \psfrag{b1}{$\beta_1$}
    \psfrag{a2}{$\alpha_2$}
    \psfrag{b2}{$\beta_2$}
    \psfrag{t}{$t$}
    \centering \includegraphics{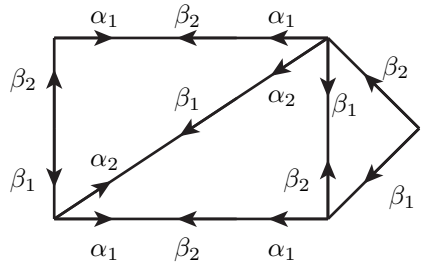}
    \caption{$\mc{Q}_1$-van Kampen diagram for $[\beta_2^{\alpha_1}, \beta_2^{-1} \beta_1]$} \label{fig9}
  \end{figure}

  \begin{figure}[h]
    \psfrag{a1}{$\alpha_1$}
    \psfrag{b1}{$\beta_1$}
    \psfrag{a2}{$\alpha_2$}
    \psfrag{b2}{$\beta_2$}
    \psfrag{t}{$t$}
    \centering \includegraphics{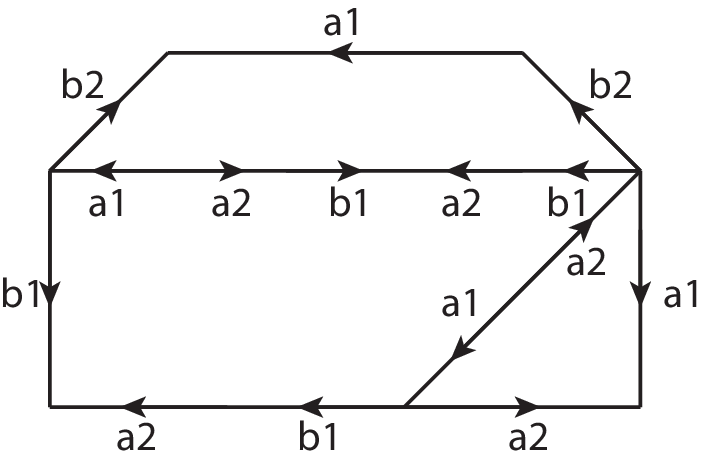}
    \caption{$\mc{Q}_2$-van Kampen diagram for $[\alpha_1^{-1}, \beta_2] [\alpha_2^{-1}, \beta_1]^{-1}$} \label{fig10}
  \end{figure}

  \begin{figure}[h]
    \psfrag{a1}{$\alpha_1$}
    \psfrag{b1}{$\beta_1$}
    \psfrag{a2}{$\alpha_2$}
    \psfrag{b2}{$\beta_2$}
    \psfrag{t}{$t$}
    \centering \includegraphics{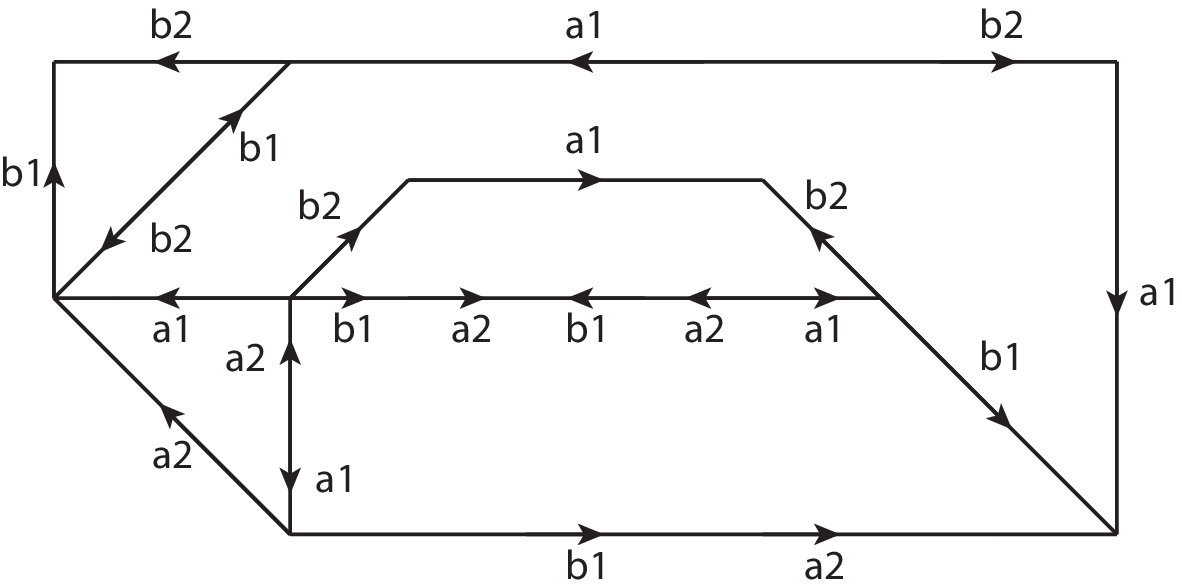}
    \caption{$\mc{Q}_2$-van Kampen diagram for $[\alpha_1^{-1}, \beta_2^{-1}] [\alpha_2^{-1}, \beta_1^{-1}]^{-1}$} \label{fig11}
  \end{figure}

\subsection{A lower bound on the Dehn function of $K^3_2(2)$}

\begin{thm}
  The Dehn function $\delta$ of $K^3_2(2)$ satisfies $\delta(l) \succeq l^3$.
\end{thm}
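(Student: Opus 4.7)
The plan is to apply the amalgam lower-bound tool Theorem~\ref{thm13} to the splitting $K^3_2(2) \cong L_1 \ast_M L_2$ furnished by Theorem~\ref{thm14}, with $M \cong K^2_2(2) \leq F_2^{(1)} \times F_2^{(2)}$ and $L_i \cong K^2_2(1)$.  I take $\alpha = \alpha_2 \in L_1 \smallsetminus M$ and $\beta = \beta_2 \in L_2 \smallsetminus M$.  Since $\alpha_2$ and $\beta_2$ have trivial $F_2^{(2)}$-component, every element of the subgroup $M_2 := (1, [F_2^{(2)}, F_2^{(2)}], 1) \leq M$ centralises both, so the centraliser hypothesis of Theorem~\ref{thm13} will be met automatically for any $h \in M_2$.

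For each $n \geq 1$, set $t := \alpha_1^{-1}\alpha_2 \in L_1$ and take the family
\[
  h_n \;:=\; \beta_1^{\,n}\, t^{-n}\, \beta_1^{-n}\, t^{\,n} \;\in\; M.
\]
A direct coordinate-wise calculation using $\beta_1^n = (y_1^n, y_2^{-n}, 1)$ and $t^n = (1, x_2^n, x_3^{-n})$ shows that the first coordinates telescope to $1$, the third coordinates cancel, and the second coordinate multiplies out to $[y_2^{-n}, x_2^{-n}]$; hence $h_n = (1, [y_2^{-n}, x_2^{-n}], 1) \in M_2$.  The word $w_n := \beta_1^n t^{-n} \beta_1^{-n} t^n$ has length $O(n)$ in the generators of $L_1$, so the null-homotopic word $[w_n, (\alpha_2 \beta_2)^n]$ has length $O(n)$ as well.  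Theorem~\ref{thm13} then gives
\[
  \Area_\mc{P}\bigl([w_n, (\alpha_2\beta_2)^n]\bigr) \;\geq\; 2n \cdot d_\mc{B}(1, h_n).
\]

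The crux of the proof, and the main obstacle, is the quadratic lower bound $d_\mc{B}(1, h_n) \geq C n^2$ on the intrinsic $M$-length of $h_n$.  I would prove this by exploiting the $\Z[\Z^2]$-module structure on $M_2^{ab}$ induced by the conjugation action of $\alpha_1, \beta_1 \in M$.  Classical Fox/Magnus calculus identifies the abelianisation $[F_2, F_2]^{ab}$ with the free $\Z[\Z^2]$-module of rank one generated by the class of $[x_2, y_2]$; under this identification $[y_2^{-n}, x_2^{-n}]$ corresponds to the element
\[
  \pm\bigl(1 + y_2^{-1} + \cdots + y_2^{-(n-1)}\bigr)\bigl(1 + x_2^{-1} + \cdots + x_2^{-(n-1)}\bigr) \,\in\, \Z[\Z^2],
\]
which has $\ell^1$-norm equal to $n^2$.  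Since conjugation by any single letter of $\mc{B}$ only translates the $\Z^2$-support of the image by a bounded amount, any $\mc{B}$-word representing $h_n$ must have length at least $Cn^2$; making this estimate precise is the technical heart of the argument.

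Combining the two pieces yields $\Area \geq 2Cn^3$ for a null-homotopic word of length $O(n)$, hence $\delta(l) \succeq l^3$ as required.
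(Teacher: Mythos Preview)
Your overall strategy is the same as the paper's: apply Theorem~\ref{thm13} to the splitting $K^3_2(2)\cong L_1\ast_M L_2$ of Theorem~\ref{thm14}. Your specific choices differ from the paper's but are symmetric to them: the paper takes $h_l=[x_1^l,y_1^l]\in [F_2^{(1)},F_2^{(1)}]\times\{1\}$ together with $\alpha=y_2\in L_1\smallsetminus M$ and $\beta=x_2\in L_2\smallsetminus M$ (both with trivial first coordinate), whereas you take $h_n\in\{1\}\times[F_2^{(2)},F_2^{(2)}]$ together with $\alpha_2,\beta_2$ (both with trivial second coordinate). Both setups satisfy the centraliser hypothesis of Theorem~\ref{thm13} and produce null-homotopic words of length $O(n)$, so the reduction to a distortion estimate for $M=K^2_2(2)$ is correct in either case.

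The point of comparison is the distortion bound $d_{\mc B}(1,h_n)\succeq n^2$, which you identify as the crux but only sketch. Your Fox-calculus outline is imprecise as written: the generators of $\mc B$ multiply rather than conjugate, and the sentence ``conjugation by a single letter of $\mc B$ only translates the $\Z^2$-support'' does not by itself bound the word length. What is actually needed is the observation that $M=M_2\rtimes F_2$ (via the first-coordinate projection, with section $x_1\mapsto a,\ y_1\mapsto b$), so in a $\mc B$-word for $h_n\in M_2$ only the $c$-letters contribute to the $M_2$-part, each contributing a single $\Z^2$-translate of $\bar m_0$ in $M_2^{\mathrm{ab}}\cong\Z[\Z^2]$; then the $\ell^1$-norm argument does give $k\geq n^2$ for the number $k$ of $c$-letters. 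So your route can be completed, but the key structural step is missing from the sketch.

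The paper's argument for this same estimate is considerably cleaner and avoids Fox calculus entirely. For the paper's element $h_l=([x_1^l,y_1^l],1)$ and $\mc B=\{a,b,c\}=\{x_1x_2^{-1},\,y_1y_2^{-1},\,[x_1,y_1]\}$, a word $W(a,b,c)$ representing $h_l$ has $W(x_1,y_1,[x_1,y_1])\FreeEq[x_1^l,y_1^l]$ (first coordinate) and $W(x_1,y_1,1)\FreeEq\emptyset$ (second coordinate, after relabelling). Replacing the $k$ occurrences of $c^{\pm1}$ by $\emptyset$ therefore gives a null $\langle x_1,y_1\mid[x_1,y_1]\rangle$-sequence of area $k$ for $[x_1^l,y_1^l]$, whence $k\geq l^2$ by the quadratic Dehn function of $\Z^2$. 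Exactly the same trick applies to your element: the two coordinate conditions interchange, yielding a null $\Z^2$-sequence of area $k$ for $[y_1^n,x_1^n]$ and hence $k\geq n^2$. This replaces your $\ell^1$-norm argument with a one-line appeal to $\delta_{\Z^2}$.
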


\begin{proof}
By Proposition~\ref{prop13} and Theorem~\ref{thm14}, we have that $K^3_2(2) \cong L_1 \ast_M L_2$ where, as subgroups of $F_2^{(1)} \times F_2^{(2)}$, $L_1 = K^2_2(1)$ is generated by $\mathcal{A}_1 = \{x_1x_2^{-1}, y_1, y_2\}$, $L_2 \cong K^2_2(1)$ is generated by $\mathcal{A}_2 = \{x_1, x_2, y_1y_2^{-1}\}$ and $M = K^2_2(2)$ is generated by $\mathcal{B} = \{x_1x_2^{-1}, y_1y_2^{-1}, [x_1, y_1] \}$.  To obtain the generating set for $L_2$ we have here implicitly used the automorphism of $F_2^{(1)} \times F_2^{(2)}$ which interchanges $x_i$ with $y_i$ and realises the isomorphism between $L_2$ and $K_2^2(1)$.

For each $l \in \N$, define $h_l$ to be the element $[x_1^l, y_1^l] \in K^2_2(2)$ and define $w_l$ to be the word $[(x_1x_2^{-1})^l, y_1^l] \in \mc{A}_1^{\pm\bast}$ representing $h_l$.  Note that $h_l$ commutes with both $y_2 \in \mathcal{A}_1$ and $x_2 \in \mathcal{A}_2$ so, by Theorem~\ref{thm13}, the word $[w_l, (y_2x_2)^l]$, which has length $16l$, has area at least $2l \, d_\mathcal{B}(1, h_l)$.  We claim that $d_\mathcal{B}(1, h_l) \geq l^2$.

Suppose that in $F_2^{(1)} \times F_2^{(2)}$ the element $h_l$ is represented by a word $w \equiv w(x_1x_2^{-1}, y_1y_2^{-1}, [x_1, y_1])$ in the generators $\mathcal{B}$.  Let $k$ be the number of occurrences of the third variable in the word $w$.  We will show that $k \geq l^2$.

Observe that as group elements the word $w(x_1x_2^{-1}, y_1y_2^{-1}, [x_1, y_1])$ is equal to the word $w(x_1, y_1, [x_1, y_1]) \, w(x_2^{-1}, y_2^{-1}, 1).$ Thus we have that $[x_1^l, y_1^l]$ is freely equal to $w(x_1, y_1, [x_1, y_1])$ and that $w(x_2^{-1}, y_2^{-1}, 1)$, and thus $w(x_1, y_1, 1)$, is freely equal to the empty word. It follows that there exists a null $\mc{P}$-sequence for $[x_1^l, y_1^l]$ with area $k$, where $\mc{P}$ is the presentation $\langle x_1, y_1 \, | \, [x_1, y_1] \rangle$.  But $\mc{P}$ presents the rank $2$ free abelian group, and basic results on Dehn functions give that $[x_1^l, y_1^l]$ has area $l^2$ over this presentation. Thus $k \geq l^2$.
\end{proof}

\section{Bestvina-Brady groups} \label{sec3}

\begin{defn}
  A simplicial complex is said to be \emph{flag} if every collection of pairwise adjacent vertices spans a simplex.  A finite flag simplicial complex $\Delta$ with vertices $v_1, \ldots, v_k$ defines an associated \emph{right-angled Artin group} $A_\Delta$ given by the presentation $$\mc{P}_A = \langle v_1, \ldots, v_k \, | \, [v_i, v_j] \text{ whenever $v_i$ and $v_j$ are joined by an edge in $\Delta$} \rangle.$$ The \emph{Bestvina-Brady group} $H_\Delta$ associated to $\Delta$ is defined to be the kernel of the homomorphism $A \rightarrow \Z = \langle t \rangle$ which maps each $a_i \mapsto t$.
\end{defn}

\begin{defn}
  A simplicial complex $\Delta$ is said to be \emph{$n$-connected} (respectively \emph{$n$-acyclic}), where $n$ is a positive integer, if $\pi_i(\Delta)$ (resp. $H_i(\Delta, \Z)$) is trivial for all $i \leq n$.
\end{defn}

\begin{thm}[Bestvina-Brady \cite{Bestvina1}]
  \mbox{}
  \begin{enumerate}
    \item $H_\Delta$ is of type $\rm{F}_m$ if and only if $\Delta$ is $(m-1)$-connected.

    \item $H_\Delta$ is  of type $\rm{FP}_m$ if and only if $\Delta$ is $(m-1)$-acyclic.
  \end{enumerate}
\end{thm}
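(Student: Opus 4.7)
The plan is to deploy (equivariant) discrete Morse theory on a non-positively curved cube complex modelling $A_\Delta$, following the scheme that makes $\Delta$ itself appear as the ascending/descending link at every vertex. First I would build the Salvetti complex $S_\Delta$: take a single $0$-cell, a loop for each vertex of $\Delta$, and for each $k$-simplex $\sigma = \{v_{i_0}, \ldots, v_{i_k}\}$ of $\Delta$ glue in a $(k{+}1)$-torus, viewed as the $(k{+}1)$-cube $[0,1]^{k+1}$ whose opposite faces are identified via the generators $v_{i_j}$. The link of the unique vertex is the simplicial complex whose simplices are precisely the commuting collections of $v_i^{\pm1}$, and the flag condition on $\Delta$ is exactly Gromov's link condition, so $\widetilde{S_\Delta}$ is CAT(0) and $S_\Delta$ is a $K(A_\Delta,1)$.

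Next I would lift the homomorphism $\phi \colon A_\Delta \to \mathbb{Z}$ to a Morse function $f \colon X := \widetilde{S_\Delta} \to \mathbb{R}$ that is affine on each cube, equal to $\phi$ on the vertices $A_\Delta \cdot \ast$, and assigns to a point of a cube the sum of its coordinates. Because $f$ is injective on the vertices of each cube, every cube has a unique top and bottom vertex, so one can speak of ascending and descending links $\mathrm{Lk}_\uparrow(v), \mathrm{Lk}_\downarrow(v) \subseteq \mathrm{Lk}(v)$. The crucial combinatorial observation is that, under the identifications above, both $\mathrm{Lk}_\uparrow(v)$ and $\mathrm{Lk}_\downarrow(v)$ are canonically isomorphic to $\Delta$ for every $0$-cell $v$. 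I would then use the standard Morse-theoretic fact that as $t$ crosses an integer value, the sublevel set $X_{\leq t}$ is obtained from $X_{<t}$ by coning off the descending links of the newly-included vertices, and dually for superlevel sets.

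With this in place I would pass to the level set $Y = f^{-1}(0)$ (perhaps after barycentric subdivision), on which $H_\Delta$ acts freely and cocompactly with quotient a candidate $K(H_\Delta, 1)$-skeleton. For the forward implications, if $\Delta$ is $(m{-}1)$-connected (respectively $(m{-}1)$-acyclic), then each attachment in the Morse filtration of $X$ along an ascending or descending link leaves $\pi_i$ (respectively $H_i$) unchanged for $i \leq m{-}1$; combined with the contractibility (respectively acyclicity) of $X$, a Mayer-Vietoris / pair-sequence argument shows that $Y$ is $(m{-}1)$-connected (respectively $(m{-}1)$-acyclic), exhibiting the required finiteness of $H_\Delta$ via the cocompact action on the $m$-skeleton of $Y$. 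For the converse, I would run the same filtration but read it backwards: if $H_\Delta$ is of type $\mathrm{FP}_m$, then there is a projective resolution of $\mathbb{Z}$ over $\mathbb{Z}H_\Delta$ finitely generated up to degree $m$. Comparing it, via the equivariant cellular chain complex of $X$ filtered by $|f|$, with the contribution of each descending link produces a spectral sequence whose $E^1$-terms in the bottom row are copies of $\widetilde{H}_*(\Delta)$; forcing these to vanish in degrees $\leq m{-}1$ yields $(m{-}1)$-acyclicity of $\Delta$, and a parallel argument with $\pi_*$ (using $\mathrm{F}_m$) gives $(m{-}1)$-connectedness. The main obstacle will be this converse: identifying the descending-link data with $\Delta$ in the equivariant chain complex and ruling out cancellation between the infinitely many $H_\Delta$-translates requires a careful filtration argument, and is the technical heart of Bestvina and Brady's paper.
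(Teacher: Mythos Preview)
The paper does not give its own proof of this theorem: it is stated with attribution to Bestvina and Brady \cite{Bestvina1} and used as a black box, so there is nothing in the paper to compare your proposal against. Your outline is in fact a faithful sketch of the original Bestvina--Brady argument (Salvetti complex, CAT(0) via the flag link condition, affine Morse function whose ascending and descending links are copies of $\Delta$, and the filtration/spectral-sequence analysis for the converse), so if the task were to reproduce their proof you are on the right track; but for the purposes of this paper no proof is required.
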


This section is devoted to proving the following result.

\begin{thm} \label{thm3}
  Every finitely presented Bestvina-Brady group has $l^4$ as an isoperimetric function.
\end{thm}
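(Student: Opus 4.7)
The plan is to apply the cyclic extension machinery of Section~\ref{sec2} to the short exact sequence $1 \to H_\Delta \to A_\Delta \xrightarrow{\theta} \Z \to 1$, exploiting the non-positively curved cube complex structure of the ambient right-angled Artin group. Since the Salvetti complex of $A_\Delta$ is non-positively curved, $A_\Delta$ is CAT(0) and hence admits a quadratic-linear area-radius pair $(\alpha,\rho) = (Cl^2, Cl)$. Choosing a vertex $t \in \Delta^{(0)}$ and a finite generating set $\mc{A}$ for $H_\Delta$ (which exists since $\Delta$ is connected), one constructs a finite positive-normal-form presentation $\mc{P}_\Gamma = \langle \mc{A}, t \,|\, \mc{R}, \mc{S}\rangle$ of $A_\Delta$, with $\mc{S} = \{tat^{-1}w_a^{-1} : a \in \mc{A}\}$ and $\mc{R} \subseteq \fm{A}$. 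Theorem~\ref{thm1} then produces an indexed presentation $(\mc{P}_H^\infty, \|\cdot\|)$ of $H_\Delta$ for which $(Cl^2, Cl)$ is an area-penetration pair; its relators are the indexed words $\Phi_k(r)$ (with $r \in \mc{R}$, $k \in \Z$) and $\Phi_{k+1}(a)\Phi_k(w_a)^{-1}$ (with $a \in \mc{A}$, $k \in \Z$).

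The heart of the proof is to exhibit a finite presentation $\mc{P}_H$ of $H_\Delta$ --- which exists by Bestvina-Brady since $\Delta$ is simply connected --- with the property that the relational area function of $(\mc{P}_H^\infty, \|\cdot\|)$ over $\mc{P}_H$ satisfies $\RArea(n) \leq C' n^2$. A quadratic bound is exactly what Proposition~\ref{prop1} requires in order to yield
\[
\delta_{\mc{P}_H}(l) \leq \alpha(l) \cdot \RArea(\rho(l)) \leq Cl^2 \cdot C'(Cl)^2 = O(l^4),
\]
which is the claimed isoperimetric inequality. A natural candidate for $\mc{P}_H$ is a Dicks-Leary-style presentation whose generators correspond to directed edges of $\Delta$ and whose relations come from triangles of $\Delta$ together with discs filling loops in $\Delta^{(1)}$ exploited via simple connectedness. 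Triangle relations in $\Delta$ supply the commutation squares needed to push the conjugating $t^{\pm k}$ through each letter of $r$, while simple connectedness ensures that this pushing can always be completed combinatorially.

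The main obstacle is the combinatorial estimate $\RArea(n) \leq C'n^2$. Each word $\Phi_k(r)$ has length $O(|k|)$, since each letter $a \in \mc{A}$ that does not commute with $t$ must be replaced under $\Phi_k$ by a word of length proportional to $|k|$. Filling such a word amounts to stacking $O(|k|)$ parallel copies of triangle/commutation relations at heights $0, 1, \ldots, |k|-1$, each contributing $O(|k|)$ to the area; similarly the telescoping relators $\Phi_{k+1}(a)\Phi_k(w_a)^{-1}$ should be fillable by induction on $k$ at quadratic cost. Making this precise uniformly across all simply-connected $\Delta$, where the action of conjugation by $t$ on $H_\Delta$ can be intricate when $t$ fails to commute with many other vertices, is the technical core of the proof and where the bulk of the work lies.
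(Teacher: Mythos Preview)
Your proposal is correct and follows essentially the same route as the paper: the paper also applies Theorem~\ref{thm1} to a positive normal form presentation of $A_\Delta$ built from the Dicks-Leary presentation $\mc{P}_H = \langle \Edge(\Delta) \mid \mc{R}_\Delta \rangle$ (with stable letter $t = q$ a chosen base vertex), invokes the CAT(0) quadratic-linear area-radius pair, and then devotes Lemmas~\ref{lem1}--\ref{lem7} to the explicit combinatorics verifying $\RArea(n) \preceq n^2$, exactly the step you flag as the technical core. Your heuristic for that estimate is accurate in spirit; the paper's key ingredients are the triangle relation $e^nf^ng^n$ having area $\leq 3n^2$ and the simple connectedness of $\Delta$ being used via combinatorial null-homotopies to fill the words $p_n(q,\iota e)\,e^n\,p_n(\tau e, q)$ at quadratic cost.
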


Theorem~\ref{thm3} provides an obstruction to the method suggested in \cite{Brady1} for producing finitely presented Bestvina-Brady groups whose Dehn functions are $\simeq$-equivalent to $l^m$ for arbitrary integers $m$.

If a Bestvina-Brady group is finitely presented, then Dicks and Leary \cite{Dicks1} have shown how to read off from the defining complex a particularly pleasant finite presentation.  Let $\Edge(\Delta)$ be the set of directed edges of $\Delta$ (so the cardinality of $\Edge(\Delta)$ is twice the number of $1$-simplices in $\Delta$).  We write $\iota e$ and $\tau e$ respectively for the initial and terminal vertices of $e$ and $\overline{e}$ for the edge $e$ with the opposite orientation.  We say that the directed edges $e_1, \ldots, e_n$ form a \emph{combinatorial path} in $\Delta$, written $e_1 \cdot \ldots \cdot e_n$, if $\tau e_i = \iota e_{i+1}$ for all $i$. If furthermore $\tau e_n = \iota e_1$ then we say that $e_1 \cdot \ldots \cdot e_n$ is a \emph{combinatorial $1$-cycle}.

Define $\mc{R}_\Delta \subseteq \Edge(\Delta)^{\pm\bast}$ to consist of all words $e \overline{e}$ for $e \in \Edge(\Delta)$ and all words $efg$ and $e^{-1} f^{-1} g^{-1}$ where $e \cdot f \cdot g$ is a combinatorial $1$-cycle in $\Delta$.

\begin{thm}[Dicks-Leary \cite{Dicks1}]
  If $\Delta$ is simply connected then $H_\Delta$ is presented by $\langle \Edge(\Delta) \, | \, \mc{R}_\Delta \rangle$ with the embedding $H_\Delta \hookrightarrow A_\Delta$ given by $e \mapsto \iota e (\tau e)^{-1}$.
\end{thm}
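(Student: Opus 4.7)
I would apply Theorem~\ref{thm1} to the cyclic extension $1 \to H_\Delta \to A_\Delta \to \Z \to 1$ (whose quotient map sends every vertex generator of $A_\Delta$ to the generator of $\Z$) to produce an infinite presentation of $H_\Delta$ on the generators $\mc{A} := \{x_e : e \in \Edge(\Delta)\}$, and then use the simple connectedness of $\Delta$ to reconcile this infinite presentation with $\langle \Edge(\Delta) \mid \mc{R}_\Delta \rangle$.

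Fix a base vertex $v_0$ of $\Delta$, set $t := v_0$, and (using that $\Delta$ is connected) pick a spanning tree $T$ of its $1$-skeleton rooted at $v_0$. For each vertex $v$, let $p_v \in \fm{A}$ be the concatenation of the generators $x_e$ along the tree path from $v_0$ to $v$, so that $p_v = v_0 v^{-1}$ and $v = p_v^{-1} t$ in $A_\Delta$. For each directed edge $e = (v_i, v_j)$, the commutation $v_i v_j = v_j v_i$ in $A_\Delta$ yields the key identity
\[
  t \, x_e \, t^{-1} \;=\; v_0 v_i v_j^{-1} v_0^{-1} \;=\; p_{v_i} \, x_e \, p_{v_i}^{-1} \;\;\text{in}\;\; A_\Delta.
\]
Setting $w_{x_e} := p_{\iota e} \, x_e \, p_{\iota e}^{-1}$ and $\mc{S} := \{t x_e t^{-1} w_{x_e}^{-1} : e \in \Edge(\Delta)\}$, Tietze transformations present $A_\Delta$ in positive normal form as $\mc{P}_A' = \langle \mc{A}, t \mid \mc{R}, \mc{S} \rangle$, where $\mc{R} \subseteq \fm{A}$ is obtained from the commutator relations of $\mc{P}_A$ and from the defining equations $x_e = \iota(e)\tau(e)^{-1}$ by substituting $v = p_v^{-1} t$ for each vertex $v$ and then using $\mc{S}$ to eliminate the letter $t$. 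Applying Theorem~\ref{thm1} with $\Phi_k := \Phi_1^k$ (so that every relation in $\overline{\mc{S}}$ becomes freely trivial) presents $H_\Delta$ as $\mc{P}_K^\infty = \langle \mc{A} \mid \overline{\mc{R}} \rangle$, where $\overline{\mc{R}} = \{\Phi_k(r) : r \in \mc{R}, k \in \Z\}$.

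By the remark following Theorem~\ref{thm1}, it remains to show that $\mc{R}_\Delta$ and $\overline{\mc{R}}$ generate the same normal subgroup of $\Fr(\mc{A})$. The easy direction, that each relator in $\mc{R}_\Delta$ represents the identity in $H_\Delta$ (hence is a consequence of $\overline{\mc{R}}$), is a direct telescoping computation: $x_e x_{\bar e} = \iota(e) \tau(e)^{-1} \tau(e) \iota(e)^{-1} = 1$, and for a combinatorial $1$-cycle $e \cdot f \cdot g$ in $\Delta$, the word $x_e x_f x_g$ collapses to the identity via $\tau(e) = \iota(f)$, $\tau(f) = \iota(g)$, $\tau(g) = \iota(e)$.

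The main challenge is the reverse direction. The relations in $\mc{R}$ split into three classes: the inverse-edge relations $x_e x_{\bar e}$ (already in $\mc{R}_\Delta$); the ``tree-cycle'' relations $x_e = p_{\iota e}^{-1} p_{\tau e}$ coming from the defining relations of non-tree directed edges; and the ``commutator'' relations of shape $p_{v_i}^{-1} \Phi_1(p_{v_j})^{-1} \Phi_1(p_{v_i}) p_{v_j}$ coming from each edge $\{v_i, v_j\}$ of $\Delta$. Each such word, and each of its $\Phi_k$-translates, corresponds geometrically to a null-homotopic closed edge loop in $\Delta$, and the simple connectedness of $\Delta$ supplies a triangulated disc filling the loop. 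The hardest step is translating this topological filling into an algebraic derivation: by induction on the number of $2$-simplices in the disc, peeling off a boundary triangle corresponds to applying a single relation $x_e x_f x_g = 1$ (or $x_e^{-1} x_f^{-1} x_g^{-1} = 1$) from $\mc{R}_\Delta$, reducing to a disc with one fewer triangle. Verifying that this bookkeeping goes through uniformly in $k$ (so that conjugation by $t^k$ does not disrupt the combinatorial correspondence between discs in $\Delta$ and $\mc{R}_\Delta$-expressions) is the crux of the proof.
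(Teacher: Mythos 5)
The paper does not prove this theorem at all; it is quoted from Dicks--Leary \cite{Dicks1} and used as a \emph{starting point} for the Dehn-function estimate in Section~\ref{sec3}. So there is no internal proof to compare your proposal against.

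Your strategy --- re-present $A_\Delta$ in positive normal form $\langle \mc{A},t \mid \mc{R},\mc{S}\rangle$ via tree paths, apply Theorem~\ref{thm1} with $\Phi_k := \Phi_1^{\,k}$ so that $\ol{\mc{S}}$ is freely trivial, and then identify the normal closures of $\ol{\mc{R}}$ and $\mc{R}_\Delta$ in $\Fr(\mc{A})$ --- is a genuinely plausible route, and it is essentially the reverse engineering of what Section~\ref{sec3} does (there the Dicks--Leary presentation is assumed and the same kind of $\Phi_n$-machinery is used to bound areas). Your computation of the key identity $t x_e t^{-1} = p_{\iota e} x_e p_{\iota e}^{-1}$ via the edge commutation is correct, and the classification of $\mc{R}$ into inverse-edge, tree-cycle and commutator relations is right. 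However, there is a genuine gap at exactly the point you call the crux: showing that each $\Phi_k(r)$, $r \in \mc{R}$, lies in the normal closure of $\mc{R}_\Delta$ is the entire content of the theorem, and you leave it at the level of a topological picture (``simple connectedness supplies a filling disc; peel off triangles''). The technically demanding step is not the uniformity in $k$ you flag --- passing to $\Phi_k$ simply raises the exponents and is a routine induction --- but rather the conversion of a combinatorial null-homotopy of a $1$-cycle $e_1\cdot\ldots\cdot e_l$ into a derivation of $e_1^n\ldots e_l^n$ from $\mc{R}_\Delta$. That conversion (precisely Lemmas~\ref{lem2}, \ref{lem4} and \ref{lem5} of Section~\ref{sec3}, whose heart is trivialising $e^n f^n (fe)^{-n}$ with $n^2$ applications of the triangle relations) is what you would need to carry out; without it you have restated the problem rather than solved it.
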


The structure of the proof of Theorem~\ref{thm3} is as follows.  Let $H_\Delta$ and $A_\Delta$ be the Bestvina-Brady and right-angled Artin groups respectively associated to a simply-connected finite flag simplicial complex $\Delta$.  The cyclic extension $1 \rightarrow H_\Delta \rightarrow A_\Delta \rightarrow \Z \rightarrow 1$ splits and we take a positive normal form presentation $\langle \Edge(\Delta), t \, | \, \mc{R}_\Delta, \mc{S}_\Delta \rangle$ for $A_\Delta$, where $\mc{P}_H = \langle \Edge(\Delta) \, | \, \mc{R}_\Delta \rangle$ is the Dicks-Leary presentation for $H_\Delta$ and $\mc{S}_\Delta$ consists of a relator of the form $tet^{-1} w_e^{-1}$ with $w_e \in \Edge(\Delta)^{\pm\bast}$ for each $e \in \Edge(\Delta)$.  Since $A_\Delta$ is CAT(0) it admits a quadratic-linear area-radius pair \cite[Proposition~III.$\Gamma$.1.6]{brid02}, and so we can apply Theorem~\ref{thm1} to produce an infinite indexed presentation $(\mc{P}_H^\infty, \| \cdot \|)$ for $H_\Delta$ that admits a quadratic-linear area-penetration pair.  Lemma~\ref{lem8} shows that the relational area function $\RArea_H$ of $(\mc{P}_H^\infty, \| \cdot \|)$ over $\mc{P}_H$ is $\preceq$ quadratic and hence Theorem~\ref{thm3} follows by Proposition~\ref{prop1}.  The individual calculations required to prove Lemma~\ref{lem8} are set out in Lemmas~\ref{lem1}--\ref{lem7}.

Choose a base vertex $q$ and a spanning tree $T$ in the $1$-skeleton of $\Delta$.  Given $n \in \Z$ and vertices $u$ and $v$ of $\Delta$ write $p_n(u, v)$ for the element $e_1^n \ldots e_l^n$ of $\Edge(\Delta)^{\pm\bast}$ where $e_1 \cdot \ldots \cdot e_l$ is the unique geodesic combinatorial path in $T$ from $u$ to $v$.  We write $p(u, v)$ as shorthand for $p_1(u, v)$.  Note that as group elements \begin{equation} \label{Eqn1} \begin{aligned} p_n(u, v)^{-1} &= (e_1^n \ldots e_l^n)^{-1} \\ &= e_l^{-n} \ldots e_1^{-n} \\ &= \ol{e_l}^n \ldots \ol{e_1}^n \\ &= p_n(v, u) \end{aligned} \end{equation} in $H_\Delta$.  For each $e \in \Edge(\Delta)$, define $w_e$ to be the word $p(q, \iota e) e p(\iota e, q) \in \Edge(\Delta)^{\pm\bast}$.  In \cite{Dicks1} it is proved that mapping $e \mapsto w_e$ defines an automorphism $\theta$ of $H_\Delta$  and that $H_\Delta \rtimes_\theta \Z$ is isomorphic to $A_\Delta$ with $e \in \Edge(\Delta)$ corresponding to $\iota e (\tau e)^{-1}$ and the generator $t$ of $\Z$ corresponding to $q \in A_\Delta$. It is also shown that if $e_1 \cdot \ldots \cdot e_l$ is a combinatorial $1$-cycle then $e_1^n \ldots e_l^n$ is null-homotopic in $H_\Delta$. Define $\mc{S}_\Delta$ to be the set of words $\{t e t^{-1} w_e \, : \, e \in \Edge(\Delta)\} \subseteq (\Edge(\Delta) \cup \{t\})^{\pm\bast}$.  Then $A_\Delta$ is finitely presented by $\mc{P}_A' = \langle \Edge(\Delta) , t \, | \, \mc{R}_\Delta, \mc{S}_\Delta \rangle$.

The following lemma details some properties of the automorphism $\theta$ of $H_\Delta$.  Of these we will only need (vii), but this property is most easily proved via the preceding sequence of assertions.

\renewcommand{\labelenumi}{(\roman{enumi})}
\begin{lem}
  For all $e \in \Edge(\Delta)$ and $n \in \Z$ the following equalities hold in $H_\Delta$:
  \begin{enumerate}
  \item $\theta(e) = p(q, \iota e) e p(q, \iota e)^{-1} = p(q, \iota e) e^2 p(\tau e, q) = p(q, \iota e) e^2 p(q, \tau e)^{-1}$.

  \item $\theta(e^n) = p(q, \iota e) e^n p(\iota e, q) = p(q, \iota e) e^{n+1} p(\tau e, q) = p(q, \iota e) e^{n+1} p(q, \tau e)^{-1}$.

  \item If $e_1 \cdot \ldots \cdot e_l$ is a combinatorial path then $$\theta(e_1^n \ldots e_l^n) = p(q, \iota e_1) e_1^{n+1} \ldots e_l^{n+1} p(\tau e_l, q).$$

  \item $\theta^{-1}(e) = p_{-1}(q, \iota e) p_{-1}(\tau e, q) = p_{-1}(q, \iota e) e p_{-1}(\iota e, q) = p_{-1}(q, \iota e) e p_{-1}(q, \iota e)^{-1}$.

  \item $\theta^{-1}(e^n) = p_{-1}(q, \iota e) e^n p_{-1}(\iota e, q) = p_{-1}(q, \iota e) e^{n-1} p_{-1}(\tau e, q) = p_{-1}(q, \iota e) e^{n-1} p_{-1}(q, \tau e)^{-1}$.

  \item If $e_1 \cdot \ldots \cdot e_l$ is a combinatorial path then $$\theta^{-1}(e_1^n \ldots e_l^n) = p_{-1}(q, \iota e_1) e_1^{n-1} \ldots e_l^{n-1} p_{-1}(\tau e_l, q).$$

  \item $\theta^k(e) = p_k(q, \iota e) e^{k+1} p_k(\tau e, q)$.
\end{enumerate}
\end{lem}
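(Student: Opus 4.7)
The plan is to deduce all seven assertions from three basic ingredients: the definition $\theta(e) = w_e = p(q, \iota e)\, e\, p(\iota e, q)$, the identity $p_n(u, v)^{-1} = p_n(v, u)$ from~(\ref{Eqn1}), and the Dicks--Leary cycle relation, which asserts that $e_1^n \ldots e_l^n$ is null-homotopic in $H_\Delta$ whenever $e_1 \cdot \ldots \cdot e_l$ is a combinatorial $1$-cycle. I would establish (i)--(iii) first and bootstrap (iv)--(vii) from these.

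For (i), the first equality is immediate from the definition together with~(\ref{Eqn1}) applied to $p(\iota e, q)$. For the second, note that $e \cdot p(\tau e, q) \cdot p(q, \iota e)$ is a combinatorial $1$-cycle based at $\iota e$, so the cycle relation with $n = 1$ gives $e\, p(\tau e, q)\, p(q, \iota e) = 1$ in $H_\Delta$; hence $p(\iota e, q) = e\, p(\tau e, q)$, and substituting yields $p(q, \iota e)\, e\, p(\iota e, q) = p(q, \iota e)\, e^2\, p(\tau e, q)$. The third equality is again~(\ref{Eqn1}). For (ii), raise $\theta(e)$ to the $n$-th power and telescope the internal factors $p(\iota e, q)\, p(q, \iota e)$ via the tree relations $e \overline{e} = 1 \in \mc{R}_\Delta$; the other two forms follow as in (i). For (iii), since $\theta$ is a homomorphism we have $\theta(e_1^n \cdots e_l^n) = \prod_{i=1}^l \theta(e_i^n)$, and upon expanding each factor via the second form of (ii), successive tree-path factors $p(\tau e_i, q)\, p(q, \iota e_{i+1})$ collapse because $\iota e_{i+1} = \tau e_i$.

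For (iv)--(vi), rather than constructing $\theta^{-1}$ from scratch, I would apply $\theta$ to the proposed right-hand sides and verify equality by means of~(iii). The key auxiliary computation is that (iii) specialised to $n = -1$ gives $\theta(p_{-1}(u, v)) = p(q, u)\, p(v, q)$ (the middle factors $e_i^{0}$ disappear); each verification then reduces to a single invocation of a combinatorial $1$-cycle relation with $n = 1$, together with tree cancellations $p(u, q)\, p(q, u) = 1$. Equation~(\ref{Eqn1}) converts between the three displayed forms within each part. Finally, (vii) is proved by induction on $|k|$. The case $k = 0$ is trivial and $k = \pm 1$ are (i) and (iv). For the inductive step with $k \geq 1$, compute
\begin{equation*}
  \theta^{k+1}(e) = \theta\bigl(p_k(q, \iota e)\bigr)\, \theta(e^{k+1})\, \theta\bigl(p_k(\tau e, q)\bigr)
\end{equation*}
by applying (iii) to each tree-path factor and (ii) to the middle factor; the resulting expression contains the internal products $p(\iota e, q)\, p(q, \iota e)$ and $p(\tau e, q)\, p(q, \tau e)$, both of which reduce to $1$, leaving $p_{k+1}(q, \iota e)\, e^{k+2}\, p_{k+1}(\tau e, q)$. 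The negative $k$ case is handled identically using (vi) in place of (iii).

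The main obstacle is combinatorial bookkeeping: ensuring that each invocation of the cycle relation corresponds to a genuine combinatorial $1$-cycle with the correct orientations, and that the telescopings occur at precisely the predicted positions. The inductive step for (vii) is the most delicate, since one simultaneously tracks the promotion of $p_k$ to $p_{k+1}$ in both the prefix and the suffix while the middle $e$-power advances from $k+1$ to $k+2$.
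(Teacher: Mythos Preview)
Your proposal is correct and follows essentially the same route as the paper: items (i)--(iii) are established from the definition of $w_e$, equation~(\ref{Eqn1}), and the $1$-cycle relation; (iv) is verified by applying $\theta$ to the candidate expression via (iii) with $n=-1$ (your auxiliary computation $\theta(p_{-1}(u,v)) = p(q,u)\,p(v,q)$ is exactly what the paper uses); (v)--(vi) are deduced from (iv) in parallel with (ii)--(iii); and (vii) is induction on $|k|$ using (iii) and (vi). The only difference is one of explicitness: you spell out the inductive step for (vii), whereas the paper leaves it as a one-line remark.
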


\begin{proof}
\mbox{}
   \begin{enumerate}
     \item The first and third equalities follow from equation \eqref{Eqn1}.  The second equality follows from the fact that $p(q, \iota e) e p(\tau e ,q)$ is null-homotopic.

     \item The first equality holds since $\theta(e^n) = \theta(e)^n = [p(q, \iota e) e p(q, \iota e)^{-1}]^n = p(q, \iota e) e^n p(q, \iota e)^{-1} = p(q, \iota e) e^n p(\iota e, q)$ in $H_\Delta$.  The second and third equalities then hold since $p(\iota e, q) = e p(\tau e ,q)$ in $H_\Delta$ and by equation \eqref{Eqn1} respectively.

     \item Follows from the fact that $\theta(e_i^n) = p(q, \iota e) e_i^{n+1} p(q, \tau e)^{-1}$ in $H_\Delta$.

     \item The first equality holds since $\theta(p_{-1}(q, \iota e) p_{-1}(\tau e, q)) =\\ p(q, q) p_0(q, \iota e) p(\iota e, q) p(q, \tau e) p_0(\tau e, q) p(q,q) = p(\iota e, q) p(q, \tau e) = e$ in $H_\Delta$.  The second and third equalities follows from the fact that $p_{-1}(q, \tau e) \bar{e}^{-1} p_{-1}(\iota e, q) = p_{-1}(q, \tau e) e p_{-1}(\iota e, q)$ is null-homotopic.

     \item Follows from (iv) as in the proof of (ii).

     \item Follows from (v) as in the proof of (iii).

     \item Follows from (iii) and (vi) by induction on $|k|$.
   \end{enumerate}
\end{proof}

For each $n \in \Z$, define a homomorphism $\Phi_n : \Edge(\Delta)^{\pm\bast} \rightarrow \Edge(\Delta)^{\pm\bast}$ which commutes with the inversion involution and is a lift of $\theta^n$ by mapping $e \mapsto p_n(q, \iota e) e^{n+1} p_n(\tau e, q)$. Define the collections of words \begin{align*} \overline{\mc{R}}_\Delta &= \{\Phi_n(r) \, : \, r \in \mc{R}_\Delta, n \in \Z\},\\
\overline{\mc{S}}_\Delta &= \{ \Phi_{n+1}(e) \Phi_n(w_e)^{-1} \, : \, e \in \Edge(\Delta), n \in \Z\}\end{align*} in $\Edge(\Delta)^{\pm\bast}$, and consider the presentation $\mc{P}_H^\infty = \langle \Edge(\Delta) \, | \, \overline{\mc{R}}_\Delta, \overline{\mc{S}}_\Delta \rangle$ of $H_\Delta$.  Define an index $\| \cdot \|$ on $\overline{\mc{R}}_\Delta \cup \overline{\mc{S}}_\Delta$ by setting $\|\omega \|$ to be the minimal value of $|k|$ such that either $\omega \equiv \Phi_k(r)$ for some $r \in \mc{R}_\Delta$ or $\omega \equiv \Phi_{k+1}(e) \Phi_k(w_e)^{-1}$ for some $e \in \Edge(\Delta)$.

Let $\dist$ be the length metric on the $1$-skeleton of $\Delta$ given by setting the length of each edge to $1$.  Define $$L = \max\{\dist(u, v) \, : \, u, v \in \Vert(\Delta) \}.$$

\begin{lem} \label{lem1}
  $\Area_{\mc{P}_H}\big(\Phi_n(e \ol{e})\big) \leq (2L+1)|n| + 1$ for all $e \in \Edge(\Delta)$.
\end{lem}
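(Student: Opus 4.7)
The plan is to exhibit an explicit null-homotopy of $\Phi_n(e\ol{e})$ over $\mc{P}_H$ and count the application-of-relator moves.

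First, since $\Phi_n$ is a monoid homomorphism on $\Edge(\Delta)^{\pm\bast}$ commuting with inversion, and since $\iota\ol{e}=\tau e$ and $\tau\ol{e}=\iota e$, the definition of $\Phi_n$ unwinds as
$$\Phi_n(e\ol{e}) \equiv p_n(q,\iota e)\, e^{n+1}\, p_n(\tau e,q)\, p_n(q,\tau e)\, \ol{e}^{\,n+1}\, p_n(\iota e, q).$$
I would then reduce this word to the empty word in three stages, applying only relators from $\mc{R}_\Delta$; free expansions and contractions are free of charge and may be used tacitly.

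In the first stage, I collapse the middle subword $p_n(\tau e,q)\,p_n(q,\tau e)$. Writing the $T$-geodesic from $q$ to $\tau e$ as $f_1\cdot\ldots\cdot f_l$ (so $l\leq L$), this subword reads $\ol{f_l}^{\,n}\ldots\ol{f_1}^{\,n}\, f_1^{\,n}\ldots f_l^{\,n}$. Peeling off matched pairs $\ol{f_i}^{\,n}\,f_i^{\,n}$ from the inside outwards, each individual cancellation $\ol{f_i}^{\pm1}\,f_i^{\pm1}\to\emptyset$ is a single application of a cyclic conjugate or inverse of the relator $f_i\ol{f_i}\in\mc{R}_\Delta$, and each matched pair is killed in $|n|$ such moves; this stage costs at most $L|n|$ applications. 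In the second stage, what remains is $p_n(q,\iota e)\, e^{n+1}\ol{e}^{\,n+1}\, p_n(\iota e, q)$, and the block $e^{n+1}\ol{e}^{\,n+1}$ is killed by $|n+1|$ applications of the relator $e\ol{e}\in\mc{R}_\Delta$ (or its inverse, according to the sign of $n+1$). In the third stage, what remains is $p_n(q,\iota e)\, p_n(\iota e, q)$, structurally identical to the middle subword of stage one with $\iota e$ replacing $\tau e$; the same argument costs at most another $L|n|$ applications.

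Summing the three stages yields
$$\Area_{\mc{P}_H}\bigl(\Phi_n(e\ol{e})\bigr) \leq L|n| + |n+1| + L|n| \leq 2L|n| + (|n|+1) = (2L+1)|n|+1,$$
as required. There is no real obstacle here: the argument is a direct unwrapping of the definition of $\Phi_n$ followed by elementary cancellations, and every reduction step is a legal application-of-relator move because $\mc{R}_\Delta$ contains $f\ol{f}$ for every directed edge $f\in\Edge(\Delta)$, so all needed cyclic conjugates and inverses are available. The only bookkeeping concerns the sign of $n$ (and in particular the degenerate cases $n\in\{-1,0\}$), but these are absorbed by the uniform bound $|n+1|\leq |n|+1$.
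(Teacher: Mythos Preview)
Your proof is correct and follows essentially the same approach as the paper: the paper presents the identical three-stage null $\mc{P}_H$-scheme (collapse the middle $p_n(\tau e,q)p_n(q,\tau e)$ at cost $L|n|$, kill $e^{n+1}\ol{e}^{\,n+1}$ at cost $|n|+1$, then collapse $p_n(q,\iota e)p_n(\iota e,q)$ at cost $L|n|$), with the same total $(2L+1)|n|+1$.
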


\begin{proof}
  The calculation (\ref{Eqn1}) shows that $p_n(q, v)^{-1}$ can be converted to $p_n(v, q)$ at a $\mc{P}_H$-cost of at most $L|n|$ for all $v \in \Vert(\Delta)$.  The following is a null $\mc{P}_H$-scheme for the word $\Phi_n(e \ol{e})$:

  \medskip

  \begin{center}
    \begin{tabular}{ccc}
      \hline
      \\[-10pt]
      $j$ & $\sigma_j$ & Area \\
      \\[-10pt]
      \hline
      \\[-10pt]
      $1$ & $p_n(q, \iota e) e^{n+1} p_n(\tau e, q) p_n(q, \tau e) \ol{e}^{n+1} p_n(\iota e, q)$ & $L|n|$ \\
      $2$ & $p_n(q, \iota e) e^{n+1} \ol{e}^{n+1} p_n(\iota e, q)$ & $|n|+1$ \\
      $3$ & $p_n(q, \iota e) p_n(\iota e, q)$ & $L|n|$ \\
      \\[-10pt]
      \hline
      \\[-10pt]
      & Total & $(2L + 1) |n| + 1$ \\
      \\[-10pt]
      \hline
    \end{tabular}
  \end{center}
\end{proof}

\begin{lem} \label{lem2}
  Let $e \cdot f \cdot g$ be a combinatorial $1$-cycle in $\Delta$. Then $\Area_{\mc{P}_H}(e^n f^n g^n) \leq 3|n|^2$.
\end{lem}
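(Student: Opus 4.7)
My plan is to proceed by induction on $|n|$, peeling off one triangle relator at each step. The base case $n = 0$ is trivial, and the cases $n = \pm 1$ are immediate since $efg$ and $\bar e \bar f \bar g$ are themselves elements of $\mc R_\Delta$.

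The preparatory observation is that, because $\Delta$ is flag, the three vertices of the $1$-cycle $e \cdot f \cdot g$ span a $2$-simplex; hence all twelve triangular words associated to this $2$-simplex, namely the cyclic rotations of $efg$ and $gfe$ together with their letter-wise inverses, appear in $\mc R_\Delta$. Using this, every commutator of two distinct letters from $\{e, f, g\}^{\pm 1}$ can be expressed as a product of exactly two relators; for instance,
\[
[e, f] \;\FreeEq\; (efg) \cdot (g^{-1} e^{-1} f^{-1}),
\]
in which the second factor is the relator $\bar g \bar e \bar f$ coming from the cycle $g \cdot e \cdot f$. Analogous two-relator splittings handle $[e, g]$, $[f, g]$, and the six further commutators involving inverse letters. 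Consequently, any adjacent swap of two distinct letters from $\{e, f, g\}^{\pm 1}$ can be carried out by a null $\mc P_H$-sequence of area $2$.

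The inductive step for $n \geq 1$ factors as
\[
e^n f^n g^n \;\longrightarrow\; e^{n-1} f^{n-1} g^{n-1} \cdot efg \;\longrightarrow\; e^{n-1} f^{n-1} g^{n-1}.
\]
For the first arrow I will run three rounds of adjacent swaps: first commute one copy of $f$ backwards through $g^{n-1}$, then commute one copy of $e$ backwards first through $f^{n-1}$ and then through $g^{n-1}$. Each round performs $n-1$ swaps and therefore costs $2(n-1)$ application-of-relator moves, giving $6(n-1)$ in total. The second arrow is a single application of the relator $efg$. Invoking the inductive hypothesis,
\[
\Area_{\mc P_H}(e^n f^n g^n) \;\leq\; 3(n-1)^2 + 6(n-1) + 1 \;=\; 3n^2 - 2 \;\leq\; 3n^2.
\]

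The case $n \leq -1$ runs verbatim after replacing $e, f, g$ by $\bar e, \bar f, \bar g$ and the peeled relator $efg$ by $\bar e \bar f \bar g$, which lies in $\mc R_\Delta$ by definition; the required area-$2$ commutators of inverse letters are produced by entirely analogous two-relator splittings such as $e^{-1} f^{-1} e f \FreeEq (\bar e \bar f \bar g)(gef)$. There is no serious obstacle in the argument; the only mildly fiddly step is cataloguing the twelve triangle relators carefully enough to exhibit an area-$2$ filling for each of the nine commutators of signed letters actually used, which is a finite verification made routine by the full $S_3$-symmetry on $\{e, f, g\}$ together with the letter-inversion involution.
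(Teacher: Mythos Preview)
Your proof is correct and rests on the same key observation as the paper's: any two distinct letters from the triangle commute at $\mc P_H$-cost $2$, since $[e,f] \FreeEq (efg)(g^{-1}e^{-1}f^{-1})$ is a product of two relators (and similarly for the other pairs). One small remark: your appeal to flagness is unnecessary. The relators $efg$ and $e^{-1}f^{-1}g^{-1}$ (and their cyclic rotations, and the corresponding words for the reversed cycle $\bar g \cdot \bar f \cdot \bar e$) lie in $\mc R_\Delta$ by definition for \emph{every} combinatorial $1$-cycle $e\cdot f\cdot g$, whether or not it bounds a $2$-simplex; so all twelve words are available directly.

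The paper organizes the same ingredients differently: rather than inducting on $|n|$, it replaces $g^n$ by $(f^{-1}e^{-1})^n$ in one sweep (cost $|n|$, using the relator $gef$ once per letter), then sorts $(f^{-1}e^{-1})^n$ into $f^{-n}e^{-n}$ using area-$2$ commutators (cost $\leq 2|n|^2$), and finally freely cancels $e^n f^n f^{-n} e^{-n}$. This gives $2|n|^2 + |n|$, marginally sharper than your $3|n|^2 - 2$, but both comfortably meet the stated bound. Your inductive peeling is a perfectly valid alternative; the paper's direct approach is slightly slicker in that it avoids tracking separate base cases and handles both signs of $n$ uniformly.
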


\begin{proof}
  Note that the relators $efg$ and $e^{-1} f^{-1} g^{-1}$ imply that $ef = g^{-1} = fe$, so $[e, f]$ is null-homotopic with $\mc{P}_H$-$\Area$ $2$.  The following is a null $\mc{P}_H$-scheme for
  the word $e^n f^n g^n$:

  \medskip

  \begin{center}
    \begin{tabular}{ccc}
      \hline
      \\[-10pt]
      $j$ & $\sigma_j$ & Area \\
      \\[-10pt]
      \hline
      \\[-10pt]
      $1$ & $e^n f^n g^n$ & $|n|$ \\
      $2$ & $e^n f^n (f^{-1} e^{-1})^n$ & $2|n|^2$ \\
      $3$ & $e^n f^n f^{-n} e^{-n}$ & $0$ \\
      \\[-10pt]
      \hline
      \\[-10pt]
      & Total & $2|n|^2 + |n|$ \\
      \\[-10pt]
      \hline
    \end{tabular}
  \end{center}
\end{proof}

\begin{lem} \label{lem3}
  Let $e \cdot f \cdot g$ be a combinatorial $1$-cycle in $\Delta$.  Then $\Area_{\mc{P}_H} \big( \Phi_n(efg) \big) \leq 3|n|^2 + (3L + 6)|n| + 3$.
\end{lem}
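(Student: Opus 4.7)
The plan is to expand $\Phi_n(efg)$ via the defining formula $\Phi_n(e) \equiv p_n(q,\iota e) e^{n+1} p_n(\tau e, q)$ and exploit the fact that $e \cdot f \cdot g$ is a combinatorial $1$-cycle, so that $\tau e = \iota f$, $\tau f = \iota g$ and $\tau g = \iota e$. The word $\Phi_n(efg)$ then becomes
\[
p_n(q,\iota e)\, e^{n+1}\, p_n(\tau e, q)\, p_n(q,\tau e)\, f^{n+1}\, p_n(\tau f, q)\, p_n(q,\tau f)\, g^{n+1}\, p_n(\iota e, q),
\]
in which two interior pairs of the form $p_n(v,q)\, p_n(q,v)$ appear.

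First I would eliminate these two interior pairs. Each such cancellation costs $L\lvert n\rvert$ in $\mc{P}_H$, by exactly the argument used in the middle step of the null $\mc{P}_H$-scheme in the proof of Lemma~\ref{lem1}: one rewrites $p_n(q,v)$ as $p_n(v,q)^{-1}$ at cost $L\lvert n\rvert$ using the relators $e\overline{e}$, after which free cancellation disposes of the block. Doing this on both interior pairs (total cost $2L\lvert n\rvert$) reduces the word to $p_n(q,\iota e)\, e^{n+1} f^{n+1} g^{n+1}\, p_n(\iota e, q)$.

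Next, since $e \cdot f \cdot g$ is a combinatorial $1$-cycle, Lemma~\ref{lem2} applies to $e^{n+1} f^{n+1} g^{n+1}$ and yields a null $\mc{P}_H$-scheme of area at most $3(\lvert n\rvert+1)^2 = 3n^2 + 6\lvert n\rvert + 3$. Finally, the outer pair $p_n(q,\iota e)\, p_n(\iota e, q)$ cancels at cost $L\lvert n\rvert$, again by the same argument. Summing gives
\[
\Area_{\mc{P}_H}\bigl(\Phi_n(efg)\bigr) \leq 2L\lvert n\rvert + 3n^2 + 6\lvert n\rvert + 3 + L\lvert n\rvert = 3\lvert n\rvert^2 + (3L+6)\lvert n\rvert + 3.
\]

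There is no real obstacle: everything is an immediate bookkeeping exercise once one notices that the cycle condition makes consecutive tree-path words cancel. The only minor point to watch is that Lemma~\ref{lem2} is applied with exponent $n+1$ rather than $n$, which accounts for the cross term $6\lvert n\rvert$ and the constant $3$ in the final bound.
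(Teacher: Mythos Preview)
Your proposal is correct and follows essentially the same route as the paper's proof: the paper presents precisely the three-step null $\mc{P}_H$-scheme you describe, cancelling the two interior path pairs at cost $2L|n|$, applying Lemma~\ref{lem2} with exponent $n+1$ at cost $3|n+1|^2 \leq 3(|n|+1)^2$, and cancelling the outer pair at cost $L|n|$.
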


\begin{proof}
  The following is a null $\mc{P}_H$-scheme for the word $\Phi_n(efg)$:

  \medskip

  \begin{center}
    \begin{tabular}{ccc}
      \hline
      \\[-10pt]
      $j$ & $\sigma_j$ & Area \\
      \\[-10pt]
      \hline
      \\[-10pt]
      $1$ & $p_n(q, \iota e) e^{n+1} p_n(\tau e, q) p_n(q, \iota f) f^{n+1} p_n(\tau f, q) \ldots$ & \\
      & $\ldots p_n(q, \iota g) g^{n+1} p_n(\tau g, q)$ & $2L|n|$ \\
      $2$ & $p_n(q, \iota e) e^{n+1} f^{n+1} g^{n+1} p_n(\tau g, q)$ & $3|n+1|^2$ \\
      $3$ & $p_n(q, \iota e) p_n(\tau g, q)$ & $L|n|$ \\
      \\[-10pt]
      \hline
      \\[-10pt]
      & Total & $3|n|^2 + (3L + 6)|n| + 3$ \\
      \\[-10pt]
      \hline
    \end{tabular}
  \end{center}
\end{proof}

\begin{defn}
  Given a combinatorial $1$-cycle $C$ in $\Delta$, a sequence $(C_i)_{i=0}^m$ of combinatorial $1$-cycles is said to be \emph{combinatorial null-homotopy} for $C$ if $C_0 = C$, $C_m = \emptyset$ and each $C_{i+1}$ is obtained from $C_i$ by one of the following moves:
  \begin{itemize}
    \item \emph{$1$-cell expansion}: $C_i = e_1 \cdot \ldots \cdot e_l \rightsquigarrow C_{i+1} = e_1 \cdot \ldots \cdot e_k \cdot e \cdot \ol{e} \cdot e_{k+1} \cdot \ldots \cdot e_l$ for some $k$, where $e \in \Edge(\Delta)$;

    \item \emph{$1$-cell collapse}: Reverse of a $1$-cell expansion;

    \item \emph{$2$-cell expansion}: $C_i = e_1 \cdot \ldots \cdot e_l \rightsquigarrow C_{i+1} = e_1 \cdot \ldots \cdot e_k \cdot e \cdot f \cdot g \cdot e_{k+1} \cdot \ldots \cdot e_l$ for some $k$, where $e \cdot f \cdot g$ is a combinatorial $1$-cycle;

    \item \emph{$2$-cell collapse}: Reverse of a $2$-cell expansion.
  \end{itemize}
\end{defn}

\begin{lem} \label{lem4}
  If $(C_i)_{i=0}^m$ is a combinatorial null-homotopy for the $1$-cycle $e_1 \cdot \ldots \cdot e_l$ then the word $e_1^n \ldots e_l^n$ has $\mc{P}_H$-Area $\leq 3m|n|^2$.
\end{lem}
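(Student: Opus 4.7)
The plan is to induct on $m$, tracking at each stage of the combinatorial null-homotopy the word in $\Edge(\Delta)^{\pm\bast}$ obtained by raising each directed edge of the current 1-cycle to the $n$-th power. Concretely, for each $C_i = f_{i,1} \cdot \ldots \cdot f_{i, l_i}$ I would set
\[
W_i \equiv f_{i,1}^n f_{i,2}^n \cdots f_{i,l_i}^n,
\]
so that $W_0 \equiv e_1^n \ldots e_l^n$ and $W_m \equiv \emptyset$. The key claim is that in passing from $C_i$ to $C_{i+1}$, one can convert $W_i$ to $W_{i+1}$ by a null $\mc{P}_H$-sequence of area at most $3|n|^2$; summing over the $m$ moves then yields the stated bound $3m|n|^2$.

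To prove the per-step claim, I would analyse the two types of move separately. For a 1-cell expansion or collapse, $C_{i+1}$ is obtained from $C_i$ by inserting or deleting a subpath $e \cdot \overline{e}$; the corresponding word transformation is the insertion or deletion of $e^n \overline{e}^n \equiv e^n e^{-n}$, which is a free reduction and therefore costs $0$ in area. For a 2-cell expansion or collapse, $C_{i+1}$ differs from $C_i$ by the insertion or deletion of a combinatorial 1-cycle $e \cdot f \cdot g$; the word transformation is the insertion or deletion of the subword $e^n f^n g^n$. By Lemma~\ref{lem2} this subword is null-homotopic over $\mc{P}_H$ with area at most $2|n|^2 + |n| \leq 3|n|^2$ (and this bound holds trivially when $n = 0$), so $W_i$ can be converted to $W_{i+1}$ at a $\mc{P}_H$-cost of at most $3|n|^2$.

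Concatenating these partial null $\mc{P}_H$-sequences gives a null $\mc{P}_H$-sequence converting $W_0 \equiv e_1^n \ldots e_l^n$ to $W_m \equiv \emptyset$ of total area at most $3m|n|^2$, which is the required bound. There is no genuine obstacle here: the argument is a bookkeeping exercise once Lemma~\ref{lem2} is in hand, and the only point that requires mild care is ensuring that a 2-cell \emph{collapse} is handled symmetrically with the expansion, but this is immediate since the area of a null-homotopic word equals the area of its inverse.
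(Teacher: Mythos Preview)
Your overall strategy matches the paper's proof exactly: track the words $W_i$ along the null-homotopy and bound the cost of each step. There is, however, a genuine error in your treatment of the $1$-cell moves. In the Dicks-Leary presentation the generating set is $\Edge(\Delta)$, so the reversed edge $\overline{e}$ is a \emph{separate generator}, not the formal inverse $e^{-1}$. Thus $e^n\overline{e}^n$ is not the word $e^n e^{-n}$ and is not freely trivial; your claimed cost of $0$ is unjustified. What makes $\overline{e}$ equal to $e^{-1}$ in $H_\Delta$ is precisely the relator $e\overline{e}\in\mc{R}_\Delta$, and one must apply it $|n|$ times to reduce $e^n\overline{e}^n$ to the empty word. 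This is what the paper does, obtaining a per-step cost of $|n|$ for $1$-cell moves and $3|n|^2$ for $2$-cell moves, and then bounding $m_1|n|+3m_2|n|^2\le 3m|n|^2$.

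The final bound is therefore unaffected, since $|n|\le 3|n|^2$, but your justification for the $1$-cell step needs to be replaced by the argument above.
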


\begin{proof}
  Given a combinatorial $1$-cycle $C = e_1 \cdot \ldots \cdot e_l$, write $W_n(C)$ for the word $e_1^n \ldots e_l^n \in \Edge(\Delta)^{\pm\bast}$.  If the $1$-cycle $C_i$ is obtained from $C_{i-1}$ by a $1$-cell expansion or collapse then, by repeated application of a relator $e \ol{e}$, the word $W_n(C_{i-1})$ can be converted to the word $W_n(C_i)$ at a $\mc{P}_H$-cost of at most $|n|$.  If the $1$-cycle $C_i$ is obtained from $C_{i-1}$ by a $2$-cell expansion or collapse then, by lemma \ref{lem2}, the word $W_n(C_{i-1})$ can be converted to the word $W_n(C_i)$ at a $\mc{P}_H$-cost of at most $3|n|^2$.

  Define $m_1$ to be the number of $i$ for which $C_i$ is obtained from $C_{i-1}$ by a $1$-cell expansion or collapse.  Define $m_2$ to be the number of $i$ for which $C_i$ is obtained from $C_{i-1}$ by a $2$-cell expansion or collapse.  Then the $\mc{P}_H$-Area of $e_1^n \ldots e_l^n = W_n(C)$ is at most $m_1|n| + 3m_2|n|^2 \leq 3(m_1 + m_2) |n|^2 = 3m|n|^2$.
\end{proof}

\begin{lem} \label{lem5}
  There exists a constant $K$ such that $\Area_{\mc{P}_H}\big( p_n(q, \iota e) e^n p_n(\tau e, q) \big) \leq K|n|^2$ for all $e \in \Edge(\Delta)$.
\end{lem}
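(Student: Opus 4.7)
The plan is to realise the word $p_n(q, \iota e)\, e^n\, p_n(\tau e, q)$ as the $n$-th power word $W_n(C_e)$ associated to a certain combinatorial $1$-cycle $C_e$ in $\Delta$ (using the notation from the proof of Lemma~\ref{lem4}), and then invoke Lemma~\ref{lem4}. The bound will then follow immediately from the fact that $\Delta$ is a fixed finite flag complex, so the combinatorial complexity of each such $C_e$ is uniformly bounded.

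More concretely, I would fix $e \in \Edge(\Delta)$ and let $f_1 \cdot \ldots \cdot f_l$ be the geodesic combinatorial path in $T$ from $q$ to $\iota e$, and $g_1 \cdot \ldots \cdot g_{l'}$ the geodesic combinatorial path in $T$ from $\tau e$ back to $q$. Then $C_e := f_1 \cdot \ldots \cdot f_l \cdot e \cdot g_1 \cdot \ldots \cdot g_{l'}$ is a based combinatorial $1$-cycle in $\Delta$, and by construction
\[
W_n(C_e) \;\equiv\; f_1^n \ldots f_l^n\, e^n\, g_1^n \ldots g_{l'}^n \;\equiv\; p_n(q, \iota e)\, e^n\, p_n(\tau e, q).
\]
Since $\Delta$ is simply connected (it has $\pi_1 = 1$ as a hypothesis for $H_\Delta$ to be finitely presented) and is a flag complex whose $2$-cells are triangles, every combinatorial $1$-cycle in its $1$-skeleton bounds a simplicial disc assembled from $2$-simplices. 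Reading across such a disc yields a combinatorial null-homotopy of $C_e$ in the sense of the definition preceding Lemma~\ref{lem4}, consisting of some finite number $m_e$ of $1$-cell and $2$-cell expansion/collapse moves.

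Finally, because $\Delta$ is finite, $\Edge(\Delta)$ is finite, so $m := \max_{e \in \Edge(\Delta)} m_e$ is a well-defined positive integer depending only on $\Delta$ (and on the choices of base vertex $q$ and spanning tree $T$, but these are fixed at the start of the section). Applying Lemma~\ref{lem4} to $C_e$ with null-homotopy length $m_e \leq m$ gives
\[
\Area_{\mc{P}_H}\bigl(p_n(q, \iota e)\, e^n\, p_n(\tau e, q)\bigr) \;=\; \Area_{\mc{P}_H}(W_n(C_e)) \;\leq\; 3 m_e |n|^2 \;\leq\; 3m |n|^2,
\]
so the conclusion holds with $K := 3m$. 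There is essentially no obstacle here: the only place we use anything non-trivial is the existence of combinatorial null-homotopies for the $C_e$, and this is precisely the combinatorial expression of simple connectivity of $\Delta$ together with flagness (which ensures that every triangle in the $1$-skeleton is a $2$-simplex, so that $2$-cell moves correspond to relations $efg$ and $e^{-1} f^{-1} g^{-1}$ already in $\mc{R}_\Delta$). Crucially, the number $m$ depends only on $\Delta$ and not on $n$, so all of the $n$-dependence is concentrated in the factor $|n|^2$ coming from Lemma~\ref{lem4}.
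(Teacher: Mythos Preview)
Your proof is correct and is essentially identical to the paper's: form the combinatorial $1$-cycle $\gamma_\iota(e)\cdot e\cdot\gamma_\tau(e)$ in $\Delta$, invoke simple connectivity to obtain a combinatorial null-homotopy of some length $m(e)$, apply Lemma~\ref{lem4}, and take $K=3\max_e m(e)$. The only difference is cosmetic---you add some commentary on flagness and on reading across a simplicial disc, but the underlying argument is the same.
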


\begin{proof}
  Given $e \in \Edge(\Delta)$ write $\gamma_\iota(e)$ and $\gamma_\tau(e)$ respectively for the unique combinatorial geodesic paths in $T$ from $q$ to $\iota e$ and from $\tau e$ to $q$.  Then $\gamma_\iota(e) \cdot e \cdot \gamma_\tau(e)$ is a combinatorial $1$-cycle for which there exists a combinatorial null-homotopy $\big(C_i(e)\big)_{i=0}^{m(e)}$ since $\Delta$ is simply-connected. By Lemma~\ref{lem4}, $\Area_{\mc{P}_H} \big( p_n(q, \iota e) e^n p_n(\tau e, q) \big) \leq 3m(e)|n|^2$, so we can take $K = 3\max\{ m(e) \, : \, e \in \Edge(\Delta)\}$.
\end{proof}

\begin{lem} \label{lem6}
  Let $e \cdot f \cdot g$ be a combinatorial $1$-cycle in $\Delta$.  Then $\Area_{\mc{P}_H} \big( \Phi_n(e^{-1}f^{-1}g^{-1}) \big) \leq (3K + 4) |n|^2 + (6L + 6) |n| + 5$, where $K$ is the constant from Lemma~\ref{lem5}.
\end{lem}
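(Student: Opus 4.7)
The plan is to mirror the structure of Lemma~\ref{lem3}, but to handle the fact that the ``middle joins'' in the concatenation $\Phi_n(e)^{-1}\Phi_n(f)^{-1}\Phi_n(g)^{-1}$ do not cancel the way they do for $\Phi_n(e)\Phi_n(f)\Phi_n(g)$ on a cycle. Expanding $\Phi_n(e^{-1}f^{-1}g^{-1}) = \Phi_n(e)^{-1}\Phi_n(f)^{-1}\Phi_n(g)^{-1}$ via the definition of $\Phi_n$, the word becomes a concatenation of three blocks of the form $p_n(\tau x, q)^{-1} x^{-(n+1)} p_n(q, \iota x)^{-1}$, and the two ``middle junction'' subwords that arise are $p_n(q, \iota e)^{-1} p_n(\tau f, q)^{-1}$ and $p_n(q, \iota f)^{-1} p_n(\tau g, q)^{-1}$.

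The key observation is that, using the cycle identities $\iota e = \tau g$, $\iota f = \tau e$, $\iota g = \tau f$, these middle junctions represent $g^{-n}$ and $e^{-n}$ respectively in $H_\Delta$. For instance, $p_n(q, \iota e)^{-1} p_n(\tau f, q)^{-1}$ equals $p_n(\tau g, q) p_n(q, \iota g)$ in $H_\Delta$, which equals $g^{-n}$ via the rearrangement $p_n(q, \iota g) = p_n(\tau g, q)^{-1} g^{-n}$ of Lemma~\ref{lem5}. I would use Lemma~\ref{lem5} together with $O(L|n|)$ conversions of the form $p_n(u,v)^{\pm1} \leadsto p_n(v,u)^{\mp1}$ (justified as in the proof of Lemma~\ref{lem1}) to perform both substitutions at total cost at most $2K|n|^2 + 4L|n|$. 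This converts the word to $p_n(\tau e, q)^{-1}\, M\, p_n(q, \iota g)^{-1}$, where $M := e^{-(n+1)} g^{-n} f^{-(n+1)} e^{-n} g^{-(n+1)}$.

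Next, the middle word $M$ (which equals $f^n$ in $H_\Delta$, as a check using commutativity and $efg=1$ confirms) is reduced to $f^n$ as follows: use the relator $efg$ to replace each of the $2n+1$ occurrences of $g^{-1}$ in $M$ by $ef$ at cost $2n+1$; then invoke the commutation $[e, f] = 1$ (of $\mc{P}_H$-area $2$, being a direct consequence of the two relators $efg$ and $e^{-1}f^{-1}g^{-1}$ together) to sort $(ef)^n \to e^n f^n$ and $(ef)^{n+1} \to e^{n+1} f^{n+1}$ at cost $2|n|^2$; after free reductions one is left with $e^{-1} f^{-1} e f^{n+1}$, and one further commutator swap plus free reductions give $f^n$, so this step costs at most $2|n|^2 + 2|n| + 3$. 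Finally, the identities $\tau e = \iota f$ and $\iota g = \tau f$ show that the remaining word $p_n(\tau e, q)^{-1} f^n p_n(q, \iota g)^{-1}$ is, after $2L|n|$ conversions of the outer factors and a cyclic rotation, the Lemma~\ref{lem5} word for the edge $f$; a final application of Lemma~\ref{lem5} dispatches it at cost at most $K|n|^2$. Summing the three steps yields area at most $(3K+2)|n|^2 + (6L+2)|n| + 3$, comfortably within the claimed bound.

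The main obstacle is recognising that the mismatched middle junctions $p_n(q, \iota e)^{-1} p_n(\tau f, q)^{-1}$ and $p_n(q, \iota f)^{-1} p_n(\tau g, q)^{-1}$ collapse to the cycle-edge powers $g^{-n}$ and $e^{-n}$ via the twisted pairing $\iota e = \tau g$, $\tau f = \iota g$, etc.\ — this is what allows Lemma~\ref{lem5} to be brought to bear in place of the direct cancellation available for $\Phi_n(efg)$. Once this is seen, the remainder is a routine, if arithmetically involved, parallel to Lemma~\ref{lem3}, with the Lemma~\ref{lem2}-style middle reduction replaced by a combination of $efg$-substitutions and $[e,f]$-commutations.
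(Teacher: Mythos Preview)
Your approach is correct and is essentially the same as the paper's. Both proofs convert the six $p_n(\cdot)^{-1}$ factors to $p_n(\cdot)$ at cost $O(L|n|)$, then use Lemma~\ref{lem5} three times (at cost $3K|n|^2$) to collapse the paired $p_n$-factors to the edge-powers $g^{-n}$, $e^{-n}$, $f^{-n}$, and finally reduce the resulting monomial in $e,f,g$ via $efg$-substitutions and $[e,f]$-commutations. The only difference is organizational: the paper performs all six $p_n^{-1}\to p_n$ conversions up front and absorbs the third Lemma~\ref{lem5} application into the same step by freely inserting $p_n(q,\iota f)p_n(q,\iota f)^{-1}$, so that the conjugated middle word $e^{-n-1}g^{-n}f^{-n-1}e^{-n}g^{-n-1}f^{-n}$ is already null-homotopic; you instead leave the two outer $p_n^{-1}$ factors until the end and reduce your $M$ to $f^n$, then dispatch $p_n(q,\iota f)f^n p_n(\tau f,q)$ by a final Lemma~\ref{lem5}. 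Your accounting is in fact slightly sharper (you get $(3K+2)|n|^2 + (6L+2)|n| + 3$), but the two arguments are the same in substance.
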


\begin{proof}
  The following is a null $\mc{P}_H$-scheme for the word $\Phi_n(e^{-1} f^{-1} g^{-1})$:

  \medskip

  \begin{center}
    \begin{tabular}{ccc}
      \hline
      \\[-10pt]
      $j$ & $\sigma_j$ & Area \\
      \\[-10pt]
      \hline
      \\[-10pt]
      $1$ & $p_n(\tau e, q)^{-1} e^{-n-1} p_n(q, \iota e)^{-1} p_n(\tau f, q)^{-1} f^{-n-1} \ldots$ & \\
        & $\ldots p_n(q, \iota f)^{-1} p_n(\tau g, q)^{-1} g^{-n-1} p_n(q, \iota g)^{-1}$ & $6L|n|$ \\
      $2$ & $p_n(q, \tau e) e^{-n-1} p_n(\iota e, q) p_n(q, \tau f) f^{-n-1} p_n(\iota f, q) \ldots$ & \\
        & $\ldots p_n(q, \tau g) g^{-n-1} p_n(\iota g, q)$ & $0$ \\
      $3$ & $p_n(q, \iota f) e^{-n-1} p_n(\tau g, q) p_n(q, \iota g) f^{-n-1} p_n(\tau e, q) \ldots$ & \\
        & $\ldots p_n(q, \iota e) g^{-n-1} p_n(\tau f, q) p_n(q , \iota f) p_n(q, \iota f)^{-1} $ & $3K|n|^2$ \\
      $4$ & $p_n(q, \iota f) e^{-n-1} g^{-n} f^{-n-1} e^{-n} g^{-n-1} f^{-n} p_n(q, \iota f)^{-1}$ & $2|n| + 1$ \\
      $5$ & $p_n(q, \iota f) e^{-n-1} (ef)^n f^{-n-1} e^{-n} (ef)^{n+1} f^{-n} p_n(q, \iota f)^{-1}$ & $2|n|^2 + 2|n+1|^2$ \\
      $6$ & $p_n(q, \iota f) e^{-n-1} e^n f^n f^{-n-1} e^{-n} e^{n+1} f^{n+1} f^{-n} p_n(q, \iota f)^{-1}$ & $0$ \\
      $7$ & $p_n(q, \iota f) e^{-1} f^{-1} e f p_n(q, \iota f)^{-1}$ & $2$ \\
      $8$ & $p_n(q, \iota f) g g^{-1} p_n(q, \iota f)^{-1}$ & $0$ \\
      \\[-10pt]
      \hline
      \\[-10pt]
      & Total & $\begin{gathered} (3K + 4) |n|^2  \\[-2pt] + (6L + 6) |n| + 5 \end{gathered}$ \\
      \\[-10pt]
      \hline
    \end{tabular}
  \end{center}
\end{proof}

\begin{lem} \label{lem7}
  $\Area_{\mc{P}_H} \big( \Phi_{n+1}(e) \Phi_n(w_e)^{-1} \big) \leq 2K|n|^2 + (3L^2 + 2L + 2K)|n| + L + K$ for all $e \in \Edge(\Delta)$, where $K$ is the constant from Lemma~\ref{lem5}.
\end{lem}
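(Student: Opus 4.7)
The plan is to build a null $\mc{P}_H$-scheme for $\Phi_{n+1}(e)\Phi_n(w_e)^{-1}$ by first freely reducing $\Phi_n(w_e)$ to a short telescoped form, then invoking Lemma~\ref{lem5} twice to eliminate the resulting tree-path loops. Write $u_n = p_n(q,\iota e)$ and $v_n = p_n(\tau e, q)$. Setting $p(q,\iota e) = e_1 \cdots e_l$ for the tree geodesic, the product $\Phi_n(p(q, \iota e)) = \prod_{i=1}^l \Phi_n(e_i)$ telescopes under free reduction: each factor $\Phi_n(e_i) = p_n(q, \iota e_i) \, e_i^{n+1} \, p_n(\tau e_i, q)$ ends in $p_n(\tau e_i, q)$, while the next begins with $p_n(q, \iota e_{i+1}) = p_n(q, \tau e_i)$, which is the word-inverse of the former. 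This yields $\Phi_n(p(q,\iota e)) \FreeEq u_{n+1}u_n^{-1}$ and, symmetrically, $\Phi_n(p(\iota e, q)) \FreeEq u_n u_{n+1}^{-1}$. Substituting these into $\Phi_n(w_e) = \Phi_n(p(q,\iota e)) \cdot \Phi_n(e) \cdot \Phi_n(p(\iota e, q))$ and cancelling $u_n^{-1}u_n$ at the central junction, I obtain $\Phi_n(w_e) \FreeEq u_{n+1} e^{n+1} v_n u_n u_{n+1}^{-1}$ at zero $\mc{P}_H$-cost. Hence $\Phi_{n+1}(e)\Phi_n(w_e)^{-1}$ is freely equivalent to a conjugate by $u_{n+1}$ of $W := e^{n+2} v_{n+1} u_{n+1} u_n^{-1} v_n^{-1} e^{-n-1}$, reducing the problem to null-scheming $W$.

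Next, I apply Lemma~\ref{lem5} twice. Split $W = e \cdot (e^{n+1} v_{n+1} u_{n+1}) \cdot (u_n^{-1} v_n^{-1} e^{-n-1})$; the middle factor is a cyclic conjugate of the word $u_{n+1} e^{n+1} v_{n+1}$ appearing in Lemma~\ref{lem5} (applied with $n$ replaced by $n+1$), so it is null-homotopic of area at most $K(n+1)^2$. Deleting it reduces $W$ to $e \cdot (u_n^{-1} v_n^{-1} e^{-n}) \cdot e^{-1}$, where the new middle factor is the word-inverse of a cyclic conjugate of $u_n e^n v_n$, null-homotopic of area at most $K|n|^2$ by Lemma~\ref{lem5}. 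Deleting it leaves $e \cdot e^{-1}$, which freely reduces to the empty word. The running total is $K(n+1)^2 + K|n|^2 = 2K|n|^2 + 2K|n| + K$, well within the stated bound; the remaining slack in the statement comfortably absorbs the cost of the initial conjugation step and of any tree-path inversions needed to place the subwords into the precise form of Lemma~\ref{lem5}, which cost at most $L|n|$ each via equation~\eqref{Eqn1}.

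The main obstacle is justifying the two ``delete a null subword'' steps as actual $\mc{P}_H$-schemes rather than abstract manipulations. The required observation is that if $\omega$ is a contiguous null-homotopic subword of $X\omega Y$ of area $A$, then $X\omega Y$ converts to $XY$ at $\mc{P}_H$-cost $A$, simply by conjugating any $\mc{P}_H$-expression for $\omega$ by $X$. Crucially, this cost is independent of $|X|$ and $|Y|$, which is what keeps the bound quadratic in $|n|$ rather than incurring an extra factor of $L|n|$ from the tree-path word lengths surrounding the deleted subwords.
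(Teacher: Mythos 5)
Your overall strategy -- telescope $\Phi_n$ along the two tree paths in $w_e$, conjugate away $u_{n+1}$, and kill the two remaining loops with Lemma~\ref{lem5} -- is exactly the paper's argument, and the two invocations of Lemma~\ref{lem5} (costing $K|n+1|^2 + K|n|^2$) are set up correctly. But there is a genuine error in the first step: the telescoping is \emph{not} free. You claim that $p_n(q,\tau e_i)$ is the word-inverse of $p_n(\tau e_i,q)$, so that consecutive factors of $\Phi_n(e_1)\cdots\Phi_n(e_l)$ cancel at zero cost. In the free monoid $\Edge(\Delta)^{\pm\bast}$ this is false: if $p(q,\tau e_i)=f_1\cdots f_k$ then $p_n(q,\tau e_i)=f_1^n\cdots f_k^n$ while $p_n(\tau e_i,q)=\ol{f_k}^{\,n}\cdots\ol{f_1}^{\,n}$, and $\ol{f_j}$ is a separate generator, not the letter $f_j^{-1}$. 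The two words are inverse only modulo the relators $f\ol{f}$, and converting one to the other (equation~\eqref{Eqn1}) costs up to $|n|$ relator applications per edge, i.e.\ up to $L|n|$ per junction and hence up to $L^2|n|$ per tree path. So the claimed free equality $\Phi_n(p(q,\iota e))\FreeEq u_{n+1}u_n^{-1}$ ``at zero $\mc{P}_H$-cost'' is wrong; this step genuinely requires area.

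The damage is quantitative rather than structural: charging the telescoping honestly costs at most $2L^2|n|$ for the two paths in $w_e$, plus one further $L|n+1|$ to invert a tree path, and these are precisely the $L^2|n|$ and $L|n|$ terms in the stated bound (and in the paper's own scheme, whose total is $2K|n|^2+(2L^2+L+2K)|n|+L+K$). Your accounting, by contrast, attributes only $O(L|n|)$ to ``tree-path inversions'' and nothing at all to the junctions, so as written the proof asserts a false free reduction even though the final inequality would survive once the cost is restored. To repair it, replace the ``word-inverse'' claim by the paper's move: first convert each $p_n(\tau e_i,q)$ to $p_n(q,\tau e_i)^{-1}$ at cost $L|n|$ via the relators $e\ol{e}$, after which the telescoping really is a free reduction.
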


\begin{proof}
  Note that if $e_1 \cdot \ldots \cdot e_l$ is a combinatorial edge-path in $\Delta$ then $\Phi_n(e_1 \ldots e_l) = \prod_{i=1}^l p_n(q, \iota e_i) e_i^{n+1} p_n(\tau e_i, q)$ can be converted to $\prod_{i=1}^l p_n(q, \iota e_i) e_i^{n+1} p_n(q, \tau e_i)^{-1} \stackrel{\rm free}{=} p_n(q, \iota e_1) e_1^{n+1} \ldots e_l^{n+1} p_n(q, \tau e_l)^{-1}$ at a $\mc{P}_H$-cost of at most $lL|n|$.  It follows that for all $u, v \in \Vert(\Delta)$ the word $\Phi_n \big( p(u, v) \big)$ can be converted to the word $p_n(q, u) p_{n+1}(u, v) p_n(q, v)^{-1}$ at a $\mc{P}_H$-cost of at most $L^2 |n|$.

  The following is a null $\mc{P}_H$-scheme for the word $\Phi_{n+1}(e) \Phi_n(w_e)^{-1}$:

  \medskip

  \begin{center}
    \begin{tabular}{ccc}
      \hline
      \\[-10pt]
      $j$ & $\sigma_j$ & Area \\
      \\[-10pt]
      \hline
      \\[-10pt]
      $1$ & $p_{n+1}(q, \iota e) e^{n+2} p_{n+1}(\tau e, q) \left[\Phi_n \big( p(q, \iota e) e p(\iota e, q) \big) \right]^{-1}$ & $2L^2|n|$ \\
      $2$ & $p_{n+1}(q, \iota e) e^{n+2} p_{n+1}(\tau e, q) \big[ p_{n+1}(q, \iota e) p_n(q, \iota e)^{-1} \ldots$ & \\
      & $\ldots p_n(q, \iota e) e^{n+1} p_n(\tau e, q) p_n(q, \iota e) p_{n+1}(\iota e, q) \big]^{-1}$ & $0$ \\
      $3$ & $p_{n+1}(q, \iota e) e^{n+2} p_{n+1}(\tau e, q) p_{n+1}(\iota e, q)^{-1} \ldots$ & \\
      & $\ldots p_n(q, \iota e)^{-1} p_n(\tau e, q)^{-1} e^{-n-1} p_{n+1}(q, \iota e)^{-1}$ & $L|n+1|$ \\
      $4$ & $p_{n+1}(q, \iota e) e^{n+2} p_{n+1}(\tau e, q) p_{n+1}(q, \iota e) \ldots$ & \\
      & $\ldots p_n(q, \iota e)^{-1} p_n(\tau e, q)^{-1} e^{-n-1} p_{n+1}(q, \iota e)^{-1}$ & $K|n+1|^2 + K |n|^2$ \\
      $5$ & $p_{n+1}(q, \iota e) e^{n+2} e^{-n-1} e^n e^{-n-1} p_{n+1}(q, \iota e)^{-1}$ & $0$ \\
      \\[-10pt]
      \hline
      \\[-10pt]
      & Total & $\begin{gathered} 2K|n|^2 \\[-2pt] + (2L^2 + L + 2K)|n| \\[-2pt] + L + K \end{gathered}$ \\
      \\[-10pt]
      \hline
    \end{tabular}
  \end{center}
\end{proof}

Combining Lemmas~\ref{lem1}, \ref{lem3}, \ref{lem6} and \ref{lem7} gives the following result.

\begin{lem} \label{lem8}
  The relational area function $\RArea_H$ of $(\mc{P}_H^\infty, \| \cdot \|)$ over $\mc{P}_H$ satisfies $\RArea_H(l) \preceq l^2$.
\end{lem}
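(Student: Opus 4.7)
The plan is to observe that the four preceding lemmas (\ref{lem1}, \ref{lem3}, \ref{lem6}, \ref{lem7}) together exhaust all the relators in $\ol{\mc{R}}_\Delta \cup \ol{\mc{S}}_\Delta$, and in each case provide a quadratic (in $|n|$) bound on the $\mc{P}_H$-area. Hence the bound on $\RArea_H$ follows essentially by taking a maximum.

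First I would unpack the definitions: by construction, every element of $\ol{\mc{R}}_\Delta$ has the form $\Phi_n(r)$ for some $n \in \Z$ and some $r \in \mc{R}_\Delta$, and every element of $\ol{\mc{S}}_\Delta$ has the form $\Phi_{n+1}(e) \Phi_n(w_e)^{-1}$ for some $n \in \Z$ and $e \in \Edge(\Delta)$. The index $\| \cdot \|$ is defined to be the minimal $|n|$ for which such a presentation of the relator is possible. Recall that $\mc{R}_\Delta$ itself consists of words of exactly three shapes: $e\ol{e}$ for $e \in \Edge(\Delta)$; $efg$ for a combinatorial $1$-cycle $e\cdot f\cdot g$; and $e^{-1}f^{-1}g^{-1}$ for such a $1$-cycle. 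Thus the relators of $\mc{P}_H^\infty$ split into four families, and Lemmas~\ref{lem1}, \ref{lem3}, \ref{lem6} and \ref{lem7} bound the $\mc{P}_H$-areas of representatives of these four families, respectively.

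To finish, I would set $M$ to be the maximum of the four quadratic polynomials in $|n|$ appearing on the right-hand sides of Lemmas~\ref{lem1}, \ref{lem3}, \ref{lem6} and \ref{lem7}; explicitly one may take
\[
  M(l) = (3K+4)l^2 + (3L^2 + 6L + 2K + 6)l + L + K + 5,
\]
with $K$ the constant from Lemma~\ref{lem5} and $L$ the diameter constant. Then for any relator $\omega \in \ol{\mc{R}}_\Delta \cup \ol{\mc{S}}_\Delta$ with $\|\omega\| \leq l$, one may choose an expression of $\omega$ in one of the four shapes above with $|n| \leq l$, and the relevant lemma gives $\Area_{\mc{P}_H}(\omega) \leq M(l)$. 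Taking the maximum over all such $\omega$ gives $\RArea_H(l) \leq M(l)$, and since $M$ is a quadratic polynomial we conclude $\RArea_H(l) \preceq l^2$.

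There is no real obstacle here: the work has all been done in Lemmas~\ref{lem1}--\ref{lem7}, and this lemma is a bookkeeping combination of them. The only minor subtlety is the definition of the index $\|\cdot\|$ as a \emph{minimum} over representations, but this poses no difficulty since any representation realising the index supplies the desired bound via one of the four cited lemmas.
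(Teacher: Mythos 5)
Your proof is correct and is exactly the paper's argument: the paper simply states that Lemma~\ref{lem8} follows by combining Lemmas~\ref{lem1}, \ref{lem3}, \ref{lem6} and \ref{lem7}, and your write-up supplies precisely the bookkeeping (the four families of relators of $\mc{P}_H^\infty$, the minimality of the index, and the maximum of the four quadratic bounds) that this combination requires.
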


\begin{proof}[Proof of Theorem~\ref{thm3}]
  Since right-angled Artin groups are CAT(0) \cite{Charney1}, $A_\Delta$ has some finite presentation which admits an area-radius pair $(\alpha, \rho)$ with $\alpha(l) \simeq l^2$ and $\rho(l) \simeq l$ \cite[Proposition~III.$\Gamma$.1.6.]{brid99}.  Thus, by Proposition~\ref{prop11}, $\mc{P}_A'$ admits an area-radius
  pair $(\alpha', \rho')$ with $\alpha'(l) \simeq l^2$ and $\rho'(l) \simeq l$.  By Theorem~\ref{thm1}, $(\alpha', \rho')$ is an are-penetration pair for $(\mc{P}_H^\infty, \| \cdot \|)$ and hence, by Proposition~\ref{prop1} and Lemma~\ref{lem8}, the Dehn function $\delta$ of $\mc{P}_H$ satisfies $\delta(l) \preceq l^4$.
\end{proof}

\bibliographystyle{abbrv}
\bibliography{V:/Will}

\end{document}